\newcommand{\ccK}{\CMcal{K}}
\newcommand{\ccR}{\CMcal{R}}
\title{Parametrized and equivariant higher algebra}
\author{Denis Nardin}
\address{Fakultät für Mathematik, Universität Regensburg, 93040 Regensburg, Germany}
\email{denis.nardin@ur.de}
\author{Jay Shah}
\address{Fachbereich Mathematik und Informatik, WWU Münster, 48149 M\"{u}nster, Germany}
\email{jayhshah@gmail.com}
\begin{document}

\tikzcdset{arrow style=tikz, diagrams={>=stealth}}

\begin{abstract} 
We develop the rudiments of a theory of parametrized $\infty$-operads, including parametrized generalizations of monoidal envelopes, Day convolution, operadic left Kan extensions, results on limits and colimits of algebras, and the symmetric monoidal Yoneda embedding.
\end{abstract}

\date{\today}
\maketitle

\tableofcontents

\section{Introduction}

The goal of this paper is to lay foundations for a theory of \emph{parametrized $\infty$-operads}. To explain the concept, suppose $G$ is a finite group and let us first recall the concept of a $G$-symmetric monoidal $\infty$-category, after Hill--Hopkins \cite{hillhopkins}, Blumberg--Hill \cite{Blumberg2020}, and Bachmann--Hoyois \cite{BACHMANN2021}. Let $\FF_G$ be the category of finite $G$-sets, $\Span(\FF_G)$ the $(2,1)$-category of spans of finite $G$-sets, and $\widehat{\Cat}$ the (huge) $\infty$-category of (large) $\infty$-categories.

\begin{dfn} \label{dfn:g_smc}
A \emph{$G$-symmetric monoidal $\infty$-category} is a product-preserving functor
\[ \cC^{\otimes}: \Span(\FF_G) \to \widehat{\Cat}. \]
\end{dfn}

For example, the $\infty$-category $\Sp^G$ of genuine $G$-spectra extends to a $G$-symmetric monoidal $\infty$-category $(\Sp^G)^{\otimes}$ whose value on $G/H$ is equivalent to $\Sp^H$ and whose covariant functoriality encodes the symmetric monoidal structures on $\{ \Sp^H \}_{H \leq G}$ as well as the \emph{Hill--Hopkins--Ravenel norm functors} $f_{\otimes}: \Sp^H \to \Sp^K$ associated to maps of $G$-orbits $f: G/H \to G/K$ (cf. \cite[\S 9]{BACHMANN2021}). More generally, one can substitute other base $\infty$-categories apart from $\FF_G$ as needed for other applications; in particular, in the motivic context Bachmann and Hoyois work with spans over certain categories of schemes and have extensively investigated the properties of such normed symmetric monoidal $\infty$-categories and their algebras in \cite{BACHMANN2021}.

Just as the theory of symmetric monoidal $\infty$-categories admits a generalization to a theory of $\infty$-operads, we will see that the theory of $G$-symmetric monoidal $\infty$-categories admits a corresponding sort of generalization. Roughly speaking, a \emph{simplicial $G$-operad} should consist of the data of a space of multimorphisms associated to every map of finite $G$-sets, with a composition law then associated to every composite of maps of finite $G$-sets.\footnote{Beware that this isn't the notion of $G$-operad that appears in the work of Blumberg-Hill \cite{MR3406512}.} In fact, just as an $\infty$-operad is really the $\infty$-categorical counterpart of a simplicial colored operad (i.e, a simplicial multicategory), our theory of $G$-$\infty$-operads will encompass both $G$-symmetric monoidal $\infty$-categories and simplicial colored $G$-operads via a suitably defined coherent nerve construction. Abstracting away from the equivariant situation, we will be able to make this idea work under the following hypotheses on our base $\infty$-category, which were first articulated in the first author's work \cite{Exp4} on parametrized stability.

\begin{definition}[{\cite[Def.~4.1]{Exp4}}] Let $\cT$ be a small $\infty$-category. We say that $\cT$ is \emph{orbital} if its finite coproduct completion admits all pullbacks. We say that $\cT$ is \emph{atomic} if it has no non-trivial retracts, so that every map with a left inverse is an equivalence. 
\end{definition}

\begin{exm}
The orbit category $\OO_G$ of a finite group is atomic orbital. Some other examples are enumerated in \cite[Ex.~4.2]{Exp4}.
\end{exm}

\begin{rem}
The condition for an $\infty$-category to be atomic orbital is a highly restrictive one; for example, if $\cT$ is atomic orbital and admits a terminal object, then $\cT$ is equivalent to the nerve of a $1$-category $T$ (\cref{lem:AtomicityEnforcesTruncatedness}).
\end{rem}

At this point, the reader should examine the definition of a simplicial colored $T$-operad (\cref{def:SimplicialColoredTOperad}) to get a conceptual handle on the forthcoming definition of a $\cT$-$\infty$-operad.

\subsection{Summary of results} 

After some preliminaries on the $\cT$-$\infty$-category $\uFinpT$ of pointed finite $\cT$-sets (\cref{def:FiniteTSetsParam}), we give the definition of $\cT$-$\infty$-operad as \cref{dfn:operad} and algebras therein as \cref{dfn:morphism_of_operads}. We  explicate the parametrized Segal condition (\cref{prp:SegalCondition}) and show how the definition of a $\cT$-symmetric monoidal $\cT$-$\infty$-category recovers \cref{dfn:g_smc} (\cref{thm:TwoPresentationsOfTSMCs}). We then study parametrized generalizations of three essential constructions in the theory of $\infty$-operads: monoidal envelopes (\cref{dfn:envelope}), Day convolution (\cref{def:OperadicCoinductionAndDayConvolution}), and operadic left Kan extension (\cref{dfn:operadic_LKE}). Finally, we study $\cT$-(co)limits in $\cT$-$\infty$-categories of $\cO$-algebras, first in the context of a general $\cT$-$\infty$-operad $\cO^{\otimes}$ (\cref{thm:LimitsInAlgebras} and \cref{thm:ColimitsInAlgebras}) and then in the special case of the $\cO^{\otimes} = \uFinpT$ (\cref{thm:IndexedCoproductsAreTensorProducts}), and we establish a $\cT$-symmetric monoidal refinement of the universal property of $\cT$-presheaves (\cref{cor:UMP_presheaves}).

\subsection{Related work}

This paper is part of a larger body of work on parametrized higher category theory and higher algebra \cite{Exp0,Exp1,Exp2,Exp2b,Exp4,nardin}. In particular, all of the conventions, terminology, and notation from \cite{Exp2b} are in force in this paper, and the reader should at least skim the introduction and \S 2 of \cite{Exp2b} before reading this work. Furthermore, the definition of a $\cT$-$\infty$-operad was developed in joint work with Barwick, Dotto, and Glasman circa 2016 and has previously appeared in the first author's thesis \cite[\S 3.1]{nardin}. On the other hand, this paper doesn't otherwise expand on \cite[\S 3]{nardin}; for instance, we will not recapitulate the first author's work on tensor products of $\cT$-presentable $\cT$-$\infty$-categories.

As this paper is intended to play a foundational and supporting role in the literature, we don't discuss many interesting examples or applications here. Horev \cite{horev2019enuine} has used these results in his development of a theory of genuine equivariant factorization homology (see also \cite{hahn2020quivariant}), and he in particular discusses the example of the $G$-$\infty$-operad $\EE_V$ associated to a finite-dimensional real $G$-representation $V$. The second author and Quigley have applied these results in their study of the parametrized Tate construction \cite{quigleyshah_tate} and real cyclotomic spectra \cite{QS21b}. Hilman has introduced similar ideas in his study of parametrized noncommutative motives and equivariant algebraic $K$-theory \cite{hilman2022,hilman2022presentability}.

In a different direction, the theory of $G$-operads in their various guises has a long history that we don't attempt to summarize here; some recent references are \cite{MR3406512,Gutirrez2018,Rubin2021,Bonventre2021,may2021equivariant,guillou2018symmetric}. In terms of the relationship to the $N_{\infty}$-operads of Blumberg--Hill, we discuss $\cT$-indexing systems $\cI$ in our framework in \cref{def:IndexingSystem}, the corresponding commutative $\cT$-$\infty$-operad $\Com^{\otimes}_{\cI}$ in \cref{dfn:IndexingSystemCommOperad}, and how they identify with $G$-indexing systems in the sense of Blumberg--Hill when $\cT = \OO_G$ in \cref{rem:blumberg_hill}. It should be possible to adapt ideas of Hinich from \cite{Hinich} to establish a formal comparison between the $\infty$-category of $\Com_{\cI}$-algebras in our sense and those in the sense of \cite{MR3406512}, but we do not attempt to do this now.

\subsection{Acknowledgements}

We thank Clark Barwick, Emanuele Dotto, and Saul Glasman for valuable discussions in the early stages of this project. J.S. was supported by NSF grant DMS-1547292 and the Deutsche Forschungsgemeinschaft (DFG, German Research Foundation) under Germany’s Excellence Strategy EXC 2044–390685587, Mathematics Münster: Dynamics–Geometry–Structure.

\section{Parametrized \texorpdfstring{$\infty$}{infinity}-operads}

\subsection{First definitions}

We begin by introducing the basic definitions of parametrized higher algebra in parallel to Lurie's development of the foundations of $\infty$-operads \cite[\S 2.1]{HA}. Let $\cT$ be an atomic orbital $\infty$-category, whose objects we refer to as \emph{orbits}, and let $\FinT$ be its finite coproduct completion\footnote{Explicitly, we could take $\FinT \subset \PP(\cT)$ to be the full subcategory spanned by finite coproducts of representables. However, any equivalent choice will suffice.}, which we refer to as the $\infty$-category of \emph{finite $\cT$-sets}.

\begin{definition} For every orbit $V \in \cT$, let
$$\FinT[V] \coloneq (\FinT)^{/V} = \Ar(\FinT) \times_{\FinT} \{V\}$$
(thus fixing a preferred choice of finite coproduct completion of $\cT^{/V}$), and let
$$\FinpT[V] \coloneq (\FinT[V])^{\id_V/} = (\FinT)^{V/ / V}$$
be the $\infty$-category of \emph{finite pointed $\cT^{/V}$-sets}.
\end{definition}

Using that $\cT$ is orbital, we could then define the \emph{$\cT$-$\infty$-category of finite $\cT$-sets} $\uFinT$ as the full $\cT$-subcategory of $\uSpcT$ spanned by the finite $\cT^{/V}$-sets in each fiber $(\uSpcT)_V$ over an orbit $V$, so that as a cocartesian fibration, $\uFinT$ is classified by the assignment $\goesto{V}{(\FinT)^{/V}}$ with functoriality that given by pullback. Similarly, we could define a pointed variant $\uFinpT$ as the full $\cT$-subcategory of $\uSpcpT$ classified by the assignment $\goesto{V}{(\FinT)^{V/ /V}}$.

However, although conceptually transparent, these definitions of $\uFinT$ and $\uFinpT$ are ill-suited to writing down arbitrary morphisms that may interpolate between different fibers. Instead, we will follow the first author's work in \cite[\S 4]{Exp4} and instead define $\uFinT$ and $\uFinpT$ as certain $\infty$-categories of spans, along the lines of the construction of the dual cocartesian fibration in \cite{BGN} as well as the span description of finite pointed sets in terms of finite sets and partially defined maps (cf. \cite[4.11]{Exp4}). 

\begin{definition} \label{def:FiniteTSetsParam} Let
$$\ArOT \coloneqq \Ar(\FinT) \times_{\FinT} \cT,$$
so that the functor $\ev_1: \ArOT \to \cT$ given by evaluation at the target is a cartesian fibration classified by $\goesto{V}{(\FinT)^{/V}}$. Labeling an arbitrary morphism $[\phi: f \to g]$ of $\ArOT$ as
\[ \begin{tikzcd}[row sep=4ex, column sep=4ex, text height=1.5ex, text depth=0.25ex]
U \ar{r}{h} \ar{d}{f} & X \ar{d}{g} \\
V \ar{r}{k} & Y,
\end{tikzcd} \]
we define wide subcategories
$$(\ArOT)^{\tdeg}, \: (\ArOT)^{\si}, \: (\ArOT)^{\cart} \subset \ArOT$$
as containing those morphisms $\phi$ such that $k$ is degenerate, $U \to X \times_Y V$ is a summand inclusion, and $U \to X \times_Y V$ is an equivalence, respectively.\footnote{Note that the ``target degenerate'' morphisms are a subclass of $\ev_1$-cocartesian edges (for which more generally the map $k$ is an equivalence) and the ``cartesian'' morphisms are exactly the $\ev_1$-cartesian edges.} Then the triples
\[  (\ArOT; (\ArOT)^{\cart}, (\ArOT)^{\tdeg}) \textrm{\quad and \quad} (\ArOT; (\ArOT)^{\si}, (\ArOT)^{\tdeg}) \]
are adequate in the sense of \cite[5.2]{M1}.\footnote{Note that we have swapped the order of the wide subcategories from that of \cite{M1}, so that the first subcategory will indicate the backward facing arrows and the second will indicate the forward facing arrows when forming the span $\infty$-category.} Consequently, we may form the associated span $\infty$-categories\footnote{In \cite{M1} and \cite{Exp4}, the term ``effective Burnside $\infty$-category'' $A^{\eff}$ is used as a synonym for ``span $\infty$-category''.}
\[  \uFinT \coloneqq \Span(\ArOT; (\ArOT)^{\cart}, (\ArOT)^{\tdeg}) \textrm{\quad and \quad} \uFinpT \coloneqq \Span(\ArOT; (\ArOT)^{\si}, (\ArOT)^{\tdeg}). \]

We regard $\uFinT$ and $\uFinpT$ as $\cT$-$\infty$-categories via the structure map $\ev_1$ given by evaluation at the target, so that a morphism $\psi$ (for either $\infty$-category)
\[ \begin{tikzcd}[row sep=4ex, column sep=4ex, text height=1.5ex, text depth=0.25ex]
U \ar{d} & Z \ar{l} \ar{d} \ar{r}{m} & X \ar{d} \\
V & Y \ar{l} \ar{r}{=} & Y.
\end{tikzcd} \]
is $\ev_1$-cocartesian if and only if $m: Z \to X$ is an equivalence and $Z \to U \times_V Y$ is an equivalence (cf. \cite[Lem.~4.9 and Def.~4.12]{Exp4}). The canonical inclusion $\uFinT \subset \uFinpT$ of span $\infty$-categories is thus the inclusion of a $\cT$-subcategory. We also have an `identity' cocartesian section $I: \cT^\op \to \uFinT$ that sends $V$ to $[V=V]$.
\end{definition}

\begin{definition} \label{def:InertActiveEdges}
In the notation of \cref{def:FiniteTSetsParam}, we declare a morphism $\psi$ in $\uFinpT$ to be \emph{inert} if $m: Z \to X$ is an equivalence and \emph{active} if $Z \to U \times_V Y$ is an equivalence. Note that a morphism $\psi$ in $\uFinpT$ is both inert and active if and only if $\psi$ is $\ev_1$-cocartesian.
\end{definition}



\begin{rem} \label{rem:UnwindingDefOfFiniteTSets}
Note that $\uFinT$ is by definition the dual cocartesian fibration to $\ArOT$ in the sense of \cite[Def.~3.4]{BGN}. As such, for any orbit $V$ we have an equivalence $$(\ArOT)_V = \FinT[V] \xto{\simeq} \Span(\FinT[V]; (\FinT[V])^{\simeq}, \FinT[V]) = (\uFinT)_V$$ implemented by inclusion.

We next describe $\uFinpT$. Let $\FinT^{\si}$ denote the wide subcategory on the summand inclusions in $\FinT$, so $(\ArOT)^{\si}_V = \FinT[V]^{\si}$. As was noted in \cite[Lem.~4.14]{Exp4}, for any orbit $V$ we have an equivalence
$$(\uFinpT)_V = \Span(\FinT[V]; \FinT[V]^{\si}, \FinT[V]) \xto{\simeq} \FinpT[V],$$
under which an object $[U \xto{f} V]$ is sent to $[U \sqcup V \xtolong{f \sqcup \id}{.8} V]$ pointed at $V$, and a span
$$U \xot{\alpha} W \xto{\beta} U',$$
with $\alpha$ given by the summand inclusion $W \subset W \sqcup W' \simeq U$, is sent to the pointed map
$$U \sqcup V \xto{\gamma} U' \sqcup V$$
with $\gamma|_{W} = \beta$ and $\gamma|_{W'} = {\const}_{V}$. Consequently, we will often refer to an object $f=[U \to V]$ of $\uFinpT$ as $f_+ = [U_+ \to V]$ to emphasize the implicit presence of the basepoint. We will also denote the canonical inclusion $\uFinT \subset \uFinpT$ of span $\infty$-categories by
\[ (-)_+: \into{\uFinT}{\uFinpT} \]
and refer to this as the \emph{pointing} $T$-functor. By \cite[Lem.~4.14]{Exp4}, $(-)_+$ has a `forgetful' right $T$-adjoint which sends $[U_+ \to V]$ to $[U \sqcup V \to V]$. Note also that a morphism $\psi$ in $\uFinpT$ is active if and only if it is in the image of $(-)_+$. 
\end{rem}

\begin{remark} \label{rem:FiberwiseInertActive}
For an orbit $V \in \cT$, we obtain from \cref{def:InertActiveEdges} a definition for inert and active edges in $(\uFinpT)_V$ by restriction to the fiber. Under the equivalence $(\uFinpT)_V \simeq \FinpT[V]$ of \cref{rem:UnwindingDefOfFiniteTSets}, a pointed map $f: U \sqcup V \to U' \sqcup V$ is then inert if and only if its pullback along $U' \subset U' \sqcup V$ is an equivalence, and is active if and only if $f \simeq g_+$ for some $g: U \to U'$ in $\FinT[V]$.
\end{remark}

\begin{definition} Suppose $f_+ = [U_+ \to V]$ is an object of $\uFinpT$. Let $\Orbit(U)$ be the set of orbits of $U$, so that we have an equivalence
\[ U \simeq \coprod_{W \in \Orbit(U)} W \]
in $\FinT$ with each $W$ an object in $\cT$. Given $W \in \Orbit(U)$, the \emph{characteristic morphism}
\[ \chi_{[W \subset U]}: f_+ \to I(W)_+ \]
is defined to be
\[ \begin{tikzcd}[row sep=4ex, column sep=4ex, text height=1.5ex, text depth=0.25ex]
U \ar{d}{f} & W \ar{l} \ar{d}{=} \ar{r}{=} & W \ar{d}{=} \\
V & W \ar{l} \ar{r}{=} & W.
\end{tikzcd} \]
Here we make essential use of our assumption that $\cT$ is atomic to ensure that $W \to U \times_V W$ is a summand inclusion. Clearly, $\chi_{[W \subset U]}$ is inert.
\end{definition}

\begin{dfn} \label{dfn:operad} A \emph{$\cT$-$\infty$-operad} is a pair $(\cC^\otimes, p)$ consisting of a $\cT$-$\infty$-category $\cC^\otimes$ along with a $\cT$-functor $p: \cC^\otimes \to \uFinpT$, which is a categorical fibration and satisfies the following additional conditions:
\begin{enumerate} \item For every inert morphism $\psi: f_+ \to g_+$ of $\uFinpT$ and every object $x \in \cC^\otimes_{f_+}$, there is a $p$-cocartesian edge $x \to y$ in $\cC^\otimes$ covering $\psi$.
\item For any object $f_+ = \left[ U_+ \to V \right]$ of $\uFinpT$, the $p$-cocartesian edges lying over the characteristic morphisms 
\[ \left\{ \chi_{[W \subset U]}: f_+ \to I(W)_+ \;\; |\;\; W \in \Orbit(U) \right\}  \]
together induce an equivalence
\[ \prod_{W \in \Orbit(U)} (\chi_{\left[W \subset U \right]})_! : \cC^\otimes_{f_+} \xto{\sim} \prod_{W \in \Orbit(U)} \cC^{\otimes}_{I(W)_+}.  \]
\item For any morphism
\[ \psi: f_+ = \left[ U_+ \to V \right] \to g_+ = \left[ U'_+ \to V' \right] \]
of $\uFinpT$, objects $x \in \cC^\otimes_{f_+}$ and $y \in \cC^\otimes_{g_+}$, and any choice of $p$-cocartesian edges
\[ \left\{ y \to y_W \;\; |\;\; W \in \Orbit(U') \right\} \]
lying over the characteristic morphisms
\[ \left\{ \chi_{[W \subset U']}: g_+ \to I(W)_+ \;\; |\;\; W \in \Orbit(U') \right\},  \]
the induced map
\[ \Map^{\psi}_{\cC^\otimes}(x,y) \xto{\sim} \prod_{W \in \Orbit(U')} \Map^{\chi_{[W \subset U']} \circ \psi}_{\cC^\otimes}(x, y_W ) \]
is an equivalence.
\end{enumerate}

We will typically omit the structure map $p$ and simply refer to $\cC^\otimes$ as a $\cT$-$\infty$-operad. Given a $\cT$-$\infty$-operad $\cC^\otimes$, its \emph{underlying $\cT$-$\infty$-category} is the fiber product
\[ \cC \coloneq \cT^{\op} \times_{I(-)_+, \uFinpT} \cC^\otimes. \]
\end{dfn}

\begin{dfn} Suppose $(\cC^\otimes,p)$ is a $\cT$-$\infty$-operad. Then an edge of $\cC^\otimes$ is \emph{inert} if it is $p$-cocartesian over an inert edge of $\uFinpT$, and is \emph{active} if it factors as a $p$-cocartesian edge followed by an edge lying over a fiberwise active edge in $\uFinpT$. We let $\cC^\otimes_{\inert}$ be the wide $\cT$-subcategory of $\cC^\otimes$ on the inert edges, and $\cC^\otimes_{\act}$ the wide $\cT$-subcategory of $\cC^\otimes$ on the active edges.
\end{dfn}

\begin{remark} \label{rem:BasepointIndependenceInOperadDefForObjects} Let $\cC^\otimes$ be a $\cT$-$\infty$-operad and $U \in \FinT$. Note that for \emph{any} orbit $V$ and morphism $f: U \to V$, we have an equivalence
$$\cC^\otimes_{f_+} \simeq \cC_U = \prod_{W \in \Orbit(U)} \cC_W.$$
We will often write objects $x \in \cC^\otimes_{f_+}$ as tuples $(x_W)$.
\end{remark}

\begin{remark}[Simplified condition on mapping spaces] \label{rem:alternateOperadDef}
In \cref{dfn:operad}, in view of the inert-fiberwise active factorization system on a $\cT$-$\infty$-operad (\cref{exm:InertActiveFactorizationSystem}) we may replace (3) by the following apparently weaker condition:

(3\textquotesingle) \: Let $\alpha: [U \xto{f} V] \to [U' \xto{g} V]$ be a morphism in $\FinT[V]$, which defines an active edge $\alpha_+$ in $(\uFinpT)_V$. Let $x \in \cC^{\otimes}_{f_+}$, $y \in \cC^\otimes_{g_+}$ be objects, and for each $W \in \Orbit(U')$ let $y \to y_W$ be a $p$-cocartesian edge lifting the characteristic morphism $\chi_{[W \subset U']}$. For every $W \in \Orbit(U')$ we have a commutative square
\[ \begin{tikzcd}[row sep=4ex, column sep=6ex, text height=1.5ex, text depth=0.5ex]
\brac{U_+ \to V} \ar{r}{\alpha_+} \ar{d}{\rho_W} & \brac{U'_+ \to V} \ar{d}{\chi_{\brac{W \subset U'}}} \\\brac{(U \times_{U'} W)_+ \to W} \ar{r}{(\alpha_W)_+} & \brac{W_+ \to W}
\end{tikzcd} \]
where $\rho_W$ is the inert edge corresponding to the summand inclusion $U \times_{U'} W \to U \times_V W $ and $\alpha_W: U \times_{U'} W \to W$ is the pullback of $\alpha: U \to U'$ along $W \subset U'$. (Note that the lower composition is the inert-fiberwise active factorization of the upper composition $\chi_{\brac{W \subset U'}} \circ \alpha_+$.) Let
\[ \left\{ x \to x_W \;\; |\;\; W \in \Orbit(U') \right\} \]
be any choice of $p$-cocartesian edges lying over the morphisms $\rho_W$. Then the induced map
\begin{equation} \label{eq__rem:alternateOperadDef}
 \Map_{\cC^\otimes}^{\alpha_+}(x,y) \xto{\sim} \prod_{W \in \Orbit(U')} \Map_{\cC^\otimes}^{(\alpha_W)_+}(x_W, y_W)
 \end{equation}
is an equivalence.
\end{remark}

\begin{remark}[Spaces of multimorphisms and operadic composition] \label{rem:OperadicCompositionUnwinding} Suppose $\cC^\otimes$ is a $\cT$-$\infty$-operad, $\alpha: U \to U'$ is a morphism in $\FinT$, and $x \in \cC_U$, $y \in \cC_{U'}$ are tuples of objects in $\cC$. For every $W \in \Orbit(U')$, let
$$\alpha_W: U_W = U \times_{U'} W \to W$$
be the pullback of $\alpha$ along the summand inclusion $W \subset U'$. Consider the component $y_W$ of $y$ as an object in $\cC^\otimes_{I(W)_+}$ and the sub-tuple $x_W \in \cC_{U_W}$ of $x$ as an object in $\cC^\otimes_{(\alpha_W)_+}$. Let
\[ \Mul_{\cC}^{\alpha}(x,y) \coloneq \prod_{W \in \Orbit(U')} \Map_{\cC^\otimes}^{(\alpha_{W})_+}(x_W, y_W) \]
be the space of \emph{$(\alpha; x, y)$-multimorphisms} encoded by $\cC^\otimes$. Then for any choice of map $U' \to V$ in $\FinT$ down to an orbit $V$, we have the canonical equivalence
$$\Map_{\cC^\otimes}^{\alpha_+}(x,y) \simeq \Mul_{\cC}^{\alpha}(x,y)$$
of \eqref{eq__rem:alternateOperadDef} (compare \cref{rem:BasepointIndependenceInOperadDefForObjects}).

These spaces of multimorphisms are interrelated by the structure of $\cC^\otimes$. For instance, for every composite morphism $U_0 \xto{\alpha} U_1 \xto{\beta} U_2$ in $\FinT$ and $x_i \in \cC_{U_i}$, $i \in \{ 0,1,2 \}$, any choice of map $\rho: U_2 \to V$ to an orbit $V$ yields a map
\[ \circ: \Mul_{\cC}^{\alpha}(x_0,x_1) \times \Mul_{\cC}^{\beta}(x_1,x_2) \to \Mul_{\cC}^{\beta \circ \alpha}(x_0,x_2) \]
defined by the composition in $\cC^\otimes$, and one may check that this map is independent of the choice of $\rho$. Likewise, for every composition of pullback squares in $\FinT$
\[ \begin{tikzcd}
X \ar{r}{f^\ast \alpha} \ar{d} & X' \ar{r} \ar{d} & W \ar{d}{f} \\
U \ar{r}{\alpha} & U' \ar{r} & V
\end{tikzcd} \]
with $V, W$ orbits, and objects $x \in \cC_U$, $y \in \cC_{U'}$, one has a base-change map
\[ f^\ast: \Mul^{\alpha}_{\cC}(x,y) \to \Mul^{f^\ast \alpha}_{\cC}(f^\ast x, f^\ast y) \]
induced by the cocartesian pushforward in $\cC^\otimes$ along $f$ in $\cT^\op$. Note that these maps extend the functoriality on the underlying $\cT$-$\infty$-category $\cC$. 

Altogether, these maps satisfy homotopy coherent unitality, associativity, and base-change compatibility constraints as encapsulated by $\cC^\otimes$.
\end{remark}

\subsection{Morphisms of operads}

We next introduce morphisms of $\cT$-$\infty$-operads and algebras over $\cT$-$\infty$-operads.

\begin{dfn} \label{dfn:morphism_of_operads}
Suppose $\cC^\otimes$ and $\cD^\otimes$ are two $\cT$-$\infty$-operads. A \emph{morphism of $\cT$-$\infty$-operads} is a $\cT$-functor
\[ A: \cC^\otimes \to \cD^\otimes \]
over $\uFinpT$ that carries inert morphisms to inert morphisms. Conceptually, $A$ is a \emph{$\cC$-algebra valued in $\cD$}.

If $A$ is moreover a categorical fibration, then we call $A$ a \emph{fibration of $\cT$-$\infty$-operads}. Given fibrations of $\cT$-$\infty$-operads $p: \cC^\otimes \to \cO^\otimes$ and $q: \cD^\otimes \to \cO^\otimes$, we let
\[ \Alg_{\cO,\cT}(\cC,\cD) \]
denote the full subcategory of $\Fun_{/\cO^\otimes}(\cC^\otimes,\cD^\otimes)$ spanned by the morphisms of $\cT$-$\infty$-operads, and
\[ \underline{\Alg}_{\cO,\cT}(\cC,\cD) \]
the corresponding full $\cT$-subcategory of the $\cT$-$\infty$-category $\underline{\Fun}_{/\cO^\otimes,\cT}(\cC^\otimes,\cD^\otimes)$ \cite[Notn.~4.7]{Exp2b}.\footnote{For this definition to be sensible, note that for any map $V \to W$ in $\cT$, the pullback of a $\cT^{/W}$-$\infty$-operad along the induced functor $(\cT^{/V})^{\op} \to (\cT^{/W})^{\op}$ is again a $\cT^{/V}$-$\infty$-operad, and likewise for morphisms of $\cT^{/W}$-$\infty$-operads.}

If $p$ is the identity on $\cO^\otimes$, then we will also denote $\Alg_{\cO,\cT}(\cC,\cD)$ as $\Alg_{\cO,\cT}(\cD)$. If $\cO^\otimes = \uFinpT$, then we will also denote $\Alg_{\cO,\cT}(\cC,\cD)$ as $\Alg_{\cT}(\cC,\cD)$. Combining these two cases, if $\cC^\otimes = \cO^\otimes = \uFinpT$, then we will also denote $\Alg_{\cO,\cT}(\cC,\cD)$ as $\CAlg_{\cT}(\cD)$, the $\infty$-category of \emph{$\cT$-commutative algebras} in $\cD$.
\end{dfn}

\begin{wrn} In the case $\cT=\ast$, our notation for $\infty$-categories of algebras conflicts with that of Lurie in \cite[Def.~2.1.3.1]{HA}.
\end{wrn}

\begin{dfn} Suppose $p: \cC^\otimes \to \cO^\otimes$ is a fibration of $\cT$-$\infty$-operads in which $p$ is moreover a cocartesian fibration. In this case, we call $\cC^\otimes$ a \emph{$\cO$-monoidal} $\cT$-$\infty$-category. If $\cO^\otimes = \uFinpT$, we also call $\cC^\otimes$ a \emph{$\cT$-symmetric monoidal} $\cT$-$\infty$-category.\footnote{We may also write ``$\cT$-symmetric monoidal $\infty$-category'' for this notion since there is no potential for ambiguity.} We also refer to $\cC$ as $\cO$-monoidal if the additional structure $(\cC^\otimes,p)$ is understood from context.
\end{dfn}

\begin{ntn} \label{ntn:norm_functors}
Let $\cC^{\otimes}$ be an $\cO$-monoidal $\cT$-$\infty$-category. For an active morphism $f: x \to y$ in $\cO^{\otimes}$, we typically denote the cocartesian pushforward functor associated to $f$ by $f_{\otimes}: \cC^{\otimes}_x \to \cC^{\otimes}_y$ and refer to it as the \emph{norm functor} for $f$. If $\cO^{\otimes} = \uFinpT$, then for any morphism $f: U \to V$ of finite $\cT$-sets with $V$ an orbit, we have a norm functor $f_{\otimes}: \cC_U \to \cC_V$ associated to $f_+: [U_+ \ra V] \to [V_+ \ra V]$. More generally, if $V$ is a finite $\cT$-set with orbit decomposition $\coprod_{i=1}^n V_i$ so that $f = \coprod_{i=1}^n (f_i: U_i \to V_i)$, then we let $f_{\otimes}$ be the product of the functors $\{ (f_i)_{\otimes} \}_{i=1}^n$. (We will also describe in \cref{subsec:bigOperads} how to dispense with the orbit restriction in the formalism by passing to `big' $\cT$-$\infty$-operads.)
\end{ntn}

\begin{dfn} Given two $\cO$-monoidal $\cT$-$\infty$-categories $p,q: \cC^\otimes, \cD^\otimes \to \cO^\otimes$, a $\cT$-functor $F: \cC^\otimes \to \cD^\otimes$ is \emph{lax $\cO$-monoidal} if it is a morphism of $\cT$-$\infty$-operads, and is \emph{(strict) $\cO$-monoidal} if it carries $p$-cocartesian edges to $q$-cocartesian edges. We let
\[ \Fun_{\cO,\cT}^\otimes(\cC,\cD) \]
denote the subcategory of $\Fun_{/\cO^\otimes}(\cC^\otimes,\cD^\otimes)$ spanned by the $\cO$-monoidal $\cT$-functors, and
\[ \underline{\Fun}_{\cO,\cT}^\otimes(\cC,\cD) \]
the corresponding $\cT$-subcategory of $\underline{\Fun}_{/\cO^\otimes,\cT}(\cC^\otimes,\cD^\otimes)$. We will also drop $\cO$ from the notation if $\cO^\otimes = \uFinpT$ and speak of lax and strict $\cT$-symmetric monoidal $\cT$-functors.
\end{dfn}

In the situation of a cocartesian fibration $p: \cC^\otimes \to \cO^\otimes$ over a $\cT$-$\infty$-operad $\cO^\otimes$, we have the following simplification of the conditions for $\cC^\otimes$ to be a $\cT$-$\infty$-operad (and hence $\cO$-monoidal).

\begin{proposition} \label{prop:SimplifyOMonoidalDef} Let $(\cO^\otimes,q)$ be a $\cT$-$\infty$-operad and let $p: \cC^\otimes \to \cO^\otimes$ be a cocartesian fibration of $\cT$-$\infty$-categories. Then $(\cC^\otimes, q \circ p)$ is a $\cT$-$\infty$-operad if and only if for every $f_+ = [U_+ \to V] \in \uFinpT$ and $x \in \cO^\otimes_{f_+}$, the inert edges $\{ x \to x_W \: | \: W \in \Orbit(U) \}$ in $\cO^\otimes$ together induce an equivalence
$$\cC^\otimes_x \xto{\sim} \prod_{W \in \Orbit(U)} \cC^\otimes_{x_W}.$$
\end{proposition}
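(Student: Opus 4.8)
The plan is to verify the three conditions of \cref{dfn:operad} for the pair $(\cC^\otimes, q \circ p)$, in each case reducing to the corresponding condition for $(\cO^\otimes, q)$ by analyzing the cocartesian fibration $p$ fiberwise; the ``only if'' direction then falls out of running the argument for condition~(2) backwards. First observe that $q \circ p$ is a composite of categorical fibrations, hence a categorical fibration, and a $\cT$-functor since $p$ and $q$ are. Condition~(1) requires nothing beyond $p$ being a cocartesian fibration: given an inert $\psi \colon f_+ \to g_+$ in $\uFinpT$ and $x \in \cC^\otimes_{f_+}$, set $\bar{x} = p(x) \in \cO^\otimes_{f_+}$, pick a $q$-cocartesian edge $\bar{x} \to \bar{y}$ over $\psi$ (possible as $\cO^\otimes$ is a $\cT$-$\infty$-operad), and take a $p$-cocartesian lift $x \to y$; this is $(q \circ p)$-cocartesian because a $p$-cocartesian lift of a $q$-cocartesian edge is always $(q \circ p)$-cocartesian.

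For condition~(2), fix $f_+ = [U_+ \to V]$ and choose the $(q \circ p)$-cocartesian edges over the characteristic morphisms $\chi_{[W \subset U]}$ to be $p$-cocartesian lifts of $q$-cocartesian edges. These assemble into a commuting square whose top edge is the induced map $\cC^\otimes_{f_+} \to \prod_{W \in \Orbit(U)} \cC^\otimes_{I(W)_+}$, whose bottom edge is the analogous map for $\cO^\otimes$, and whose vertical maps are $p$ restricted over $f_+$ and the product of its restrictions over the $I(W)_+$, hence are cocartesian fibrations. The bottom edge is an equivalence by condition~(2) for $\cO^\otimes$; and over a point $\bar{x} \in \cO^\otimes_{f_+}$ with inert pushforwards $\bar{x}_W \in \cO^\otimes_{I(W)_+}$, the map induced on fibers is precisely the map $\cC^\otimes_{\bar{x}} \to \prod_W \cC^\otimes_{\bar{x}_W}$ of the statement. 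Hence: if the latter is an equivalence for all $\bar{x}$, then since the base map is an equivalence and both verticals are cocartesian fibrations, the top edge is an equivalence, which is condition~(2); conversely, if $(\cC^\otimes, q \circ p)$ is already a $\cT$-$\infty$-operad, then the top edge is an equivalence, so the maps induced on fibers are equivalences, which is the asserted condition. This settles the ``only if'' direction and reduces the ``if'' direction to condition~(3).

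It remains to deduce condition~(3) from the fiberwise equivalence. Fix $\psi \colon f_+ \to g_+$ with $g_+ = [U'_+ \to V']$, objects $x \in \cC^\otimes_{f_+}$ and $y \in \cC^\otimes_{g_+}$, and $p$-cocartesian edges $y \to y_W$ over $\chi_{[W \subset U']}$, and write $\bar{x} = p(x)$, $\bar{y} = p(y)$. The comparison map $\Map^\psi_{\cC^\otimes}(x, y) \to \prod_W \Map^{\chi_{[W \subset U']} \circ \psi}_{\cC^\otimes}(x, y_W)$ sits over the equivalence $\Map^\psi_{\cO^\otimes}(\bar{x}, \bar{y}) \xto{\sim} \prod_W \Map^{\chi_{[W \subset U']} \circ \psi}_{\cO^\otimes}(\bar{x}, \bar{y}_W)$ furnished by condition~(3) for $\cO^\otimes$, so it is enough to prove it is an equivalence on the fiber over each lift $\bar{\alpha} \colon \bar{x} \to \bar{y}$ of $\psi$. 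By decomposing mapping spaces over the base $\cO^\otimes$ and using that $p$ is a cocartesian fibration, that fiber map is identified with $\Map_{\cC^\otimes_{\bar{y}}}(\bar{\alpha}_! x, y) \to \prod_W \Map_{\cC^\otimes_{\bar{y}_W}}((\chi_{[W \subset U']})_! \bar{\alpha}_! x, y_W)$, where $\bar{\alpha}_!$ and $(\chi_{[W \subset U']})_!$ denote cocartesian pushforward along $p$ and $y_W \simeq (\chi_{[W \subset U']})_! y$. Since the hypothesis says precisely that $\prod_W (\chi_{[W \subset U']})_! \colon \cC^\otimes_{\bar{y}} \to \prod_W \cC^\otimes_{\bar{y}_W}$ is an equivalence --- sending $\bar{\alpha}_! x$ to $((\chi_{[W \subset U']})_! \bar{\alpha}_! x)_W$ and $y$ to $(y_W)_W$ --- this map is an equivalence, as desired. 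I expect the main obstacle to lie in this last step: making precise that the comparison map of \cref{dfn:operad}(3) genuinely sits over the corresponding map for $\cO^\otimes$ and that, fiberwise, it is computed by the cocartesian pushforwards $(\chi_{[W \subset U']})_!$ out of $\cC^\otimes_{\bar{y}}$; granted this bookkeeping, the hypothesis finishes the argument at once. (One could instead verify the ostensibly weaker \cref{rem:alternateOperadDef}(3\textquotesingle), but the fiberwise reduction is the same.)
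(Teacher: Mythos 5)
Your argument is correct and is precisely the one the paper intends: the paper omits the proof, saying only that it is ``exactly analogous to that of [HA, Prop.~2.1.2.12],'' and your three-step verification — cocartesian lifts of inert edges as $p$-cocartesian lifts of $q$-cocartesian edges, the Segal condition via a fiberwise comparison of cocartesian fibrations over the Segal equivalence for $\cO^\otimes$, and condition~(3) via full faithfulness of $\prod_{W}(\chi_{[W \subset U']})_!\colon \cC^\otimes_{\bar{y}} \to \prod_W \cC^\otimes_{\bar{y}_W}$ on the fibers of the mapping-space comparison — is exactly Lurie's argument transported to the parametrized setting. The bookkeeping point you flag at the end (that the comparison map of \cref{dfn:operad}(3) sits over the corresponding equivalence for $\cO^\otimes$ and is computed fiberwise by cocartesian pushforward) is handled as you expect, using that $p$ is a categorical fibration so that strict fibers of the induced maps on mapping spaces compute homotopy fibers.
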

\begin{proof} The proof is exactly analogous to that of \cite[Prop.~2.1.2.12]{HA}, so we will omit it.
\end{proof}

Lastly, we state the evident notions of $\cT$-suboperad and $\cO$-monoidal $\cT$-subcategory.

\begin{definition}
Let $(\cC^\otimes, p)$ be a $\cT$-$\infty$-operad and let $\cD^\otimes$ be a $\cT$-subcategory of $\cC^\otimes$ with inclusion $\cT$-functor $i$. We say that $\cD^\otimes$ is a \emph{$\cT$-suboperad} of $\cC^\otimes$ if $p \circ i$ exhibits $\cD^\otimes$ as a $\cT$-$\infty$-operad and $i$ is a morphism of $\cT$-$\infty$-operads. If $\cC^\otimes$ is moreover an $\cO$-monoidal $\cT$-$\infty$-category via $q: \cC^\otimes \to \cO^\otimes$, then $\cD^\otimes$ is a \emph{$\cO$-monoidal $\cT$-subcategory} of $\cC^\otimes$ if $\cD^\otimes \subset \cC^\otimes$ is stable under $q$-cocartesian edges, so that $q \circ i$ exhibits $\cD^\otimes$ as an $\cO$-monoidal $\cT$-$\infty$-category and $i$ is an $\cO$-monoidal functor.
\end{definition}


\subsection{Parametrized Segal condition}

We next want to interpret condition (2) of \cref{dfn:operad} as an equivalence of $\cT^{/V}$-$\infty$-categories (i.e., a \emph{$\cT$-Segal condition}). First, we extend our notation for the $\cT$-fibers of a $\cT$-functor.

\begin{ntn} Let $F: \cX \to \cC$ be a $\cT$-functor and let $\sigma: \Delta^n \to \cC$ be a $n$-simplex of $\cC$. Define the \emph{$\cT$-fiber of $\cX$ over $\sigma$} to be
\[ \cX_{\underline{\sigma}} \coloneq \Delta^n \times_{\sigma,\cC,\ev_0} \Ar^\cocart(\cC) \times_{\ev_1,\cC,F} \cX. \] 
\end{ntn}


\begin{cnstr} \label{cnstr:EdgeFunctoriality} For any $\cT$-$\infty$-category $\cX$ and edge $f: x \to y$ in $\cX$, we can construct a $\cT$-functor
\[ \phi: \Delta^1 \times \underline{y} = \Delta^1 \times (\Delta^0 \times_{y, \cX, \ev_0} \Ar^\cocart(\cX)) \to \Delta^1 \times_{f, \cX, \ev_0} \Ar^\cocart(\cX) \]
which fits into the commutative diagram
\[ \begin{tikzcd}[row sep=2em, column sep=2em]
\{0\} \times \underline{y} \ar{r}{f^\ast} \ar{d} & \underline{x} \ar{d} \\
\Delta^1 \times \underline{y} \ar{r}{\phi} & \Delta^1 \times_{f, \cX, \ev_0} \Ar^\cocart(\cX) \\
\{1\} \times \underline{y} \ar{u} \ar{r}{=} & \underline{y}, \ar{u}
\end{tikzcd} \]
where $f^*: \underline{y} \to \underline{x}$ is the $\cT$-functor defined in \cite[Rem.~12.11]{Exp2}, which sends a cocartesian edge $[e: y \ra z]$ to the cocartesian edge $[f^*(e): x \ra z']$ given by the factorization of $[e \circ f]$ as the composite of $f^*(e)$ and a fiberwise edge $\phi(e)_1$.

Explicitly, let $h: \Delta^1 \times \Delta^1 \to \cX$ be given by
\[ \begin{tikzcd}[row sep=2em, column sep=2em]
x \ar{r}{f} \ar{d}{f} & y \ar{d}{=} \\
y \ar{r}{=} & y
\end{tikzcd} \] 
and let 
\[ \cM = \Delta^1 \times_{h, \widetilde{\Fun}_{\Delta^1}(\Delta^1 \times \Delta^1, \cX \times \Delta^1),\ev_0} \widetilde{\Fun}_{\Delta^1}(\Delta^1 \times \Delta^1, \Ar^\cocart(\cX) \times \Delta^1) \times_{\ev_1,\widetilde{\Fun}_{\Delta^1}(\Delta^1 \times \Delta^1, \cT^\op \times \Delta^1),I} (\cT^\op \times \Delta^1), \]
where $I$ denotes the identity section, so that $\cM \to \Delta^1 \times \cT^\op$ is a $\cT$-correspondence with
\begin{align*}
\cM_0 & = \{ 0\} \times_{f,\Ar(\cX), \ev_0}  \Ar(\Ar^{\cocart}(\cX)) \times_{\ev_1,\Ar(\cT^{\op}),I} \cT^{\op}, \\ 
\cM_1 & = \{ 0\} \times_{\id_y,\Ar(\cX),\ev_0}  \Ar(\Ar^{\cocart}(\cX)) \times_{\ev_1,\Ar(\cT^{\op}),I} \cT^{\op}.
\end{align*}
We have a zig-zag of $\cT$-functors over $\Delta^1 \times \cT^\op$
\[ \begin{tikzcd}[row sep=2em, column sep=3em]
\underline{y} \ar[bend left,dotted]{r}{\tau} \times \Delta^1 & \cM \ar{l}[swap]{\pi} \ar{r}{\rho} & \Delta^1 \times_{f,\cX, \ev_0} \Ar^\cocart(\cX) 
\end{tikzcd} \]
where $\pi$ restricts to the trivial fibrations $\cM_0 \to \underline{y} \times \{0\}$, $\cM_1 \to \underline{y} \times \{1\}$ of \cite[Lem.~12.10]{Exp2} and $\rho$ restricts to $\cM_0 \to \underline{x}$, $\cM_1 \to \underline{y}$. Thus $\pi$ is a trivial fibration and we may choose a section $\tau$ which fixes $\underline{y} \times \{1\} \subset \cM_1$. Then we let $\phi = \rho \circ \tau$.
\end{cnstr}


\begin{thm} \label{prp:SegalCondition} Let $\cC^\otimes \to \cO^\otimes$ be a fibration of $\cT$-$\infty$-operads. Let $x \in \cO^\otimes$ be an object over $[f_+: U_+ \rightarrow V] \in \uFinpT$. Let $U \simeq U_1 \coprod ... \coprod U_n$ be an orbit decomposition, let $f_i: U_i \to V$ denote the induced morphisms, and let $e_i: x \to x_i$ be inert edges in $\cO^\otimes$ lifting the characteristic morphisms $\chi_{[U_i \subset U]}$ in $\uFinpT$. Then we have an equivalence of $\cT^{/V}$-$\infty$-categories
\[ \cC^\otimes_{\underline{x}} \simeq \prod_f \left( \coprod_{1 \leq i \leq n} \cC_{\underline{x_i}} \right) \simeq \prod_{1 \leq i \leq n} \left( \prod_{f_i} \cC_{\underline{x_i}} \right). \footnote{In this expression, each $\cC_{\underline{x_i}}$ is a $\cT^{/{U_i}}$-$\infty$-category, their coproduct is a cocartesian fibration over $(\cT^{/U})^\op = \underline{U} \simeq \coprod_{1 \leq i \leq n} \underline{U_i}$, the righthand product is taken in $\cT^{/V}$-$\infty$-categories, and the indexed product $\prod_f$ denotes the right adjoint to pullback along the induced functor $\underline{U} \to \underline{V}$.} \]
\end{thm}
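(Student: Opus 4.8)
The plan is to realise both sides as cocartesian fibrations over $\underline{V} = (\cT^{/V})^{\op}$ and compare them fibrewise, the essential input being condition (2) of \cref{dfn:operad}. I would first dispose of the second equivalence, which is formal: since $U \simeq \coprod_i U_i$ we have $\cT^{/U} \simeq \coprod_i \cT^{/U_i}$ and hence $\underline{U} \simeq \coprod_i \underline{U_i}$, so $\coprod_{1 \le i \le n} \cC_{\underline{x_i}}$ is a single $\cT^{/U}$-$\infty$-category restricting to $\cC_{\underline{x_i}}$ over each $\underline{U_i}$, and the induced $\cT$-functor $\underline{U} \to \underline{V}$ restricts on the $i$-th summand to the one induced by $f_i$. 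The identity $\prod_f ( \coprod_i \cC_{\underline{x_i}} ) \simeq \prod_i \prod_{f_i} \cC_{\underline{x_i}}$ then expresses the compatibility of the indexed product (= right adjoint to restriction) with this decomposition: restriction along $\underline{U} \to \underline{V}$ is the product of the restrictions along the $\underline{U_i} \to \underline{V}$, so its right adjoint is the $\underline{V}$-product of the $\prod_{f_i}$. Hence it remains to produce the first equivalence.

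Next I would identify the fibres of both sides and construct a comparison $\cT^{/V}$-functor $\Phi$. By \cref{def:FiniteTSetsParam} and the span description of $\uFinpT$ (the dual cocartesian fibration of \cite{BGN}), $\cC^{\otimes}_{\underline{x}}$ is a cocartesian fibration over $\underline{V}$ whose fibre over $[a \colon W \to V]$ is equivalent to $\cC^{\otimes}_{a^{\ast} x}$, where $a^{\ast} x$ is a cocartesian pushforward of $x$ over $\cT^{\op}$ lying over $a^{\ast} f_+ = [(U \times_V W)_+ \to W]$, with cocartesian transport implemented by the pushforwards in $\cC^{\otimes}$; and, unwinding the construction of the indexed product, $\prod_f ( \coprod_i \cC_{\underline{x_i}} )$ has fibre over $[a \colon W \to V]$ equivalent to $\prod_{Z \in \Orbit(U \times_V W)} ( \coprod_i \cC_{\underline{x_i}} )_Z$, again with transport by pushforwards in $\cC$. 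To build $\Phi$ I would feed the inert edges $e_i \colon x \to x_i$ of $\cO^{\otimes}$ into \cref{cnstr:EdgeFunctoriality}, obtaining $\cT$-functors $\underline{x_i} \to \underline{x}$ over $\cT^{\op}$ covering $\underline{U_i} \to \underline{V}$; pulling back $\cC^{\otimes}$ along the target evaluation turns these into $\cT^{/U_i}$-functors from the restriction of $\cC^{\otimes}_{\underline{x}}$ to $\cC_{\underline{x_i}}$, which assemble into a $\cT^{/U}$-functor from the restriction of $\cC^{\otimes}_{\underline{x}}$ along $\underline{U} \to \underline{V}$ to $\coprod_i \cC_{\underline{x_i}}$; its adjunct is the desired $\Phi \colon \cC^{\otimes}_{\underline{x}} \to \prod_f ( \coprod_i \cC_{\underline{x_i}} )$.

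Since $\Phi$ is a map of cocartesian fibrations over $\underline{V}$, it is an equivalence once it is one fibrewise. Over $[a \colon W \to V]$ it becomes, under the identifications above, the map $\cC^{\otimes}_{a^{\ast} x} \to \prod_{Z \in \Orbit(U \times_V W)} \cC^{\otimes}_{(a^{\ast} x)_Z}$ induced by the cocartesian lifts of the characteristic morphisms $\chi_{[Z \subset U \times_V W]}$, which is an equivalence by condition (2) of \cref{dfn:operad} applied to $a^{\ast} x$ (combining condition (2) for $\cC^{\otimes}$ and for $\cO^{\otimes}$ over $\uFinpT$ to descend from the total spaces to the fibration). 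To match this with the right-hand fibre one uses the components: because $\uFinpT$ is a span $\infty$-category and $\cT$ is atomic, every orbit $Z \subset U \times_V W$ lies in a unique $U_i \times_V W$, and the composite of $\chi_{[Z \subset U \times_V W]}$ with the cocartesian pushforward along $a$ coincides, as a morphism of $\uFinpT$, with the composite of $\chi_{[U_i \subset U]}$ with the cocartesian pushforward along $Z \to U_i$ (both are the span $[U \to V] \leftarrow [Z = Z] \to [Z = Z]$). Hence $(a^{\ast} x)_Z$ is canonically the pushforward of $x_i$ along $Z \to U_i$, i.e.\ the fibre of $\cC_{\underline{x_i}}$ over $Z$, so the fibrewise equivalences respect the cocartesian transport on both sides.

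I expect the one genuine difficulty to be the homotopy-coherent bookkeeping gluing these fibrewise equivalences: one must check that $\Phi$ is well defined independently of the section chosen in \cref{cnstr:EdgeFunctoriality}, and that fibrewise it really recovers the equivalence of \cref{dfn:operad}(2) --- equivalently, that the two cocartesian-transport structures are identified by the pullback-stability of characteristic morphisms in $\uFinpT$. Once this naturality is in place, the fibrewise statement is exactly \cref{dfn:operad}(2) and the proof is complete.
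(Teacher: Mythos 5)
Your proposal is correct and follows essentially the same route as the paper: build the comparison functor by applying \cref{cnstr:EdgeFunctoriality} to the inert edges $e_i$, pull back $\cC^\otimes$ to get $\cT^{/U_i}$-functors $f_i^\ast(\cC^\otimes_{\underline{x}}) \to \cC_{\underline{x_i}}$, assemble and pass to the adjoint, then verify the map is an equivalence fibrewise over each $[g\colon V' \to V]$, where it reduces to condition (2) of \cref{dfn:operad} applied to an inert pushforward of $x$. Your additional remarks on the formal second equivalence and on the base-change compatibility of characteristic morphisms are elaborations of steps the paper leaves implicit, not a different argument.
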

\begin{proof} Let
 \[ h_i: \Delta^1 \times \underline{x_i} \to \Delta^1 \times_{e_i, \cO^\otimes} \Ar^\cocart(\cO^\otimes) \to \cO^\otimes \]
 be the homotopy associated to the edge $e_i$ as defined as in \cref{cnstr:EdgeFunctoriality}. Because $h_i$ lands in $\cO^\otimes_{\inert}$, the pullback $\cC^\otimes \times_{\cO^\otimes} (\Delta^1 \times \underline{x_i}) \to \Delta^1 \times \underline{U_i}$ is a cocartesian fibration. This corresponds to a $\cT^{/U_i}$-functor $\rho^i: f_i^\ast (\cC^\otimes_{\underline{x}}) \to \cC_{\underline{x_i}}$. Taking the coproduct of the $\rho^i$ and taking the adjoint of that, we get a comparison $\cT^{/V}$-functor
\[ \rho: \cC^\otimes_{\underline{x}} \to \prod_f \left( \coprod_{1 \leq i \leq n} \cC_{\underline{x_i}} \right). \]
We claim that $\rho$ is a equivalence of $\cT^{/V}$-$\infty$-categories. We will check that for every object $[g: V' \ra V]$, the fiber $\rho_{g}$ is an equivalence. Consider the pullback square
\[ \begin{tikzcd}[row sep=2em, column sep=2em]
U' \ar{r}{g'} \ar{d}{f'} & U \ar{d}{f} \\
V' \ar{r}{g} & V.
\end{tikzcd} \]
Let $[U_+ \rightarrow V] \to [U'_+ \rightarrow V']$ be the corresponding inert morphism in $\uFinpT$ and let $x \to x'$ be an inert lift of that morphism to $\cO^\otimes$. Also let $U' \simeq U'_1 \coprod ... \coprod U'_m$ be an orbit decomposition and let $e_j': x' \to x'_j$ be inert morphisms lifting the characteristic morphisms $\chi_{[U'_j \subset U']}$.

Note that
\[  (\cC^\otimes_{\underline{x}})_g \simeq (\cC^\otimes_{\underline{x'}})_{\id_{V'}} \simeq \cC^\otimes_{x'} \]
and
\begin{align*}
\left( \prod_f \left( \coprod_{1 \leq i \leq n} \cC_{\underline{x_i}} \right) \right)_g & \simeq \left( \prod_{f'} \left( (g')^\ast \left( \coprod_{1 \leq i \leq n} \cC_{\underline{x_i}} \right) \right) \right)_{\id_{V'}} \\
& \simeq \left( (g')^\ast \left( \coprod_{1 \leq i \leq n} \cC_{\underline{x_i}} \right) \right)_{\id_{U'}} \\
& \simeq \left( \coprod_{1 \leq j \leq m} \cC_{\underline{x'_j}} \right)_{\id_{U'}} \\
& \simeq \coprod_{1 \leq j \leq m} \cC_{x'_j}
\end{align*}

A diagram chase then shows that the functor $\rho_g: \cC^\otimes_{x'} \to \coprod_{1 \leq j \leq m} \cC_{x'_j}$ implements the equivalence of condition (2) in \cref{dfn:operad}.
\end{proof}

\begin{cor}[$\cT$-Segal condition] \label{cor:SymmetricMonoidalSegalCondition} Let $\cC^\otimes \to \uFinpT$ be a $\cT$-$\infty$-operad. Then for every object $[U_+ \xto{f_+} V]$ in $\uFinpT$, we have an equivalence of $\cT^{/V}$-$\infty$-categories
\[ \cC^\otimes_{\underline{f_+}} \simeq \underline{\Fun}_{\cT^{/V}}(\underline{U}, \cC_{\underline{V}}). \]
\end{cor}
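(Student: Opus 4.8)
The plan is to deduce the $\cT$-Segal condition from the special case $\cO^\otimes = \uFinpT$ of \cref{prp:SegalCondition} and then rewrite the resulting target as a parametrized functor $\cT$-$\infty$-category. To apply \cref{prp:SegalCondition} I would first note that the structure map $p \colon \cC^\otimes \to \uFinpT$ is a fibration of $\cT$-$\infty$-operads --- it is a categorical fibration by \cref{dfn:operad} and visibly carries inert edges to inert edges --- and that $\uFinpT$, with its identity $\cT$-functor, is a $\cT$-$\infty$-operad (indeed the terminal one). Taking $x = f_+$ together with the given orbit decomposition $U \simeq U_1 \sqcup \dots \sqcup U_n$, and observing that each characteristic morphism $\chi_{[U_i \subset U]} \colon f_+ \to I(U_i)_+$ is already $p$-cocartesian in $\uFinpT$ so that we may choose $x_i = I(U_i)_+$, the second displayed equivalence of \cref{prp:SegalCondition} becomes
\[ \cC^\otimes_{\underline{f_+}} \;\simeq\; \prod_{1 \le i \le n} \left( \prod_{f_i} \cC_{\underline{I(U_i)_+}} \right). \]

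It then remains to identify the terms on the right. Since each $I(U_i)_+$ lies in the image of the identity section $I(-)_+$, it corresponds to the object $U_i$ of the underlying $\cT$-$\infty$-category $\cT^\op$ of $\uFinpT$, and unwinding the construction of the $\cT^{/U_i}$-$\infty$-category $\cC_{\underline{x_i}}$ in the proof of \cref{prp:SegalCondition} identifies $\cC_{\underline{I(U_i)_+}}$ with the restriction $\cC_{\underline{U_i}}$ of the underlying $\cT$-$\infty$-category $\cC$ along $\cT^{/U_i} \to \cT$. As this functor factors through $\cT^{/V}$ via post-composition with $f_i$, we get $\cC_{\underline{U_i}} \simeq (f_i)^\ast \cC_{\underline V}$, so that $\prod_{f_i} \cC_{\underline{I(U_i)_+}} \simeq \prod_{f_i} (f_i)^\ast \cC_{\underline V}$. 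Invoking the standard identification of the indexed product $\prod_{f_i} (f_i)^\ast(-)$ with the internal hom $\underline{\Fun}_{\cT^{/V}}(\underline{U_i}, -)$ for the finite $\cT^{/V}$-set $U_i$ (see \cite{Exp2,Exp2b}), together with the facts that $\underline{\Fun}_{\cT^{/V}}(-, \cC_{\underline V})$ sends coproducts of $\cT^{/V}$-$\infty$-categories to products and that $\underline U \simeq \coprod_i \underline{U_i}$, we conclude
\[ \cC^\otimes_{\underline{f_+}} \;\simeq\; \prod_{1 \le i \le n} \underline{\Fun}_{\cT^{/V}}(\underline{U_i}, \cC_{\underline V}) \;\simeq\; \underline{\Fun}_{\cT^{/V}}(\underline U, \cC_{\underline V}). \]
(Equivalently one may run the first displayed equivalence of \cref{prp:SegalCondition}: $\coprod_i \cC_{\underline{U_i}} \simeq g^\ast \cC_{\underline V}$ for $g \colon \underline U \to \underline V$ the structure map, whence $\prod_f \left( \coprod_i \cC_{\underline{U_i}} \right) \simeq \prod_f g^\ast \cC_{\underline V} \simeq \underline{\Fun}_{\cT^{/V}}(\underline U, \cC_{\underline V})$.)

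Since \cref{prp:SegalCondition} carries all of the analytic content, I do not expect a genuine obstacle. The one point requiring care is the bookkeeping in the second paragraph: verifying that the inert lifts of the characteristic morphisms land in the image of the identity section, so that the abstract $\cT^{/U_i}$-fibers $\cC_{\underline{x_i}}$ appearing in \cref{prp:SegalCondition} are genuinely the restrictions $\cC_{\underline{U_i}}$ of the underlying $\cT$-$\infty$-category, and keeping track of which indexed product ($\prod_{f_i}$ versus the $\prod_f$ of the footnote to \cref{prp:SegalCondition}) is meant so that it matches the right adjoint appearing in the internal-hom adjunction.
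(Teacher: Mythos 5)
Your argument is correct and is essentially the paper's own proof: both deduce the statement by specializing \cref{prp:SegalCondition} to the fibration $\cC^\otimes \to \uFinpT$ at $x = f_+$ and then identifying $\prod_f f^\ast(-) \simeq \underline{\Fun}_{\cT^{/V}}(\underline{U}, -)$ together with $\coprod_i \cC_{\underline{U_i}} \simeq \underline{U} \times_{\underline{V}} \cC_{\underline{V}}$. The bookkeeping points you flag (that the inert lifts $x_i$ are the $I(U_i)_+$ and that $\cC_{\underline{x_i}} = \cC_{\underline{U_i}}$) are exactly the observations the paper treats as immediate.
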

\begin{proof} In view of \cref{prp:SegalCondition}, we only need to note that $\underline{\Fun}_{\cT^{/V}}(\underline{U},-) \simeq \prod_f f^\ast$ as endofunctors of $\Cat_{\cT^{/V}}$ and that for an orbit decomposition $U \simeq U_1 \coprod ... \coprod U_n$, $\coprod_{1 \leq i \leq n} \cC_{\underline{U_i}} \simeq \cC_{\underline{U}} \simeq \underline{U} \times_{\underline{V}} \cC_{\underline{V}}$.
\end{proof}

\begin{exm} \label{exm:normParametrizedFunctor}
Let $\cC^{\otimes}$ be a $\cT$-symmetric monoidal $\cT$-$\infty$-category. Then for every morphism $f: U \to V$ of finite $\cT$-sets, the norm functor $f_{\otimes}: \cC_{U} \to \cC_{V}$ of \cref{ntn:norm_functors} canonically refines to a norm $\cT^{/V}$-functor $\underline{\Fun}_{\cT^{/V}}(\underline{U}, \cC_{\underline{V}}) \to \cC_{\underline{V}}$. Indeed, if $V$ is an orbit this is encoded by the cocartesian fibration $\cC^{\otimes} \to \uFinpT$ in view of \cref{cor:SymmetricMonoidalSegalCondition}, and one extends to general $V$ by taking coproducts.
\end{exm}

We also have a reformulation of condition (3) in \cref{dfn:operad} (or rather (3\textquotesingle) in \cref{rem:alternateOperadDef}), whose proof is the same as that of \cref{prp:SegalCondition}. Recall the notion of $\cT$-mapping spaces from \cite[\S 11]{Exp2}.

\begin{ntn} Let $p: \cX \to \cB$ be a $\cT$-fibration, let $\alpha: a \to b$ be a morphism in a fiber $\cB_V$, and let $x, y \in \cX$ so that $p(x) = a$ and $p(y) = b$. Then we define as a pullback of $\cT^{/V}$-spaces
\[ \underline{\Map}^{\alpha}_{\cX}(x,y) \coloneq \underline{\alpha} \times_{\underline{\Map}_{\cB}(a,b)} \underline{\Map}_{\cX}(x,y). \]
\end{ntn}

\begin{prp} \label{prop:MultiMappingTSpace} Let $\cC^\otimes$ be a $\cT$-$\infty$-operad and let notation be as in \cref{rem:alternateOperadDef}. Then we have an equivalence of $\cT^{/V}$-spaces
\[ \underline{\Map}_{\cC^\otimes}^{\alpha_+}(x,y) \xto{\sim} \prod_g \left( \coprod_{W \in \Orbit(U')} \underline{\Map}_{\cC^\otimes}^{(\alpha_W)_+}(x_W, y_W) \right). \]
\end{prp}

Let us now apply the $\cT$-Segal condition to characterize $\cT$-symmetric monoidal $\cT$-$\infty$-categories as $\cT$-commutative monoids in $\uCatT$.

\begin{remark} \label{rem:SMCsAreCommMonoids}
Under the equivalences (given by straightening and \cite[Prop.~3.10]{Exp2}, respectively)
$$\Cat^{\cocart}_{/\uFinpT} \simeq \Fun(\uFinpT, \Cat) \simeq \Fun_T(\uFinpT, \uCatT) $$
we see that $\cT$-symmetric monoidal $\cT$-$\infty$-categories $\cC^\otimes$ correspond to $\cT$-commutative monoids $M$, i.e., $\cT$-semiadditive $\cT$-functors \cite[Def.~5.3]{Exp4}, since by \cref{cor:SymmetricMonoidalSegalCondition}, $M$ transforms
$$[U_+ \to V] \simeq \coprod_{W \in \Orbit(U)} \coprod_{W \rightarrow V} I(W)_+ \in \FinpT[V]$$
into
$$\prod_{W \in \Orbit(U)} \prod_{W \rightarrow V} \cC_{\underline{W}} \simeq \prod_{W \in \Orbit(U)} \underline{\Fun}_{\cT^{/V}}(\underline{W}, \cC_{\underline{V}}) \simeq \underline{\Fun}_{\cT^{/V}}(\underline{U}, \cC_{\underline{V}}) \in \Cat_{\cT^{/V}}.$$
\end{remark}

Furthermore, in \cite[Thm.~6.5]{Exp4} the first author identified $\cT$-commutative monoids with $\cT$-Mackey functors. Using this, we can relate our notion of $\cT$-symmetric monoidal $\cT$-$\infty$-category with that which appears in \cite{BACHMANN2021}, wherein the norm functors of \cref{ntn:norm_functors} appear as the covariant part of categorical Mackey functor.

\begin{theorem} \label{thm:TwoPresentationsOfTSMCs} We have a canonical equivalence of $\infty$-categories
\[ \CatT^\otimes \simeq \Fun^{\times}(\Span(\FinT), \Cat) \]
between the $\infty$-category $\CatT^\otimes$ of $\cT$-symmetric monoidal $\cT$-$\infty$-categories and the $\infty$-category of product-preserving functors from $\Span(\FinT)$ to $\Cat$.
\end{theorem}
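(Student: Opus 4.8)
The plan is to assemble the equivalence from three ingredients already in play: (i) the straightening/unstraightening and parametrized-functor equivalences $\Cat^{\cocart}_{/\uFinpT} \simeq \Fun(\uFinpT, \Cat) \simeq \Fun_T(\uFinpT, \uCatT)$ recalled in \cref{rem:SMCsAreCommMonoids}; (ii) the identification, under this correspondence, of $\cT$-symmetric monoidal $\cT$-$\infty$-categories with $\cT$-commutative monoids in $\uCatT$, i.e.\ $\cT$-semiadditive $\cT$-functors $\uFinpT \to \uCatT$, again by \cref{rem:SMCsAreCommMonoids} (via \cref{cor:SymmetricMonoidalSegalCondition}); and (iii) the first author's identification $\CMon_{\cT}(\uCatT) \simeq \Fun^{\times}(\Span(\FinT), \Cat)$ of $\cT$-commutative monoids with $\cT$-Mackey functors valued in $\Cat$, which is \cite[Thm.~6.5]{Exp4} applied with coefficients in the (large) $\cT$-$\infty$-category $\uCatT$. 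Concatenating these three equivalences yields the asserted equivalence $\CatT^\otimes \simeq \Fun^{\times}(\Span(\FinT),\Cat)$.

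First I would make precise the lefthand side: by definition a $\cT$-symmetric monoidal $\cT$-$\infty$-category is a cocartesian fibration $\cC^\otimes \to \uFinpT$ of $\cT$-$\infty$-categories satisfying the operadic Segal condition, so by \cref{prop:SimplifyOMonoidalDef} (with $\cO^\otimes = \uFinpT$) the only extra condition beyond being a cocartesian fibration is that the inert characteristic morphisms induce $\cC^\otimes_{f_+} \xrightarrow{\sim} \prod_{W \in \Orbit(U)} \cC^\otimes_{I(W)_+}$. I would organize $\CatT^\otimes$ as a full subcategory of $\Cat^{\cocart}_{/\uFinpT}$ and transport it across the equivalences of (i) to a full subcategory of $\Fun_T(\uFinpT, \uCatT)$; \cref{cor:SymmetricMonoidalSegalCondition} shows this subcategory is exactly the $\cT$-semiadditive $\cT$-functors, since the Segal condition on $\cC^\otimes$ translates into the condition that the classifying $\cT$-functor sends the canonical decomposition $[U_+ \to V] \simeq \coprod_{W} \coprod_{W \to V} I(W)_+$ to the corresponding indexed product in $\uCatT$ — which is precisely $\cT$-semiadditivity in the sense of \cite[Def.~5.3]{Exp4} as spelled out in \cref{rem:SMCsAreCommMonoids}. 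Then I would invoke \cite[Thm.~6.5]{Exp4} to rewrite $\CMon_{\cT}(\uCatT)$ as product-preserving functors $\Span(\FinT) \to \uCatT$, and finally observe that a product-preserving functor valued in the constant $\cT$-$\infty$-category $\uCatT$ on $\Cat$ is the same datum as a product-preserving functor $\Span(\FinT) \to \Cat$ — i.e.\ unwind the ``$\cT$-object'' versus ``ordinary functor into $\Cat$'' bookkeeping, using that $\Span(\FinT)$ already packages the $\cT$-variance.

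The one subtlety I expect to be the main obstacle is the last bookkeeping step: \cite[Thm.~6.5]{Exp4} as stated presumably produces $\cT$-Mackey functors, meaning product-preserving functors out of a span category built relative to $\cT$ (e.g.\ $\Span(\FinT)$ or a fiberwise/parametrized version thereof), and one must check that for coefficients in $\uCatT$ this unwinds to honest product-preserving functors $\Span(\FinT) \to \Cat$ rather than some parametrized-functor category, and that the product-preservation condition matches on the nose (finite coproducts in $\FinT$, hence products in $\Span(\FinT)$, going to products in $\Cat$). Relatedly, one should confirm the size conventions are compatible — $\uCatT$ is a large $\cT$-$\infty$-category, so $\CatT^\otimes$ and $\Fun^{\times}(\Span(\FinT),\Cat)$ are (very) large, and \cite[Thm.~6.5]{Exp4} must be cited in the form that allows large coefficients. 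Everything else is formal transport along equivalences already established in the excerpt, so once this translation is pinned down the proof is essentially a chain of ``$\simeq$'' with \cref{cor:SymmetricMonoidalSegalCondition}, \cref{rem:SMCsAreCommMonoids}, and \cite[Thm.~6.5]{Exp4} as the load-bearing inputs.
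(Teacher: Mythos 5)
Your architecture matches the paper's exactly: apply \cite[Thm.~6.5]{Exp4} with coefficients $\cD = \uCatT$, identify $\CMon_{\cT}(\uCatT)$ with $\CatT^{\otimes}$ via the Segal condition as in \cref{rem:SMCsAreCommMonoids}, and then translate to product-preserving functors out of $\Span(\FinT)$. However, the step you defer as "bookkeeping" — unwinding the parametrized Mackey-functor category into $\Fun^{\times}(\Span(\FinT),\Cat)$ — is not bookkeeping; it is the entire mathematical content of the paper's proof, and as written your proposal has a genuine gap there. The point is that \cite[Thm.~6.5]{Exp4} produces, after passing to cocartesian sections and straightening, the $\infty$-category $\Fun^{\times}_{\cT}(\underline{\Span}(\FinT),\uCatT)$ of product-preserving $\cT$-functors out of the \emph{parametrized} span category $\underline{\Span}(\FinT) = \Span(\ArOT; \ArOT, (\ArOT)^{\tdeg})$, which is a $\cT$-$\infty$-category fibered over $\cT^{\op}$ whose total space is emphatically not $\Span(\FinT)$. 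Under $\Fun_T(\underline{\Span}(\FinT),\uCatT) \simeq \Fun(\underline{\Span}(\FinT),\Cat)$ this corresponds to a subcategory $\Fun'$ of functors on the total space characterized by two conditions (cartesian edges go to equivalences; fiberwise products go to products), and one still has to compare $\Fun'$ with $\Fun^{\times}(\Span(\FinT),\Cat)$.

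The paper does this by exhibiting the source map $s: \underline{\Span}(\FinT) \to \Span(\FinT)$ and proving that $s^{\ast}$ is an equivalence onto $\Fun'$ with inverse given by right Kan extension $s_{\ast}$. That requires real work: one checks that $(\FinT^{\op})^{X/} \to \Span(\FinT)^{X/}$ is initial (it has a right adjoint sending a span $X \ot Z \to Y$ to $X \ot Z$), pulls this back to get an initial functor into $\underline{\Span}(\FinT) \times_{\Span(\FinT)} \Span(\FinT)^{X/}$, and then uses the hypothesis that $F \in \Fun'$ to show the relevant diagram is right Kan extended from the identity section $\cT^{\op} \subset (\ArOT)^{\op}$, yielding the pointwise formula $s_{\ast}F(X) \simeq \prod_{i} F(\id_{X_i})$ for an orbit decomposition $X \simeq \coprod_i X_i$; from this one verifies that $s_{\ast}F$ preserves products and that the unit and counit are equivalences. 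None of this is automatic from "the span category already packages the $\cT$-variance," and your proof is incomplete until you supply this cofinality computation or an equivalent one.
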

\begin{proof} In \cite[Thm.~6.5]{Exp4}, the first author proved that given a $\cT$-$\infty$-category $\cD$ with finite $\cT$-limits, precomposition by the inclusion $\uFinpT \to \underline{\Span}(\FinT) \coloneq \Span(\ArOT; (\ArOT), (\ArOT)^{\tdeg})$ induces an equivalence of $\cT$-$\infty$-categories
\[ \uFun^{\times}_{\cT}(\underline{\Span}(\FinT),\cD) \xto{\sim} \underline{\CMon}_{\cT}(\cD). \]
In particular, if $\cD = \CatT$, then as we just observed $\CMon_{\cT}(\cD) \simeq \CatT^\otimes$, and passing to cocartesian sections we obtain
\[ \Fun^{\times}_{\cT}(\underline{\Span}(\FinT),\uCatT) \simeq \CatT^{\otimes}. \]
Under the equivalence
\[ \Fun_T(\underline{\Span}(\FinT), \uCatT) \simeq  \Fun(\underline{\Span}(\FinT), \Cat) \]
let $\Fun'(\underline{\Span}(\FF_T), \Cat)$ denote the image of $\Fun^{\times}_T(\underline{\Span}(\FinT),\CatT)$. Explicitly, functors in $\Fun'$ send cartesian edges to equivalences and fiberwise products to products, where cartesian edges in $\underline{\Span}(\FinT)$ are given by
\[ \begin{tikzcd}[row sep=4ex, column sep=4ex, text height=1.5ex, text depth=0.25ex]
U \ar{d}{=} & U \ar{r}{=} \ar{d} \ar{l}[swap]{=} & U \ar{d} \\
W & V \ar{r}{=} \ar{l}[swap]{f} & V
\end{tikzcd} \]
in view of the adjunction
\[ \adjunct{f^{\ast}}{\Span(\FinT[W])}{\Span(\FinT[V])}{f_!}.\]
Let
\[ s: \underline{\Span}(\FinT) \to \Span(\FinT) \]
denote the source map. Then we claim that precomposition by $s$ induces an equivalence
\[ \Fun^{\times}(\Span(\FinT), \Cat) \xto{\sim} \Fun'(\underline{\Span}(\FinT), \Cat) \]
with inverse given by right Kan extension.

First note that given a product preserving functor $G: \Span(\FinT) \to \Cat$, $s^\ast G$ evidently sends cartesian edges to equivalences and fiberwise products to products. Conversely, suppose we have a functor $F: \underline{\Span}(\FinT) \to \Cat$ in $\Fun'$. Let $X \in \Span(\FinT)$ be an object. Note that $(\FinT^\op)^{X/} \to \Span(\FinT)^{X/}$ is an initial functor as it admits a right adjoint which sends a span $X \ot Z \to Y$ to $X \ot Z$. Pulling back, we thereby obtain an initial functor
\[ (\ArOT)^\op \times_{\FinT^\op} (\FinT^\op)^{X/} \to \underline{\Span}(\FinT) \times_{\Span(\FinT)} \Span(\FinT)^{X/} \]
and we are interested in computing the limit of the functor
\[ F' = F \circ \pr: (\ArOT)^\op \times_{\FinT^\op} (\FinT^\op)^{X/} \to (\ArOT)^\op \to \Cat.\]
By our assumption on $F$, $F'$ is the right Kan extension of its restriction to $\cT^\op \times_{\FinT^\op} (\FinT^\op)^{X/}$ (where $\cT^\op \to (\ArOT)^\op$ is the identity section).  Indeed, given an object $I = [V \ot U \to X]$ and an orbit decomposition $U \simeq \coprod_{i=1}^n U_i$, the $n$ projection maps $I \to I_i = [U_i = U_i \to X]$ induce an equivalence
\[ F'(I) = F([U \to V]) \xto{\sim} \prod_{i=1}^n F'(I_i) = \prod_{i=1}^n F([U_i = U_i]). \]

We conclude that the limit of $F'$ is $\prod_{i \in I} F([X_i = X_i])$ for some orbit decomposition $X \simeq \coprod_{i \in I} X_i$, so $s_\ast F(X) \simeq \prod_{i \in I} F(\id_{X_i})$. Using this pointwise formula and a simple argument regarding the morphisms, we see that $s_{\ast} F$ preserves products and the counit and unit maps are equivalences.
\end{proof}

\subsection{Examples}

In this subsection, we discuss some basic examples of $\cT$-$\infty$-operads.

\begin{exm}[$\cT$-(co)cartesian $\cT$-symmetric monoidal structures] \label{exm:cartesian_monoidal}
Let $\cC$ be a $\cT$-$\infty$-category and let $\pi: \cC^{\times} \to \FinT$ be the cartesian fibration defined as in \cite[Prop.~5.12]{Exp2}, so $(\cC^{\times})_U \simeq \prod_{W \in \Orbit(U)} \cC_W$ and the functoriality is given by restriction. Suppose $\cC$ admits finite $\cT$-coproducts. Then by \cite[Prop.~5.12]{Exp2}, $\pi$ is a Beck--Chevalley fibration with cocartesian functoriality given by the coinduction functors, and by Barwick's unfurling construction \cite[\S 11]{M1}, $\pi$ straightens to a product-preserving functor $\Span(\FinT) \to \Cat$. Let
\begin{equation*}
\cC^{\amalg} \to \uFinpT
\end{equation*}
denote the resulting $\cT$-symmetric monoidal $\cT$-$\infty$-category under the equivalence of \cref{thm:TwoPresentationsOfTSMCs}. We call $\cC^{\amalg}$ the \emph{$\cT$-cocartesian $\cT$-symmetric monoidal structure} on $\cC$.

Dually, suppose that $\cC$ admits finite $\cT$-products. Then by the dual of \cite[Prop.~5.12]{Exp2}, the vertical opposite $\pi^{\vop}: (\cC^{\times})^{\vop} \to \FinT$ is a Beck--Chevalley fibration with cocartesian functoriality given by the (opposite of the) induction functors, and thus we obtain a product-preserving functor $\Span(\FinT) \to \Cat$. After postcomposing by the opposite automorphism of $\Cat$, let
\begin{equation*}
\cC^{\productop} \to \uFinpT
\end{equation*}
denote the resulting $\cT$-symmetric monoidal $\cT$-$\infty$-category under the equivalence of \cref{thm:TwoPresentationsOfTSMCs}. We call $\cC^{\productop}$ the \emph{$\cT$-cartesian $\cT$-symmetric monoidal structure} on $\cC$.
\end{exm}

\begin{exm}[$G$-spectra]
Let $\Gpd_{\fin}$ be the $(2,1)$-category of finite groupoids and let
\[ \SH^{\otimes}: \Span(\Gpd_{\fin}) \to \CAlg(\Cat^{\sift}) \]
denote the (restriction of the) functor of \cite[\S 9.2]{BACHMANN2021}. Let $G$ be a finite group and let
$$\omega_G: \Span(\FF_G) \to \Span(\Gpd_{\fin})$$
be the action groupoid functor. Let $(\underline{\Sp}^G)^{\otimes}$ be the $G$-symmetric monoidal $G$-$\infty$-category associated to $\SH^{\otimes} \circ \omega_G$ under \cref{thm:TwoPresentationsOfTSMCs}. Then $(\underline{\Sp}^G)^{\otimes}$ is the $G$-symmetric monoidal structure on $\underline{\Sp}^G$ that encodes the Hill--Hopkins--Ravenel norm functors.
\end{exm}


\begin{exm}[Trivial $\cT$-$\infty$-operad] Let $\Triv_{\cT}^\otimes \subset \uFinpT$ be the wide subcategory on the inert edges. Then $\Triv_{\cT}^\otimes$ is a $\cT$-suboperad of $\uFinpT$ such that the identity cocartesian section $I_+: \cT^\op \to \uFinpT$ restricts to a fully faithful functor into $\Triv_{\cT}^\otimes$ and an equivalence onto $\Triv_{\cT}$. We call $\Triv_{\cT}^\otimes$ the \emph{trivial} $\cT$-$\infty$-operad.
\end{exm}

We claim that given any $\cT$-$\infty$-operad $\cC^\otimes$, we have an equivalence
\[ \underline{\Alg}_{\Triv_\cT, \cT}(\cC) \xto{\simeq} \cC \]
implemented by restriction along $I_+$. To show this, we need the following lemma.

\begin{lemma} \label{lem:InertSubcategoryIsRightKanExtended}
Let $(\cO^\otimes, p)$ be a $\cT$-$\infty$-operad and let $$\cO^\otimes_{\inert} \coloneq \Triv_{\cT}^\otimes \times_{\uFinpT} \cO^\otimes$$ be the wide subcategory on the inert edges. Let $F^{\inert}_{\cO}: \Triv_{\cT}^\otimes \to \Cat$ be the functor classifying the cocartesian fibration $p|_{\cO^\otimes_{\inert}}$. Then $F^{\inert}_{\cO}$ is the right Kan extension of its restriction $F_{\cO}$ along $I_+: \cT^\op \to \Triv_{\cT}^\otimes$ (which classifies the underlying $\cT$-$\infty$-category $\cO$).
\end{lemma}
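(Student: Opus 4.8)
The plan is to verify the pointwise criterion for right Kan extension: for each object $f_+ = [U_+ \to V]$ of $\Triv_{\cT}^\otimes$, I must show that the natural map
\[ F^{\inert}_{\cO}(f_+) \to \lim_{(I(W)_+ \to f_+) \text{ in } (\cT^\op)_{f_+/}} F_{\cO}(I(W)_+) \]
(the limit taken over the comma category $\cT^\op \times_{\Triv_{\cT}^\otimes} (\Triv_{\cT}^\otimes)_{f_+/}$) is an equivalence. First I would identify this comma category. Since $\Triv_{\cT}^\otimes$ has only inert edges, a morphism $f_+ \to I(W)_+$ with $W$ an orbit corresponds, via the span description in \cref{rem:UnwindingDefOfFiniteTSets}, essentially to the data of a summand inclusion $W' \subset U$ with $W'$ an orbit together with an arrow $W' \to W$ realizing $W$ as a further "restriction" — but atomicity of $\cT$ forces such data to be captured precisely by the characteristic morphisms $\chi_{[W \subset U]}$ up to the contractible ambiguity of choosing the orbit, so the comma category should be (co)final on the full subcategory spanned by the characteristic morphisms $\{\chi_{[W \subset U]} : W \in \Orbit(U)\}$. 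In fact I expect this full subcategory to be equivalent to the discrete set $\Orbit(U)$ (or a groupoid thereof), making the limit simply the product $\prod_{W \in \Orbit(U)} F_{\cO}(I(W)_+) = \prod_{W \in \Orbit(U)} \cO_W$.

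With that reduction in hand, the statement becomes exactly condition (2) of \cref{dfn:operad}: the cocartesian pushforwards along the characteristic morphisms induce an equivalence $\cO^\otimes_{f_+} \xto{\sim} \prod_{W \in \Orbit(U)} \cO^\otimes_{I(W)_+}$, and since $F^{\inert}_{\cO}(f_+) = \cO^\otimes_{f_+}$ and $F_{\cO}(I(W)_+) = \cO_W = \cO^\otimes_{I(W)_+}$ by definition of the underlying $\cT$-$\infty$-category, this is precisely the content of the right Kan extension criterion — provided the comparison map produced by the universal property of the limit agrees with the map assembled from the characteristic pushforwards, which is a routine check using that $F^{\inert}_{\cO}$ classifies $p|_{\cO^\otimes_{\inert}}$ and the characteristic morphisms are inert.

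The main obstacle is the cofinality argument identifying the comma category $\cT^\op \times_{\Triv_{\cT}^\otimes} (\Triv_{\cT}^\otimes)_{f_+/}$ with $\Orbit(U)$. I would approach this by first describing, via the span formalism of \cref{def:FiniteTSetsParam}, all inert morphisms out of $f_+$ landing over an identity-type object $I(W)_+$: such a morphism is a span $U \xot{\alpha} Z \xto{m} W$ covering $V \xot{} W \xto{=} W$ with $m$ an equivalence, hence is the datum of a summand inclusion $Z \hookrightarrow U$ with $Z \simeq W$ an orbit mapping down to $W$; atomicity then guarantees that $Z \to U \times_V W$ is itself a summand inclusion so these are exactly the $\chi_{[W \subset U]}$. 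A morphism between two such, $\chi_{[W \subset U]} \to \chi_{[W' \subset U]}$ in the comma category over a map $W \to W'$ in $\cT^\op$, would force $W \simeq W'$ as summands of $U$ (again by atomicity, since the summand inclusions $W, W' \subset U$ are compatible), so the comma category is equivalent to a groupoid with $\pi_0 = \Orbit(U)$ and contractible components; checking the components are contractible uses that $\cT$ is atomic orbital and — if $\cT$ has a terminal object — that it is then the nerve of a $1$-category (\cref{lem:AtomicityEnforcesTruncatedness}), though the argument should be run $\cT^{/V}$-locally to avoid assuming a terminal object globally. Once the comma category is so identified, the rest follows formally from \cref{dfn:operad}(2).
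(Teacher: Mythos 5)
Your overall architecture is right and matches the paper's: reduce to the pointwise right Kan extension criterion at $f_+ = [U_+ \to V]$, observe that the value of the limit restricted to the characteristic morphisms is exactly condition (2) of \cref{dfn:operad}, and isolate a cofinality statement about $\Orbit(U)$ inside the comma category $\cJ^{\op} = \cT^\op \times_{\Triv_{\cT}^\otimes} (\Triv_{\cT}^\otimes)^{f_+/}$ as the remaining content. But your execution of that last step contains a genuine error: you assert that every inert morphism $f_+ \to I(W)_+$ is (up to contractible ambiguity) a characteristic morphism, and hence that the comma category is equivalent to the discrete set $\Orbit(U)$. This is false. Unwinding the span description, such a morphism is a span $U \xot{} Z \to W$ over $V \xot{} W = W$ in which $Z \to W$ is an equivalence and $Z \to U \times_V W$ (not $Z \to U$!) is a summand inclusion; equivalently, it is an arbitrary map of orbits $W \to U$ over $V$. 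For example, with $\cT = \OO_G$ and $f_+ = [(G/G)_+ \to G/G]$, the map $G/e \to G/G$ gives an object of the comma category that is not a characteristic morphism. Atomicity makes the split map $Z \to U \times_V W$ a summand inclusion, but it does not force the composite $Z \to U$ to be one, and it certainly does not collapse a general map $W \to W'$ of orbits to an equivalence. So the comma category is not $\Orbit(U)$; it is equivalent to $\coprod_{W \in \Orbit(U)} \cT^{/W}$ (orbits equipped with a map to $U$), and your "identification" silently discards all the non-invertible maps.

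What actually has to be proved — and what the paper proves — is that the functor $\Orbit(U) \to \cJ$ sending $W$ to $\chi_{[W \subset U]}$ is cofinal in the appropriate sense, which the paper does via Quillen's Theorem A: for an object $\overline{\alpha} = [X \xrightarrow{\alpha} U]$ of $\cJ \simeq \cT \times_{(\ArOT)^{\si}} ((\ArOT)^{\si})^{/f}$, one shows $\Orbit(U) \times_{\cJ} \cJ^{\overline{\alpha}/}$ is contractible because $\alpha$ factors through a unique orbit $W \subset U$ and the mapping space $\Map_{\cJ}(\overline{\alpha}, \chi_{[W \subset U]})$ is the fiber of $\Map_{\FinT}(X,W) \to \Map_{\FinT}(X,U)$ over $\alpha$, which is contractible since the latter map is the inclusion of a union of components. (Equivalently: $\chi_{[W\subset U]}$ is a terminal object of the component $\cT^{/W}$ of $\cJ$.) The inputs here are orbitality (to identify mapping spaces in $(\ArOT)^{\si}$ with those in $\ArOT$) and atomicity; your appeal to \cref{lem:AtomicityEnforcesTruncatedness} and to running the argument $\cT^{/V}$-locally is not needed and does not substitute for this cofinality argument. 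Since the limit over the correct comma category happens to agree with the product $\prod_{W \in \Orbit(U)} \cO_W$ once cofinality is established, your final answer is right, but the step you flagged as "the main obstacle" is precisely the one your proposed argument fails to clear.
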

\begin{proof}
Let $f_+ = [U_+ \to V]$ be any object in $\Triv_{\cT}^\otimes$ and let $\cJ^\op = \cT^\op \times_{\Triv_{\cT}^\otimes} (\Triv_{\cT}^\otimes)^{f_+/}$. We need to show that the natural map
\[ F^{\inert}_{\cO}(f_+) \to \lim ( \theta: \cJ^\op \to \cT^\op \xto{F_{\cO}} \Cat) \]
is an equivalence. If we view $\Orbit(U)$ as a discrete category, then we have a functor $\phi: \Orbit(U) \to \cJ^\op$ that sends $W$ to $(W,\chi_{[W \subset U]})$, and by definition we have an equivalence
\[ F^{\inert}_{\cO}(f_+) \xto{\simeq} \lim (\theta \circ \phi). \]
Consequently, it suffices to show that $\phi$ is right cofinal. Since $\Triv_{\cT}^\otimes \simeq ((\ArOT)^{\si})^{\op}$ under the inclusion into $\uFinpT = \Span(\ArOT; (\ArOT)^{\si}, (\ArOT)^{\tdeg})$, we may equivalently show that
$$\phi^\op: \Orbit(U)^{\op} = \Orbit(U) \to \cJ \simeq \cT \times_{(\ArOT)^{\si}} ((\ArOT)^{\si})^{/f}$$
is left cofinal. For this, we will apply Quillen's Theorem A. Let
\[ \overline{\alpha} = \left( \begin{tikzcd}
X \ar{r}{\alpha} \ar{d}{=} & U \ar{d}{f} \\
X \ar{r} & V
\end{tikzcd} \right) \]
be any object in $\cJ$. Then since $\Orbit(U)$ is discrete, we have an equivalence
\[ \Orbit(U) \times_{\cJ} \cJ^{\overline{\alpha}/} \simeq \coprod_{W \in \Orbit(U)} \Map_{\cJ}(\overline{\alpha}, \chi_{[W \subset U]}). \]
If we let $W$ be the orbit in $U$ that $\alpha$ factors through, then for all other $W' \in \Orbit(X)$, we have
$$\Map_{\cJ}(\overline{\alpha}, \chi_{[W' \subset U]}) = \emptyset.$$
To compute the remaining mapping space, observe that we have a homotopy pullback square
\[ \begin{tikzcd}
\Map_{\cJ}(\overline{\alpha}, \chi_{[W \subset U]}) \ar{r} \ar{d} & \Map_{(\ArOT)^{\si}}(\id_X, \id_W) \simeq \Map_{\ArOT}(\id_X, \id_W) \ar{d} \\
\ast \ar{r}{\overline{\alpha}} & \Map_{(\ArOT)^{\si}}(\id_X, f) \simeq \Map_{\ArOT}(\id_X, f)
\end{tikzcd} \]
where for the righthand equivalences we use our assumption that $\cT$ is orbital. But the righthand vertical map identifies with
\[ \Map_{\FinT}(X,W) \to \Map_{\FinT}(X,U) \times_{\Map_{\FinT}(X,V)} \Map_{\FinT}(X,V) \simeq \Map_{\FinT}(X,U) \]
and is hence an equivalence. We conclude that $\Orbit(U) \times_{\cJ} \cJ^{\overline{\alpha}/}$ is contractible, so $\phi^\op$ is left cofinal.
\end{proof}

\begin{corollary} \label{cor:TrivialOperad}
Let $\cC$ be a $\cT$-$\infty$-category, let $F_{\cC}: \cT^\op \to \Cat$ be the functor that classifies $\cC$, and let $q: \Triv_{\cT}(\cC)^\otimes \to \Triv_{\cT}^\otimes$ be the cocartesian fibration classified by the right Kan extension of $F_{\cC}$ along $I_+: \cT^\op \to \Triv_{\cT}^\otimes$.\footnote{Using \cite[Ex.~2.26]{Exp2}, we could give a definition of $\Triv_{\cT}(\cC)^\otimes$ at the level of marked simplicial sets, without passing through straightening and unstraightening.} Then $\Triv_{\cT}(\cC)^\otimes$ is a $\cT$-$\infty$-operad and for any $\cT$-$\infty$-operad $\cO^\otimes$, we have an equivalence of $\cT$-$\infty$-categories
\[ \underline{\Alg}_{\cT}(\Triv_{\cT}(\cC), \cO) \xto{\simeq} \underline{\Fun}_{\cT}(\cC, \cO) \]
implemented by restriction along $I_+$. 
\end{corollary}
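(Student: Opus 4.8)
The plan is to first verify that $\Triv_{\cT}(\cC)^\otimes$ is a $\cT$-$\infty$-operad, and then establish the adjunction-type equivalence by a cofinality argument paralleling \cref{lem:InertSubcategoryIsRightKanExtended}. For the first point, since $q$ is a cocartesian fibration over the trivial $\cT$-$\infty$-operad $\Triv_{\cT}^\otimes \subset \uFinpT$ (whose edges are all inert), condition (1) of \cref{dfn:operad} is automatic, and by \cref{prop:SimplifyOMonoidalDef} it suffices to check that for every $f_+ = [U_+ \to V]$ the inert edges $\{ f_+ \to I(W)_+ \}_{W \in \Orbit(U)}$ induce an equivalence $\Triv_{\cT}(\cC)^\otimes_{f_+} \xto{\sim} \prod_{W} \Triv_{\cT}(\cC)^\otimes_{I(W)_+}$. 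But by construction $\Triv_{\cT}(\cC)^\otimes$ is classified by the right Kan extension of $F_{\cC}$ along $I_+$, and the computation of this right Kan extension at $f_+$ is \emph{exactly} the cofinality statement proved in \cref{lem:InertSubcategoryIsRightKanExtended}: the functor $\phi: \Orbit(U) \to \cJ^\op$ is right cofinal, so the value at $f_+$ is $\lim_{\Orbit(U)} (W \mapsto F_{\cC}(W)) = \prod_{W \in \Orbit(U)} \cC_W$, with the projections being precisely the maps induced by the characteristic morphisms. Hence conditions (2) and (3\textquotesingle) hold (the latter because $\Triv_{\cT}^\otimes$ has no nontrivial active edges, so the relevant mapping-space condition is vacuous), and $\Triv_{\cT}(\cC)^\otimes$ is a $\cT$-$\infty$-operad.

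For the equivalence of $\cT$-$\infty$-categories, I would argue fiberwise, i.e. for each orbit $V$ produce a natural equivalence on $V$-fibers $\Alg_{\cT^{/V}}(\Triv_{\cT^{/V}}(\cC|_{V}), \cO|_{V}) \simeq \Fun_{\cT^{/V}}(\cC|_V, \cO|_V)$, compatibly in $V$; since the construction is stable under pullback along $\cT^{/V} \to \cT$, it is enough to treat $V$ terminal, i.e. to produce an equivalence of $\infty$-categories $\Alg_{\cT}(\Triv_{\cT}(\cC), \cO) \xto{\sim} \Fun_{\cT}(\cC, \cO)$ natural in the relevant variables. The map is restriction along $I_+: \cT^\op \to \Triv_{\cT}^\otimes$, noting that a morphism of $\cT$-$\infty$-operads $\Triv_{\cT}(\cC)^\otimes \to \cO^\otimes$ restricts on underlying $\cT$-$\infty$-categories to a $\cT$-functor $\cC \to \cO$. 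To see this is an equivalence, I would use that a $\cT$-functor $A: \Triv_{\cT}(\cC)^\otimes \to \cO^\otimes$ over $\uFinpT$ is a morphism of $\cT$-$\infty$-operads precisely when it carries inert edges to inert edges, and every edge of $\Triv_{\cT}(\cC)^\otimes$ is cocartesian over an inert edge of $\uFinpT$; so such $A$ are the same as $\cT$-functors over $\Triv_{\cT}^\otimes$ that preserve cocartesian edges, i.e. strict morphisms. By straightening over $\Triv_{\cT}^\otimes$, these correspond to natural transformations of functors $\Triv_{\cT}^\otimes \to \Cat$ out of $F^{\inert}_{\Triv_{\cT}(\cC)} = \RKan_{I_+} F_{\cC}$ into $F^{\inert}_{\cO}$. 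Now $F^{\inert}_{\cO} = \RKan_{I_+} F_{\cO}$ by \cref{lem:InertSubcategoryIsRightKanExtended}, and restriction along $I_+$ is right adjoint to $\RKan_{I_+}$, giving
\[ \Nat_{\Triv_{\cT}^\otimes}(\RKan_{I_+} F_{\cC}, \RKan_{I_+} F_{\cO}) \simeq \Nat_{\cT^\op}(F_{\cC}, \RKan_{I_+} F_{\cO} \circ I_+) \simeq \Nat_{\cT^\op}(F_{\cC}, F_{\cO}), \]
the last step because the unit of the $(\RKan_{I_+}, I_+^\ast)$-adjunction, evaluated on $F_{\cC}$ via \cref{lem:InertSubcategoryIsRightKanExtended} applied to $\cC$ itself, is an equivalence (equivalently, $I_+^\ast \RKan_{I_+} \simeq \id$ since $I_+$ is fully faithful). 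Tracking through, this natural-transformation equivalence is implemented by restriction along $I_+$, which is what we wanted; upgrading from mapping spaces/sets to the full $\cT$-$\infty$-categories of algebras and functors is then the standard argument applied to all the mapping $\cT$-$\infty$-categories at once, using that straightening is an equivalence of $\infty$-categories and hence identifies the relevant functor $\infty$-categories.

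The main obstacle I anticipate is the precise bookkeeping of the identification ``morphisms of $\cT$-$\infty$-operads $\Triv_{\cT}(\cC)^\otimes \to \cO^\otimes$ = strict maps over $\Triv_{\cT}^\otimes$ = natural transformations of straightened functors,'' and in particular checking that all the coherences — $\cT$-naturality in $V$, and the passage from the $1$-categorical $\Hom$-level statement to the level of $\cT$-$\infty$-categories $\underline{\Alg}$ and $\underline{\Fun}$ — are genuinely compatible with restriction along $I_+$ rather than merely abstractly equivalent. The key technical input making all of this go through is \cref{lem:InertSubcategoryIsRightKanExtended}, which simultaneously (i) identifies $F^{\inert}_{\cO}$ as a right Kan extension, supplying the adjunction, and (ii) when applied to $\cC$ in place of $\cO$, shows the unit $F_{\cC} \to I_+^\ast \RKan_{I_+} F_{\cC}$ is an equivalence; everything else is formal manipulation of the $(\RKan_{I_+}, I_+^\ast)$-adjunction on functor $\infty$-categories into $\Cat$, together with the observation that $\Triv_{\cT}^\otimes$ has only inert edges so that ``morphism of operads over $\Triv_{\cT}^\otimes$'' reduces to ``cocartesian-edge-preserving functor over $\Triv_{\cT}^\otimes$.''
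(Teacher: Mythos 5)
Your proposal is correct and follows essentially the same route as the paper: both arguments reduce everything to \cref{lem:InertSubcategoryIsRightKanExtended}, identify morphisms of $\cT$-$\infty$-operads out of $\Triv_{\cT}(\cC)^\otimes$ with cocartesian-edge-preserving functors over $\Triv_{\cT}^\otimes$ into $\cO^\otimes_{\inert}$, and conclude from the universal property of right Kan extension along the fully faithful $I_+$ (the paper merely compresses your adjunction computation into ``this follows immediately from the lemma''). The only slip is the phrase ``every edge of $\Triv_{\cT}(\cC)^\otimes$ is cocartesian over an inert edge'' --- there are non-cocartesian fiberwise edges coming from morphisms in $\cC$; what you need, and what you in fact use, is that the \emph{inert} edges of $\Triv_{\cT}(\cC)^\otimes$ are precisely the $q$-cocartesian ones.
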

\begin{proof}
By \cref{lem:InertSubcategoryIsRightKanExtended}, we have an equivalence of $\infty$-categories between $\Triv_{\cT}$-monoidal $\cT$-$\infty$-categories and the full subcategory of $\Fun(\Triv_{\cT}^\otimes, \Cat)$ spanned by those functors right Kan extended from $\cT^\op$ along $I_+$, so in particular $\Triv_{\cT}(\cC)^\otimes$ is a $\cT$-$\infty$-operad. For the second statement, since restriction along $I_+$ is a $\cT$-functor, it suffices to show an equivalence of fibers over every orbit $V \in \cT$, so after replacing $\cT$ by $\cT^{/V}$ we reduce to the claim that
\[ \Alg_{\cT}(\Triv_{\cT}(\cC), \cO) \simeq \Fun^{\cocart}_{/\Triv_{\cT}^\otimes}(\Triv_{\cT}(\cC)^\otimes, \cO^\otimes_{\inert}) \to \Fun_{\cT}(\cC, \cO) \]
is an equivalence of $\infty$-categories. But this follows immediately from \cref{lem:InertSubcategoryIsRightKanExtended}.
\end{proof}

\begin{exm} \label{exm:TrivialSuboperads}
By \cref{cor:TrivialOperad}, the $\cT$-suboperads of the trivial $\cT$-$\infty$-operad are in bijective correspondence with sieves of $\cT$. For instance, we have that the initial $\cT$-$\infty$-operad $\Triv_{\cT}(\emptyset)^\otimes$ corresponds to $\emptyset \subset \cT$ and identifies with the full subcategory of $\uFinpT$ on objects $[\emptyset_+ \to V]$.
\end{exm}

\begin{exm}[Indexing systems and the family of commutative $\cT$-$\infty$-operads] \label{exm:CommutativeOperads}
Clearly $\uFinpT$ is itself a $\cT$-$\infty$-operad, which deserves to be called the \emph{$\cT$-commutative $\cT$-$\infty$-operad} $\Com_{\cT}^\otimes$. Note that the identity cocartesian section $I_+: \cT^\op \to \Com_{\cT}^\otimes$ restricts to an equivalence $\cT^\op \xto{\simeq} \Com_{\cT}$.

In the parametrized setting, we may further define a family of $\cT$-suboperads of $\Com_{\cT}^\otimes$ so as to encode different flavors of parametrized commutativity. First, define the \emph{minimal} $\cT$-commutative $\cT$-$\infty$-operad $\Com^\otimes_{\cT^\simeq}$ to be the wide subcategory of $\uFinpT$ containing all morphisms
\[ \begin{tikzcd}[row sep=4ex, column sep=4ex, text height=1.5ex, text depth=0.25ex]
U \ar{d} & Z \ar{l} \ar{d} \ar{r}{m} & X \ar{d} \\
V & Y \ar{l} \ar{r}{=} & Y.
\end{tikzcd} \]
where $m$ is a coproduct of fold maps (including possibly empty fold maps). In other words, if we let $\nabla$ denote the collection of fold maps and $(\ArOT)^{\nabla:\tdeg} \subset \ArOT$ the wide subcategory on morphisms with source in $\nabla$ and target degenerate, then
$$\Com^\otimes_{\cT^\simeq} = \Span(\ArOT; (\ArOT)^{\si}, (\ArOT)^{\nabla:\tdeg}),$$
where we use that $\nabla$ is stable under pullback by summand inclusions, and $\Com^{\otimes}_{\cT^\simeq}$ is classified by the functor $\cT^\op \to \Cat$ that sends an orbit $V$ to $\Span(\FinT[V]; \FinT[V]^{\si}, \FinT[V]^{\nabla})$. Since $\Com^{\otimes}_{\cT^\simeq}$ contains all inert edges in $\Com_{\cT}^\otimes$ and $\Com_{\cT^\simeq} \simeq \cT^\op$, to verify that $\Com^\otimes_{\cT^\simeq}$ is a $\cT$-suboperad it only remains to check condition (3\textquotesingle) of \cref{rem:alternateOperadDef}. But this condition is satisfied since a map $\alpha: U \to U'$ of finite $\cT$-sets is a fold map if and only if for all $W \in \Orbit(U')$, the pullback $\alpha_W: U \times_{U'} W \to W$ is a fold map.

Now suppose that we want to define a $\cT$-$\infty$-operad $\cO^\otimes$ such that we have $\cT$-operadic inclusions
$$\Com^\otimes_{\cT^\simeq} \subset \cO^\otimes \subset \Com_{\cT}^\otimes.$$
Since $\Com_{\cT^\simeq} = \Com_{\cT} \simeq \cT^\op$, the only constraint on $\cO^\otimes$ to be a $\cT$-suboperad arises from condition (3\textquotesingle). In other words, to specify $\cO^\otimes$ we may as well specify the morphisms $\alpha: U \to V$ with $V$ an orbit that we wish to be active. We already have that all fold maps are active, so in particular all summand inclusions are active. Furthermore, as observed in \cref{rem:OperadicCompositionUnwinding}, for any orbit $W \subset U$, the composite map $W \subset U \xto{\alpha} V$ yields an operadic composition map
$$\Mul^{W \subset U}_{\cO} \times \Mul^{\alpha}_{\cO} \simeq \ast \times \Mul^{\alpha}_{\cO} \to \Mul^{\alpha|_W}_{\cO}, $$
so if $\alpha$ is active, we must have that $\alpha|_W$ is active for all $W \in \Orbit(U)$. The converse holds by a similar argument since $\alpha$ factors as
\[ \begin{tikzcd}
U \ar{r}{\simeq} & \coprod_{W \in \Orbit(U)} W \ar{r}{\coprod \alpha|_W} & \coprod_{W \in \Orbit(U)} V \ar{r}{\nabla} & V
\end{tikzcd}. \]
We thereby reduce to specifying whether or not morphisms $\alpha: W \to V$ are active for both $W$ and $V$ orbits. Moreover, by examining \cref{rem:OperadicCompositionUnwinding} again we see that the only constraints are:
\begin{enumerate}
\item The active morphisms contain all equivalences and are closed under composition, so assemble to a subcategory $\cI \subset \cT$ such that $\cI$ contains the maximal subgroupoid $\cT^{\simeq}$ of $\cT$.
\item The active morphisms are closed under base-change, in the sense that for any commutative square
\[ \begin{tikzcd}
W' \ar{r} \ar{d}{\alpha'} & W \ar{d}{\alpha} \\
V' \ar{r} & V
\end{tikzcd} \]
such that the map $W' \to V' \times_{V} W$ is a summand inclusion, if $\alpha$ is in $\cI$ then $\alpha'$ is in $\cI$.
\end{enumerate}
\end{exm}

\begin{definition} \label{def:IndexingSystem} A $\cT$-\emph{indexing system} is a subcategory $\cI$ of $\cT$ that satisfies the above conditions (1) and (2).
\end{definition}

\begin{remark} \label{rem:IndexingSystemAlternative}
An indexing system $\cI$ is the same data as a subcategory $\overline{\cI} \subset \FinT$ such that:
\begin{enumerate}
\item $\overline{\cI}$ contains the maximal subgroupoid $\FinT^{\simeq}$.
\item Morphisms in $\overline{\cI}$ are closed under base-change and binary coproducts.
\item $\overline{\cI}$ contains all fold maps (and hence all summand inclusions).
\end{enumerate}
Indeed, the assignment $\goesto{\overline{\cI}}{\cI = \overline{\cI} \times_{\FinT} \cT}$ is seen to identify the two notions, with inverse given by taking the finite coproduct completion of $\cI$.
\end{remark}

\begin{definition} \label{dfn:IndexingSystemCommOperad}
Let $\cI$ be a $\cT$-indexing system and let $(\ArOT)^{\cI:\tdeg} \subset \ArOT$ be the wide subcategory on morphisms with source in $\overline{\cI}$ and target degenerate. We then define the \emph{$\cI$-commutative $\cT$-$\infty$-operad} to be
\[ \Com^\otimes_{\cI} \coloneq \Span(\ArOT; (\ArOT)^{\si}, (\ArOT)^{\cI:\tdeg}). \]
More generally, we define a \emph{commutative} $\cT$-$\infty$-operad to be any $\cT$-suboperad of $\Com^\otimes_{\cT}$ containing $\Com^\otimes_{\cT^\simeq}$.
\end{definition}

The above analysis confirms the following proposition.

\begin{proposition} \label{prp:IndexingSystemsAreCommutativeSuboperads}
The assignment $\goesto{\cI}{\Com^\otimes_{\cI}}$ implements an inclusion-preserving bijection between $\cT$-indexing systems and commutative $\cT$-$\infty$-operads.
\end{proposition}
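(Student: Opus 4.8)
The plan is to read off the bijection from the analysis already carried out in \cref{exm:CommutativeOperads}, so that the proof amounts to naming the inverse map and checking that it is one. To a commutative $\cT$-$\infty$-operad $\cO^\otimes$ (i.e.\ a $\cT$-suboperad of $\uFinpT$ containing $\Com^\otimes_{\cT^\simeq}$) I would associate the wide subcategory $\cI_{\cO} \subseteq \cT$ consisting of those maps $\beta \colon W \to V$ between orbits whose associated fiberwise active edge $\beta_+ \colon [W_+ \to V] \to [V_+ \to V]$ in $(\uFinpT)_V$ lies in $\cO^\otimes$. The closure properties established in \cref{exm:CommutativeOperads} --- constraint (1) there (closure under composition and containment of $\cT^\simeq$) and constraint (2) (closure under base change along summand inclusions) --- are exactly conditions (1) and (2) of \cref{def:IndexingSystem}, so $\cI_{\cO}$ is a $\cT$-indexing system, and $\goesto{\cO^\otimes}{\cI_{\cO}}$ is a candidate inverse to $\goesto{\cI}{\Com^\otimes_{\cI}}$.

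The first composite is a direct computation: unwinding \cref{def:FiniteTSetsParam} and \cref{rem:UnwindingDefOfFiniteTSets}, for orbits $W$ and $V$ the fiberwise active edge $\beta_+ \colon [W_+ \to V] \to [V_+ \to V]$ lies in $\Com^\otimes_{\cI} = \Span(\ArOT; (\ArOT)^{\si}, (\ArOT)^{\cI:\tdeg})$ if and only if $\beta$ lies in $\overline{\cI}$, equivalently (since $W$ and $V$ are orbits) in $\cI$; hence $\cI_{\Com^\otimes_{\cI}} = \cI$. For the other composite I would first observe that a $\cT$-suboperad $\cO^\otimes$ of $\uFinpT$ containing $\Com^\otimes_{\cT^\simeq}$ is determined by which fiberwise active edges it contains: both $\cO^\otimes$ and $\uFinpT$ carry the inert-fiberwise active factorization system (\cref{rem:alternateOperadDef}), the inclusion is compatible with it, and both contain all inert edges, so a morphism of $\uFinpT$ lies in $\cO^\otimes$ precisely when the fiberwise active half of its factorization does. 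By \cref{rem:FiberwiseInertActive} such an edge has the form $\alpha_+$ for $\alpha \colon U \to U'$ in $\FinT[V]$, and the reductions in \cref{exm:CommutativeOperads} --- factoring $\alpha$ through fold maps (which are active, lying in $\Com^\otimes_{\cT^\simeq}$) and invoking the operadic composition and base-change maps of \cref{rem:OperadicCompositionUnwinding} --- show that $\alpha_+ \in \cO^\otimes$ if and only if the pullback $\alpha_W \colon U \times_{U'} W \to W$ along $W \subseteq U'$ lies in $\cI_{\cO}$ for every $W \in \Orbit(U')$. The same criterion describes membership of $\alpha_+$ in $\Com^\otimes_{\cI_{\cO}}$, since $\overline{\cI_{\cO}}$ is the finite coproduct completion of $\cI_{\cO}$ (\cref{rem:IndexingSystemAlternative}); comparing the resulting wide subcategories of $\ArOT$ gives $\Com^\otimes_{\cI_{\cO}} = \cO^\otimes$. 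Inclusion-preservation in both directions is then immediate: $\cI \subseteq \cJ$ iff $\overline{\cI} \subseteq \overline{\cJ}$ iff $(\ArOT)^{\cI:\tdeg} \subseteq (\ArOT)^{\cJ:\tdeg}$ iff $\Com^\otimes_{\cI} \subseteq \Com^\otimes_{\cJ}$, while conversely $\cO_0^\otimes \subseteq \cO_1^\otimes$ clearly forces $\cI_{\cO_0} \subseteq \cI_{\cO_1}$.

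The only step with any real content is the reassembly in the last paragraph --- that an arbitrary commutative $\cT$-$\infty$-operad is genuinely recovered from the orbit-to-orbit data $\cI_{\cO}$ as a span subcategory of $\ArOT$. Concretely, the work is to make sure each reduction step of \cref{exm:CommutativeOperads} is reversible and compatible with forming span $\infty$-categories: first passing from general morphisms to fiberwise active ones via the factorization system, then from a fiberwise active $\alpha \colon U \to U'$ to the family $\{\alpha_W\}_{W \in \Orbit(U')}$, and finally matching the latter with membership in $\overline{\cI_{\cO}}$ using \cref{rem:IndexingSystemAlternative}. Everything else --- that $\cI_{\cO}$ satisfies conditions (1) and (2), that the span-theoretic description of $\Com^\otimes_{\cI}$ behaves as claimed, and monotonicity --- is formal, which is why the proposition is stated as being confirmed by the preceding analysis.
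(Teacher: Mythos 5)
Your proposal is correct and follows essentially the same route as the paper, which simply declares the proposition "confirmed by the above analysis" in \cref{exm:CommutativeOperads}: the reduction to specifying which orbit-to-orbit maps are active, with the two closure constraints of \cref{def:IndexingSystem} as the only conditions. You merely make explicit the inverse assignment $\cO^\otimes \mapsto \cI_{\cO}$ and the verification of the two composites, which the paper leaves implicit.
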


\begin{remark} \label{rem:blumberg_hill}
Let $\cT = \OO_G$. A $G$-indexing system $I$ in the sense of Blumberg-Hill \cite[Def.~3.22]{MR3406512} as reformulated by Rubin \cite[Def.~2.12]{Rubin2021} is a collection $\{ I(H) \}$ of finite $H$-sets for every subgroup $H \leq G$ such that $I(H)$ contains all finite $H$-sets with trivial $H$-action and satisfies the following closure properties:
\begin{enumerate}
\item If $U \in I(H)$ and $U' \cong U$, then $U' \in I(H)$.
\item For every subgroup $K \leq H$, if $U \in I(H)$ then $\res^H_K U \in I(K)$.
\item For every conjugate $H' = g H g^{-1}$ of $H$, if $U \in I(H)$ then its conjugate $U' \in I(H')$.
\item If $U \in I(H)$ and $U' \subset U$, then $U' \in I(H)$.
\item \label{item:IndexingSystemCoproducts} If $U, U' \in I(H)$, then $U \coprod U' \in I(H)$.
\item For any subgroup $K \leq H$, if $U \in I(K)$ and $H/K \in I(H)$ then $\ind^H_K U \in I(H)$.
\end{enumerate}

In \cite[Thm.~3.17]{MR3406512}, it was shown that $G$-indexing systems are in bijective correspondence with subcategories of $\FF_G$ satisfying the conditions of \cref{rem:IndexingSystemAlternative} (and thus with the $\OO_G$-indexing systems of \cref{def:IndexingSystem}). For the convenience of the reader, we review this correspondence. Note that under the equivalences $\FF_H \simeq \FF_G^{/(G/H)}$, given a subgroup $K \leq H$, induction corresponds to postcomposition by $G/K \to G/H$. Therefore, given a $G$-indexing system $I$, we may define a wide subcategory $\cI \subset \OO_G$ to be the subcategory whose morphisms $f: V \to W \cong G/H$ are such that $f \in I(H)$. The enumerated conditions then imply that $\cI$ is a $\OO_G$-indexing system, using closure under restriction and inclusion to validate the base-change condition.

Conversely, suppose $\cI$ is a $\OO_G$-indexing system and let $\overline{\cI} \subset \FF_G$ be the subcategory generated by $\cI$ as in \cref{rem:IndexingSystemAlternative}. Let $I(H)$ be the subset of objects of $\FF_H$ given by morphisms in $\overline{\cI}$ with target $G/H$ under $\FF_H \simeq \FF_G^{/(G/H)}$. Then one sees that $I$ is a $G$-indexing system and these assignments are mutually inverse --  note that condition (\ref{item:IndexingSystemCoproducts}) holds since given $U, U' \to G/H$, the coproduct in $\FF_H$ is given by the composition $U \coprod U' \to G/H \coprod G/H \to G/H$.

Consequently, by the work of Bonventre--Pereira \cite{Bonventre2021}, Guti\'{e}rrez--White \cite{Gutirrez2018}, and Rubin \cite{Rubin2021}, we see that the commutative $G$-$\infty$-operads are in bijection with the $N_{\infty}$-operads of Blumberg--Hill.
\end{remark}

A straightforward adaptation of the proof of \cref{thm:TwoPresentationsOfTSMCs} shows the following.

\begin{definition}
Given a $\cT$-indexing system $\cI$, let $\Cat^\otimes_{\cI}$ be the $\infty$-category of $\cI$-symmetric monoidal $\cT$-$\infty$-categories and $\cI$-symmetric monoidal $\cT$-functors thereof. 
\end{definition}

\begin{theorem}
Let $\cI$ be a $\cT$-indexing system. We then have a canonical equivalence
\[ \Cat^\otimes_{\cI} \simeq \Fun^{\times}(\Span(\FinT; \FinT, \overline{\cI}), \Cat). \]
\end{theorem}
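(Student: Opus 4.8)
The plan is to follow the exact same strategy as in the proof of \cref{thm:TwoPresentationsOfTSMCs}, replacing $\Span(\FinT)$ by $\Span(\FinT; \FinT, \overline{\cI})$ throughout. First I would observe that, as in \cref{rem:SMCsAreCommMonoids}, under the equivalences $\Cat^{\cocart}_{/\Com^\otimes_{\cI}} \simeq \Fun(\Com^\otimes_{\cI}, \Cat) \simeq \Fun_\cT(\Com^\otimes_{\cI}, \uCatT)$, an $\cI$-symmetric monoidal $\cT$-$\infty$-category corresponds to a $\cT$-functor $M \colon \Com^\otimes_{\cI} \to \uCatT$ which, by \cref{cor:SymmetricMonoidalSegalCondition} applied to the $\cT$-suboperad $\Com^\otimes_{\cI}$, satisfies the $\cT$-Segal condition. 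I would then want the analogue of \cite[Thm.~6.5]{Exp4} with $\overline{\cI}$-restricted wrong-way maps: precomposition by the inclusion $\Com^\otimes_{\cI} \hookrightarrow \underline{\Span}_{\cI}(\FinT) \coloneqq \Span(\ArOT; (\ArOT), (\ArOT)^{\cI:\tdeg})$ induces, for $\cD$ a $\cT$-$\infty$-category with $\cI$-indexed $\cT$-products, an equivalence onto the $\cT$-$\infty$-category of ``$\cI$-commutative monoids'' in $\cD$ (i.e.\ $\cT$-functors $\underline{\Span}_{\cI}(\FinT)\to\cD$ sending cocartesian edges to equivalences and satisfying the appropriate semiadditivity with respect to $\cI$). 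Taking $\cD = \uCatT$ and passing to cocartesian sections gives $\Fun^{\times}_\cT(\underline{\Span}_{\cI}(\FinT), \uCatT) \simeq \Cat^\otimes_{\cI}$, where $\Fun^\times$ here means functors satisfying the $\cI$-Segal/semiadditivity condition.

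Next I would transport this to an unparametrized statement exactly as in the original proof. Under $\Fun_\cT(\underline{\Span}_{\cI}(\FinT), \uCatT) \simeq \Fun(\underline{\Span}_{\cI}(\FinT), \Cat)$, let $\Fun'$ denote the image of the $\cI$-Segal $\cT$-functors; explicitly these are the functors sending $\ev_1$-cartesian edges to equivalences and sending the relevant fiberwise products to products. I would then claim that precomposition by the source map $s \colon \underline{\Span}_{\cI}(\FinT) \to \Span(\FinT; \FinT, \overline{\cI})$ induces an equivalence $\Fun^{\times}(\Span(\FinT; \FinT, \overline{\cI}), \Cat) \xrightarrow{\sim} \Fun'(\underline{\Span}_{\cI}(\FinT), \Cat)$ with inverse given by right Kan extension along $s$. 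That $s^\ast$ lands in $\Fun'$ is immediate. For the converse, the cofinality argument runs verbatim: $(\FinT^\op)^{X/} \to \Span(\FinT; \FinT, \overline{\cI})^{X/}$ is initial because a span $X \leftarrow Z \to Y$ with $Z \to Y$ in $\overline{\cI}$ admits the natural right adjoint sending it to $X \leftarrow Z$ (this uses precisely that $\overline{\cI}$ contains all equivalences, so the ``do nothing on the right'' span lies in $\underline{\Span}_{\cI}(\FinT)$, and that $\overline{\cI}$ is closed under composition); pulling back to $\ArOT$ and using the pointwise formula for $s_\ast F(X) \simeq \prod_{i} F(\id_{X_i})$ over an orbit decomposition of $X$ then shows $s_\ast F$ is $\cI$-Segal and that the unit and counit are equivalences.

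The main obstacle I anticipate is establishing the $\cI$-restricted analogue of \cite[Thm.~6.5]{Exp4}: the cited theorem is about genuine $\cT$-semiadditivity, i.e.\ all finite $\cT$-(co)products and all wrong-way maps in $\underline{\Span}(\FinT)$, whereas here one must restrict the backward maps to the subcategory $\overline{\cI}$ and prove the corresponding relative version. Concretely one needs: (a) that a $\cT$-functor $\Com^\otimes_{\cI} \to \cD$ preserving inert edges extends essentially uniquely to $\underline{\Span}_{\cI}(\FinT)$ — this is the $\cI$-Segal condition repackaged, and should follow by combining \cref{cor:SymmetricMonoidalSegalCondition} (which already handles this for the full $\uFinpT$) with the observation that the active morphisms of $\Com^\otimes_{\cI}$ are exactly those coming from $\overline{\cI}$; and (b) an unfurling/Beck--Chevalley argument in the style of \cite[\S 11]{M1} showing that the required $\cI$-indexed norm maps are well-defined and coherent. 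Since the active part of $\Com^\otimes_{\cI}$ is $((\ArOT)^{\cI:\tdeg})^{\op}$ and $\overline{\cI}$ is by hypothesis closed under base change along summand inclusions and under binary coproducts (\cref{rem:IndexingSystemAlternative}), this closure is exactly what makes the span construction $\underline{\Span}_{\cI}(\FinT)$ well-defined and the Beck--Chevalley squares available, so the argument of \cite{Exp4} goes through mutatis mutandis; I would either cite a relative form of that result or indicate that its proof localizes to the subcategory $\overline{\cI}$ without change. Everything downstream of this — the cofinality computation and the pointwise formula for the right Kan extension — is formally identical to \cref{thm:TwoPresentationsOfTSMCs}.
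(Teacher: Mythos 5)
Your proposal is correct and follows exactly the route the paper intends: the paper proves this theorem only by the remark that it is ``a straightforward adaptation of the proof of \cref{thm:TwoPresentationsOfTSMCs},'' and your adaptation --- running the Segal/monoid identification over $\Com^{\otimes}_{\cI}$, then comparing with $\Span(\FinT;\FinT,\overline{\cI})$ via the source map and right Kan extension along it --- is precisely that adaptation. You have in fact correctly isolated the one point requiring genuine care (the $\cI$-restricted analogue of \cite[Thm.~6.5]{Exp4}, which localizes to $\overline{\cI}$ because an indexing system is closed under the base changes needed for the Beck--Chevalley squares), which the paper leaves implicit.
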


\begin{corollary} \label{cor:MinimalIndexingSystemIsFiberwiseCommutative}
For the minimal indexing system $\cI = \cT^\simeq$, we have a canonical identification of $\cT^{\simeq}$-symmetric monoidal $\cT$-$\infty$-categories with $\cT^{\op}$-cocartesian families of symmetric monoidal $\infty$-categories (\cite[Def.~4.8.3.1]{HA}).
\end{corollary}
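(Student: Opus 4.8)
The plan is to combine the preceding theorem with a direct analysis of the span $\infty$-category $\Span(\FinT;\FinT,\nabla)$, where $\nabla = \overline{\cT^{\simeq}}$ is the class of fold maps in $\FinT$ (those whose pullback to each orbit of the target has the form $W^{\sqcup n} \to W$; this class is local on the target and stable under base change). For $\cI = \cT^{\simeq}$ the preceding theorem reads $\Cat^{\otimes}_{\cT^{\simeq}} \simeq \Fun^{\times}(\Span(\FinT;\FinT,\nabla),\Cat)$. On the other side, by \cite[Def.~4.8.3.1]{HA} together with the identification $\CMon(\Cat) \simeq \Cat^{\otimes}$ of commutative monoids in $(\Cat,\times)$ with symmetric monoidal $\infty$-categories and strong symmetric monoidal functors, a $\cT^{\op}$-cocartesian family of symmetric monoidal $\infty$-categories is the same as a functor $\cT^{\op} \to \CMon(\Cat)$, equivalently a functor $F \colon \cT^{\op} \times \Span(\FF) \to \Cat$ (with $\FF$ the category of finite sets) such that $F(V,-)$ is product-preserving for every orbit $V$. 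So the task is to exhibit a natural equivalence $\Fun^{\times}(\Span(\FinT;\FinT,\nabla),\Cat) \simeq \{ F \colon \cT^{\op} \times \Span(\FF) \to \Cat \mid F(V,-) \text{ product-preserving} \}$.

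I would set this up via the functor $\Phi \colon \cT^{\op} \times \Span(\FF) \to \Span(\FinT;\FinT,\nabla)$ which sends $(V,I)$ to the $I$-indexed coproduct $V^{\sqcup I}$, a morphism $g \colon V' \to V$ of $\cT$ to the span with backward leg $\bigsqcup_I g$ and forward leg the identity, and a span $I \leftarrow K \to I'$ to the evident span $V^{\sqcup I} \leftarrow V^{\sqcup K} \to V^{\sqcup I'}$ (whose forward leg lies in $\nabla$). The first observation is that finite coproducts in $\FinT$ become \emph{products} in $\Span(\FinT;\FinT,\nabla)$ --- immediate from $\nabla$ being local on the target, since then $\Map(X,\coprod_i U_i) \simeq \prod_i \Map(X,U_i)$ --- so every object is a finite product of orbits and each $\Phi(V,-)$ preserves finite products. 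Hence precomposition $\Phi^{\ast}$ carries $\Fun^{\times}(\Span(\FinT;\FinT,\nabla),\Cat)$ into the subcategory on the right, and it remains to check it is an equivalence onto it.

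For the inverse I would write $G$ down explicitly: given such an $F$, set $G(U) = \prod_{W \in \Orbit(U)} F(W,\ast)$; on a morphism $U \to U'$, reduce by the product structure on $U'$ to morphisms $U \to W'$ with $W' \in \Orbit(U')$, and note such a morphism is a span $U \leftarrow (W')^{\sqcup S} \to W'$ (forward leg the fold) recording a finite set $S$, a function $S \to \Orbit(U)$, $s \mapsto W(s)$, and maps $\psi_{s} \colon W' \to W(s)$ in $\cT$; send it to the composite $\prod_{W} F(W,\ast) \to \prod_{s \in S} F(W(s),\ast) \xrightarrow{\prod_s \psi_{s}^{\ast}} F(W',\ast)^{\times S} \simeq F(W',S) \to F(W',\ast)$, where the last map is induced by the fold span $S \leftarrow S \to \ast$ using product-preservation of $F(W',-)$. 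In fact $G$ is the right Kan extension of $F$ along $\Phi$, and the formula $G(U) \simeq \prod_{W \in \Orbit(U)} F(W,\ast)$ amounts to the claim that the discrete subcategory $\Orbit(U) \hookrightarrow (U \downarrow \Phi)$ on the characteristic spans $U \to W$ is cofinal once the $\Span(\FF)$-coordinate is collapsed using product-preservation; granting this, checking that $G$ is a well-defined product-preserving functor and that $\Phi^{\ast}$ and $F \mapsto G$ are mutually inverse is the same kind of bookkeeping as in the proof of the preceding theorem and of \cref{lem:InertSubcategoryIsRightKanExtended}.

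The main obstacle is exactly that pointwise computation: $(U \downarrow \Phi)$ is genuinely larger than $\Orbit(U)$ --- the objects $(V,I,U \to V^{\sqcup I})$ with $\lvert I\rvert > 1$ or with non-constant structure maps really do occur --- and one has to argue that feeding in an $F$ that is product-preserving in the $\Span(\FF)$-direction collapses all of them onto the single product indexed by $\Orbit(U)$. Everything else --- stability of $\nabla$ under base change, coproducts becoming products, the identification $\CMon(\Cat) \simeq \Cat^{\otimes}$, and the bookkeeping of composites --- is routine or quotable. (As a sanity check, for $\cT = \ast$ every map of finite sets is a fold map, so $\Span(\FinT;\FinT,\nabla)$ is the Burnside $\infty$-category $\Span(\FF)$, $\Phi$ is the identity, and we recover $\Cat^{\otimes} \simeq \CMon(\Cat)$.)
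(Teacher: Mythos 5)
Your overall architecture is sound, and the statement you reduce to --- that restriction along the copower functor $\Phi(V,I)=V^{\sqcup I}$ identifies $\Fun^{\times}(\Span(\FinT;\FinT,\nabla),\Cat)$ with functors $\cT^{\op}\times\Span(\FF)\to\Cat$ preserving products in the second variable --- is true, with exactly the pointwise formula you write down. But the mechanism you propose for the step you flag as the main obstacle does not work: the comma category $(U\downarrow\Phi)$ does not collapse onto $\Orbit(U)$ by any cofinality argument of the kind used in \cref{lem:InertSubcategoryIsRightKanExtended} or in the proof of \cref{thm:TwoPresentationsOfTSMCs}. Concretely, take $\cT=\OO_{C_2}$ and $U=V=C_2/e$, and let $z\in(U\downarrow\Phi)$ be the span $U\leftarrow V^{\sqcup 2}\to V$ whose forward leg is the fold and whose two backward components are $\mathrm{id}$ and the swap $\tau$. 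A morphism $(W,\ast,\chi_{[W\subset U]})\to z$ in $(U\downarrow\Phi)$ would be a pair $(g\colon V\to W,\ \ast\leftarrow K\to\ast)$ whose composite with $\chi_{[W\subset U]}$ has all $K$ backward components equal to the single map $g$; since the two components of $z$ are distinct, no such morphism exists. Dually, postcomposition replaces each multiplicity $|J_{i'}|$ by a sum of the old multiplicities, so every morphism out of $z$ lands at objects all of whose multiplicities are even or zero. Hence the characteristic spans are not initial in $(U\downarrow\Phi)$, and $F\circ\mathrm{pr}$ is not right Kan extended from the locus of multiplicity-one spans (the relevant comma category at $z$ is empty, which would force $F(z)\simeq\ast$). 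Such objects occur for every nontrivial $\cT$, so the "collapse by product-preservation" cannot be implemented by the bookkeeping you point to.

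The limit is nevertheless $\prod_{W\in\Orbit(U)}F(W,\ast)$, but proving this requires a different input. The cleanest route: your observation that finite coproducts in $\FinT$ become products in $\Span(\FinT;\FinT,\nabla)$ also shows they remain coproducts, so this span category is semiadditive; therefore every product-preserving functor to $\Cat$ lifts canonically through $\CMon(\Cat)$, giving $\Fun^{\times}(\Span(\FinT;\FinT,\nabla),\Cat)\simeq\Fun^{\times}(\Span(\FinT;\FinT,\nabla),\CMon(\Cat))$. One then identifies $\Span(\FinT;\FinT,\nabla)$ as the free semiadditive $\infty$-category on $\cT^{\op}$ (its mapping spaces are the free commutative monoids on the sets $\Map_{\cT}(W',W)$), so that product-preserving functors into the semiadditive target $\CMon(\Cat)$ are simply functors $\cT^{\op}\to\CMon(\Cat)$, i.e.\ $\cT^{\op}$-cocartesian families. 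This is a relative of \cite[Thm.~6.5]{Exp4} rather than of \cref{lem:InertSubcategoryIsRightKanExtended}, and it is the ingredient missing from your proposal.
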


\subsection{\texorpdfstring{$T$}{T}-operadic nerve}

In this subsection, we suppose that $\cT$ is equivalent to the nerve of a $1$-category $T$. For example, by the following proposition we could take $\cT$ to be any atomic orbital $\infty$-category that admits a final object.

\begin{proposition} \label{lem:AtomicityEnforcesTruncatedness} Suppose $\cT$ is an atomic orbital $\infty$-category that admits a final object $\ast$. Then $\cT$ is equivalent to the nerve of a $1$-category.
\end{proposition}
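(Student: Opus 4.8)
The plan is to show that $\cT$ is a $1$-category by proving that all mapping spaces are discrete, i.e., that $\operatorname{Map}_{\cT}(X, Y)$ is a set for every pair of objects. Since $\cT$ is small and has a final object $\ast$, the key leverage is that the orbital hypothesis lets us form pullbacks in $\FinT = \mathrm{Fin}_{\cT}$, and the atomic hypothesis forbids non-trivial retracts. First I would record the standard reduction: an $\infty$-category is (equivalent to the nerve of) a $1$-category if and only if all its mapping spaces are $0$-truncated, so it suffices to show each $\operatorname{Map}_{\cT}(X,Y)$ is a set. By a further standard argument it is enough to show that every endomorphism space $\operatorname{Map}_{\cT}(X,X)$ is discrete (given a point $f \in \operatorname{Map}_{\cT}(X,Y)$, one can compare the component of $f$ with $\operatorname{Map}_{\cT}(X,X)$ using composition, though some care is needed; alternatively one argues directly with loop spaces), and in fact to show that the space of self-equivalences $\operatorname{Aut}_{\cT}(X)$ is discrete and that there are no higher homotopy groups based at any point.

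The heart of the matter is a finiteness/atomicity argument. Consider $X \in \cT$ and the diagonal map $X \to X \times_{\ast} X$ in $\FinT$; using orbitality, $X \times_\ast X$ exists as a finite $\cT$-set. Atomicity says $\cT$ has no non-trivial retracts, and one wants to bootstrap this to a statement about $\FinT$: a retract in $\FinT$ of an object of $\cT$ must be an orbit summand, and an idempotent self-map of an orbit must be the identity. The main step is then to analyze $\operatorname{Map}_{\FinT}(X, X)$: I would argue that the monoid $\pi_0 \operatorname{Map}_{\cT}(X,X)$ under composition has the property that every element is invertible (any endomorphism of $X$, viewed in $\FinT$, is in particular a map of finite $\cT$-sets between objects of the same "size", and atomicity prevents it from being a proper inclusion onto a summand, forcing it to be an equivalence — this is the kind of argument already invoked in the definition of the characteristic morphism, where atomicity guarantees $W \to U \times_V W$ is a summand inclusion). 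So $\operatorname{End}_{\cT}(X)$ is a group up to homotopy, i.e. $\operatorname{Map}_{\cT}(X,X) = \operatorname{Aut}_{\cT}(X)$ is a grouplike $E_1$-space, hence (the classifying space of) a group.

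To kill the higher homotopy, I would use the following idea: the group $\operatorname{Aut}_{\cT}(X)$ acts on the finite $\cT$-set $X$, and iterating, on all the finite powers $X^{\times_\ast n} \in \FinT$; but because $\cT$ is atomic, the "isotropy" is rigid — more precisely, I expect one can show that $\operatorname{Map}_{\cT}(X, X)$ retracts off a mapping space computed in the $1$-category underlying $\FinT$ between finite $\cT$-sets, or that it embeds into $\operatorname{Map}_{\FinT}(X, X^{\times_\ast n})$ for suitable $n$ in a way incompatible with non-trivial $\pi_k$. Concretely, the self-map $X \xrightarrow{(\mathrm{id}, e)} X \times_\ast X$ for a self-equivalence $e$ exhibits $X$ as a retract of $X \times_\ast X$ in $\cT$ (the first projection is a section up to homotopy after base change), and atomicity then forces something rigid about $e$; running this with the based loop space $\Omega_e \operatorname{Aut}_{\cT}(X)$ in place of a single point should show this loop space is contractible, giving $0$-truncation. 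The hard part will be making this last "rigidity kills higher homotopy" step precise — turning the purely categorical atomicity axiom into a statement that constrains the homotopy type of $\operatorname{Map}_{\cT}(X,X)$, rather than merely its set of components — since atomicity is phrased at the level of retracts (homotopy-commuting triangles) and one must propagate it coherently; I would expect the cleanest route is to exhibit $\operatorname{Map}_{\cT}(X, X)$, or a space built from it, as a summand of a mapping space in $\mathrm{Fin}_{\cT}$ that is manifestly discrete because $\cT$ is small and the relevant objects decompose into finitely many orbits, each with discrete (by induction on a suitable complexity) automorphisms.
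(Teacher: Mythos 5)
There is a genuine gap here, and you essentially flag it yourself: the step ``rigidity kills higher homotopy'' is never made precise, and the route you propose (show $\pi_0\operatorname{Map}_{\cT}(X,X)$ is a group, then try to kill higher homotopy by letting $\operatorname{Aut}_{\cT}(X)$ act on powers $X^{\times_\ast n}$) does not close. The missing ingredient is the standard recursion for truncatedness of \emph{morphisms}: a map $f\colon X \to Y$ is $n$-truncated if and only if its diagonal $X \to X\times_Y X$ is $(n-1)$-truncated (\cite[Lem.~5.5.6.15]{HTT}). Combined with the observation that, since $\ast$ is final, all mapping spaces of $\cT$ are $0$-truncated if and only if each essentially unique map $V \to \ast$ is $0$-truncated (\cite[Prop.~2.3.4.18]{HTT}), the whole proof collapses to one sentence: the diagonal $\delta\colon V \to V\times V$ in $\FinT$ (which exists by orbitality) is split by either projection, so atomicity forces $\delta$ to be a summand inclusion, hence a monomorphism, i.e.\ $(-1)$-truncated; therefore $V \to \ast$ is $0$-truncated and $\cT$ is a $1$-category. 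You actually have the right geometric object in hand --- you write down the split diagonal $X \to X\times_\ast X$ and correctly observe that atomicity makes split maps from orbits into finite $\cT$-sets summand inclusions, exactly as in the characteristic-morphism construction --- but you use it only to extract information about retracts and $\pi_0$, rather than recognizing that $(-1)$-truncatedness of the diagonal is literally equivalent to the $0$-truncatedness you want.

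Two further cautions about the parts you did write down. First, the reduction ``it is enough to show every endomorphism space is discrete'' is not justified as stated; the clean reduction goes through the final object as above, not through comparing components of $\operatorname{Map}_{\cT}(X,Y)$ with $\operatorname{Map}_{\cT}(X,X)$. Second, the claim that every endomorphism of an orbit is invertible, while plausible (and true in examples like $\OO_G$, where one gets Weyl groups), is not needed and would not suffice: knowing $\operatorname{Map}_{\cT}(X,X)$ is grouplike says nothing about its higher homotopy, which is precisely the issue. The lesson is that atomicity enters the proof as a statement about monomorphisms in $\FinT$ (via summand inclusions), and the truncation machinery of \cite{HTT} converts that $(-1)$-truncatedness statement directly into the $0$-truncatedness of mapping spaces with no further homotopical input.
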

\begin{proof}
By \cite[Prop.~2.3.4.18]{HTT} it suffices to show that the mapping spaces of $\cT$ are $0$-truncated, or equivalently that the essentially unique maps $V \to \ast$ are $0$-truncated for all $V \in \cT$. By \cite[Lem.~5.5.6.15]{HTT}, this occurs if and only if the diagonal $\delta: V \to V \times V$ in $\FinT$ is $(-1)$-truncated. But since $\cT$ is atomic and $\delta$ is split by either projection to $V$, it follows that $\delta$ is a summand inclusion and hence a monomorphism.
\end{proof}

Correspondingly, let $\FF_T$ denote the subcategory of the category of $\Set$-valued presheaves on $T$ spanned by the finite coproducts of representables, so that $\FinT \simeq N(\FF_T)$.

\begin{rem} If $T$ is a $1$-category, then the \emph{a priori} $(2,1)$-category
\[\underline{\FF}_{T,\ast}:=\Span(\FF_T^{\Delta^1}\times_{\FF_T} T, (\FF_T^{\Delta^1})^{\tdeg}, (\FF_T^{\Delta^1})^{\si})\]
is enriched in setoids and therefore equivalent to a $1$-category. Indeed, the only automorphisms of spans of the form
\[\begin{tikzcd}
    U \ar[d] & \tilde U\ar[l]\ar[d]\ar[r] & U'\ar[d]\\
    V & V'\ar[l]\ar[r,-,double] & V'
\end{tikzcd}\]
where the left square is in $(\FF_T^{\Delta^1})^{si}$ are identities. In what follows, we will implicitly make a choice of 1-categorical model for $\underline{\FF}_{T,\ast}$ by picking a representative for every equivalence class of morphisms.
\end{rem}

Our goal is to indicate how to prescribe the data of a $\cT$-$\infty$-operad in terms of the stricter data of a \emph{simplicial colored $T$-operad}, which will be defined along the lines suggested by \cref{rem:OperadicCompositionUnwinding}. To concisely state its definition, we first need to introduce some notation.

\begin{notation} Let $A\FF_T\subseteq \Ar(\FF_T)$ denote the wide subcategory of the arrow category whose morphisms are cartesian squares. Suppose that $\ob O \to \FF_T$ is a Grothendieck fibration fibered in sets, and let $\ob O(U)$ denote the fiber of $\ob O$ over $U \in \FF_T$. We then write
\[A^O\FF_T \coloneqq A\FF_T \times_{(\FF_T\times\FF_T)} (\ob O\times\ob O) \]
for the category whose objects are triples $(f:U\to V,\, x \in \ob O(U),\, y \in \ob O(V))$ and whose morphisms are cartesian squares
 \[\begin{tikzcd}
    U' \ar[d,"f'"] \ar[r,"\phi"] & U \ar[d,"f"]\\
    V' \ar[r,"\psi"] & V
 \end{tikzcd}\]
 such that $x'=\phi^\ast x$ and $y'=\psi^\ast y$. We have functors
 \begin{align*}
 1:\ob O\to A^O\FF_T , \quad &(x \in \ob O(U))\mapsto (\id_U,x,x),\\
 \circ:A^O\FF_T\times_{\ob O} A^O\FF_T\to A^O\FF_T , \quad &(f:U\to V,g:V\to W, x,y,z)\mapsto (gf:U\to W,x,z).
 \end{align*}
\end{notation}

\begin{dfn} \label{def:SimplicialColoredTOperad} A (fibrant) \emph{simplicial colored $T$-operad} $O$ is the data of
\begin{enumerate}
 \item A `$T$-set of colors', given as a Grothendieck fibration fibered in sets
  $$\ob O \to \FF_T $$
 classified by a functor $\FF_T^\op\to\Set$ preserving finite products.

 \item A collection of spaces of multimorphisms, packaged into a functor
 \[\Mul_O:(A^O\FF_T)^\op\to \sSet, \quad (f:U\to V,\, x\in\ob O(U),\, y\in \ob O(V)) \mapsto \Mul_O^f(x,y)\]
 that preserves finite products and is valued in Kan complexes.\footnote{If $V$ is an orbit, we thus obtain a $T^{/V}$-space $\underline{\Mul}_O^f(x,y): (T^{/V})^\op \to \sSet$ by precomposing $\Mul_O$ with the functor $T^{/V} \to A^O\FF_T$  over $\FF_T$ determined by $(f,x,y)$ -- indeed, one has an equivalence $(A^O\FF_T)^{/(f,x,y)} \simeq (\FF_T)^{/V}$.}

 \item A distinguished `identity' for $\Mul_O$, given by a natural transformation
 \[1:\ast\to \Mul_O^{\id_U}(x,x)\]
 of functors $(\ob O)^\op\to \sSet$, where the right hand side is the composition
 \[(\ob O)^\op\xrightarrow{1}A^O\FF_T^\op\xrightarrow{\Mul}\sSet\,.\]
 \item A `composition law' for $\Mul_O$, given by a natural transformation
 \[\circ: \Mul_O^f(x,y)\times\Mul_O^g(y,z)\to \Mul_O^{gf}(x,z)\]
 of functors $(A^O\FF_T\times_{\ob O}A^O\FF_T)^\op\to \sSet$, where the left hand side is the composition
 \[(A^O\FF_T\times_{\ob O}A^O\FF_T)^\op\to (A^O\FF_T\times A^O\FF_T)^\op\xrightarrow{\Mul\times\Mul}\sSet\times\sSet\xrightarrow{\times}\sSet\,,\]
 and the right hand side is the composition
 \[(A^O\FF_T\times_{\ob O}A^O\FF_T)^\op\xrightarrow{\circ}A^O\FF_T^\op\xrightarrow{\Mul}\sSet\,.\]
\end{enumerate}
These data are required to satisfy the following compatibilities:
\begin{itemize}
 \item \textbf{Unitality}: The compositions
 \[\Mul^f_O(x,y)\xrightarrow{(\id,1_y)} \Mul^f_O(x,y)\times\Mul^{\id_V}_O(y,y)\xrightarrow{\circ} \Mul^f_O(x,y)\]
 and
 \[\Mul^f_O(x,y)\xrightarrow{(1_x,\id)} \Mul^{\id_U}_O(x,x)\times\Mul^f_O(x,y)\xrightarrow{\circ} \Mul^f_O(x,y)\]
 are the identity natural transformation.
 \item \textbf{Associativity} The following diagram is commutative
 \[\begin{tikzcd}
    \Mul^f_O(x,y)\times \Mul^g_O(y,z)\times\Mul^h_O(z,t)\ar[r,"(\circ{,}\id)"]\ar[d,"(\id{,}\circ)"] & \Mul^{gf}_O(x,z)\times\Mul^h_O(z,t)\ar[d,"\circ"]\\
    \Mul^f_O(x,y)\times\Mul^{hg}_O(y,t)\ar[r,"\circ"] & \Mul^{hgf}_O(x,t)
 \end{tikzcd}\,.\]
\end{itemize}
\end{dfn}

\begin{construction} From the data of \cref{def:SimplicialColoredTOperad} we will build a simplicial category $O^\otimes$ over $\underline{\FF}_{T,\ast}$ as follows. We let the objects of $O^\otimes$ be the pairs $([U_+ \to V], x)$, where $[U_+ \to V]$ is an object of $\underline{\FF}_{T,\ast}$ and $x \in \ob O(U)$, and we define as mapping simplicial sets
\[\Map_{O^\otimes}(([U_+ \to V],x),([U'_+ \to V'],x')) \coloneqq \coprod_{U\xleftarrow{i}U_0\xrightarrow{f} U'} \Mul^f(i^*x,x') \]
where the coproduct is indexed by the set of all maps $[U_+ \to V]\to [U'_+ \to V']$ in $\underline{\FF}_{T,\ast}$. The identity of $([U_+ \to V],x)$ is given by $1_{(U,x)}\in\Mul^{\id_U}(x,x)$. If 
\[\begin{tikzcd}
    && U_1 \ar[ld,swap,"j'"] \ar[rd,"f'"]&&\\
    & U_0\ar[ld,swap,"i"]\ar[rd,"f"] && U'_0\ar[ld,swap,"j"]\ar[rd,"g"]\\
    U && U' && U''
\end{tikzcd}\]
is a diagram representing a composition in $\underline{\FF}_{T,\ast}$, composition over it is given by
\[\Mul^f(i^*x,x')\times \Mul^g(j^*x',x'')\to \Mul^{f'}((j')^*i^*x,j^*x')\times\Mul^g(j^*x',x'')\to \Mul^{gf'}((i j')^*x,x'')\,.\]
Verifying that this satisfies associativity and unitality is left as an exercise for the reader.
\end{construction}

\begin{prp}
 The map $N(O^\otimes)\to N(\underline{\FF}_{T,\ast}) \simeq \uFinpT$ is a $\cT$-$\infty$-operad.   
\end{prp}
\begin{proof}
 Using \cite[Prop.~2.4.1.10]{HTT} we see that the above map is an inner fibration and that its restriction over the subcategory of inert edges is a cocartesian fibration. Then remaining properties are true because $\ob O$ and $\Mul$ preserve finite products.
\end{proof}

\subsection{Model structures}

In this subsection, we introduce a model structure for $\cT$-$\infty$-operads by means of Lurie's theory of categorical patterns (\cite[Def.~B.0.19]{HA}). For a $\cT$-$\infty$-operad $\cO^\otimes$, let $\Inert \subset (\cO^\otimes)_1$ denote the subset of inert morphisms.



\begin{definition} \label{def:CategoricalPattern}
We define categorical patterns $\mathfrak{P}_{\cT}$ and $\mathfrak{P}^\otimes_{\cT}$ on $\uFinpT$ as follows. For each collection of morphisms $\underline{\alpha} = \{ \alpha_i: U_i \to V \}_{i=1}^n$ in $\cT$, let $\alpha: U = \bigsqcup_{i=1}^n U_i \xto{(\alpha_i)} V$ and define a morphism
$$f_{\alpha}: (n^{\lhd})^\sharp \to (\uFinpT, \Inert)$$
(for $n = \{ 1,...,n \}$ regarded as a discrete category) that sends the cone point $v$ to $\left[ U_+ \ra V \right]$, $i \in n$ to $\left[{U_i}_+ \ra U_i \right]$, and $v \to i$ to the characteristic morphism $\chi_{[U_i \subset U]}$. Then let $A$ be the set of the $\underline{\alpha}$ and let
\begin{align*}
\mathfrak{P}_{\cT} = (\Inert,  \All, \{ f_{\alpha}: n^{\lhd} \to \uFinpT \}_{\underline{\alpha} \in A}), \\
\mathfrak{P}^\otimes_{\cT} = (\All,  \All, \{ f_{\alpha}: n^{\lhd} \to \uFinpT \}_{\underline{\alpha} \in A}).
\end{align*} 

Furthermore, for any $\cT$-$\infty$-operad $\cO^\otimes$, we define categorical patterns $\mathfrak{P}_{\cO}$ and $\mathfrak{P}^\otimes_{\cO}$ on $\cO^\otimes$ by the construction of \cite[Rem.~B.1.5]{HA}. In other words, let $B$ denote the set of pairs $(\underline{\alpha},\overline{f_{\alpha}})$ where $\overline{f_{\alpha}}: (n^{\lhd})^\sharp \to (\cO^\otimes, \Inert)$ is any lift of $f_{\alpha}$, and let
\begin{align*}
\mathfrak{P}_{\cO} = (\Inert, \All, \{ \overline{f_{\alpha}}: n^{\lhd} \to \cO^\otimes \}_{(\underline{\alpha},\overline{f_{\alpha}}) \in B} ), \\
\mathfrak{P}^\otimes_{\cO} = (\Inert, \All, \{ \overline{f_{\alpha}}: n^{\lhd} \to \cO^\otimes \}_{(\underline{\alpha},\overline{f_{\alpha}}) \in B} ).
\end{align*} 

\end{definition}

\begin{conthm} \label{thm:ModelStructureOperads}
The \emph{$\cT$-operadic model structure} on the category $\sSet^+_{/(\uFinpT,\Inert)}$ is that defined by the categorical pattern $\mathfrak{P}_{\cT}$ of \cref{def:CategoricalPattern} according to \cite[Thm.~B.0.20]{HA}. The $\cT$-operadic model structure is left proper, combinatorial, simplicial, and has the following properties:
\begin{enumerate}
\item The cofibrations are precisely the monomorphisms.
\item A marked map $\fromto{X}{Y}$ over $(\uFinpT,\Inert)$ is a weak equivalence if for any $\cT$-$\infty$-operad $\cO^{\otimes}$, the induced map
\[\fromto{\Map_{/(\uFinpT,\Inert)}(Y,(\cO^{\otimes}, \Inert))}{\Map_{/(\uFinpT,\Inert)}}(X,(\cO^{\otimes}, \Inert))\]
is a weak equivalence.
\item An object is fibrant if it is of the form $(\cO^{\otimes}, \Ne)$ for some $\cT$-$\infty$-operad $\cO^{\otimes}$.
\item The fibrations between fibrant objects in $\sSet^+_{/(\uFinpT,\Inert)}$ are exactly given by the fibrations of $\cT$-$\infty$-operads.
\end{enumerate}

Furthermore, for any $\cT$-$\infty$-operad $\cO^{\otimes}$, we define the \emph{$\cT$-operadic model structure} on the category $\sSet^+_{/(\cO^\otimes, \Inert)}$ via the categorical pattern $\mathfrak{P}_{\cO}$, and this coincides with the model structure induced from the $\cT$-operadic model structure on $\sSet^+_{/(\uFinpT,\Inert)}$ by slicing over $(\cO^{\otimes}, \Inert)$.

Finally, we also define the \emph{$\cT$-monoidal model structure} on the category $\sSet^+_{/\cO^\otimes}$ via the categorical pattern $\mathfrak{P}^{\otimes}_{\cO}$. This construction has the same formal properties as with the $\cT$-operadic model structure, but where the fibrant objects are precisely the $\cO$-monoidal $\cT$-$\infty$-categories with the cocartesian edges marked.
\end{conthm}
\begin{proof}
The construction of the $\cT$-operadic model structure on $\sSet^+_{/(\uFinpT,\Inert)}$ and the first three claims about it follows immediately from \cite[Thm.~B.0.20]{HA} and the definition of a $\cT$-$\infty$-operad. The fourth claim and the assertion about the model structure on $\sSet^+_{/(\cO^\otimes, \Inert)}$ follow from \cite[Prop.~B.2.7]{HA}. Finally, the analogous assertions about the $\cT$-monoidal model structure all follow in the same way (cf. \cite[Var.~2.1.4.13]{HA}).
\end{proof}

\begin{dfn}
We define the \emph{$\infty$-category of (small) $\cT$-$\infty$-operads} $$\Op_{\cT} \coloneq N((\sSet^+_{/(\uFinpT,\Inert)})^f)$$ to be the simplicial nerve of the full simplicial subcategory of $\sSet^+_{/(\uFinpT,\Inert)}$ spanned by the fibrant objects in the $\cT$-operadic model structure. We further let $\Cat_{\cT}^\otimes$ denote the subcategory of $\Op_{\cT}$ spanned by the (small) $\cT$-symmetric monoidal $\cT$-$\infty$-categories and $\cT$-symmetric monoidal functors thereof, or equivalently, the simplicial nerve $N((\sSet^+_{/\uFinpT})^f)$ taken with respect to the $\cT$-monoidal model structure.

For a small $\cT$-$\infty$-operad $\cO^\otimes$, we then let
\[ \Op_{\cO,\cT} \coloneq N((\sSet^+_{/(\cO^\otimes,\Inert)})^f), \quad \Cat^\otimes_{\cO,\cT} \coloneq N((\sSet^+_{/\cO^\otimes})^f).\]
Note that $\Op_{\cO,\cT} \simeq (\Op_{\cT})^{/\cO^\otimes}$ and $\Cat^\otimes_{\cO,\cT}$ includes as the subcategory of $\Op_{\cO,\cT}$ on the $\cO$-monoidal $\cT$-$\infty$-categories and morphisms thereof.
\end{dfn}

\begin{corollary}
For any small $\cT$-$\infty$-operad $\cO^\otimes$, the $\infty$-categories $\Op_{\cO,\cT}$ and $\Cat_{\cO,\cT}^\otimes$ are presentable.
\end{corollary}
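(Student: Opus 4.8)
The plan is to deduce presentability from the general fact that the underlying $\infty$-category of a combinatorial simplicial model category is presentable, as recorded in \cite[Prop.~A.3.7.6]{HTT}.

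By \cref{thm:ModelStructureOperads}, the $\cT$-operadic model structure on $\sSet^+_{/(\cO^\otimes,\Inert)}$ (defined by the categorical pattern $\mathfrak{P}_{\cO}$) is left proper, combinatorial, and simplicial, with cofibrations precisely the monomorphisms; the $\cT$-monoidal model structure on $\sSet^+_{/\cO^\otimes}$ (defined by $\mathfrak{P}^\otimes_{\cO}$) is again left proper, combinatorial, and simplicial, with cofibrations the monomorphisms. First I would observe that, since the cofibrations are the monomorphisms, every object is cofibrant, so the full simplicial subcategory spanned by the fibrant objects coincides with the full simplicial subcategory $(-)^\circ$ of fibrant--cofibrant objects. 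Writing $\mathbf{A}$ for $\sSet^+_{/(\cO^\otimes,\Inert)}$ equipped with the $\cT$-operadic model structure and $\mathbf{B}$ for $\sSet^+_{/\cO^\otimes}$ equipped with the $\cT$-monoidal model structure, this identifies $\Op_{\cO,\cT} = N(\mathbf{A}^\circ)$ and $\Cat^\otimes_{\cO,\cT} = N(\mathbf{B}^\circ)$. Since $\mathbf{A}$ and $\mathbf{B}$ are combinatorial simplicial model categories, \cite[Prop.~A.3.7.6]{HTT} applies and shows that $\Op_{\cO,\cT}$ and $\Cat^\otimes_{\cO,\cT}$ are presentable.

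I do not anticipate a genuine obstacle here: the one point that requires attention is the identification of the fibrant objects with the fibrant--cofibrant objects, which is immediate from property (1) of \cref{thm:ModelStructureOperads}. An alternative, less uniform route for $\Op_{\cO,\cT}$ would be to first treat the case $\cO^\otimes = \uFinpT$ and then invoke the equivalence $\Op_{\cO,\cT} \simeq (\Op_{\cT})^{/\cO^\otimes}$ together with the stability of presentability under passage to over-categories (\cite[Prop.~5.5.3.10]{HTT}); but since $\Cat^\otimes_{\cO,\cT}$ does not present itself as an over-category of $\Cat^\otimes_{\cT}$, the model-categorical argument above is preferable.
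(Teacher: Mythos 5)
Your proposal is correct and follows exactly the same route as the paper: cite \cref{thm:ModelStructureOperads} for the combinatorial simplicial model structures and then apply \cite[Prop.~A.3.7.6]{HTT}. The extra observation that all objects are cofibrant (so fibrant coincides with fibrant--cofibrant) is a worthwhile point to make explicit, though the paper leaves it implicit.
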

\begin{proof}
Since the $\cT$-operadic model structure on $\sSet^+_{/(\cO^\otimes,\Inert)}$ and the $\cT$-monoidal model structure on $\sSet^+_{/\cO^\otimes}$ are combinatorial and simplicial by \cref{thm:ModelStructureOperads}, it follows from \cite[Prop.~A.3.7.6]{HTT} that $\Op_{\cO,\cT}$ and $\Cat_{\cO,\cT}^\otimes$ are presentable. 
\end{proof}

Using the theory of categorical patterns, it is easy to construct \emph{cotensors} in the $\infty$-category of $\cT$-$\infty$-operads fibered over a given base.

\begin{construction} \label{constr:OperadCotensors}
Let $\cO^\otimes$ be a $\cT$-$\infty$-operad and let $K$ be a marked simplicial set. By \cite[Prop.~B.1.9]{HA} applied to the trivial categorical pattern on $\sSet^+$ and $\mathfrak{\cP}_{\cO}$ on $\sSet^+_{/(\cO^\otimes, \Ne)}$, the functor
\[ (- \times K) :\sSet^+_{/(\cO^\otimes, \Ne)} \to \sSet^+_{/(\cO^\otimes,\Ne)}, \quad A \mapsto A \times K \]
is left Quillen. We denote its right adjoint on fibrant objects by $(\cC^\otimes,p) \mapsto ((\cC^\otimes,p)^K, p^K)$ and the underlying $\cT$-$\infty$-category by $(\cC,p)^K$. Since this adjunction is also simplicial, for $(\cC^\otimes,p)$ and $(\cD^\otimes,q)$ fibrations of $\cT$-$\infty$-operads over $\cO^\otimes$ we obtain equivalences of $\infty$-categories
\[ \Alg_{\cO, \cT}(\cC, (\cD,q)^K) \simeq \Fun_{/(\cO^\otimes, \Ne)}(K \times (\cC^\otimes, \Ne), (\cD^\otimes, \Ne)) \simeq \Fun(K, \Alg_{\cO, \cT}(\cC,\cD)^{\sim}) \]
where $(-)^{\sim}$ means we take the marking given by the equivalences. In other words, we have constructed the cotensor of $\Op_{\cO,\cT}$ over $\Cat$ at the level of marked simplicial sets. Note also that a fibrant replacement of $K \times (\cC^\otimes, \Ne)$ computes the tensor. Repeating this analysis with the categorical pattern $\mathfrak{\cP}_{\cO}^\otimes$, we see that if $\cC^\otimes$ and $\cD^\otimes$ are $\cO$-monoidal, then $(\cD^\otimes,p)^K$ is moreover $\cO$-monoidal, and we have equivalences of $\infty$-categories
\[ \Fun^\otimes_{\cO,\cT}(\cC, (\cD,q)^K) \simeq \Fun_{/\cO^\otimes}(K \times (\cC^\otimes)^\sharp, (\cD^\otimes)^\sharp) \simeq \Fun(K, \Fun^\otimes_{\cO,\cT}(\cC,\cD)^{\sim}). \]

Moreover, note that if $F: \cO \to \Cat$ denotes the functor classifying $\cD \to \cO$, then $(\cD,q)^K \to \cO$ is classified by the functor $\cO \to \Cat$ given by applying $\Fun(K, (-)^{\sim})$ fiberwise to $F$. We may thus consider $(\cD^\otimes,p)^K$ to be a construction of the \emph{pointwise} $\cO$-monoidal structure on $(\cD,q)^K$.
\end{construction}




\subsection{Big \texorpdfstring{$\cT$-$\infty$}{T-infinity}-operads} \label{subsec:bigOperads}

For certain arguments, it is technically inconvenient that the base $\cT$ does not admit pullbacks. We will thus need to consider an equivalent definition of a $\cT$-$\infty$-operad.

\begin{dfn} \label{dfn:BigFiniteTsets} Let $\Ar(\FinT)^{\tdeg}, \Ar(\FinT)^{\si}$ denote the wide subcategories of $\Ar(\FinT)$ on morphisms 
\[ \sigma = \left( \begin{tikzcd}[row sep=2em, column sep=2em]
U \ar{r} \ar{d} & X \ar{d} \\
V \ar{r} & Y
\end{tikzcd} \right) \]
such that $\ev_1(\sigma): V \to Y$ is a degenerate edge, resp. the induced morphism $U \to V \times_Y X$ is a summand inclusion. Then $(\Ar(\FinT); \Ar(\FinT)^{\si}, \Ar(\FinT)^{\tdeg})$ is a disjunctive triple, and we define
\[ \BigFinT \coloneq \Span(\Ar(\FinT); \Ar(\FinT)^{\si}, \Ar(\FinT)^{\tdeg}). \]
Evaluation at the target defines a structure map $\BigFinT \to \FinT^\op$, which is a cocartesian fibration.
\end{dfn}


\begin{definition} \label{def:BigCatPattern}
We define a categorical pattern $\wt{\mathfrak{P}}_{\cT}$ on $\BigFinT$ as follows. Let $\phi: U \to V$ be a morphism in $\FinT$ and $\Sigma = \{\sigma_1, ..., \sigma_n\}$ be a collection of commutative squares in $\FinT$
\[ \sigma_i = \left( \begin{tikzcd}[row sep=2em, column sep=2em]
U_i \ar{r}{\alpha_i} \ar{d}{\phi_i} & U \ar{d}{\phi} \\
V_i \ar{r}{\beta_i} & V
\end{tikzcd} \right) \]
such that $\alpha_i$ is a summand inclusion and the induced map $U_i \to V_i \times_V U$ is also a summand inclusion. Let $\chi_{\sigma_i}: \Delta^1 \to (\Ar(\FinT)^{\si})^\op \subset \BigFinT$ be the morphism corresponding to $\sigma_i$.

Suppose moreover that the summand inclusions $\alpha_i$ combine to yield an equivalence $\coprod_{1 \leq i \leq n} U_i \simeq U$. Let
\[ f_{\phi,\Sigma}: n^\lhd \to \BigFinT \]
denote the functor which selects the $n$ morphisms $\chi_{\sigma_1}$,  ..., $\chi_{\sigma_n}$. We then let
\[ \wt{\mathfrak{P}}_{\cT} = (\Ne, \All, \{ f_{\phi,\Sigma} \}) \]
where $\phi$ and $\Sigma$ range over all possible choices.
\end{definition}

\begin{definition} \label{def:BigModelStructure}
The \emph{$\cT$-operadic model structure} on the category $\sSet^+_{/(\BigFinT,\Ne)}$ is that defined by the categorical pattern $\wt{\mathfrak{P}}_{\cT}$ of \cref{def:BigCatPattern} according to \cite[Thm.~B.0.20]{HA}. We call the fibrant objects \emph{big $\cT$-$\infty$-operads}.

For any big $\cT$-$\infty$-operad $\widetilde{\cO}^\otimes \to \BigFinT$, we then define the \emph{$\cT$-operadic model structure} on $\sSet^+_{/(\widetilde{\cO}^\otimes,\Ne)}$ via the categorical pattern
$$\wt{\mathfrak{P}}_{\cO} = (\Ne, \All, \{ f_{x,\phi,\Sigma} \}),$$
where the $f_{x,\phi,\Sigma}: n^\lhd \to \widetilde{\cO}^\otimes$ range over all cocartesian sections of the $f_{\phi, \Sigma}$, and the $x$ in the notation denotes the value of $f_{x,\phi,\Sigma}$ on the cone point $v$.
\end{definition}


Given a big $\cT$-$\infty$-operad $\wt{\cO}^\otimes \to \BigFinT \to \FinT^\op$, we will let $\cO^\otimes \to \uFinpT \to \cT^{\op}$ denote its pullback along the inclusion $\cT^\op \subset \FinT^\op$. Clearly, $\cO^\otimes$ is a $\cT$-$\infty$-operad.

\begin{prp} \label{prp:QuillenEquivalenceOfBigAndSmall} Let $\wt{\cO}^\otimes$ be a big $\cT$-$\infty$-operad over $\BigFinT$ and consider the span
\[ \begin{tikzcd}[row sep=2em, column sep=2em]
(\wt{\cO}^\otimes, \Ne) & (\Ar^{\inert}(\wt{\cO}^\otimes) \times_{\wt{\cO}^\otimes} O^\otimes, \Ne) \ar{r}{\pr_{\cO^\otimes}} \ar{l}[swap]{\ev_0} & (\cO^\otimes, \Ne).
\end{tikzcd} \]
Then the adjunction
\[ \adjunct{(\pr_{\cO^\otimes})_! (\ev_0)^\ast}{\sSet^+_{/(\wt{\cO}^\otimes,\Ne)}}{\sSet^+_{/(\cO^\otimes,\Ne)}}{(\ev_0)_\ast (\pr_{\cO^\otimes})^\ast} \]
is a Quillen equivalence.
\end{prp}
\begin{proof} We first show that $(\pr_{\cO^\otimes})_! (\ev_0)^\ast \dashv (\ev_0)_\ast (\pr_{\cO^\otimes})^\ast$ is a Quillen adjunction. For this, it suffices to show that $ (\ev_0)_\ast (\pr_{\cO^\otimes})^\ast$ preserves fibrant objects. Examining the proof of \cite[Prop.~3.5(1)]{Exp2b}, we see that it implies 
\[ \ev_0: \Ar^{\inert}(\wt{\cO}^\otimes) \times_{\wt{\cO}^\otimes} \cO^\otimes \to \wt{\cO}^\otimes\] 
is a cartesian fibration, because any fiberwise active edge with target in $\cO^\otimes$ necessarily has source in $\cO^\otimes$. Therefore, the hypotheses of \cite[Thm.~B.4.2]{HA} are satisfied \emph{excluding those involving the maps $f_{x,\phi, \Sigma}$}. We deduce that $(\ev_0)_\ast (\pr_{\cO^\otimes})^\ast$ sends fibrant objects to fibrations over $\wt{\cO}^\otimes$ which are cocartesian over the inert edges. Given a $\cT$-$\infty$-operad $\cC^\otimes$, let $\wt{\cC}^\otimes = (\ev_0)_\ast (\pr_{\cO^\otimes})^\ast(\cC^\otimes)$. It remains to show that for every $f_{x,\phi,\Sigma}: n^\lhd \to \wt{\cO}^\otimes$,
\begin{enumerate}
	\item[(i)] the functor $n^\lhd \to \Cat$ classifying the cocartesian fibration $n^\lhd \times_{\wt{\cO}^\otimes} \wt{\cC}^\otimes$ is a limit diagram.
	\item[(ii)] For every cocartesian section $n^\lhd \to n^\lhd \times_{\wt{\cO}^\otimes} \wt{\cC}^\otimes$, the composite $n^\lhd \to \wt{\cC}^\otimes$ is a $f$-limit diagram for $f: \wt{\cC}^\otimes \to \wt{\cO}^\otimes$.	
\end{enumerate}

In fact, we only need to consider $f_{x,\phi,\Sigma}$ where $\Sigma$ is given by squares
\[ \begin{tikzcd}[row sep=2em, column sep=2em]
U_i \ar{r} \ar{d}{=} & U \ar{d}{\phi} \\
U_i \ar{r} & V
\end{tikzcd} \]

with $U_i$ an orbit, so we will suppose this in the remainder of the argument.

For (i), recall from \cite[Ex.~2.26]{Exp2} that the right Kan extension of a functor $\cC \to \Cat$ along $\cC \to \cD$ is modeled at the level of cocartesian fibrations by the push-pull construction involving the span
\[ \cD \ot \Ar(\cD) \times_{\cD} \cC \to \cC. \]
Therefore, $\wt{\cC}^\otimes_{\inert}$ is the right Kan extension of $\cC^\otimes_{\inert}$ along $\cO^\otimes_{\inert} \subset \wt{\cO}^\otimes_{\inert}$. In particular, given $x \in \wt{\cO}^\otimes$ over $[f_+: U_+ \ra V] \in \BigFinT$ and an orbit decomposition $V \simeq V_1 \coprod ... \coprod V_m$, let $x \to x_j$ be inert morphisms lifting the cocartesian morphisms $[U_+ \rightarrow V] \to [(U \times_V V_j)_+ \times V_j]$. Then we have an equivalence $\wt{\cC}^\otimes_{x} \simeq \prod_{1 \leq j \leq m} \cC^\otimes_{x_j}$, and postcomposing with the further decompositions of $\cC^\otimes_{x_j}$ given by orbit decompositions of $U \times_V V_j$ verifies (i).

For (ii), it suffices to prove that for every fiberwise active edge $\alpha: x \to y \in \wt{\cO}^\otimes$ over $[U'_+ \rightarrow V] \to [U_+ \rightarrow V]$, objects $\overline{x}$, $\overline{y} \in \wt{\cC}^\otimes$ over $x,y$, and identification $\overline{y} \simeq (\overline{y}_1,...,\overline{y}_n)$ induced by $f_{y,\phi,\Sigma}$, 
\[ \Map^{\alpha}_{\wt{\cC}^\otimes}(\overline{x},\overline{y}) \simeq \prod_{1 \leq i \leq n} \Map^{\alpha_i}_{\wt{\cC}^\otimes}(\overline{x},\overline{y}_i). \]
where $\alpha_i$ is the composition $x \to y \to y_i$.

However, for an orbit decomposition $V \simeq V_1 \coprod ... \coprod V_m$ and corresponding inert morphisms $\overline{x} \to \overline{x}_j$, $\overline{y} \to \overline{y}_j$, we have that
\[ \Map^{\alpha}_{\wt{\cC}^\otimes}(\overline{x},\overline{y}) \simeq \prod_{1 \leq j \leq m} \Map^{\alpha^j}_{C^\otimes}(\overline{x}_j,\overline{y}_j) \]
where $\alpha \simeq (\alpha^j: x_j \to y_j)$ under the same decomposition for mapping spaces in $\wt{\cO}^\otimes$. Using the known decompositions of mapping spaces in $\cC^\otimes$ then yields the claim.

Finally, it is easy to see that the induced adjunction of $\infty$-categories
\[ \adjunctb{(\Op^{\textrm{big}}_{\cT})_{/\wt{\cO}^\otimes}}{(\Op_{\cT})_{/\cO^\otimes}}  \]
is an equivalence because the unit and counit transformations are equivalences. Hence the Quillen adjunction is a Quillen equivalence.
\end{proof}

\begin{cor} \label{cor:BigVsSmallOperads} Let $\wt{\cO}^\otimes$ be a big $\cT$-$\infty$-operad over $\BigFinT$ and let $i: \cO^\otimes \to \wt{\cO}^\otimes$ denote the inclusion. Then we have a Quillen equivalence
\[ \adjunct{i_!}{\sSet^+_{/(\cO^\otimes,\Ne)}}{\sSet^+_{/(\wt{\cO}^\otimes,\Ne)}}{i^\ast} \]
\end{cor}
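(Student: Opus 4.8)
The plan is to deduce this from \cref{prp:QuillenEquivalenceOfBigAndSmall}. Write $L = (\pr_{\cO^\otimes})_! (\ev_0)^\ast$ and $R = (\ev_0)_\ast (\pr_{\cO^\otimes})^\ast$ for the left and right adjoints of the Quillen equivalence established there, and recall that since every object of the relevant slices of $\sSet^+$ is cofibrant, $\mathbf{L}L = L$ already induces an equivalence of homotopy categories. I will establish two things: (a) that $i_! \dashv i^\ast$ is a Quillen adjunction, and (b) that the composite left Quillen functor $L \circ i_! \colon \sSet^+_{/(\cO^\otimes, \Ne)} \to \sSet^+_{/(\cO^\otimes, \Ne)}$ (apply $i_!$, then $L$) is naturally weakly equivalent to the identity. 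Granting these, $\mathbf{L}(L \circ i_!) \simeq \mathbf{L}L \circ \mathbf{L}i_!$ is an equivalence of homotopy categories, and since $\mathbf{L}L$ is an equivalence it follows that $\mathbf{L}i_!$ is one; thus $i_! \dashv i^\ast$ is a Quillen equivalence.

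The main input for (b) is the ``identity arrow'' section $s \colon \cO^\otimes \to \Ar^{\inert}(\wt{\cO}^\otimes) \times_{\wt{\cO}^\otimes} \cO^\otimes$, $b \mapsto [\id_b \colon b \to b]$, which lands in the indicated fibre product precisely because $\id_b$ is inert, and which satisfies $\ev_0 \circ s = i$ and $\pr_{\cO^\otimes} \circ s = \id_{\cO^\otimes}$. Unwinding definitions, $L(i_! \cC^\otimes)$ is the marked simplicial set $\cC^\otimes \times_{\ev_0, \wt{\cO}^\otimes} (\Ar^{\inert}(\wt{\cO}^\otimes) \times_{\wt{\cO}^\otimes} \cO^\otimes)$ viewed over $\cO^\otimes$ via $\pr_{\cO^\otimes}$, and $s$ induces a natural comparison map $\iota \colon \cC^\otimes \to L(i_! \cC^\otimes)$ over $\cO^\otimes$. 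To see $\iota$ is a weak equivalence I would invoke the characterization of weak equivalences in \cref{thm:ModelStructureOperads}: it suffices that for every $\cT$-$\infty$-operad $\cD^\otimes$ over $\cO^\otimes$ the induced map of mapping spaces be an equivalence, and via the adjunctions $L \dashv R$ and $i_! \dashv i^\ast$ this identifies with postcomposition by a natural map $\epsilon_{\cD} \colon i^\ast R \cD^\otimes \to \cD^\otimes$. Finally, $\epsilon_{\cD}$ is a weak equivalence: the proof of \cref{prp:QuillenEquivalenceOfBigAndSmall} identifies $(R\cD^\otimes)_{\inert}$ with the right Kan extension of $\cD^\otimes_{\inert}$ along the fully faithful inclusion $\cO^\otimes_{\inert} \hookrightarrow \wt{\cO}^\otimes_{\inert}$, and restricting a right Kan extension along a fully faithful functor back along that functor recovers the original, so $\epsilon_{\cD}$ is the invertible counit. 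Making this last step fully rigorous — that $i^\ast R$ is a homotopy-inverse retraction on fibrant objects — is the point that demands the most care, though it is soft given the explicit description of $R$ in the proof of the proposition.

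For (a), $i_!$ manifestly preserves monomorphisms, hence cofibrations, so it remains to see it preserves trivial cofibrations; by the functoriality of Lurie's categorical-pattern model structures (\cite[\S B.2]{HA}) it is enough that $i \colon \cO^\otimes \hookrightarrow \wt{\cO}^\otimes$ be compatible with the categorical patterns $\mathfrak{P}_{\cO}$ of \cref{def:CategoricalPattern} and $\wt{\mathfrak{P}}_{\cO}$ of \cref{def:BigModelStructure}. This holds: $i$ carries inert edges to inert edges and all edges to all edges, and it carries each generating diagram $\overline{f_{\alpha}} \colon n^\lhd \to \cO^\otimes$ of $\mathfrak{P}_{\cO}$ to a generating diagram $f_{x, \phi, \Sigma}$ of $\wt{\mathfrak{P}}_{\cO}$ — one takes $\phi$ to be the structure map $U \to V$ of the image of the cone point and $\Sigma$ the squares cut out by an orbit decomposition of $U$, exactly the diagrams to which the proof of \cref{prp:QuillenEquivalenceOfBigAndSmall} reduces. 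Hence $i_!$ is left Quillen, and the argument is complete.
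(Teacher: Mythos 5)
Your argument is correct and is essentially the paper's: the paper likewise first notes that $i$ is compatible with the two categorical patterns (so $i_! \dashv i^\ast$ is a Quillen adjunction) and then deduces the equivalence from \cref{prp:QuillenEquivalenceOfBigAndSmall} by a two-out-of-three argument, phrasing the key step as ``the derived $i^\ast$ is left adjoint to the derived $(\ev_0)_\ast (\pr_{\cO^\otimes})^\ast$,'' which is equivalent to your assertion that $\mathbf{L}L \circ \mathbf{L}i_! \simeq \id$. For the step you flag as delicate, the identity-section comparison $\iota$ can be shown to be a marked homotopy equivalence directly, by constructing an explicit homotopy inverse from a cocartesian pushforward exactly as in \cref{lem:IdentifyingLeftAdjointOfCoinduction}, which sidesteps the mapping-space reduction and the counit analysis entirely.
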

\begin{proof} $i$ is obviously compatible with the categorical patterns defining the model structures in the sense of \cite[B.2.8]{HA}, so $i_! \dashv i^\ast$ is a Quillen adjunction. Moreover, at the level of the underlying $\infty$-categories $i^\ast$ is left adjoint to the right Quillen functor of \cref{prp:QuillenEquivalenceOfBigAndSmall}. Hence the Quillen adjunction is a Quillen equivalence.
\end{proof}

\subsection{Monoidal envelopes}

In this subsection, we apply the theory of parametrized factorization systems \cite[\S 3]{Exp2b} to construct the $\cO$-monoidal envelope of any fibration of $\cT$-$\infty$-operads $\cC^{\otimes} \to \cO^{\otimes}$. First recall the notion of $\cT$-factorization system from \cite[Def.~3.1]{Exp2b} and the associated `total' factorization system of \cite[Def.~3.2]{Exp2b}.

\begin{exm} \label{exm:InertActiveFactorizationSystem}
We have the inert-active $\cT$-factorization system on $\uFinpT$ given fiberwise by the inert-active factorization system on $\FinpT[V]$ as in \cref{rem:FiberwiseInertActive}. Note that the definition of (possibly non-fiberwise) inert and active edges in $\uFinpT$ given initially in \cref{def:InertActiveEdges} then matches that of \cite[Def.~3.2]{Exp2b}. More generally, we have the inert-active $\cT$-factorization system on any $\cT$-$\infty$-operad $\cO^\otimes$. From this, we obtain the inert-fiberwise active factorization system on $\cO^\otimes$ itself.
\end{exm}

\begin{rem} \label{lem:InertEdgesCancellative} The inert edges in $\uFinpT$ and $\BigFinT$ satisfy the following right cancellation property: if we have a $2$-simplex
\[ \begin{tikzcd}[row sep=2em,column sep=2em]
& x_1 \ar{rd}{g} & \\
x_0 \ar{ru}{f} \ar{rr}{h} & & x_2
\end{tikzcd} \]
such that $f$ is inert, then $g$ is inert if and only if $h$ is inert. The `only if' direction is clear. To see the converse, note that by factoring $f$ as a cocartesian edge followed by a fiberwise inert edge, we may suppose that $f$ is of either form. Then by examining the composition of spans, we see that the claimed assertion reduces to the two-out-of-three property for equivalences in $(\FF_{\cT})^{/U}$, where $x_2$ lies over $U$.

In contrast, the active edges \emph{do not} satisfy the left cancellation property, because cocartesian edges do not. However, fiberwise active edges $\emph{do}$ satisfy the left cancellation property, just as they do in the theory of $\infty$-operads.
\end{rem}

\begin{ntn}
Given a $\cT$-$\infty$-operad $\cO^\otimes$, let $\Ar^\act(\cO^\otimes)$ denote the full $\cT$-subcategory of $\Ar(\cO^\otimes)$ on the active morphisms, and let
$$\Ar^\act_{\cT}(\cO^\otimes) = \cT^\op \times_{\Ar(\cT^\op)} \Ar^\act(\cO^\otimes).$$
\end{ntn}

\begin{dfn} \label{dfn:envelope}  Given a fibration of $\cT$-$\infty$-operads $p: \cC^\otimes \to \cO^\otimes$, the \emph{$\cO$-monoidal envelope} of $p$ is
$$\Env_{\cO,\cT}(\cC)^\otimes \coloneq \cC^\otimes \times_{\cO^\otimes} \Ar^\act_{\cT}(\cO^\otimes) \to \cO^\otimes.$$

If $\cO^\otimes = \uFinpT$, we will abbreviate $\Env_{\cO,\cT}(\cC)^\otimes$ as $\Env_{\cT}(\cC)^\otimes$ and refer to it as the \emph{$\cT$-symmetric monoidal envelope} of $\cC^\otimes$.
\end{dfn}

\begin{rem} For a $\cT$-$\infty$-operad $\cC^\otimes$, the underlying $\cT$-$\infty$-category of $\Env_{\cT}(\cC)^\otimes$ is $\cC^\otimes_{\act}$.
\end{rem}


\begin{prp} \label{prp:EnvelopeCocartesian} Let $p: \cC^\otimes \to \cO^\otimes$ be a fibration of $\cT$-$\infty$-operads. Then $\Env_{\cO,\cT}(\cC)^\otimes$ is a $\cO$-monoidal $\cT$-$\infty$-category.
\end{prp}
\begin{proof} We need to show that
\[ \ev_1: \cC^\otimes \times_{\cO^\otimes} \Ar^\act_{\cT}(\cO^\otimes) \to \cO^\otimes\]
is a cocartesian fibration of $\cT$-$\infty$-operads. By \cite[Prop.~3.5(2)]{Exp2b}, $\ev_1$ is a cocartesian fibration. We now seek to verify the criterion of \cref{prop:SimplifyOMonoidalDef} to finish the proof. Because $\cO^\otimes$ is a $\cT$-$\infty$-operad, for any object $y \in \cO^\otimes_{[U_+ \rightarrow V]}$, orbit decomposition $U \simeq U_1 \coprod ... \coprod U_n$, and inert edges $\rho^i: y \to y_i$ lifting the characteristic morphisms $\chi_{[U_i \subset U]}$, the $\rho^i$ induce an equivalence
\[ ((\cO^\otimes_{\act})_V)^{/y} \simeq \prod_{1 \leq i \leq n} ((\cO^\otimes_{\act})_{U_i})^{/y_i}. \]
Using that $\cC^\otimes \to \cO^\otimes$ is a fibration of $\cT$-$\infty$-operads, we have the further equivalence
\[ (\cC^\otimes_{\act})_V \times_{(\cO^\otimes_{\act})_V} ((\cO^\otimes_{\act})_V)^{/y} \simeq \prod_{1 \leq i \leq n} (\cC^\otimes_{\act})_{U_i} \times_{(\cO^\otimes_{\act})_{U_i}} ((\cO^\otimes_{\act})_{U_i})^{/y_i}. \]

Using that the fiberwise active edges are left cancellative, we identify the lefthand side with $(\cC^\otimes \times_{\cO^\otimes} \Ar^\act_{\cT}(\cO^\otimes))_y$, and similarly for the righthand side. The stated equivalence is then the one induced by the Segal maps, and we conclude that $\ev_1: \cC^\otimes \times_{\cO^\otimes} \Ar^\act_{\cT}(\cO^\otimes) \to \cO^\otimes$ is a cocartesian fibration of $\cT$-$\infty$-operads.
\end{proof}

\begin{prp} \label{prp:UniversalPropertyMonoidalEnvelope} Let $p: \cC^\otimes \to \cO^\otimes$ be a fibration of $\cT$-$\infty$-operads and let $q: \cD^\otimes \to \cO^\otimes$ be a cocartesian fibration of $\cT$-$\infty$-operads. Let $i: \cC^\otimes \subset \Env_{\cO,\cT}(\cC)^\otimes$ denote the inclusion of $\cC^\otimes$ into its $\cO$-monoidal envelope.
\begin{enumerate}
\item Precomposition by $i$ yields an equivalence
\[ i^\ast: \Fun^\otimes_{\cO,\cT}(\Env_{\cO,\cT}(\cC), \cD) \to \Alg_{\cO,\cT}(\cC,\cD). \]
\item We have an adjunction
\[ \adjunct{i_!}{\Alg_{\cO,\cT}(\cC,\cD)}{\Alg_{\cO,\cT}(\Env_{\cO,\cT}(\cC),\cD)}{i^\ast} \]\
where $i_!$ is the fully faithful inclusion of $\Fun^\otimes_{\cO,\cT}(\Env_{\cO,\cT}(\cC), \cD)$ under the equivalence of (1).
\end{enumerate}
\end{prp}
\begin{proof} This follows immediately from \cite[Thm.~3.6]{Exp2b}, using the inert-active $\cT$-factorization system on $\cO^\otimes$.
\end{proof}

\begin{cor} Let $\cO^\otimes$ be a $\cT$-$\infty$-operad. We have an adjunction
\[ \adjunct{\Env^\otimes_{\cO,\cT}}{\Op_{\cO,\cT}}{\Cat^\otimes_{\cO,\cT}}{\mathrm{U}}. \]
\end{cor}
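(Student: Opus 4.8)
The goal is to produce an adjunction $\Env^\otimes_{\cO,\cT} \dashv \mathrm{U}$ between $\Op_{\cO,\cT}$ and $\Cat^\otimes_{\cO,\cT}$, where $\mathrm{U}$ is the forgetful functor regarding an $\cO$-monoidal $\cT$-$\infty$-category as a $\cT$-$\infty$-operad. The strategy is to upgrade the pointwise data assembled in \cref{prp:EnvelopeCocartesian} and \cref{prp:UniversalPropertyMonoidalEnvelope} to a genuine adjunction of $\infty$-categories, which I would do most cleanly at the level of model categories using \cref{thm:ModelStructureOperads}. First I would observe that the assignment $\cC^\otimes \mapsto \Env_{\cO,\cT}(\cC)^\otimes = \cC^\otimes \times_{\cO^\otimes} \Ar^{\act}_{\cT}(\cO^\otimes)$ is visibly functorial in $\cC^\otimes$ and preserves monomorphisms (being a pullback along a fixed map), and that the further map $\Ar^{\act}_{\cT}(\cO^\otimes) \to \cO^\otimes$ given by $\ev_0$ (source evaluation) makes $\Ar^{\act}_{\cT}(\cO^\otimes)$ into an object over $\cO^\otimes$; pulling back along $\ev_1$ and marking the $\ev_1$-cocartesian edges, we get a left Quillen functor $\sSet^+_{/(\cO^\otimes,\Ne)} \to \sSet^+_{/\cO^\otimes}$ (into the $\cT$-monoidal model structure), whose value on fibrant objects computes $\Env_{\cO,\cT}(\cC)^\otimes$ with its cocartesian marking by \cref{prp:EnvelopeCocartesian}. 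Its right adjoint is pullback along $\ev_1$ composed with pushforward along $\ev_0$; that this is a Quillen adjunction is the categorical-patterns bookkeeping one checks via \cite[Thm.~B.4.2]{HA}, exactly as in the proof of \cref{prp:QuillenEquivalenceOfBigAndSmall}, and the identification of the derived right adjoint with $\mathrm{U}$ follows because on fibrant objects the $\ev_0$-pushforward of $\ev_1^\ast \cD^\otimes$ is (up to equivalence) just $\cD^\otimes$ itself — a cocartesian fibration over $\cO^\otimes$ is already fibrant for $\mathfrak{P}_{\cO}$, and one uses \cref{lem:InertEdgesCancellative} together with the fact that fiberwise active edges are left cancellative to see no new data is introduced.

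Alternatively — and perhaps more transparently — I would deduce the adjunction directly from \cref{prp:UniversalPropertyMonoidalEnvelope}(1) as an instance of the following general principle: a functor $L \colon \cE \to \cF$ between $\infty$-categories is left adjoint to $R \colon \cF \to \cE$ as soon as one exhibits, functorially in $c \in \cE$ and $d \in \cF$, an equivalence $\Map_{\cF}(Lc, d) \simeq \Map_{\cE}(c, Rd)$. Here $\cE = \Op_{\cO,\cT}$, $\cF = \Cat^\otimes_{\cO,\cT}$, $L = \Env^\otimes_{\cO,\cT}$, $R = \mathrm{U}$; the mapping spaces in $\cF$ are the cores of $\Fun^\otimes_{\cO,\cT}(-,-)$, those in $\cE$ the cores of $\Alg_{\cO,\cT}(-,-)$, and \cref{prp:UniversalPropertyMonoidalEnvelope}(1) provides precisely the required equivalence $\Fun^\otimes_{\cO,\cT}(\Env_{\cO,\cT}(\cC),\cD) \xto{\sim} \Alg_{\cO,\cT}(\cC,\cD)$. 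The content of turning this into an honest adjunction is (a) naturality of $i^\ast$ in both variables, which is clear from the construction of the inclusion $i \colon \cC^\otimes \subset \Env_{\cO,\cT}(\cC)^\otimes$ via the inert–active $\cT$-factorization system, and (b) the observation that $\Env^\otimes_{\cO,\cT}$ is itself functorial on $\Op_{\cO,\cT}$, with the monoidal structure on the envelope produced by \cref{prp:EnvelopeCocartesian}. One can then invoke \cite[Prop.~5.2.2.8 or 5.2.4.2]{HTT} (adjunctions from unit transformations / from an equivalence of mapping spaces), taking the unit to be $i$, to conclude.

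The main obstacle I expect is not any single hard lemma but the coherence bookkeeping: \cref{prp:UniversalPropertyMonoidalEnvelope} is phrased for a fixed pair $(\cC,\cD)$, so one must promote its equivalence to a natural equivalence of functors $\Op_{\cO,\cT}^{\op} \times \Cat^\otimes_{\cO,\cT} \to \Cat$, and likewise check that $\Env^\otimes_{\cO,\cT}$ is a well-defined functor (and not merely an object-level assignment) landing in $\Cat^\otimes_{\cO,\cT}$ rather than just $\Op_{\cO,\cT}$. The model-categorical route sidesteps the first of these by producing the natural equivalence from a simplicial Quillen adjunction directly; the cost is verifying the categorical-pattern compatibility of the span $(\cO^\otimes,\Ne) \xleftarrow{\ev_1} \Ar^{\act}_{\cT}(\cO^\otimes)\times_{\cO^\otimes}(-) \xrightarrow{\ev_0} \cO^\otimes$, for which one reuses, essentially verbatim, the argument pattern of \cref{prp:QuillenEquivalenceOfBigAndSmall}. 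I would therefore present the proof in the model-categorical style, stating the left Quillen functor, citing \cite[Thm.~B.4.2]{HA} for the Quillen adjunction, and invoking \cref{prp:EnvelopeCocartesian}, \cref{prp:UniversalPropertyMonoidalEnvelope}, and \cref{prp:EnvelopeCocartesian}'s identification of the cocartesian marking to pin down the two adjoint functors on fibrant–cofibrant objects as $\Env^\otimes_{\cO,\cT}$ and $\mathrm{U}$.
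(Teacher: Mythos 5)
Your second route is the paper's own: the corollary is stated without separate proof precisely because it is the assertion that the object-level equivalence $i^\ast\colon \Fun^\otimes_{\cO,\cT}(\Env_{\cO,\cT}(\cC),\cD) \xto{\sim} \Alg_{\cO,\cT}(\cC,\cD)$ of \cref{prp:UniversalPropertyMonoidalEnvelope}(1), with unit the inclusion $i\colon \cC^\otimes \to \Env_{\cO,\cT}(\cC)^\otimes$ (which is strictly natural at the level of marked simplicial sets, since the envelope is a pullback along a fixed map), assembles into an adjunction via the unit-transformation criterion. So that half of your proposal is complete and correct. The model-categorical route you say you would actually write up is also viable but contains a slip and an over-optimistic citation: the envelope is $\cC^\otimes \times_{p,\cO^\otimes,\ev_0} \Ar^{\act}_{\cT}(\cO^\otimes)$ with structure map $\ev_1$, so the left adjoint is $(\ev_1)_!\,(\ev_0)^\ast$ and the right adjoint $(\ev_0)_\ast\,(\ev_1)^\ast$ — you have the two evaluation maps reversed in both places you write the span. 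Moreover, \cref{prp:QuillenEquivalenceOfBigAndSmall} and \cref{thm:OperadicCoinduction} verify the hypotheses of \cite[Thm.~B.4.2]{HA} for spans built from $\Ar^{\inert}$, where $\ev_0$ is a cartesian/flat fibration for reasons specific to inert edges; the corresponding checks for the \emph{active}-arrow span are not the same computation and are not done anywhere in the paper, so "reuse essentially verbatim" undersells the work. Given that the universal-property route is both shorter and already fully supported by \cref{prp:EnvelopeCocartesian} and \cref{prp:UniversalPropertyMonoidalEnvelope}, I would lead with it rather than the Quillen-adjunction argument.
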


\subsection{Subcategories and localization}

Let $\cC^\otimes \to \cO^\otimes$ be a fibration of $\cT$-$\infty$-operads. Given a full $\cT$-subcategory $\cD \subset \cC$, let $\cD^\otimes$ be the full $\cT$-subcategory of $\cC^\otimes$ on the objects of $\cD$ (using the Segal decompositions of the fibers of $\cC^\otimes$). Clearly, $\cD^\otimes \to \cO^\otimes$ is a fibration of $\cT$-$\infty$-operads, and the inclusion $\cD^\otimes \to \cC^\otimes$ is a morphism of $\cT$-$\infty$-operads over $\cO^\otimes$. In this subsection, we state conditions under which $\cD^\otimes$ inherits an $\cO$-monoidal structure from $\cC^\otimes$. Our presentation of these results parallels and extends those of \cite[\S 2.2.1]{HA}.

\begin{prp} Let $\cC^\otimes \to \cO^\otimes$ be a fibration of $\cT$-$\infty$-operads and let $\cD \subset \cC$ be a full $\cT$-subcategory. Suppose that for every fiberwise active edge $\alpha: x \to y$ in $\cO^\otimes_V$ with $y \in \cO_V$, the pushforward functor $\otimes_\alpha: \cC^\otimes_x \to \cC_y$ restricts to a functor $\cD^\otimes_x \to \cD_y$. Then $\cD^\otimes \to \cO^\otimes$ is a cocartesian fibration and the inclusion $\cD^\otimes \to \cC^\otimes$ is an $\cO$-monoidal $\cT$-functor.
\end{prp}
\begin{proof} This is immediate in light of the inert-fiberwise active factorization system on $\cC^\otimes$ (\cref{exm:InertActiveFactorizationSystem}).
\end{proof}

We say that a $\cT$-functor $L: \cC \to \cC$ is a \emph{$\cT$-localization} if for every object $V \in \cT$, $L_V$ is a localization functor. If we let $\cD$ denote the essential image of $L$, then by \cite[7.3.2.6]{HA} we have a $\cT$-adjunction
\[ \adjunct{L}{\cC}{\cD}{R} \]
where $R: \cD \to \cC$ is the inclusion. Given a $\cT$-localization $L: \cC \to \cC$, a morphism in $\cC$ is an \emph{$L$-equivalence} if it lies in a fiber $\cC_V$ and is a $L_V$-equivalence. Similarly, given a $\cT$-$\infty$-operad $\cC^\otimes$, a morphism in $\cC^\otimes$ is an \emph{$L$-equivalence} if it lies entirely in a fiber $\cC^\otimes_{[U_+ \rightarrow V]}$ and is a product of $L$-equivalences under the Segal decomposition of that fiber.

\begin{thm} \label{thm:CompatibleLocalization} Let $\cO^\otimes$ be a $\cT$-$\infty$-operad and $\cC^\otimes \to \cO^\otimes$ an $\cO$-monoidal $\cT$-$\infty$-category. Let $L: \cC \to \cC$ be a $\cT$-localization and let $\cD \subset \cC$ be its essential image. Suppose that for every fiberwise active edge $\alpha: x \to y \in \cO^\otimes_V$ with $y \in \cO_V$, the pushforward functor $\otimes_{\alpha}: \cC^\otimes_x \to \cC_y$ preserves $L$-equivalences. Then we have a relative adjunction over $\cO^\otimes$
\[ \adjunct{L^\otimes}{\cC^\otimes}{\cD^\otimes}{R^\otimes} \]
with $L^\otimes$ an $\cO$-monoidal functor (i.e., preserving cocartesian edges over $\cO^\otimes$) and $R^\otimes$ a lax $\cO$-monoidal functor (i.e., a morphism of $T$-$\infty$-operads), which prolongs the $\cT$-adjunction $\adjunct{L}{\cC}{\cD}{R}$.
\end{thm}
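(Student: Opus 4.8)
The plan is to mimic the construction of reflective subcategories of symmetric monoidal $\infty$-categories in \cite[\S 2.2.1]{HA}, exhibiting $\cD^\otimes$ as a reflective $\cT$-subcategory of $\cC^\otimes$ relative to $\cO^\otimes$. Here $\cD^\otimes \subset \cC^\otimes$ denotes the full $\cT$-subcategory on the objects whose Segal components all lie in $\cD$, which as noted at the start of this subsection is already a fibration of $\cT$-$\infty$-operads over $\cO^\otimes$; since $R^\otimes \coloneqq i \colon \cD^\otimes \hookrightarrow \cC^\otimes$ is the inclusion of a full $\cT$-suboperad, it carries inert edges to inert edges and is therefore lax $\cO$-monoidal. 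The work is to produce the relative left adjoint $L^\otimes$ and to verify that it preserves cocartesian edges over $\cO^\otimes$; throughout, mapping spaces may be replaced by their $\cT$-enhancements with no change to the argument.

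First I would write down the candidate unit. Given $x \in \cC^\otimes_{[U_+ \to V]}$, write $x = (x_W)_{W \in \Orbit(U)}$ under the Segal equivalence $\cC^\otimes_{[U_+ \to V]} \simeq \prod_{W} \cC_W$ of \cref{prp:SegalCondition}, set $L^\otimes x \coloneqq (L_W x_W)$, and let $\eta_x \colon x \to L^\otimes x$ be the product of the local units $x_W \to L_W x_W$. By construction $\eta_x$ is an $L$-equivalence, so it lies in a fiber and $q(\eta_x)$ is an equivalence for $q \colon \cC^\otimes \to \cO^\otimes$. The key claim is that each $\eta_x$ is $\cD^\otimes$-initial relative to $q$: for every $z \in \cD^\otimes$ and every $\phi \colon q(x) \to q(z)$, precomposition with $\eta_x$ induces an equivalence $\Map^{\phi}_{\cC^\otimes}(L^\otimes x, z) \xrightarrow{\sim} \Map^{\phi}_{\cC^\otimes}(x, z)$. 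Granting this, the relative adjoint functor criterion (in the relative-over-$\cO^\otimes$ and $\cT$-parametrized form; cf.\ \cite[7.3.2.6]{HA} and \cite{Exp2b}) yields the relative adjunction $\adjunct{L^\otimes}{\cC^\otimes}{\cD^\otimes}{R^\otimes}$ over $\cO^\otimes$ with unit $\eta$.

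To prove the claim, use that $\cC^\otimes \to \cO^\otimes$ is a cocartesian fibration to rewrite $\Map^{\phi}_{\cC^\otimes}(x, z) \simeq \Map_{\cC^\otimes_{q(z)}}(\phi_! x, z)$, and similarly for $L^\otimes x$, where $\phi_!$ is cocartesian pushforward. Decomposing $\cC^\otimes_{q(z)}$ via \cref{prp:SegalCondition} and using that every Segal component of $z$ is local, the claim reduces to showing that the map $L^\otimes(\phi_! x) \to L^\otimes(\phi_! L^\otimes x)$ induced by $\phi_!(\eta_x)$ is an equivalence, which holds as soon as $\phi_!$ preserves $L$-equivalences. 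To see the latter I would factor $\phi$ as an inert edge followed by a fiberwise active edge (the inert-fiberwise active factorization of \cref{exm:InertActiveFactorizationSystem}): pushforward along the inert part is a composite of restrictions along maps of $\cT$ and of Segal projections, all of which preserve $L$-equivalences because $L$ is a $\cT$-localization; pushforward along the fiberwise active part is, after a Segal decomposition of its target into orbits, a product of the norm functors $\otimes_\alpha$ of the hypothesis, which preserve $L$-equivalences by assumption. This proves the claim.

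It remains to check that $L^\otimes$ is $\cO$-monoidal. For $\phi \colon a \to b$ in $\cO^\otimes$ and $y \in \cD^\otimes_a$, the composite $y \to \phi_! y \xrightarrow{\eta} L^\otimes \phi_! y$ is $q$-cocartesian as an edge of $\cD^\otimes$: for $z \in \cD^\otimes$, fullness of $\cD^\otimes$ together with the relative initiality of $\eta$ and the cocartesianness of $y \to \phi_! y$ in $\cC^\otimes$ give $\Map^{\psi}_{\cD^\otimes}(L^\otimes \phi_! y, z) \simeq \Map^{\psi}_{\cC^\otimes}(\phi_! y, z) \simeq \Map^{\psi \phi}_{\cD^\otimes}(y, z)$. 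Hence $\cD^\otimes \to \cO^\otimes$ is a cocartesian fibration — and, being already a fibration of $\cT$-$\infty$-operads, is $\cO$-monoidal — with pushforward along $\phi$ computed by $L^\otimes \circ \phi_! \circ i$. Applying $L^\otimes$ to a $\cC^\otimes$-cocartesian edge $x \to \phi_! x$ gives $L^\otimes x \to L^\otimes(\phi_! x)$, which coincides with the $\cD^\otimes$-cocartesian lift $L^\otimes x \to L^\otimes \phi_!(L^\otimes x)$ because $\phi_!(\eta_x)$ is an $L$-equivalence; thus $L^\otimes$ preserves cocartesian edges over $\cO^\otimes$, hence also over $\cT^\op$, so it is a $\cT$-functor and $L^\otimes \dashv R^\otimes$ is a $\cT$-relative adjunction over $\cO^\otimes$. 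Restricting $\eta$ to the underlying $\cT$-$\infty$-categories recovers the unit of $\adjunct{L}{\cC}{\cD}{R}$, so the adjunction prolongs the given one. I expect the main obstacle to be the $L$-equivalence-preservation step in the needed generality: the hypothesis only concerns fiberwise active edges with target in $\cO$, and one must upgrade it to arbitrary morphisms of $\cO^\otimes$ via the inert-fiberwise active factorization and the Segal decompositions, keeping track of $\cT$-functoriality throughout.
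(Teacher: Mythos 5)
Your argument is correct, and at its core it runs on the same engine as the paper's proof: the inert--fiberwise-active factorization system together with the Segal decompositions of \cref{prp:SegalCondition} are used to upgrade the hypothesis (preservation of $L$-equivalences by $\otimes_\alpha$ for fiberwise active $\alpha$ with target in $\cO_V$) to preservation of $L$-equivalences by pushforward along an arbitrary morphism of $\cO^\otimes$. The difference is one of packaging: the paper stops there and feeds this verification into the general criterion of \cite[Prop.~D.7]{BACHMANN2021} for fiberwise reflective localizations of a cocartesian fibration, whereas you reprove that criterion by hand in the style of \cite[\S 2.2.1]{HA} --- constructing the unit componentwise, checking relative initiality of $\eta_x$ against local objects via $\Map^{\phi}_{\cC^\otimes}(x,z)\simeq \Map_{\cC^\otimes_{q(z)}}(\phi_!x,z)$, and then deducing both the cocartesianness of $\cD^\otimes\to\cO^\otimes$ and the $\cO$-monoidality of $L^\otimes$ from the fact that $\phi_!(\eta_x)$ is an $L$-equivalence. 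What your route buys is self-containedness (no appeal to an external appendix); what the paper's route buys is brevity, since the only genuinely new content in the parametrized setting is exactly the reduction step you also identify as the main obstacle. One small point worth making explicit in your step on inert edges: the reason restriction along maps of $\cT$ preserves $L$-equivalences is that $L$ is a $\cT$-functor, so $f^\ast L_V \simeq L_W f^\ast$.
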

\begin{proof} This is immediate from the inert-fiberwise active factorization system on $\cC^\otimes$ (\cref{exm:InertActiveFactorizationSystem}) together with the criterion of \cite[Prop.~D.7]{BACHMANN2021}.
\end{proof}

\begin{rem} \label{rem:SMClocalization} In the case where $\cO^\otimes = \uFinpT$, the criterion of \cref{thm:CompatibleLocalization} amounts to
\begin{enumerate} \item For every object $V \in \cT$ and $Z \in \cC_V$, the functor 
\[ - \otimes Z: \cC_V \to \cC_V \]
preserves $L_V$-equivalences.
\item For every morphism $f: V \to W$ in $\cT$, the norm functor
\[ f_{\otimes}: \cC_V \to \cC_W \]
sends $L_V$-equivalences to $L_W$-equivalences.
\end{enumerate}
\end{rem}

\section{Parametrized Day convolution}

In this section, we construct a (partially-defined) internal hom for $\cT$-$\infty$-operads fibered over an arbitrary base $\cT$-$\infty$-operad $\cO^\otimes$: the \emph{$\cT$-Day convolution}. We first introduce the notion of an \emph{$\cO$-promonoidal $\cT$-$\infty$-category} $\cC^\otimes$, which is the analogue of a flat categorical fibration\footnote{Some authors also call this an exponentiable fibration to highlight its key property: a categorical fibration $\pi: \cC \to \cD$ is said to be \emph{flat} if the right adjoint $\pi_\ast$ to the pullback functor $\pi^\ast: \Cat_{/\cD} \to \Cat_{/\cC}$ exists.} in the context of $\cT$-$\infty$-operads. The $\cO$-promonoidal condition ensures the existence of the \emph{$p$-operadic coinduction} functor (\cref{cor:OperadicCoinduction2}), and given a $\cO$-promonoidal $\cT$-$\infty$-category $p: \cC^\otimes \to \cO^\otimes$ and any fibration $\cE^\otimes \to \cO^\otimes$ of $\cT$-$\infty$-operads, we may then use the $p$-operadic coinduction on the pullback of $\cE^\otimes$ over $\cC^\otimes$ to construct the \emph{$\cT$-Day convolution} (\cref{def:OperadicCoinductionAndDayConvolution})
\[ \wt{\Fun}_{\cO,\cT}(\cC,\cE)^\otimes \to \cO^\otimes. \]

We then state conditions on $\cE^\otimes$ under which $\wt{\Fun}_{\cO,\cT}(\cC,\cE)^\otimes$ is $\cO$-monoidal -- these amount to the existence of certain $\cT$-colimits as well as $\cT$-distributivity of the tensor product (\cref{thm:DayConvolutionCocartesian}).

\subsection{\texorpdfstring{$\cO$}{O}-promonoidal \texorpdfstring{$\cT$-$\infty$}{T-infinity}-categories and \texorpdfstring{$\cT$}{T}-Day convolution} \label{subsec:DayConvolution}

\begin{definition} \label{def:PromonoidalCategory}
Let $p: \cC^\otimes \to \cO^\otimes$ be a fibration of $\cT$-$\infty$-operads. We say that $\cC^\otimes$ is \emph{$\cO$-promonoidal} if for every $V \in \cT$, the functor $p_V: (\cC^\otimes_V)_{\act} \to (\cO^\otimes_V)_{\act}$ is a flat categorical fibration.
\end{definition}

\begin{example}
Suppose $(\cC^\otimes,p)$ is a $\cO$-monoidal $\cT$-$\infty$-category, so that $p$ is a cocartesian fibration. $\cC^\otimes$ is then $\cO$-promonoidal since cocartesian fibrations are flat \cite[Ex.~B.3.4]{HA}.
\end{example}

\begin{rem}
To understand the relevance of the $\cO$-promonoidal condition, the reader may find it useful to first review the nonparametrized story from \cite[\S 10]{Exp2b}. To our knowledge, the correct definition of an $\cO$-promonoidal $\infty$-category first appeared in Hinich's work \cite{HINICH2020107129} (and was misstated in \cite{BarwickGlasmanShah}).
\end{rem}

For technical reasons, to construct the $\cT$-Day convolution we will first work in the setting of big $\cT$-$\infty$-operads (i.e., so that the base is $\FinT^\op$ in place of $\cT^\op$). Let $\Ar^{\inert}(\wt{\cO}^\otimes)$ be notation for the full subcategory of $\Ar(\wt{\cO}^\otimes)$ on the inert edges. 

\begin{thm} \label{thm:OperadicCoinduction} Let $\wt{\cO}^\otimes$ and $\wt{\cC}^\otimes$ be big $\cT$-$\infty$-operads over $\BigFinT$ and let $p:\wt{\cC}^{\otimes} \to \wt{\cO}^{\otimes}$ be a fibration of big $\cT$-$\infty$-operads such that the restriction $p':\cC^\otimes \to \cO^\otimes$ is $\cO$-promonoidal. Consider the span of marked simplicial sets
\[ \begin{tikzcd}[row sep=2em, column sep=2em]
(\wt{\cO}^{\otimes},\Ne) & (\Ar^{\inert}(\wt{\cO}^\otimes) \times_{\wt{\cO}^\otimes} \wt{\cC}^\otimes,\Ne) \ar{l}[swap]{\ev_0} \ar{r}{\pr_{\wt{\cC}^\otimes}} & (\wt{\cC}^{\otimes},\Ne)
\end{tikzcd} \]
where by the middle marking $\Ne$ we mean those edges in $\Ar^{\inert}(\wt{\cO}^\otimes) \times_{\wt{\cO}^\otimes} \wt{\cC}^\otimes$ whose source in $\wt{\cO}^\otimes$ is inert and whose projection to $\wt{\cC}^\otimes$ is inert. Then the functor
\[ (\ev_0)_{\ast} \circ (\pr_{\wt{\cC}^\otimes})^{\ast}: \sSet^+_{/(\wt{\cC}^{\otimes},\Ne)} \to \sSet^+_{/(\wt{\cO}^{\otimes},\Ne)} \]
is right Quillen with respect to the $\cT$-operadic model structures of \cref{def:BigModelStructure}.
\end{thm}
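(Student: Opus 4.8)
The plan is to recognize $(\ev_0)_{\ast}\circ(\pr_{\wt{\cC}^\otimes})^{\ast}$ as the incarnation, for big $\cT$-$\infty$-operads, of operadic coinduction --- the construction producing Day convolution in \cite[\S 2.2.6]{HA} and \cite[\S 10]{Exp2b} --- and to prove it is right Quillen by verifying the hypotheses of \cite[Thm.~B.4.2]{HA} for the displayed span, exactly as in the proof of \cref{prp:QuillenEquivalenceOfBigAndSmall}. Write $\cM \coloneqq \Ar^{\inert}(\wt{\cO}^\otimes)\times_{\wt{\cO}^\otimes}\wt{\cC}^\otimes$, equipped with the marking $\Ne$ of the statement, all triangles, and the selected simplices pulled back along $\ev_0$ from the categorical pattern $\wt{\mathfrak{P}}_{\cO}$ of \cref{def:BigModelStructure}; work with $\wt{\mathfrak{P}}_{\cO}$ on $\wt{\cO}^\otimes$ and the analogous $\wt{\mathfrak{P}}_{\cC}$ on $\wt{\cC}^\otimes$. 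The left adjoint $(\pr_{\wt{\cC}^\otimes})_{!}\circ(\ev_0)^{\ast}$ visibly preserves monomorphisms, so it suffices to check the conditions of \cite[Thm.~B.4.2]{HA}.

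The conditions not involving the selected simplices reduce to the assertion that $\ev_0 \colon \cM \to \wt{\cO}^\otimes$ is a flat categorical fibration, together with routine compatibilities between the marking $\Ne$ on $\cM$ and $\Ne$ on $\wt{\cO}^\otimes$ which are immediate from the definitions. For flatness: as in the proof of \cref{prp:QuillenEquivalenceOfBigAndSmall}, the argument of \cite[Prop.~3.5(1)]{Exp2b} shows $\ev_0 \colon \Ar^{\inert}(\wt{\cO}^\otimes)\to\wt{\cO}^\otimes$ is a cartesian fibration, hence flat; $\cM$ is the base change of this along $p$, and to see that the composite $\cM \to \wt{\cO}^\otimes$ is again flat one combines the inert--fiberwise active factorization system on $\wt{\cC}^\otimes$ with the hypothesis that $p' \colon \cC^\otimes \to \cO^\otimes$ is $\cO$-promonoidal --- that is, that $p$ restricts to a flat categorical fibration on the active part of each $\cT$-fiber (\cref{def:PromonoidalCategory}). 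This is the step I would verify most carefully: it is precisely where $\cO$-promonoidality is used to globalize fiberwise flatness of the active direction, the relevant $2$-simplices of $\wt{\cO}^\otimes$ decomposing, via the factorization system, into an inert part (controlled by cocartesian pushforward) and a fiberwise active part (controlled by promonoidality).

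The main obstacle is the remaining family of conditions, those attached to the selected simplices $f_{x,\phi,\Sigma}\colon n^\lhd \to \wt{\cO}^\otimes$ of $\wt{\mathfrak{P}}_{\cO}$. Following conditions (i)--(ii) of the proof of \cref{prp:QuillenEquivalenceOfBigAndSmall}, for a fibrant $\wt{\cE}^\otimes$ over $\wt{\cC}^\otimes$ and its coinduced fibration $(\ev_0)_\ast (\pr_{\wt{\cC}^\otimes})^\ast \wt{\cE}^\otimes \to \wt{\cO}^\otimes$, one must show that for each $f_{x,\phi,\Sigma}$ the pulled-back fibration over $n^\lhd$ is classified by a limit diagram, and that each cocartesian section composes to a limit diagram relative to $\wt{\cO}^\otimes$. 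I would argue as there: since $f_{x,\phi,\Sigma}$ encodes the decomposition of an object over $[U_+ \to V]$ into its orbit-pieces via the characteristic (inert) morphisms, computing the fiber of $\cM$ over the image of $f_{x,\phi,\Sigma}$ and invoking the right cancellation property of inert edges (\cref{lem:InertEdgesCancellative}) together with the parametrized Segal condition (\cref{prp:SegalCondition}) reduces the claim orbit by orbit; within a single orbit the content concerns coinduction along the active direction, which is exactly what the fiberwise flatness from promonoidality makes well-behaved. The mapping-space half (condition (ii)) then follows from the decomposition of $\cT$-mapping spaces in $\wt{\cC}^\otimes$ and $\wt{\cO}^\otimes$, as in \cref{prop:MultiMappingTSpace}. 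With every hypothesis verified, \cite[Thm.~B.4.2]{HA} gives that $(\ev_0)_{\ast}\circ(\pr_{\wt{\cC}^\otimes})^{\ast}$ is right Quillen.
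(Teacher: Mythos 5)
Your overall strategy --- verifying the hypotheses of \cite[Thm.~B.4.2]{HA} for the displayed span --- is the one the paper uses, and your treatment of the flatness of $\ev_0$ is essentially right: the paper invokes \cite[Lem.~10.1]{Exp2b} for the inert--fiberwise active factorization system on $\wt{\cO}^\otimes$, using that products of flat fibrations are flat to promote the fiberwise flatness of $(p')_{f.\act}$ supplied by $\cO$-promonoidality to $p_{f.\act}$. (Your phrase ``$\cM$ is the base change of this along $p$'' is not literally correct --- $\cM \to \Ar^{\inert}(\wt{\cO}^\otimes)$ is the base change of $p$ along $\ev_1$, and one then composes with $\ev_0$ --- but your subsequent description of how promonoidality enters is the correct mechanism.)

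The gap is in your handling of the conditions of \cite[Thm.~B.4.2]{HA} attached to the selected simplices $f_{x,\phi,\Sigma}$. These are hypotheses on the \emph{span}, not on the coinduced objects: one must show (a) that the pullback $n^\lhd \times_{\wt{\cO}^\otimes} \cM \to n^\lhd$ along each $f_{x,\phi,\Sigma}$ is a cocartesian fibration, and (b) that the composite of any cocartesian section with $\pr_{\wt{\cC}^\otimes}$ is again one of the selected diagrams $f_{c,\phi',\Sigma'}$ of the pattern on $\wt{\cC}^\otimes$. You have instead described the limit-diagram conditions that the \emph{output} $(\ev_0)_\ast(\pr_{\wt{\cC}^\otimes})^\ast\wt{\cE}^\otimes$ must satisfy in order to be fibrant --- that is, the conclusion of B.4.2 rather than its hypotheses. (That direct verification of fibrancy is what the paper does in \cref{prp:QuillenEquivalenceOfBigAndSmall}, precisely because the selected-simplex hypotheses fail there; here they hold and must be checked.) Concretely, (a) reduces, via right cancellation of inert edges and cocartesianness of $\wt{\cC}^\otimes$ over the inert edges, to the assertion that $\wt{\cO}^\otimes_{\inert}$ admits pushouts, hence to the existence of pullbacks in $\Ar(\FinT)^{\si}$ --- the commutative-cube argument showing that summand inclusions are stable under pullback. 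This is the pivotal step and the entire reason the theorem is formulated for \emph{big} $\cT$-$\infty$-operads over $\FinT^\op$ (where pullbacks exist) rather than over $\cT^\op$ (where they do not); your ``orbit by orbit'' reduction via the Segal condition does not substitute for it. Condition (b) likewise requires an explicit check that pulling back the collection $\Sigma$ along the inert edge selected at the cone point produces a collection $\Sigma'$ of the required form, again using stability of summand inclusions under pullback.
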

\begin{proof} We verify the hypotheses of \cite[Thm.~B.4.2]{HA}.
\begin{enumerate}[leftmargin=*]
\item $\ev_0$ is flat by \cite[Lem.~10.1]{Exp2b} applied to the inert-fiberwise active factorization system on $\wt{\cO}^\otimes$ (\cref{exm:InertActiveFactorizationSystem}), noting that products of flat fibrations are flat in order to promote the flatness condition on $(p')_{f.\act}$ to $p_{f.\act}$.
\item It is obvious that inert edges are closed under composition and contain the equivalences.
\item Vacuously true since the categorical patterns we are looking at contain all $2$-simplices.
\item Suppose $e: x_0 \rightarrow y_0$ is an inert edge in $\wt{\cO}^\otimes$. Then as we saw in \cite[Prop.~3.5(1)]{Exp2b}, given an inert edge $y_0 \rightarrow y_1$ the cartesian lift of $e$ to an edge $e': [x_0 \rightarrow x_1] \rightarrow [y_0 \rightarrow y_1]$ in $\Ar^\inert(\wt{\cO}^\otimes)$ has $\ev_1(e'): x_1 \rightarrow y_1$ an equivalence. It follows that given a further lift of $y_0 \rightarrow y_1$ to an object $(y_0 \rightarrow y_1, c \in \wt{\cC}^\otimes_{y_1})$ in $\Ar^\inert(\wt{\cO}^\otimes) \times_{\wt{\cO}^\otimes} \wt{\cC}^\otimes$, $e$ admits a cartesian lift to an edge $e'': (x_0 \rightarrow x_1, c') \rightarrow (y_0 \rightarrow y_1, c)$ (with $c' \rightarrow c$ an equivalence).
\item Let $f_{x,\phi,\Sigma}: n^\lhd \to \wt{\cO}^\otimes$ be in the categorical pattern defining the $\cT$-operadic model structure on $\sSet^+_{/(\wt{\cO}^{\otimes},\Ne)}$. We claim that the pullback
\[ \pi: n^\lhd \times_{\wt{\cO}^\otimes} \Ar^{\inert}(\wt{\cO}^\otimes) \times_{\wt{\cO}^\otimes} \wt{\cC}^\otimes \to n^\lhd \]
is a cocartesian fibration.
Because inert edges are right cancellative (\cref{lem:InertEdgesCancellative}) and $\wt{\cC}^\otimes$ is cocartesian over the inert edges of $\wt{\cO}^\otimes$, it suffices to show that 
\[ n^\lhd \times_{\wt{\cO}^\otimes_{\inert}} \Ar(\wt{\cO}^\otimes_{\inert}) \to n^\lhd \]
is cocartesian, where $\wt{\cO}^\otimes_{\inert} \subset \wt{\cO}^\otimes$ is the wide subcategory with morphisms restricted to the inert edges. In fact, we will prove the stronger assertion that 
\[ \ev_0: \Ar(\wt{\cO}^\otimes_{\inert}) \to \wt{\cO}^\otimes_{\inert} \]
is cocartesian. For this, by \cite[6.1.1.1]{HTT}, it suffices to show that $\wt{\cO}^\otimes_{\inert}$ admits pushouts. Since the inert edges in $\wt{\cO}^\otimes$ are defined to be the cocartesian lifts of inert edges in $\BigFinT$, we may reduce to the case $\wt{\cO}^\otimes = \BigFinT$. It then suffices to show that $\Ar(\FinT)^\dagger$ (as in \cref{dfn:BigFiniteTsets}) admits pullbacks. So suppose we have a commutative cube
\setlength{\perspective}{2pt}
\[\begin{tikzcd}[row sep={40,between origins}, column sep={40,between origins}]
      &[-\perspective] W \times_U X \ar{rr}\ar{dd}\ar{dl} &[\perspective] &[-\perspective] X\vphantom{\times_{U}} \ar{dd}\ar{dl} \\[-\perspective]
     W \ar[crossing over]{rr} \ar{dd} & & U \\[\perspective]
      & Z \times_V Y  \ar{rr} \ar{dl} & &  Y\vphantom{\times_{V}} \ar{dl} \\[-\perspective]
    Z \ar{rr} && V \ar[from=uu,crossing over]
\end{tikzcd}\]
We want to show that if $W \to Z \times_V U$ is a summand inclusion, then $W \times_U X \to Z \times_Y X$ is a summand inclusion. To see this, consider the diagram
\[ \begin{tikzcd}[row sep=2em, column sep=2em]
W \times_U X \ar{r} \ar{d} & Z \times_Y X \ar{r} \ar{d} & X \ar{d} \\ 
W \ar{r} & Z \times_V U \ar{r} & U
\end{tikzcd} \]
The right square and outer rectangle are both pullback squares, so the left square is as well. Since summand inclusions are stable under pullback, the desired conclusion follows.

\item Let $s: n^\lhd \to n^\lhd \times_{\wt{\cO}^\otimes} \Ar^{\inert}(\wt{\cO}^\otimes) \times_{\wt{\cO}^\otimes} \wt{\cC}^\otimes$ be a cocartesian section of $\pi$ defined as above. Suppose that $s(\{v\}) = (x \xrightarrow{e} y \in \wt{\cO}^\otimes_{\inert}, \: c \in \wt{\cC}^\otimes_{y})$ and $e$ is a cocartesian lift of the inert morphism in $\BigFinT$ defined by the square
\[ \begin{tikzcd} [row sep=2em, column sep=2em]
U' \ar{r} \ar{d}{\phi'} & U \ar{d}{\phi} \\
V' \ar{r} & V
\end{tikzcd} \]
in $\FinT$. If $\Sigma = \{\sigma_1, ..., \sigma_n\}$ is given by squares $\sigma_i$
\[ \begin{tikzcd}[row sep=2em, column sep=2em]
U_i \ar{r} \ar{d} & U \ar{d}{\phi} \\
V_i \ar{r} & V
\end{tikzcd} \]
then let $\Sigma' = \{\sigma'_1, ..., \sigma'_n \}$ be given by the collection of squares $\sigma'_i$
\[ \begin{tikzcd}[row sep=2em, column sep=2em]
U_i \times_U U' \ar{r} \ar{d} & U' \ar{d}{\phi'} \\
V_i \times_V V' \ar{r} & V'.
\end{tikzcd} \]
Because summand inclusions are stable under pullback, the morphisms $U_i \times_U U' \to U'$ are summand inclusions, and clearly induce an equivalence $\coprod_{1 \leq i \leq n} U_i \times_U U' \simeq U'$. Therefore, the data of $(c,\phi',\Sigma')$ defines a morphism $f_{c,\phi',\Sigma'}: n^\lhd \to \wt{\cC}^\otimes$ which is part of the categorical pattern defining the $\cT$-operadic model structure on $\sSet^+_{/(\wt{\cC}^{\otimes},\Ne)}$. Moreover, by the analysis done in (5) we may identify the composite map
\[ n^\lhd \to n^\lhd \times_{\wt{\cO}^\otimes} \Ar^{\inert}(\wt{\cO}^\otimes) \times_{\wt{\cO}^\otimes} \wt{\cC}^\otimes \to \wt{\cC}^\otimes \]
with $f_{c,\phi',\Sigma'}$.

\item We check the following: suppose we have a commutative diagram
\[ \begin{tikzcd}[row sep=2em, column sep = 2em]
x_0 \ar{r} \ar{d} & x_1 \ar{d} \ar{r} & x_2 \ar{d} \\
y_0 \ar{r} & y_1 \ar{r} & y_2
\end{tikzcd} \]
in $\Ar^\inert(\wt{\cO}^\otimes)$ and $c_0 \to c_1 \to c_2$ in $\wt{\cC}^\otimes$ that covers $y_0 \to y_1 \to y_2$, such that $c_1 \to c_2$ is an equivalence (so $y_1 \to y_2$ is an equivalence), $x_1 \to x_2$ is inert, $x_0 \to x_1$ is an equivalence. Then $c_0 \to c_1$ is inert if and only if $c_0 \to c_2$ is inert. But this is clear from the definitions.

\item It suffices to check the following: suppose we have a commutative diagram
\[ \begin{tikzcd}[row sep=2em, column sep = 2em]
x_0 \ar{r} \ar{d} & x_1 \ar{d} \ar{r} & x_2 \ar{d} \\
y_0 \ar{r} & y_1 \ar{r} & y_2
\end{tikzcd} \]
in $\Ar^\inert(\wt{\cO}^\otimes)$ and $c_0 \to c_1 \to c_2$ in $\wt{\cC}^\otimes$ that covers $y_0 \to y_1 \to y_2$, such that $x_0 \to x_1$, $y_0 \to y_1$, and $c_0 \to c_1$ are inert. Then \{$x_1 \to x_2$, $y_1 \to y_2$, $c_1 \to c_2$\} are inert if and only if \{$x_0 \to x_2$, $y_0 \to y_2$, $c_0 \to c_2$\} are inert. But this follows from the right cancellativity of inert morphisms (\cref{lem:InertEdgesCancellative}).

\end{enumerate}
\end{proof}

Having passed to the big $\cT$-$\infty$-operads to construct the $\cT$-operadic coinduction and verify its properties, we now pass back to the usual formulation of $\cT$-$\infty$-operads.

\begin{cor} \label{cor:OperadicCoinduction2} Let $\cO^\otimes$ be a $\cT$-$\infty$-operad and let $(\cC^\otimes,p)$ be an $\cO$-promonoidal $\cT$-$\infty$-category. Consider the span diagram of marked simplicial sets
\[ \begin{tikzcd}[row sep=2em, column sep=2em]
(\cO^{\otimes},\Ne) & (\Ar^{\inert}(\cO^\otimes) \times_{\cO^\otimes} \cC^\otimes,\Ne) \ar{l}[swap]{\ev_0} \ar{r}{\pr_{\cC^\otimes}} & (\cC^{\otimes},\Ne).
\end{tikzcd} \]
Then the functor
\[ (\ev_0)_{\ast} \circ (\pr_{\cC^\otimes})^{\ast}: \sSet^+_{/(\cC^{\otimes},\Ne)} \to \sSet^+_{/(\cO^{\otimes},\Ne)} \]
is right Quillen with respect to the $\cT$-operadic model structures.
\end{cor}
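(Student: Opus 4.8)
The plan is to deduce this from the ``big'' version \cref{thm:OperadicCoinduction} by first replacing the given data with big $\cT$-$\infty$-operads and then descending along the Quillen equivalences of \cref{prp:QuillenEquivalenceOfBigAndSmall} and \cref{cor:BigVsSmallOperads}. Passing to big operads is genuinely necessary here: the verification in the proof of \cref{thm:OperadicCoinduction} of the hypotheses of \cite[Thm.~B.4.2]{HA} involving the diagrams $f_{x,\phi,\Sigma}$ relied on $\Ar(\FinT)^\dagger$ admitting pullbacks, and the corresponding statement over $\cT^\op$ fails because $\cT$ has no pullbacks.

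First I would choose compatible big extensions. Applying \cref{cor:BigVsSmallOperads} to $\wt{\cO}^\otimes = \BigFinT$ exhibits an equivalence between the $\infty$-category of $\cT$-$\infty$-operads and that of big $\cT$-$\infty$-operads, implemented by restriction along $\cT^\op \subset \FinT^\op$ (using $\uFinpT \simeq \BigFinT \times_{\FinT^\op} \cT^\op$); so we may choose a big $\cT$-$\infty$-operad $\wt{\cO}^\otimes$ with restriction $\cO^\otimes$, and then, applying \cref{cor:BigVsSmallOperads} for this $\wt{\cO}^\otimes$, a fibration of big $\cT$-$\infty$-operads $\wt{p} \colon \wt{\cC}^\otimes \to \wt{\cO}^\otimes$ restricting to $p$. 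Its restriction $p'$ is again $\cO$-promonoidal (flatness being equivalence-invariant), so \cref{thm:OperadicCoinduction} applies: the functor $\wt{R} \coloneqq (\ev_0)_\ast \circ (\pr_{\wt{\cC}^\otimes})^\ast$ is right Quillen for the big $\cT$-operadic model structures of \cref{def:BigModelStructure}, equivalently its left adjoint $\wt{L} \coloneqq (\pr_{\wt{\cC}^\otimes})_! \circ (\ev_0)^\ast$ is left Quillen.

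The main step is then a base-change identification exhibiting the \emph{small} operadic coinduction as a composite of left Quillen functors built from the big one. Write $i_\cO \colon \cO^\otimes \hookrightarrow \wt{\cO}^\otimes$ and $i_\cC \colon \cC^\otimes \hookrightarrow \wt{\cC}^\otimes$ for the (monomorphic) inclusions and let $M \coloneqq \Ar^\inert(\cO^\otimes) \times_{\cO^\otimes} \cC^\otimes$, $\wt{M} \coloneqq \Ar^\inert(\wt{\cO}^\otimes) \times_{\wt{\cO}^\otimes} \wt{\cC}^\otimes$ be the small and big middle terms, each formed along evaluation at the target of the inert edge. An object of $\wt{M}$ is an inert edge $y_0 \to y_1$ of $\wt{\cO}^\otimes$ together with an object of $\wt{\cC}^\otimes$ over $y_1$; imposing that $y_0$ lie over an orbit and that the chosen object lie in $\cC^\otimes$ (the latter already forcing $y_1$ to lie over an orbit) cuts out exactly $M \subset \wt{M}$, compatibly with both legs $\ev_0$ and $\pr$. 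Hence there is a natural isomorphism of functors
\[ L \coloneqq (\pr_{\cC^\otimes})_! \circ (\ev_0)^\ast \;\cong\; i_\cC^\ast \circ \wt{L} \circ (i_\cO)_! \colon \; \sSet^+_{/(\cO^\otimes,\Ne)} \longrightarrow \sSet^+_{/(\cC^\otimes,\Ne)}, \]
both sides sending $A$ to $A \times_{\cO^\otimes} M$ over $\cC^\otimes$. Now $(i_\cO)_!$ is left Quillen by \cref{cor:BigVsSmallOperads}, $\wt{L}$ is left Quillen by the previous paragraph, and $i_\cC^\ast$ is left Quillen because the proof of \cref{cor:BigVsSmallOperads} identifies it, up to natural equivalence, with the left Quillen equivalence furnished by \cref{prp:QuillenEquivalenceOfBigAndSmall}. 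So $L$ is a composite of left Quillen functors, hence left Quillen, and therefore $R \coloneqq (\ev_0)_\ast \circ (\pr_{\cC^\otimes})^\ast$ is right Quillen, as required.

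The crux of the argument — and the step I expect to be the main obstacle — is the base-change identity just invoked: one must keep straight which evaluation map each fiber product is formed along, check that $M \hookrightarrow \wt{M}$ is compatible with both span legs and with the middle markings ``$\Ne$'', and, most delicately, justify treating $i_\cO^\ast$ and $i_\cC^\ast$ as left Quillen functors rather than merely as the right Quillen functors of \cref{cor:BigVsSmallOperads} — which is precisely where the interplay between the two descriptions of the big-versus-small comparison (\cref{cor:BigVsSmallOperads} and \cref{prp:QuillenEquivalenceOfBigAndSmall}) is needed. By contrast, the choice of big extensions in the first step and the re-use of the hypothesis verifications from \cref{thm:OperadicCoinduction} are routine.
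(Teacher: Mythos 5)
Your proposal is correct and takes essentially the same route as the paper: the paper's entire proof is ``Combine \cref{thm:OperadicCoinduction}, \cref{prp:QuillenEquivalenceOfBigAndSmall}, and \cref{cor:BigVsSmallOperads},'' and your argument is a faithful unpacking of exactly that combination. The step you flag as delicate --- identifying the small push-pull left adjoint with $i_\cC^\ast \circ \wt{L} \circ (i_\cO)_!$ and justifying that $i_\cC^\ast$ preserves weak equivalences because it is naturally weakly equivalent (via the identity section) to the left Quillen functor of \cref{prp:QuillenEquivalenceOfBigAndSmall} --- is precisely the implicit content of the paper's one-line proof.
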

\begin{proof} Combine \cref{thm:OperadicCoinduction}, \cref{prp:QuillenEquivalenceOfBigAndSmall}, and \cref{cor:BigVsSmallOperads}.
\end{proof}

\begin{dfn} \label{def:OperadicCoinductionAndDayConvolution}
In the situation of \cref{cor:OperadicCoinduction2}, given a fibration of $\cT$-$\infty$-operads $\cE^\otimes \to \cC^\otimes$, define the \emph{$p$-operadic coinduction} or \emph{$p$-norm} of $\cE^\otimes$ to be the $\cT$-$\infty$-operad
\[ (\Norm_p \cE)^\otimes \coloneq (\ev_0)_\ast (\pr_{\cC^\otimes})^{\ast} (\cE^{\otimes},\Ne), \]
given as a marked simplicial set fibered over $(\cO^{\otimes}, \Inert)$.

Given a fibration of $\cT$-$\infty$-operads $\cE^\otimes \to \cO^\otimes$, define the \emph{Day convolution} $\cT$-$\infty$-operad to be
\[ \widetilde{\Fun}_{\cO,\cT}(\cC,\cE)^\otimes \coloneq (\Norm_p p^\ast \cE)^\otimes. \]
If $\cO^\otimes = \uFinpT$, we will also denote $\widetilde{\Fun}_{\cO,\cT}(\cC,\cE)^\otimes$ as $\underline{\Fun}_{\cT}(\cC,\cE)^\otimes$.
\end{dfn}

\begin{prp} \label{lem:IdentifyingLeftAdjointOfCoinduction}
Let $p, q: \cD^\otimes \to \cO^\otimes$ be fibrations of $\cT$-$\infty$-operads. Then the functor
\[ \iota: (\cD^\otimes \times_{\cO^\otimes} \cC^\otimes, \Inert) \to (\cD^\otimes \times_{\cO^\otimes} \Ar^{\inert}(\cO^\otimes) \times_{\cO^\otimes} \cC^\otimes, \Ne) \]
induced by the identity section is a homotopy equivalence in $\sSet^+_{/(\cC^\otimes, \Ne)}$. Consequently, for $\cO$-promonoidal $(\cC^\otimes,p)$, $\iota^\ast$ induces an equivalence of $\cT$-$\infty$-categories
\[ \begin{tikzcd}[row sep=2em, column sep=2em]
\underline{\Alg}_{\cO,\cT}(\cD, \Norm_p \cE) \ar{r}{\simeq} & \underline{\Alg}_{\cC,\cT}(\cD \times_{\cO} \cC, \cE)
\end{tikzcd} \]
and an equivalence of $\infty$-categories
\[ \begin{tikzcd}[row sep=2em, column sep=2em]
\Alg_{\cO,\cT}(\cD, \Norm_p \cE) \ar{r}{\simeq} & \Alg_{\cC,\cT}(\cD \times_{\cO} \cC, \cE).
\end{tikzcd} \]
\end{prp}
\begin{proof}
Let $P: \cD^\otimes \times_{\cO^\otimes} \Ar^{\inert}(\cO^\otimes) \to \cD^\otimes$ be a cocartesian pushforward chosen so that $P|_{\cD^\otimes} = \id$, and let
$$P' = P \times \id_{\cC^\otimes}: \cD^\otimes \times_{\cO^\otimes} \Ar^{\inert}(\cO^\otimes) \times_{\cO^\otimes} \cC^\otimes \to \cD^\otimes \times_{\cO^\otimes} \cC^\otimes.$$
Then $P'$ respects the given markings, and as in the proof of \cite[Lem.~3.2(2)]{Exp2} we may construct an explicit homotopy $h: \id \to \iota \circ P'$ such that $h$ sends objects to fiberwise marked edges. This shows that $P$ is a marked homotopy inverse to $\iota$ and hence $\iota$ is a homotopy equivalence in $\sSet^+_{/(\cC^\otimes, \Ne)}$. The consequences then follow from the definition of the left adjoint to $\Norm_p$.
\end{proof}

\begin{cor} The Quillen adjunction of \cref{cor:OperadicCoinduction2} descends to an adjunction of $\infty$-categories
\[ \adjunct{p^\ast}{(\OpT)_{/\cO^\otimes}}{(\OpT)_{/\cC^\otimes}}{\Norm_p}. \]
In particular, if $(\cC^\otimes,p)$ is $\cO$-promonoidal, then the right adjoint to $p^\ast$ exists and is computed by $\Norm_p$.
\end{cor}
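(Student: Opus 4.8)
The plan is to extract the claimed adjunction from the Quillen adjunction of \cref{cor:OperadicCoinduction2} by the standard correspondence between Quillen adjunctions and adjunctions of the underlying $\infty$-categories, and then to identify the two adjoint functors. First I would recall that in the $\cT$-operadic model structures the cofibrations are exactly the monomorphisms (\cref{thm:ModelStructureOperads}), so every object is cofibrant, and that these model structures are combinatorial and simplicial; hence the Quillen adjunction $(\pr_{\cC^\otimes})_! (\ev_0)^\ast \dashv (\ev_0)_\ast (\pr_{\cC^\otimes})^\ast$ of \cref{cor:OperadicCoinduction2} descends to an adjunction $L \dashv R$ of underlying $\infty$-categories. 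By construction these underlying $\infty$-categories are $\Op_{\cO,\cT} \simeq (\OpT)_{/\cO^\otimes}$ and $\Op_{\cC,\cT} \simeq (\OpT)_{/\cC^\otimes}$, and since a right Quillen functor preserves fibrant objects it computes its own right derived functor, so $R$ is precisely the functor $\cE^\otimes \mapsto (\Norm_p \cE)^\otimes$ of \cref{def:OperadicCoinductionAndDayConvolution}.

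It then remains to identify $L$ with $p^\ast$. The cleanest route is via \cref{lem:IdentifyingLeftAdjointOfCoinduction}, which (for $\cO$-promonoidal $(\cC^\otimes,p)$) supplies a natural equivalence of $\infty$-categories $\Alg_{\cO,\cT}(\cD, \Norm_p \cE) \simeq \Alg_{\cC,\cT}(p^\ast\cD, \cE)$, where $p^\ast \cD^\otimes$ denotes the pullback $\cD^\otimes \times_{\cO^\otimes} \cC^\otimes$ -- itself a $\cT$-$\infty$-operad over $\cC^\otimes$, by stability of the operad-fibration conditions under base change along the morphism of $\cT$-$\infty$-operads $p$. Passing to maximal subgroupoids turns this into a natural equivalence of mapping spaces (functoriality in $\cD$ and $\cE$ being clear from the construction via $\iota^\ast$) that exhibits $p^\ast$ as a left adjoint of $\Norm_p = R$; since adjoints are unique, $L \simeq p^\ast$, giving the displayed adjunction $p^\ast \dashv \Norm_p$. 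One may instead argue directly that $L \simeq p^\ast$ by observing that the weak equivalence $\iota\colon p^\ast \cD^\otimes \xto{\sim} \cD^\otimes \times_{\cO^\otimes} \Ar^{\inert}(\cO^\otimes) \times_{\cO^\otimes} \cC^\otimes$ of \cref{lem:IdentifyingLeftAdjointOfCoinduction}, together with the fibrancy of $p^\ast \cD^\otimes$ and the cofibrancy of all objects, exhibits $p^\ast \cD^\otimes$ as a model for $L(\cD^\otimes)$. The concluding ``in particular'' is then simply a restatement of the adjunction just obtained.

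I do not expect a genuine obstacle here: the substantive work has already been carried out in \cref{thm:OperadicCoinduction}, \cref{prp:QuillenEquivalenceOfBigAndSmall}, \cref{cor:BigVsSmallOperads}, and \cref{lem:IdentifyingLeftAdjointOfCoinduction}, and what remains is bookkeeping. The only points that need a moment's care are the verification that $p^\ast$ really takes values in $\cT$-$\infty$-operads over $\cC^\otimes$, and that the object-level identification $L(\cD^\otimes) \simeq p^\ast\cD^\otimes$ is natural in $\cD^\otimes$; the uniqueness-of-adjoints argument above is the most efficient way to handle the latter, which is why I would present it as the main line of proof and relegate the direct derived-functor computation to a parenthetical.
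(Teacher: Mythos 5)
Your proposal is correct and matches the paper's (implicit) argument: the corollary is stated without proof immediately after \cref{lem:IdentifyingLeftAdjointOfCoinduction}, whose homotopy equivalence $\iota$ is precisely what identifies the left derived functor of $(\pr_{\cC^\otimes})_!(\ev_0)^\ast$ with $p^\ast$, the rest being the standard passage from a simplicial Quillen adjunction with all objects cofibrant to an adjunction of underlying $\infty$-categories. No discrepancies to report.
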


\begin{prp} \label{prp:underlyingCategoryOfDayConvolution} The underlying $\cT$-$\infty$-category of $\widetilde{\Fun}_{\cO,\cT}(\cC,\cE)^\otimes$ is the $\cT$-pairing construction $\widetilde{\Fun}_{\cO,\cT}(\cC,\cE)$ of \cite[Constr.~9.1]{Exp2}. In particular, if $\cO^\otimes = \uFinpT$, the underlying $\cT$-$\infty$-category of $\underline{\Fun}_{\cT}(\cC,\cE)^\otimes$ is $\underline{\Fun}_{\cT}(\cC,\cE)$.
\end{prp}
\begin{proof} Consider the diagram
\[ \begin{tikzcd}[row sep=2em, column sep=2em]
\Ar^\cocart(\cO) \times_{\cO} {\cC} \ar{r} \ar{d}{\iota} & \cO \ar{rd} \\
\cO \times_{\cO^\otimes} \Ar^\inert(\cO^\otimes) \times_{\cO^\otimes} \cC^\otimes \ar{r} \ar{d} & \Ar^\inert(\cO^\otimes) \times_{\cO^\otimes} \cC^\otimes \ar{r} \ar{d} & \cO^\otimes \\
\cO \ar{r} & \cO^\otimes
\end{tikzcd} \]
where $\iota$ is defined using the inclusions $\cO \subset \cO^\otimes$ and $\cC \subset \cC^\otimes$. Unwinding the definitions, to prove the claim it suffices to show that the map
\[ \Ar^\cocart(\cO) \times_{\cO} \cC \to \cO \times_{\cO^\otimes} \Ar^\inert(\cO^\otimes) \times_{\cO^\otimes} \cO \times_{\cO} \cC \]
is a homotopy equivalence (in $\sSet^+_{/\leftnat{\cO}}$ via the target map). But this is clear, since the inert edges in $\cO^\otimes$ with source and target in $\cO$ are, up to equivalence, precisely the cocartesian edges in $\cO$.
\end{proof}



\subsection{\texorpdfstring{$\cO$}{O}-monoidality of the \texorpdfstring{$\cT$}{T}-Day convolution}

We now establish $\cO$-monoidality of the $\cT$-Day convolution $\cT$-$\infty$-operad given appropriate conditions on the input $\cT$-$\infty$-operads. For this, we will use repeatedly the following criterion for when a fibration of $\cT$-$\infty$-operads is locally cocartesian or cocartesian.

\begin{lem} \label{lem:LocallyCocartesianCriterionForOperads} Let $p: \cC^\otimes \to \cO^\otimes$ be a fibration of $\cT$-$\infty$-operads. Suppose that for every fiberwise active edge $e: x \to y$ in $\cO^\otimes$ with $y \in \cO$, and $c \in \cO^\otimes$ with $p(c) = x$, there exists a locally cocartesian edge $f: c \to c'$ over $e$. Then $p$ is a locally cocartesian fibration.

Furthermore, suppose that for every composition of fiberwise active edges $x \xrightarrow{e} y \xrightarrow{e'} z$ in $\cO^\otimes$ with $z \in \cO$, and $c \in \cO^\otimes$ with $p(c) = x$, locally cocartesian lifts of $e$ to $f: c \to c'$ and $e'$ to $f':c' \to c''$ compose to yield a locally cocartesian edge $f'': c \to c''$. Then $p$ is a cocartesian fibration.
\end{lem}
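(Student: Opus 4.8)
The plan is to adapt the proof of \cite[Prop.~2.1.2.12]{HA} (and the surrounding lemmas of \cite[\S 2.1.2]{HA}) to the parametrized setting, using the inert--fiberwise active factorization system on $\cO^\otimes$ (\cref{exm:InertActiveFactorizationSystem}) and the $\cT$-Segal condition (\cref{prp:SegalCondition}) to reduce the existence of locally $p$-cocartesian lifts of arbitrary edges to the special case asserted in the hypotheses. For the first assertion, since $p$ is a categorical fibration it is in particular an inner fibration, so it suffices to produce for every edge $e \colon x \to y$ of $\cO^\otimes$ and every object $c$ over $x$ a locally $p$-cocartesian lift of $e$ with source $c$. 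Factor $e$ as $x \xrightarrow{\alpha} w \xrightarrow{\beta} y$ with $\alpha$ inert and $\beta$ fiberwise active, and let $\widetilde{\alpha} \colon c \to \overline{c}$ be a $p$-cocartesian lift of $\alpha$, which exists because $p$ is a fibration of $\cT$-$\infty$-operads. A $p$-cocartesian edge is locally $p$-cocartesian, and one checks directly on mapping spaces --- using that precomposition with $\widetilde{\alpha}$ induces an equivalence $\Map^{\beta}_{\cC^\otimes}(\overline{c}, d) \xto{\sim} \Map^{e}_{\cC^\otimes}(c, d)$ for every $d$ over $y$ --- that the composite of $\widetilde{\alpha}$ with any locally $p$-cocartesian lift of $\beta$ is again a locally $p$-cocartesian lift of $e$. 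Thus we are reduced to producing locally $p$-cocartesian lifts of fiberwise active edges.

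Next I would reduce the fiberwise active case to the case $y \in \cO$. Suppose $y$ lies over $[U_+ \to V]$ with orbit decomposition $U \simeq \coprod_i U_i$, let $\chi_i \colon y \to y_i$ be inert edges lifting the characteristic morphisms $\chi_{[U_i \subset U]}$ (so $y_i \in \cO$), and factor each composite $\chi_i \circ \beta$ in $\cO^\otimes$ as an inert edge $\rho_i$ followed by a fiberwise active edge $\beta_i$ with target $y_i \in \cO$, exactly as in \cref{rem:alternateOperadDef}. Given $c$ over the source of $\beta$, choose $p$-cocartesian lifts $\widetilde{\rho_i} \colon c \to c_i$ of $\rho_i$ and, by the hypothesis of the first assertion, locally $p$-cocartesian lifts $\widetilde{\beta_i} \colon c_i \to c_i'$ of $\beta_i$. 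By condition (2) of \cref{dfn:operad} (equivalently, \cref{prp:SegalCondition} applied to $p$) the objects $c_i'$ assemble to an object $c'$ over $y$ equipped with inert edges $c' \to c_i'$ lifting $\chi_i$, and by condition (3) of \cref{dfn:operad} the tuple $(\widetilde{\beta_i} \circ \widetilde{\rho_i})_i$ lifts to an edge $g \colon c \to c'$ over $\beta$. To see $g$ is locally $p$-cocartesian, note that for $d$ over $y$ (with chosen inert edges $d \to d_i$ lifting $\chi_i$) the decompositions $\Map_{\cC^\otimes_y}(c', d) \xto{\sim} \prod_i \Map_{\cC^\otimes_{y_i}}(c_i', d_i)$ (from \cref{prp:SegalCondition}) and $\Map^{\beta}_{\cC^\otimes}(c, d) \xto{\sim} \prod_i \Map^{\beta_i}_{\cC^\otimes}(c_i, d_i)$ (from condition (3) of \cref{dfn:operad} together with $p$-cocartesianness of the $\widetilde{\rho_i}$) identify precomposition with $g$ with the product of precomposition with the $\widetilde{\beta_i}$, each of which is an equivalence. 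This completes the first assertion.

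For the second assertion, recall the standard criterion that a locally cocartesian fibration is a cocartesian fibration if and only if its locally cocartesian edges are closed under composition (\cite[Prop.~2.4.2.8]{HTT}). By the argument above, every locally $p$-cocartesian edge factors as a $p$-cocartesian lift of an inert edge followed by a locally $p$-cocartesian lift of a fiberwise active edge. Given two composable locally $p$-cocartesian edges, I would use this factorization together with the right cancellativity of inert edges (\cref{lem:InertEdgesCancellative}) and the inert--fiberwise active factorization system to commute the inert parts past the fiberwise active parts --- re-factoring each ``fiberwise active then inert'' composite as ``inert then fiberwise active'' and transporting the lifts accordingly --- thereby reducing the claim to (a) that a composite of $p$-cocartesian lifts of inert edges is again one (clear, since both $p$-cocartesian edges and inert edges are closed under composition), and (b) that a composite of two locally $p$-cocartesian lifts of fiberwise active edges is locally $p$-cocartesian. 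For (b), applying the reduction of the previous paragraph to the two edges and to their composite lets us assume the common target lies in $\cO$, and this is precisely the hypothesis of the second assertion.

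I expect the main obstacle to be the assembly step in the second paragraph: verifying that the edge $g$ built from the componentwise locally cocartesian lifts is itself locally $p$-cocartesian. This requires that the mapping-space criterion for local cocartesianness be compatible with the product decompositions of conditions (2) and (3) of \cref{dfn:operad} (equivalently, with \cref{prp:SegalCondition} and \cref{prop:MultiMappingTSpace}) and with $p$-cocartesian transport along the inert edges $\rho_i$. The diagram-chasing in the third paragraph needed to commute inert and fiberwise active parts past one another, so as to bring the standard criterion \cite[Prop.~2.4.2.8]{HTT} to bear, is routine but somewhat lengthy, and is where \cref{lem:InertEdgesCancellative} is used.
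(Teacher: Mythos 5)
Your proposal is correct and follows essentially the same route as the paper's proof: reduce to fiberwise active edges via the inert--fiberwise active factorization system, reduce further to targets in $\cO$ via the Segal decomposition, and for the second assertion commute inert transports past locally cocartesian lifts of fiberwise active edges to reduce composability to the stated hypothesis. The paper carries out these reductions more tersely (in particular, your ``commuting'' step appears there as the observation that inert transport of a locally cocartesian edge over a fiberwise active edge is again locally cocartesian), but the substance is identical.
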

\begin{proof} For the first assertion, using the inert-fiberwise active factorization system on $\cO^\otimes$ and that $p$ admits cocartesian lifts over inert edges, we reduce to checking that $p$ admits locally cocartesian lifts over fiberwise active edges. Then given any fiberwise active edge $e: x \to y$ in $\cO^\otimes$, we may use that $\cO^\otimes$ is a $\cT$-$\infty$-operad to obtain a decomposition of $e$ as $(e_i: x_i \to y_i)_{i \in I}$ for $y_i \in \cO$ under the identifications $\cO^\otimes_{y} \simeq \prod_{i \in I} \cO^{\otimes}_{y_i}$, $\cO^\otimes_{x} \simeq \prod_{i \in I} \cO^\otimes_{x_i}$, and $\Map_{\cO^\otimes_\act}(x,y) \simeq \prod_{i \in I} \Map_{\cO^\otimes_\act}(x_i,y_i)$. Suppose $c \in \cC^\otimes$ over $x$. Because $p$ is a fibration of $\cT$-$\infty$-operads, we get $c_i \in \cC^\otimes$ over $x_i$, and we may take the product of locally cocartesian lifts $c_i \to c_i'$ over the $e_i$ to obtain the desired locally cocartesian lift $c \to c'$ of $e$.

For the second assertion, we need to check that locally cocartesian edges compose to yield again a locally cocartesian edge. Note that for any commutative diagram in $\cC^\otimes$
\[ \begin{tikzcd}[row sep=2em, column sep=2em]
c \ar{r}{f} \ar{d}{g} & d \ar{d}{g'} \\
c' \ar{r}{f'} & d'
\end{tikzcd} \]
with $f$ locally cocartesian over a fiberwise active edge and $g, g'$ inert, then $f'$ is necessarily locally cocartesian. Using this, we can reduce to checking that locally cocartesian edges over fiberwise active edges compose. Then as before, we can further reduce to supposing that the last object lies in $\cO$.
\end{proof}

The next proposition allows us to understand $\cT$-Day convolution as a \emph{locally} cocartesian fibration over the base $\cT$-$\infty$-operad $\cO^\otimes$.

\begin{prp} \label{prp:DayConvolutionLocallyCocartesian} Let $(\cC^\otimes,p)$ be an $\cO$-promonoidal $\cT$-$\infty$-category and let $(\cE^\otimes, q)$ be an $\cO$-monoidal $\cT$-$\infty$-category in which for every object $x \in \cO_V$, the parametrized fiber $\cE_{\underline{x}}$ is $\cT^{/V}$-cocomplete.

\begin{enumerate}
\item The $\cT$-Day convolution $\widetilde{\Fun}_{\cO,\cT}(\cC,\cE)^\otimes \to \cO^\otimes$ is a locally cocartesian fibration, and for every $x \in \cO_V$, its parametrized fiber $\widetilde{\Fun}_{\cO,\cT}(\cC,\cE)_{\underline{x}}$ is $\cT^{/V}$-cocomplete.

\item Suppose $\overline{\alpha}$ is a fiberwise active edge in $\widetilde{\Fun}_{\cO,\cT}(\cC,\cE)^\otimes_V$ lifting $\alpha: x \to y$ in $\cO^\otimes_V$ with $y \in \cO_V$, which in turn lifts $f: U \to V$ in $\FinT$ (identified with the fiberwise active edge $[U_+ \rightarrow V] \to [V_+ \rightarrow V]$ in $\uFinpT$). The data of $\overline{\alpha}$ is given by a commutative diagram of $\cT^{/V}$-$\infty$-categories
\[ \begin{tikzcd}[row sep=2em, column sep=2em]
\{ x \} \times_{\cO^\otimes} \Ar^\inert(\cO^\otimes) \times_{\cO^\otimes} \cC^\otimes \ar{r}{F} \ar{d}{i} & \cE^\otimes_{\underline{V}} \ar{d}{q_{\underline{V}}} \\
\Delta^1 \times_{\alpha, \cO^\otimes} \Ar^\inert(\cO^\otimes) \times_{\cO^\otimes} \cC^\otimes \ar{ru}[swap]{H} \ar{r} & \cO^\otimes_{\underline{V}}
\end{tikzcd} \]
where we use the projections of the lefthand $\infty$-categories to $\{ V \} \times_{\cT^\op} \Ar(\cT^\op) \cong (\cT^{/V})^\op$ in order to pullback to the base $(\cT^{/V})^\op$.

Then $\overline{\alpha}$ is a locally cocartesian edge if and only if $H$ is a weak $q_{\underline{V}}$-$\cT^{/V}$-left Kan extension of $F$ in the sense of \cite[Def.~6.1]{Exp2b}.


\item Note that we have inclusions of full $\cT$-subcategories
\begin{align*} \cC^\otimes_{\underline{x}} \coloneq \{ x \} \times_{\cO^\otimes} \Ar^\cocart(\cO^\otimes) \times_{\cO^\otimes} \cC^\otimes & \subset \{ x \} \times_{\cO^\otimes} \Ar^\inert(\cO^\otimes) \times_{\cO^\otimes} \cC^\otimes \\
\cC^\otimes_{\underline{\alpha}} \coloneq \Delta^1 \times_{\alpha, \cO^\otimes} \Ar^\cocart(\cO^\otimes) \times_{\cO^\otimes} \cC^\otimes & \subset \Delta^1 \times_{\alpha,\cO^\otimes} \Ar^\inert(\cO^\otimes) \times_{\cO^\otimes} \cC^\otimes.
\end{align*}

Choose a cocartesian pushforward $P_\alpha: \cE^\otimes_{\underline{\alpha}} \to \cE_{\underline{y}}$ and consider the commutative diagram
\[ \begin{tikzcd}[row sep=2em, column sep=2em]
\cC^\otimes_{\underline{x}} \ar{r}{F} \ar{d}[swap]{i} & \cE^\otimes_{\underline{\alpha}} \ar{r}{P_{\alpha}} & \cE_{\underline{y}} \ar{d} \\
\cC^\otimes_{\underline{\alpha}} \ar{rr} \ar{ru}[swap]{H} & & (\cT^{/V})^\op
\end{tikzcd} \]
(where we abusively continue to write $H$ and $F$ for the canonical lifts of those functors to have codomains $\cE^\otimes_{\underline{\alpha}}$ and $\cE^\otimes_{\underline{x}} \subset \cE^\otimes_{\underline{\alpha}}$ respectively). Then $\overline{\alpha}$ is a locally cocartesian edge if and only if $P_{\alpha} \circ H$ is a $\cT^{/V}$-left Kan extension of $P_\alpha \circ F$.

Furthermore, if we let $G = (P_{\alpha} \circ H)|_{\cC_{\underline{y}}}: \cC_{\underline{y}} \to \cE_{\underline{y}}$, then we have the data of a diagram
\[ \begin{tikzcd}[row sep=2em, column sep=2em]
\cC^\otimes_{\underline{x}} \ar{r}{F} \ar{d}{\quad \Downarrow \eta}[swap]{\alpha_{\otimes}} & \cE^\otimes_{\underline{x}} \ar{r}{\alpha_{\otimes}} & \cE_{\underline{y}} \\
\cC_{\underline{y}} \ar[bend right=15]{rru}[swap]{G}
\end{tikzcd} \]
in which the natural transformation $\eta$ exhibits $G$ as a $\cT^{/V}$-left Kan extension of $\alpha_{\otimes} \circ F$ along $\alpha_{\otimes}$.
\end{enumerate}
\end{prp}
\begin{proof} (2): By definition, $\overline{\alpha}$ is a locally cocartesian edge if and only if $H$ is initial in the space of all such fillers. But if $H$ is a weak $q_{\underline{V}}$-$\cT^{/V}$-left Kan extension of $F$, then it is in particular initial. Conversely, provided that we know a weak $q_{\underline{V}}$-$\cT^{/V}$-left Kan extension of $F$ exists, then it necessarily coincides with the filler defined by $\overline{\alpha}$.

(1): We now show existence of these weak $q_{\underline{V}}$-$\cT^{/V}$-left Kan extensions. Let
\begin{align*} K & \coloneq \{ x \} \times_{\cO^\otimes} \Ar^\inert(\cO^\otimes) \times_{\cO^\otimes} \cC^\otimes \\
L & \coloneq \Delta^1 \times_{\alpha, \cO^\otimes} \Ar^\inert(\cO^\otimes) \times_{\cO^\otimes} \cC^\otimes.
\end{align*}

Note that any object in $L$ that is not in $K$ is either of the form $(y \ra z = p(c) \in \Ar^{\cocart}(\cO), \: c \in \cC)$ or $(y \ra z = p(c) \in \Ar^{\inert}(\cO^\otimes), \: c \in \cC^\otimes)$ with $z$ over $[\emptyset_+ \ra W] \in \uFinpT$. Let $L_0 \subset L$ denote the full $\cT$-subcategory excluding the second type of objects. Because the fiber of $\cE^\otimes$ over any object $[\emptyset_+ \ra W] \in \uFinpT$ is contractible, we may replace $L$ with $L_0$ and instead consider fillers $H_0: L_0 \to \cE^\otimes$. Moreover, note that there are no inert edges in $L_0$ not either cocartesian over $\cT^{\op}$ or in $K$, so any extension $H_0$ necessarily defines an edge of $\widetilde{\Fun}_{\cO,\cT}(\cC,\cE)^\otimes$.

Because both the additional objects in $L_0$ and morphisms between those objects lie over $\underline{y} \to \cO \subset \cO^\otimes$, by \cite[Thm.~6.2]{Exp2b} in conjunction with \cite[Prop.~5.8]{Exp2b} it suffices to have $\cE_{\underline{y}}$ be $\cT^{/V}$-cocomplete for a weak $q_{\underline{V}}$-$\cT^{/V}$-left Kan extension of $F$ to exist. This is ensured by our hypothesis.

By \cref{lem:LocallyCocartesianCriterionForOperads}, we see that the case we just considered suffices to show that $\widetilde{\Fun}_{\cO,\cT}(\cC,\cE)^\otimes \to \cO^\otimes$ is a locally cocartesian fibration. In addition, by \cref{prp:underlyingCategoryOfDayConvolution} and \cite[Prop.~9.7]{Exp2}, for every $x \in \cO_V$ we have an equivalence
$$\widetilde{\Fun}_{\cO,\cT}(\cC,\cE)_{\underline{x}} \simeq \underline{\Fun}_{\underline{V}}(\cC_{\underline{x}},\cE_{\underline{x}}),$$
and the latter $\cT^{/V}$-$\infty$-category is $\cT^{/V}$-cocomplete by the pointwise computation of $\cT^{/V}$-colimits in $\cT^{/V}$-functor categories.

(3): Observe that for $l = (y \ra z, c) \in \cC^\otimes_{\underline{\alpha}} \subset L$, $K \times_L L^{/\underline{l}} \simeq \cC^{\otimes}_{\underline{x}} \times_{\cC^\otimes_{\underline{\alpha}}} (\cC^\otimes_{\underline{\alpha}})^{/\underline{l}}$. By the pointwise formula for $\cT^{/V}$-left Kan extensions, the first part of the claim follows. The second part then follows by \cite[Rem.~2.14]{Exp2b}.
\end{proof}

As with ordinary Day convolution, we need an additional distributivity hypothesis on the target for $\cT$-Day convolution to be $\cO$-monoidal. We first recall the definition of a distributive functor (as originally formulated by the first author).

\begin{dfn}[{\cite[Def.~8.18]{Exp2b}}] 
Let $f: U \to V$ be a map of finite $\cT$-sets, let $\cC$ be a $\cT^{/U}$-$\infty$-category, and let $\cD$ be a $\cT^{/V}$-$\infty$-category. Let $F: \prod_f \cC = f_* \cC \to \cD$ be a $\cT^{/V}$-functor. Then we say that $F$ is \emph{distributive} if for every pullback square
\[ \begin{tikzcd}
U' \ar{r}{f'} \ar{d}{g'} & V' \ar{d}{g} \\
U \ar{r}{f} & V
\end{tikzcd} \]
of finite $\cT$-sets and $\cT^{/U'}$-colimit diagram $\overline{p}: \cK^{\underline{\rhd}} \to g'^* \cC$, the $\cT^{/V'}$-functor
\[ (f'_* \cK)^{\underline{\rhd}} \xto{\can} f'_* (\cK^{\underline{\rhd}}) \xto{f'_* \overline{p}} f'_* g'^* \cC \simeq g^* f_* \cC \xto{g^* F} g^* \cD \]
is a $\cT^{/V'}$-colimit diagram.
\end{dfn}

\begin{dfn} \label{def:DistributiveMonoidalCategory}
Let $\cC^\otimes$ be a $\cO$-monoidal $\cT$-$\infty$-category and suppose that for all $y \in \cO_V$, $\cC_{\underline{y}}$ is $\cT^{/V}$-cocomplete. We say that $\cC^\otimes$ is \emph{distributive} if for every fiberwise active edge $\alpha: x \to y$ lifting $[U_+ \rightarrow V] \to [V_+ \rightarrow V]$ (corresponding to $f: U \to V$ in $\FinT$), the associated pushforward $\cT^{/V}$-functor
\[ \alpha_{\otimes}: \cC^\otimes_{\underline{x}} \to \cC_{\underline{y}} \]
is distributive. Here, for this condition to be sensible, we use that for an orbit decomposition $U \simeq U_1 \coprod ... \coprod U_n$ and $n$ cocartesian morphisms $x \to x_i$ lifting the characteristic maps $\chi_{[U_i \subset U]}: [U_+ \rightarrow V] \to [(U_i)_+ \rightarrow U_i]$, the parametrized fiber $\cC^\otimes_{\underline{x}}$ is identified with $\prod_f \left( \cC_{\underline{x_1}} \coprod ... \coprod \cC_{\underline{x_n}} \right)$ by the $\cT$-Segal condition (cf. \cref{exm:normParametrizedFunctor}).

Also note that in particular, for each morphism $\alpha: x \to y$ in $\cO_V$, the condition that the pushforward $\cT^{/V}$-functor $\alpha_{\otimes}: \cC_{\underline{x}} \to \cC_{\underline{y}}$ is distributive is equivalent to $\alpha_{\otimes}$ strongly preserving all small $\cT^{/V}$-colimits.
\end{dfn}


The following proposition furnishes some examples of distributive $\cT$-symmetric monoidal $\cT$-$\infty$-categories.

\begin{prp}
Let $\cC$ be a cocomplete $\infty$-category with finite products such that the products commute with colimits separately in each variable. Let $f: U \to U'$ be a morphism of finite $\cT$-sets. Then the product $\cT^{/U'}$-functor
\[ \mu: f_* f^* \underline{\cC}_{\cT^{/U'}} \to \underline{\cC}_{\cT^{/U'}} \]
is distributive. Consequently, the $\cT$-cartesian $\cT$-symmetric monoidal structure on $\underline{\cC}_{\cT}$ is $\cT$-distributive.
\end{prp}
\begin{proof}
By the universal property of the category of $\cT^{/U'}$-objects (\cite[Prop.~3.10]{Exp2}), $\mu$ can be identified with a functor
\[ \mu^{\dagger}: f_* f^* \underline{\cC}_{\cT^{/U'}} \to \cC\]
such that its restriction to the fiber over $[W \ra U'] \in \cT^{/U'}$ is the functor
\[\prod_{V\in\mathrm{Orbit}(U\times_{U'}W)} \Fun(\underline{V},\cC ) \xrightarrow{\prod \ev_V} \prod_{V\in \mathrm{Orbit}(U\times_{U'}W)} \cC \xrightarrow{\times} \cC\,.\]
For the proof of distributivity we can assume without loss of generality that $U'$ is an orbit. Suppose $p: \cK^{\underline{\rhd}} \to \underline{\cC}_{\cT^{/U}}$ is a $\cT^{/U}$-colimit diagram and let $p^{\dagger}: \cK^{\underline{\rhd}} \to \cC$ be the functor under the equivalence of \cite[Prop.~3.10]{Exp2}. By \cite[Prop.~5.5]{Exp2}, any such $p$ is $\cT^{/U}$-colimit diagram if and only if for every $[V \ra U]\in \cT^{/U}$ the functor $p^{\dagger}_V :\cK_{[V\ra U]}^{\rhd} \to \cC$ given by restriction to the fiber is a colimit diagram. By \cite[Prop.~5.5]{Exp2} again, it suffices to prove that the diagram
\[ (f_* \cK)^{\underline{\rhd}} \to f_* \underline{\cC}_{\cT^{/U}} \xto{\mu^{\dagger}} \cC \]
is a colimit diagram when restricted to each fiber. But the restriction to the fiber above $[W \ra U']\in \cT^{/U'}$ is
        \[\left(\prod_{V\in\mathrm{Orbit}(U\times_{U'}W)} \cK_V \right)^\triangleright \to \prod_{V\in\mathrm{Orbit}(U\times_{U'}W)} \cK_V^\triangleright \xrightarrow{\prod p_V} \prod_{V\in\mathrm{Orbit}(U\times_{U'}W)} \cC \xrightarrow{\times} \cC, \]
        which is a colimit diagram since the cartesian product in $\cC$ commutes with colimits separately in each variable.
\end{proof}

We now return to $\cT$-Day convolution and prove the main result of this subsection. 

\begin{thm} \label{thm:DayConvolutionCocartesian}
In the situation of \cref{prp:DayConvolutionLocallyCocartesian}, suppose moreover that $q: \cE^\otimes \to \cO^\otimes$ is distributive. Then $\widetilde{\Fun}_{\cO,\cT}(\cC,\cE)^\otimes \to \cO^\otimes$ is a cocartesian fibration and $\widetilde{\Fun}_{\cO,\cT}(\cC,\cE)^\otimes$ is distributive.
\end{thm}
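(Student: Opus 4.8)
The plan is to use the local-to-global criterion of \cref{lem:LocallyCocartesianCriterionForOperads}: having already established in \cref{prp:DayConvolutionLocallyCocartesian} that $\widetilde{\Fun}_{\cO,\cT}(\cC,\cE)^\otimes \to \cO^\otimes$ is a locally cocartesian fibration, it remains only to check that locally cocartesian edges compose, and by the reductions built into that lemma it suffices to treat a composition of fiberwise active edges $x \xto{\alpha} y \xto{\beta} z$ in $\cO^\otimes_V$ with $z \in \cO_V$. Using the $\cT$-Segal condition to decompose over the orbits of the source of $z$, we may further assume $V$ is an orbit and $z$ lies over $[V_+ \to V]$, so that $\alpha$ lifts some $f\colon U \to U'$ and $\beta$ lifts some $g\colon U' \to V$ in $\FinT$, with $gf\colon U \to V$.

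The key computation is to compare the two ways of producing a locally cocartesian lift over $g \circ f$. By part (3) of \cref{prp:DayConvolutionLocallyCocartesian}, the locally cocartesian pushforward along a fiberwise active edge covering $f$ is computed, after identifying the relevant parametrized fibers with $\cT$-functor categories $\underline{\Fun}$ via \cref{cor:SymmetricMonoidalSegalCondition} and \cref{prp:underlyingCategoryOfDayConvolution}, by a $\cT^{/V}$-left Kan extension along the norm functor $f_\otimes$ of $\cE^\otimes$ composed with postcomposition by $f_\otimes$. So the composite of the two pushforwards is the composite of two such left Kan extensions: first along $f_\otimes\colon \cE_{\underline{U}} \to \cE_{\underline{U'}}$ (relative to the base $\underline{U'}$), then along $g_\otimes\colon \cE_{\underline{U'}} \to \cE_{\underline{V}}$. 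I would invoke the pasting/transitivity of parametrized left Kan extensions (\cite[Thm.~6.2]{Exp2b} and the associated functoriality) together with the fact that $(gf)_\otimes \simeq g_\otimes \circ f_\otimes$ as norm functors of $\cE^\otimes$ (since $\cE^\otimes$ is $\cO$-monoidal, hence the cocartesian pushforwards compose), to conclude that this composite agrees with the single left Kan extension computing the pushforward over $gf$ — \emph{provided} the intermediate left Kan extension along $f_\otimes$ is preserved by postcomposition with $g_\otimes$ after the outer left Kan extension. This is precisely where distributivity of $q\colon \cE^\otimes \to \cO^\otimes$ enters: the defining condition on $g_\otimes$ (applied to the relevant pullback squares of finite $\cT$-sets arising from decomposing $U'$ into orbits over $V$) says exactly that $g_\otimes$ strongly preserves the $\cT$-colimits along which the inner left Kan extension is formed, so the canonical comparison map from $g_\otimes$ applied to the $f_\otimes$-left Kan extension to the left Kan extension of the composite is an equivalence. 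Combining these identifications shows the composite lift is again locally cocartesian, whence $p$ is a cocartesian fibration.

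Finally, for distributivity of $\widetilde{\Fun}_{\cO,\cT}(\cC,\cE)^\otimes$ itself: its norm $\cT^{/V}$-functors are, by part (3) of \cref{prp:DayConvolutionLocallyCocartesian} and the pointwise formula for $\cT$-left Kan extensions, assembled pointwise out of the norm functors of $\cE^\otimes$ and a left Kan extension along $f_\otimes$ on the indexing side. Since $\cT^{/V}$-colimits in $\underline{\Fun}_{\underline{V}}(-,-)$ are computed pointwise (as used already in the proof of \cref{prp:DayConvolutionLocallyCocartesian}(1)), and the pointwise values of the norm functor are exactly the norm functors of $\cE^\otimes$ which are distributive by hypothesis, one checks that the distributivity square for $\widetilde{\Fun}_{\cO,\cT}(\cC,\cE)^\otimes$ reduces, fiberwise and pointwise, to the distributivity squares for $\cE^\otimes$. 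Here I would again use the compatibility of parametrized left Kan extension with base change (the pullback-stability built into the definition of a distributive functor) to handle the base-change pullback squares appearing in that definition.

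I expect the main obstacle to be bookkeeping the parametrized left Kan extensions under the various identifications $\widetilde{\Fun}_{\cO,\cT}(\cC,\cE)_{\underline{x}} \simeq \underline{\Fun}_{\underline{V}}(\cC_{\underline{x}},\cE_{\underline{x}})$ and tracking exactly which pullback squares of finite $\cT$-sets the distributivity hypothesis must be applied to when $U'$ is not an orbit; the conceptual content — distributivity is precisely the condition making the relevant left Kan extensions compose and base-change correctly — is straightforward, but making the comparison maps explicit and matching them against \cite[Def.~8.18]{Exp2b} and \cite[Thm.~6.2, Rem.~2.14]{Exp2b} will require care.
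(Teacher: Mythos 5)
Your proposal follows essentially the same route as the paper: reduce via \cref{lem:LocallyCocartesianCriterionForOperads} to composing locally cocartesian lifts of fiberwise active edges, identify the pushforwards as parametrized left Kan extensions via \cref{prp:DayConvolutionLocallyCocartesian}(3), use distributivity of $\cE^\otimes$ exactly where you say — to show that the outer norm functor $\beta_\otimes$ of $\cE^\otimes$ carries the inner left Kan extension to a left Kan extension — and then conclude by transitivity; the second half likewise matches the paper's reduction of distributivity of the Day convolution to that of $\cE^\otimes$ via the pointwise/adjoint description of colimits in functor categories. Two corrections to your bookkeeping: the left Kan extensions computing the pushforward are taken along the norm functors $\alpha_\otimes \colon \cC^\otimes_{\underline{x}} \to \cC_{\underline{y}}$ of the \emph{source} $\cC^\otimes$ (after postcomposing with the norm of $\cE^\otimes$), not along $f_\otimes \colon \cE_{\underline{U}} \to \cE_{\underline{U'}}$ as you write — taken literally that step would not parse, since the Day convolution fibers are functor categories out of the fibers of $\cC^\otimes$. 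Also, the ``pasting/transitivity'' you invoke to splice the two Kan extension data into one over the $2$-simplex is not free: the paper glues them along the pushout $\cC^\otimes_{\underline{\sigma}} \simeq \cC^\otimes_{\underline{\alpha}} \cup_{\cC^\otimes_{\underline{y}}} \cC^\otimes_{\underline{\beta}}$, which uses the $\cO$-promonoidality of $\cC^\otimes$ and \cref{lem:KanExtendToPushout}; that is the one ingredient genuinely absent from your outline.
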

\begin{proof} Let
\[ \begin{tikzcd}[row sep=2em, column sep=2em]
& y \ar{rd}{\beta} & \\
x \ar{ru}{\alpha} \ar{rr}{\gamma} & & z
\end{tikzcd} \]
be a $2$-simplex $\sigma$ of fiberwise active edges in $\cO^\otimes_W$ with $z \in \cO_W$, which covers
\[ \begin{tikzcd}[row sep=2em, column sep=2em]
& V \ar{rd}{g} & \\
U \ar{ru}{f} \ar{rr}{h} & & W
\end{tikzcd} \]
in $\FinT$ (viewed as a $2$-simplex in $(\uFinpT)^{\act}_W$). We need to verify that given a lift of $\sigma$ to a $2$-simplex $\overline{\sigma}$
\[ \begin{tikzcd}[row sep=2em, column sep=2em]
& \overline{y} \ar{rd}{\overline{\beta}} & \\
\overline{x} \ar{ru}{\overline{\alpha}} \ar{rr}{\overline{\gamma}} & & \overline{z}
\end{tikzcd} \]
in $\widetilde{\Fun}_{\cO,\cT}(\cC,\cE)^\otimes$, if $\overline{\alpha}$ and $\overline{\beta}$ are locally cocartesian edges then $\overline{\gamma}$ is a locally cocartesian edge.


Suppose that $V \simeq \coprod_{1 \leq i \leq n} V_i$ is an orbit decomposition of $V$ with respect to which $\alpha$ decomposes as $\{ \alpha_i: x_i \to y_i\}_{1 \leq i \leq n}$. Then the locally cocartesian edge $\overline{\alpha}$ corresponds to $n$ commutative diagrams of $\cT^{/V_i}$-$\infty$-categories
\[ \begin{tikzcd}[row sep=2em, column sep=2em]
\cC^\otimes_{\underline{x_i}} \ar{r}{F_{x_i}} \ar{d} & \cE_{\underline{y_i}} \\
\cC^\otimes_{\underline{\alpha_i}} \ar{ru}[swap]{F_{\alpha_i}} 
\end{tikzcd} \]
in which $F_{\alpha_i}$ is a $\cT^{/V_i}$-left Kan extension of $F_{x_i}$. From this, we obtain the commutative diagram of $\cT^{/W}$-$\infty$-categories 
\[ \begin{tikzcd}[row sep=2em, column sep=2em]
\prod_g (\coprod_{1 \leq i \leq n} \cC^\otimes_{\underline{x_i}}) \simeq \cC^\otimes_{\underline{x}} \ar{r}{F_x} \ar{d} & \prod_g (\coprod_{1 \leq i \leq n} \cE_{\underline{y_i}}) \simeq \cE^\otimes_{\underline{y}} \ar{r}{\beta_{\otimes}} & \cE_{\underline{z}} \\
\prod_g (\coprod_{1 \leq i \leq n} \cC^\otimes_{\underline{\alpha_i}}) \simeq \cC^\otimes_{\underline{\alpha}} \ar{ru}{F_{\alpha}} \ar[bend right=10]{rru}[swap]{\beta_{\otimes} \circ F_\alpha}
\end{tikzcd} \]
in which $\beta_{\otimes} \circ F_{\alpha}$ is a $\cT^{/W}$-left Kan extension of $\beta_{\otimes} \circ F_x$, invoking the hypothesis that $\cE^\otimes$ is distributive.

On the other hand, the locally cocartesian edge $\overline{\beta}$ corresponds to a commutative diagram of $\cT^{/W}$-$\infty$-categories
\[ \begin{tikzcd}[row sep=2em, column sep=2em]
\cC^\otimes_{\underline{y}} \ar{r}{F_y} \ar{d} &  \cE_{\underline{z}} \\
\cC^\otimes_{\underline{\beta}} \ar{ru}[swap]{F_\beta}
\end{tikzcd} \]
in which $F_y$ is the restriction of $\beta_{\otimes} \circ F_{\alpha}$ along the inclusion $\cC^\otimes_{\underline{y}} \subset \cC^\otimes_{\underline{\alpha}}$ and $F_\beta$ is a $\cT^{/W}$-left Kan extension of $F_y$. Combining these two diagrams, we obtain a commutative diagram of $\cT^{/W}$-$\infty$-categories
\[ \begin{tikzcd}[row sep=2em, column sep=3em]
& \cC^\otimes_{\underline{x}} \ar{r}{F_x} \ar{d} & \cE^\otimes_{\underline{y}} \ar{d}{\beta_{\otimes}} \\
\cC^\otimes_{\underline{y}} \ar{r} \ar{d} & \cC^\otimes_{\underline{\alpha}} \ar{r}{\beta_{\otimes} \circ F_\alpha} \ar{d} & \cE_{\underline{z}} \\
\cC^\otimes_{\underline{\beta}} \ar{r} \ar{rru}[near start]{F_\beta} & \cC^\otimes_{\underline{\sigma}} \ar[dotted]{ru}[swap]{F_\sigma}
\end{tikzcd} \]
where $\cC^\otimes_{\underline{\sigma}} \coloneq \Delta^2 \times_{\sigma, \cO^\otimes} \Ar^\cocart(\cO^\otimes) \times_{\cO^\otimes} \cC^\otimes$. Because $\cC^\otimes$ is $\cO$-promonoidal and $\sigma$ is a $2$-simplex of fiberwise active edges, the lefthand square is a pushout square of $\cT^{/W}$-$\infty$-categories, and the dotted $\cT^{/W}$-functor $F_{\sigma}$ is obtained from gluing together $\beta_{\otimes} \circ F_\alpha$ and $F_\beta$. Indeed, $F_\sigma$ corresponds to the $2$-simplex $\overline{\sigma}$ in $\widetilde{\Fun}_{\cO,\cT}(\cC,\cE)^\otimes$. By \cref{lem:KanExtendToPushout}, $F_\sigma$ is a $\cT^{/W}$-left Kan extension of $\beta_{\otimes} \circ F_\alpha$. By transitivity of $\cT^{/W}$-left Kan extensions, $F_\sigma$ is a $\cT^{/W}$-left Kan extension of $\beta_{\otimes} \circ F_x$, and the restriction $F_\gamma$ of $F_\sigma$ to $\cC^\otimes_{\underline{\gamma}} \subset \cC^\otimes_{\underline{\sigma}}$ is also a $\cT^{/W}$-left Kan extension of $\beta_{\otimes} \circ F_x$. But this exactly means that $\overline{\gamma}$ is a locally cocartesian edge. Finally, by \cref{lem:LocallyCocartesianCriterionForOperads} this suffices to show that $\widetilde{\Fun}_{\cO,\cT}(\cC,\cE)^\otimes \to \cO^\otimes$ is a cocartesian fibration.

To show that $\widetilde{\Fun}_{\cO,\cT}(\cC,\cE)^\otimes$ is distributive, we check the definition. So suppose that $\alpha: x \to y$ is a fiberwise active edge in $\cO^\otimes_V$ with $y \in \cO_V$, which lifts $f: U \to V$ in $\FinT$. Let $U \simeq U_1 \coprod ... \coprod U_n$ be an orbit decomposition and suppose we have $\cT^{/U_i}$-colimits
\[ \begin{tikzcd}[row sep=2em, column sep=3em]
\cK_i \ar{r}{p_i} \ar{d} & \underline{\Fun}_{\cT^{/U_i}}(\cC_{\underline{x_i}}, \cE_{\underline{x_i}}) \\
(\cT^{/U_i})^\op \ar{ru}[swap]{q_i}
\end{tikzcd} \]

(Here and throughout we suppress the data of the natural transformations.) By \cite[Prop.~9.17]{Exp2} and its proof, these are adjoint to $\cT^{/U_i}$-left Kan extensions
\[ \begin{tikzcd}[row sep=2em, column sep=3em]
\cK_i \times_{(\cT^{/U_i})^\op} \cC_{\underline{x_i}} \ar{r}{p'_i} \ar{d} & \cE_{\underline{x_i}} \\
\cC_{\underline{x_i}} \ar{ru}[swap]{q_i'}
\end{tikzcd} \]

Let $\cK = \coprod_{1 \leq i \leq n} \cK_i$, $p = \coprod_{1 \leq i \leq n} p_i$, and $q = \coprod_{1 \leq i \leq n} q_i$, and the same for $\cK',p',q'$. We need to show that
\[ \begin{tikzcd}[row sep=2em, column sep=3em]
\prod_f \cK \ar{r}{\prod_f p} \ar{d} & \widetilde{\Fun}_{\cO,\cT}(\cC,\cE)^\otimes_{\underline{x}} \ar{r}{\alpha_{\otimes}} & \underline{\Fun}_{\cT^{/V}}(\cC_{\underline{y}}, \cE_{\underline{y}}) \\
(\cT^{/V})^\op \ar[bend right=5]{rru}[swap]{\alpha_{\otimes} \circ \prod_f q}
\end{tikzcd} \]

is a $\cT^{/V}$-colimit. Equivalently, we need to show that in the diagram

\[ \begin{tikzcd}[row sep=2em, column sep=3em]
\prod_f \cK \times_{(\cT^{/V})^\op} \cC^\otimes_{\underline{x}} \ar{r}{(\prod_f p)'} \ar{d} & \cE^\otimes_{\underline{x}} \ar{r}{\alpha_{\otimes}} & \cE_{\underline{y}} \\
\prod_f \cK \times_{(\cT^{/V})^\op} \cC^\otimes_{\underline{\alpha}}  \ar[bend right=10]{rru}{(\alpha_{\otimes} \circ \prod_f p)'} \ar{d} \\
\cC^\otimes_{\underline{\alpha}} \ar[bend right=15]{rruu}[swap]{(\alpha_{\otimes} \circ \prod_f q)'}
\end{tikzcd} \]

the $\cT^{/V}$-functor $(\alpha_{\otimes} \circ \prod_f q)'$ is a $\cT^{/V}$-left Kan extension of $(\alpha_{\otimes} \circ \prod_f p)'$. Here we change the domain from $\cC^\otimes_{\underline{y}}$ to $\cC^\otimes_{\underline{\alpha}}$ because we are not supposing that $\cC^\otimes$ is $\cO$-monoidal. Note that $(\alpha_{\otimes} \circ \prod_f p)'$ is by definition a $\cT^{/V}$-left Kan extension of $\alpha_{\otimes} \circ (\prod_f p)'$, so it suffices to show that $(\alpha_{\otimes} \circ \prod_f q)'$ is a $\cT^{/V}$-left Kan extension of $\alpha_{\otimes} \circ (\prod_f p)'$.

Because $\cE^\otimes$ is distributive, we have that
\[ \begin{tikzcd}[row sep=2em, column sep=3em]
\prod_f \cK \times_{(\cT^{/V})^\op} \cC^\otimes_{\underline{x}} \ar{r}{\prod_f (p')} \ar{d} & \cE^\otimes_{\underline{x}} \ar{r}{\alpha_{\otimes}} & \cE_{\underline{y}} \\
\cC^\otimes_{\underline{x}} \ar[bend right=10]{rru}[swap]{\alpha_{\otimes} \circ \prod_f (q')}
\end{tikzcd} \]
is a $\cT^{/V}$-left Kan extension. By definition, $(\alpha_{\otimes} \circ \prod_f q)'$ is a $\cT^{/V}$-left Kan extension of $\alpha_{\otimes} \circ \prod_f (q')$ along the inclusion $\cC^\otimes_{\underline{x}} \subset \cC^\otimes_{\underline{\alpha}}$. By transitivity of $\cT^{/V}$-left Kan extensions, we are done.
\end{proof}

\begin{lem} \label{lem:KanExtendToPushout} Suppose we have a diagram of $\cT$-$\infty$-categories
\[ \begin{tikzcd}[row sep=2em, column sep=2em]
\cA \ar{r}{i} \ar{d}{\phi} & \cB \ar{r}{F} \ar{d}{\phi} & \cE \\
\cC \ar{r}{i} & \cD \ar{ru}[swap]{H}
\end{tikzcd} \]
such that the lefthand square is a pushout square in which every $\cT$-functor is an inclusion. Further suppose that the relevant $\cT$-colimits exists for the $\cT$-left Kan extensions of $F$ along $\phi$ and of $F \circ i$ along $\phi$ to exist. Then $H$ is a $\cT$-left Kan extension of $F$ if and only if $H \circ i$ is a $\cT$-left Kan extension of $F \circ i$.
\end{lem}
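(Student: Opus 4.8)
The strategy is to reduce the statement to an identification of a single $\cT$-left Kan extension. Write $L$ for the $\cT$-left Kan extension of $F$ along $\phi\colon\cB\to\cD$ and $L_{0}$ for the $\cT$-left Kan extension of $F\circ i$ along $\phi\colon\cA\to\cC$; both exist by hypothesis, and each has invertible unit because a $\cT$-left Kan extension along a fully faithful $\cT$-functor restricts back to the original functor (by the pointwise formula, since the relevant parametrized slices carry $\cT$-final objects). Since $\underline{\Fun}_{\cT}(-,\cE)$ carries $\cT$-colimits to $\cT$-limits, the pushout square yields an equivalence
\[ \underline{\Fun}_{\cT}(\cD,\cE)\;\simeq\;\underline{\Fun}_{\cT}(\cB,\cE)\times_{\underline{\Fun}_{\cT}(\cA,\cE)}\underline{\Fun}_{\cT}(\cC,\cE) \]
under which the restriction functors along $\cB\hookrightarrow\cD$ and $\cC\hookrightarrow\cD$ become the two projections. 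The central claim is that $L$ corresponds to the pair $(F,L_{0})$, which is a legitimate object of this pullback precisely because $L_{0}|_{\cA}\simeq F\circ i$.

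I would prove this claim by checking that $(F,L_{0})$ has the universal property of $L$: for every $\cT$-functor $G\colon\cD\to\cE$, written under the equivalence above as $(G_{\cB},G_{\cC})$ with common restriction $G_{\cA}$ to $\cA$, the restriction map $\underline{\Map}((F,L_{0}),G)\to\underline{\Map}(F,G_{\cB})$ (note $\mathrm{res}_{\cB}(F,L_{0})=F$) should be an equivalence of $\cT$-spaces, naturally in $G$. The source is the fiber product of $\underline{\Map}(F,G_{\cB})$ and $\underline{\Map}(L_{0},G_{\cC})$ over $\underline{\Map}(F\circ i,G_{\cA})$; by the $\cT$-adjunction characterizing $L_{0}$ together with the invertibility of its unit, the structure map $\underline{\Map}(L_{0},G_{\cC})\to\underline{\Map}(L_{0}|_{\cA},G_{\cA})\simeq\underline{\Map}(F\circ i,G_{\cA})$ is an equivalence, so the fiber product collapses onto $\underline{\Map}(F,G_{\cB})$ as required.

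Granting the claim the lemma follows formally. The commutative diagram supplies an equivalence $\mathrm{res}_{\cB}H\simeq F$, hence also $\mathrm{res}_{\cA}H\simeq F\circ i$, and under the pullback decomposition $H$ is represented by the pair $(F,H\circ i)$. By uniqueness of $\cT$-left Kan extensions, the assertion ``$H$ is a $\cT$-left Kan extension of $F$'' means exactly ``$H\simeq L$ under $F$'', which by the claim means ``$H\circ i\simeq L_{0}$ under $F\circ i$'', which in turn — again by uniqueness — means exactly ``$H\circ i$ is a $\cT$-left Kan extension of $F\circ i$''.

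The main obstacle is not this formal core but the two parametrized inputs it rests on: that $\underline{\Fun}_{\cT}(-,\cE)$ converts the pushout $\cD\simeq\cB\sqcup_{\cA}\cC$ of $\cT$-$\infty$-categories into a pullback, so that the square of restriction functors along the (by hypothesis fully faithful) inclusions is genuinely cartesian; and that $\cT$-left Kan extension along a fully faithful $\cT$-functor has invertible unit, which I would extract from the parametrized pointwise formula of \cite[Rem.~2.14]{Exp2b} and \cite[Thm.~6.2]{Exp2b} using that for $a\in\cA$ the parametrized slice $\cA^{/\underline{a}}$ carries a $\cT$-final object selecting $\mathrm{id}_{a}$ (cf.\ \cite{Exp2}). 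As an alternative entirely within the pointwise formalism, one may observe that for objects $d$ of $\cB$ the pointwise condition on both sides is automatic, while for $d$ of $\cC$ lying outside $\cB$ it suffices to show that the inclusion of parametrized comma $\cT$-$\infty$-categories $\cA\times_{\cC}\cC^{/\underline{d}}\hookrightarrow\cB\times_{\cD}\cD^{/\underline{d}}$ is $\cT$-cofinal, which one checks from the formula for mapping $\cT$-spaces in $\cB\sqcup_{\cA}\cC$ together with the parametrized co-Yoneda lemma.
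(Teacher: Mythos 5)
Your proposal is correct and follows essentially the same route as the paper's proof: both arguments rest on the fact that $\underline{\Fun}_{\cT}(-,\cE)$ carries the pushout $\cD \simeq \cB \sqcup_{\cA} \cC$ to a pullback, so that the $\cT$-category of extensions of $F$ over $\cD$ is identified with the $\cT$-category of extensions of $F \circ i$ over $\cC$, together with the characterization of a $\cT$-left Kan extension along a (fully faithful) inclusion as the distinguished object of this category of extensions. Where the paper simply asserts that the common section $\sigma$ of the two fiber products is $\cT$-initial iff $H$ (equivalently $H \circ i$) is a $\cT$-left Kan extension, you unwind the same statement via a mapping-space computation showing $(F,L_0)$ has the universal property of $L$; this is a harmless elaboration rather than a different method.
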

\begin{proof} Consider the commutative diagram of $\cT$-$\infty$-categories
\[ \begin{tikzcd}[row sep=2em, column sep=3em]
\cP \ar{r} \ar{d} & \underline{\Fun}_{\cT}(\cD,\cE) \ar{r} \ar{d} & \underline{\Fun}_{\cT}(\cC,\cE) \ar{d} \\
\cT^{\op} \ar{r}{\sigma_F} \ar[bend left,dotted]{u}{\sigma} \ar[bend right=15]{rr}[swap]{\sigma_{F i}} & \underline{\Fun}_{\cT}(\cB,\cE) \ar{r} & \underline{\Fun}_{\cT}(\cA,\cE)
\end{tikzcd} \]
in which every square is a pullback square. Then $\cP$ is identified with
\[ \cT^{\op} \times_{\sigma_{F i}, \underline{\Fun}_{\cT}(\cA,\cE)} \underline{\Fun}_{\cT}(\cC,\cE) \simeq \cT^{\op} \times_{\sigma_F, \underline{\Fun}_{\cT}(\cB,\cE)} \underline{\Fun}_{\cT}(\cD,\cE) \]
and the cocartesian section $\sigma$ is equivalently determined by a $\cT$-functor $H$ extending $F$ or a $\cT$-functor $G$ extending $F \circ i$. Moreover, because of our hypothesis on the existence of the relevant $\cT$-colimits, $\sigma$ is an $\cT$-initial object in $\cP$ if and only if $H$ is a $\cT$-left Kan extension of $F$ along $\phi$ if and only if $G$ is a $\cT$-left Kan extension of $F \circ i$ along $\phi$.
\end{proof}

\begin{exm}[Smash product]
Define a functor
\[ (\underline{\Delta^1})^\otimes: \Span(\FinT) \to \text{ho} \Span(\FinT) \to \Cat_1 \]
by sending an object $U = \coprod_I U_i$ (decomposed as a disjoint union of orbits) to $\prod_I \Delta^1$, and a morphism $f: \coprod_I U_i \to \coprod_J V_j$, $\phi: I \to J$ contravariantly to the restriction functor $\phi^\ast: \prod_J \Delta^1 \to \prod_I \Delta^1$ and covariantly to the product over $J$ of
\[ \text{min}: \prod_{I_j} \Delta^1 \to \Delta^1, \: (x_i) \mapsto \text{min}(x_i)  \]
if $I_j$ is nonempty, and $1: \Delta^0 \to \Delta^1$ otherwise. (One easily verifies the base-change condition, so this indeeds defines a functor.)

Let $(\Delta^1 \times \cT^\op)^\otimes$ denote the resulting $\cT$-symmetric monoidal $\cT$-$\infty$-category under the equivalence of \cref{thm:TwoPresentationsOfTSMCs}. Let $\cC^\otimes$ be a distributive $\cT$-symmetric monoidal $\cT$-$\infty$-category. Then by \cref{thm:DayConvolutionCocartesian}, we have that $\underline{\Fun}_{\cT}(\Delta^1 \times \cT^\op, \cC)^\otimes$ is a distributive $\cT$-symmetric monoidal $\cT$-$\infty$-category. Moreover, the fiberwise tensor products admit a simple description because the underlying $\cT$-category of the source is constant. Namely, for the fold map $\nabla: U \coprod U \to U$, the tensor product
\[ \otimes: \Fun(\Delta^1, \cC_U) \times \Fun(\Delta^1, \cC_U) \to \Fun(\Delta^1, \cC_U) \]
is given by taking the cartesian product into $\Fun(\Delta^1 \times \Delta^1, \cC_U \times \cC_U)$, postcomposition by $\otimes: \cC_U \times \cC_U \to \cC_U$, and then left Kan extension along $\text{min}: \Delta^1 \times \Delta^1 \to \Delta^1$ (which is computed by taking colimits fiberwise because $\text{min}$ is a cocartesian fibration).


Now suppose in addition that $\cC^\otimes$ is \emph{$\cT$-cartesian} $\cT$-symmetric monoidal, so the tensor product on the fibers of $\underline{\Fun}_{\cT}(\Delta^1 \times \cT^\op, \cC)$ is the pushout product. Let $\cC_{\ast}$ denote the full $\cT$-subcategory of $\underline{\Fun}_{\cT}(\Delta^1 \times \cT^\op, \cC)$ given over an object $U \in \cT$ by those functors $\Delta^1 \to \cC_U$ which take $0$ to a final object $\ast$ of $\cC_U$. The inclusion $\cC_\ast \subset \underline{\Fun}_{\cT}(\Delta^1 \times \cT^\op, \cC)$ admits a left adjoint $L$, which over an object $U \in \cT$ takes a functor $F: \Delta^1 \to \cC_U$ to
\[ L(F): \Delta^1 \to \cC_U, \: \goesto{[0 \rightarrow 1]}{[\ast \rightarrow  F(1)/F(0)]}. \]

$L$ is a $\cT$-localization functor, so to descend the cartesian $\cT$-symmetric monoidal structure on $\cC$ to the \emph{smash product} on $\cC_{\ast}$, we can check the criterion of \cref{thm:CompatibleLocalization} (or rather, \cref{rem:SMClocalization}):
\begin{enumerate} \item For the fold map $\nabla: U \coprod U \to U$ and an object $[z_0 \to z_1] \in \Fun(\Delta^1, \cC_U)$,
\[ - \otimes [z_0 \to z_1]: \Fun(\Delta^1, \cC_U) \to \Fun(\Delta^1, \cC_U) \]
preserves $L_U$-equivalences. Indeed, let
\[ \begin{tikzcd}[row sep=4ex, column sep=4ex, text height=1.5ex, text depth=0.25ex]
x_0 \ar{r} \ar{d} & y_0 \ar{d} \\
x_1 \ar{r} & y_1 
\end{tikzcd} \]
be an $L_U$-equivalence, i.e. $x_1/x_0 \to y_1/y_0$ is an equivalence in $\cC_U$. We have an equivalence
\[ \left( x_1 \times z_1 \right) / \left(x_0 \times z_1 \cup_{x_0 \times z_0} x_1 \times z_0 \right) \simeq \left( \frac{x_1 \times z_1}{x_0 \times z_1} \right) / \left( \frac{x_1 \times z_0}{x_0 \times z_0} \right). \]
Using that we have a diagram of pushout squares
\[ \begin{tikzcd}[row sep=4ex, column sep=4ex, text height=1.5ex, text depth=0.25ex]
x_0 \times z_j \ar{r} \ar{d} & z_j \ar{r} \ar{d} & \ast \ar{d} \\
x_1 \times z_j \ar{r} & x_1/x_0 \times z_j \ar{r} & (x_1 \times z_j)/(x_0 \times z_j)
\end{tikzcd} \]
(and similarly for $y$) by cartesian closedness, we deduce that 
\[ \left( \frac{x_1 \times z_1}{x_0 \times z_1} \right) / \left( \frac{x_1 \times z_0}{x_0 \times z_0} \right) \to \left( \frac{y_1 \times z_1}{y_0 \times z_1} \right) / \left( \frac{y_1 \times z_0}{y_0 \times z_0} \right) \]
is an equivalence, as desired.

\item For a map $f: U \to V$ in $\cT$,
\[ f_{\otimes}: \Fun(\Delta^1, \cC_U) \to \Fun(\Delta^1,\cC_V) \]
sends $L_U$-equivalences to $L_V$-equivalences: to show this, suppose
\[ \theta: [x_0 \rightarrow x_1] \to [y_0 \rightarrow y_1] \]
is a $L_U$-equivalence in $\Fun(\Delta^1,\cC_U)$. Equivalently, we have a left Kan extension
\[ \begin{tikzcd}[row sep=4ex, column sep=4ex, text height=1.5ex, text depth=0.25ex]
\Lambda^2_0 \times \Delta^1 \ar{r} \ar{d} & \cC_U \\
(\Lambda^2_0)^\rhd \times \Delta^1 \ar{ru}
\end{tikzcd} \]
where restriction to the cone point is sent to an equivalence $x_1/x_0 \xto{\sim} y_1/y_0$. Using distributivity, we get a $\cT^{/V}$-left Kan extension diagram
\[ \begin{tikzcd}
\prod_{f} ((\Lambda^2_0 \times \Delta^1) \times (\cT^{/U})^\op) \ar{r} \ar{d} & \prod_{f} \cC_{\underline{U}} \ar{r}{\prod_{f}} & \cC_{\underline{V}} \\
((\Lambda^2_0)^\rhd \times \Delta^1) \times (\cT^{/V})^\op \ar{rru}
\end{tikzcd} \]
The vertical arrow factors as
\[ \prod_{f} ((\Lambda^2_0 \times \Delta^1) \times (\cT^{/U})^\op) \to (\Lambda^2_0 \times \Delta^1) \times (\cT^{/V})^\op \to ((\Lambda^2_0)^\rhd \times \Delta^1) \times (\cT^{/V})^\op \]
where the first arrow is induced by the symmetric monoidal structure on $\Delta^1 \times \cT^\op$. Therefore, the left Kan extension corresponding to $f_{\otimes} (\theta)$
\[ \begin{tikzcd}[row sep=4ex, column sep=4ex, text height=1.5ex, text depth=0.25ex]
\Lambda^2_0 \times \Delta^1 \ar{r} \ar{d} & \cC_V \\
(\Lambda^2_0)^\rhd \times \Delta^1 \ar{ru}
\end{tikzcd} \]
restricts on the cone point to an equivalence, so $f_{\otimes} (\theta)$ is a $L_V$-equivalence.
\end{enumerate}

In particular, suppose that $\cT = \OO_G$ and $\cC = \underline{\Spc}_G$. Then we obtain the smash product $G$-symmetric monoidal structure on pointed $G$-spaces $\underline{\Spc}_{G,\ast}$, and given a map of orbits $f: G/H \to G/K$ corresponding to an inclusion of subgroups $K \subset H$ and a real $K$-representation $R$, the norm functor $f_{\otimes}$ sends the representation sphere $S^R$ to the representation sphere $S^{\Ind^H_K R}$. 
\end{exm}

\subsection{Pointwise \texorpdfstring{$\cO$}{O}-monoidal structure}

In this brief subsection, we indicate how to adapt the construct of the $\cT$-Day convolution so as to construct the cotensor of $\Op_{\cO,\cT}$ over $\Cat_{\cT}$. In other words, given a fibration of $\cT$-$\infty$-operads $p: \cD^{\otimes} \to \cO^{\otimes}$  and a $\cT$-$\infty$-category $\cK$, we have a $\cT$-$\infty$-operad $\widetilde{\Fun}_{\cO,\cT}(\cK \times_{\cT^{\op}} \cO, \cD)^{\otimes}$ that satisfies the universal mapping property
\begin{equation} \label{eq:UMP_cotensor}
 \Alg_{\cO,\cT}( \cC, \widetilde{\Fun}_{\cO,\cT}(\cK \times_{\cT^{\op}} \cO, \cD)) \simeq \Fun_{\cT}(\cK, \underline{\Alg}_{\cO, \cT}(\cC,\cD))
 \end{equation} 
for all fibrations of $\infty$-operads $q:\cC^{\otimes} \to \cO^{\otimes}$.

For the following construction, observe the isomorphism
\[ \Ar^{\inert}(\cO^\otimes) \times_{\ev_1,\cO^{\otimes}, \pr} (\cO^{\otimes} \times_{\cT^{\op}} \cK) \cong \Ar^{\inert}(\cO^\otimes) \times_{\cT^{\op}} \cK. \]

\begin{conthm} \label{conthm:cotensor_operads}
Let $\cO^{\otimes}$ be a $\cT$-$\infty$-operad and $\cK$ a $\cT$-$\infty$-category. Consider the span diagram of marked simplicial sets
\[ \begin{tikzcd}[row sep=2em, column sep=2em]
(\cO^{\otimes},\Ne) & (\Ar^{\inert}(\cO^\otimes) \times_{\cT^{\op}} \cK, \Ne) \ar{l}[swap]{\ev_0} \ar{r}{\ev_1} & (\cO^{\otimes},\Ne).
\end{tikzcd} \]
where by the middle marking $\Ne$ we mean those edges whose source and target in $\cO^{\otimes}$ are inert and whose projection to $\cK$ is a cocartesian edge. Then the functor
\[ (\ev_0)_* \circ (\ev_1)^*: \sSet^+_{/(\cO^{\otimes}, \Ne)} \to \sSet^+_{/(\cO^{\otimes}, \Ne)} \]
is right Quillen with respect to the $\cT$-operadic model structures. For a fibration of $\infty$-operads $p: \cC^{\otimes} \to \cO^{\otimes}$, we let
\[ \widetilde{\Fun}_{\cO,\cT}(\cK \times_{\cT^{\op}} \cO, \cC)^{\otimes} \coloneq (\ev_0)_* (\ev_1)^* (\cC^{\otimes},\Ne). \]
If $\cO \simeq \cT^{\op}$, then we more simply write
\[ \underline{\Fun}_{\cT}(\cK,\cC)^{\otimes} \coloneq (\ev_0)_* (\ev_1)^* (\cC^{\otimes},\Ne). \]
This construction satisfies the universal property \eqref{eq:UMP_cotensor} and its underlying $\cT$-$\infty$-category is as the notation indicates.
\end{conthm}
\begin{proof}
This follows along the same lines as our proofs of the analogous results for $\cT$-Day convolution in \cref{subsec:DayConvolution}.
\end{proof}

\begin{rem}
In the situation of \cref{conthm:cotensor_operads}, a fibrant replacement of $(\cO^{\otimes}, \Ne) \times_{\cT^{\op}} \leftnat{\cK}$ in the $\cT$-operadic model structure on $\sSet^+_{/(\cO^{\otimes}, \Ne)}$ computes the \emph{tensor} of $\Op_{\cO,\cT}$ over $\CatT$.
\end{rem}

If we then suppose that $\cC^{\otimes}$ is an $\cO$-monoidal $\cT$-$\infty$-category, we obtain the \emph{pointwise} $\cO$-monoidal structure on $\underline{\Fun}_{\cT}(\cK,\cC)$. In contrast to the $\cT$-Day convolution, we don't need to impose any further hypotheses on $\cC^{\otimes}$ for this to exist.

\begin{conthm}
Let $\cO^{\otimes}$ be a $\cT$-$\infty$-operad and $\cK$ a $\cT$-$\infty$-category. Consider the span diagram of marked simplicial sets
\[ \begin{tikzcd}[row sep=2em, column sep=2em]
(\cO^{\otimes})^{\sharp} & \Ar^{\inert}(\cO^\otimes)^{\sharp} \times_{\cT^{\op}} \leftnat{\cK} \ar{l}[swap]{\ev_0} \ar{r}{\ev_1} & (\cO^{\otimes})^{\sharp}.
\end{tikzcd} \]
Then the functor
\[ (\ev_0)_* \circ (\ev_1)^*: \sSet^+_{/\cO^{\otimes}} \to \sSet^+_{/\cO^{\otimes}} \]
agrees with the construction of \cref{conthm:cotensor_operads} on underlying simplicial sets and is right Quillen with respect to the $\cT$-monoidal model structures. Given any $\cO$-monoidal $\cT$-$\infty$-categories $\cC^{\otimes}, \cD^{\otimes}$, we then have a natural equivalence
\[ \Fun^{\otimes}_{\cO,\cT}(\cC, \widetilde{\Fun}_{\cO,\cT}(\cK \times_{\cT^{\op}} \cO, \cD)) \simeq \Fun_{\cT}(\cK, \underline{\Fun}^{\otimes}_{\cO, \cT}(\cC, \cD)). \]
\end{conthm}
\begin{proof}
That $(\ev_0)_* \circ (\ev_1)^*$ is right Quillen follows by \cite[Thm.~B.4.2]{HA} as in the proof of \cref{thm:OperadicCoinduction}. The only differences to note are regarding conditions (4) and (7). For (4), by \cite[Prop.~3.5(1)]{Exp2b} the $\ev_0$-cartesian edges in $\Ar^{\inert}(\cO^{\otimes})$ are fiberwise active in the target, hence $\ev_0: \Ar^{\inert}(\cO^{\otimes}) \times_{\cT^{\op}} \cK \to \cO^{\otimes}$ is a cartesian fibration whose cartesian edges project to equivalences in $\cK$. (7) then follows by inspection.

The universal mapping property then follows by restricting the equivalence \eqref{eq:UMP_cotensor}.
\end{proof}

\begin{exm}
Suppose $\cO^{\otimes} = \uFinpT$, let $\cC^{\otimes}$ be a $\cT$-symmetric monoidal $\cT$-$\infty$-category, and let $\cK$ be a $\cT$-$\infty$-category. Then we may unwind the pointwise $\cT$-symmetric monoidal structure on $\underline{\Fun}_{\cT}(\cK, \cC)$ as follows:

\begin{itemize}
\item[($\ast$)] Let $f: U \to V$ be a map of finite $\cT$-sets. Then the norm functor
\[ f_{\otimes}: \Fun_{\cT^{/U}}(\cK_{\underline{U}}, \cC_{\underline{U}}) \to \Fun_{\cT^{/V}}(\cK_{\underline{V}}, \cC_{\underline{V}}) \]
sends a $\cT^{/U}$-functor $F: \cK_{\underline{U}} \to \cC_{\underline{U}}$ to the $\cT^{/V}$-functor
\[ \cK_{\underline{V}} \xto{\eta} \prod_f \cK_{\underline{U}} \simeq \Fun_{\cT^{/V}}(\underline{U}, \cK_{\underline{V}}) \xtolong{\prod_f F}{1.3} \prod_f \cC_{\underline{U}} \simeq \Fun_{\cT^{/V}}(\underline{U}, \cC_{\underline{V}}) \xto{f_{\otimes}} \cC_{\underline{V}} \]
where $\eta$ is the diagonal $\cT^{/V}$-functor (i.e., the unit of the restriction-coinduction adjunction) and the norm $\cT^{/V}$-functor $f_{\otimes}$ is defined as in \cref{exm:normParametrizedFunctor}.
\end{itemize}

In particular, note that if $\cC^{\otimes}$ is distributive, then $\underline{\Fun}_{\cT}(\cK, \cC)^{\otimes}$ is also distributive.
\end{exm}

\section{Parametrized operadic left Kan extensions}

In this section, we construct $\cT$-operadic left Kan extensions, implementing in the operadic context the strategy that the second author used to construct $\cT$-left Kan extensions in \cite[§§9-10]{Exp2}.

\begin{rem} The strategy of our construction of $\cT$-operadic left Kan extensions will be to first show that the $\cT$-colimit of a lax $\cO$-monoidal functor canonically inherits a $\cO$-algebra structure, and to then reduce to this case via the $\cO$-monoidal envelope. If we let $\cT = \Delta^0$, this gives a new construction of Lurie's operadic left Kan extension (\cite[\S 3.1.2]{HA}). See also \cite{chu2021free}.
\end{rem}

We first begin with some necessary preliminaries on generalized $\cT$-$\infty$-operads and the $\cT$-operadic join.

\subsection{Generalized \texorpdfstring{$\cT$-$\infty$}{T-infinity}-operads}

Given a map of finite $\cT$-sets $\phi: U \to V$, let $\sigma_1$, $\sigma_2$ be two squares in $\FinT$
\[ \begin{tikzcd}[row sep=2em, column sep=2em]
U_i \ar{r}{\alpha_i} \ar{d}{\phi_i} & U \ar{d}{\phi} \\
V_i \ar{r}{\beta_i} & V
\end{tikzcd} \]
such that $\alpha_i$ is a summand inclusion, the induced map $U_i \to V_i \times_V U$ is also a summand inclusion, and moreover $(\alpha_1, \alpha_2): U_1 \sqcup U_2 \to U$ is an epimorphism. Let $U_{12} = U_1 \times_U U_2$ and $V_{12} = V_1 \times_V V_2$. Then we have an induced map
\[ g_{\phi,\sigma_1,\sigma_2}: (\Lambda^2_2)^{\lhd} \to \BigFinT \]
which selects the square
\[ \begin{tikzcd}[row sep=2em, column sep=2em]
\left[ U_+ \rightarrow V \right] \ar{r} \ar{d} & \left[ (U_1)_+ \rightarrow V_1 \right] \ar{d} \\
\left[ (U_2)_+ \rightarrow V_2 \right] \ar{r} & \left[ (U_{12})_+ \rightarrow V_{12} \right] 
\end{tikzcd} \]
in which every morphism is inert. 

We define the \emph{generalized $\cT$-operadic model structure} on $\sSet^+_{/(\BigFinT,\Ne)}$ to be the model structure defined using the categorical pattern $(\Ne, \text{All}, \{g_{\phi,\sigma_1,\sigma_2}\} )$ on $\BigFinT$, letting the $\phi$ and $\{ \sigma_1, \sigma_2\}$ range over all possible choices. We call the fibrant objects for this model structure \emph{generalized $\cT$-$\infty$-operads}. Note that $\cT$-$\infty$-operads are fibrant in this model structure, so are examples of generalized $\cT$-$\infty$-operads. However, the converse is not true. Indeed, let $\sigma_0: \FinT^\op \to \BigFinT$ be the cocartesian section which selects $[\emptyset_+ \rightarrow V]$ in each fiber and define $\wt{\cC}_0 \coloneq \FinT^\op \times_{\sigma_0, \BigFinT} \wt{\cC}^\otimes$. Then if $\wt{\cC}^\otimes \to \BigFinT$ is a generalized $\cT$-$\infty$-operad, $\wt{\cC}_0$ is not necessarily the terminal $\cT$-$\infty$-category.

We then say that $\cC^\otimes \to \uFinpT$ is a \emph{generalized $\cT$-$\infty$-operad} if it is the pullback of a generalized $\cT$-$\infty$-operad $\wt{\cC}^\otimes \to \BigFinT$ under the inclusion $\uFinpT \to \BigFinT$. Let $\cC_0$ be the corresponding pullback of $\wt{\cC}_0$.  The $\cT$-functor $\cT^\op \times \Delta^1 \to \uFinpT$ which selects the inert edge $[V_+ \rightarrow V] \to [\emptyset_+ \rightarrow V]$ in each fiber induces a $\cT$-functor $\cC \to \cC_0$.


\begin{lem} \label{lem:FiberwiseDescriptionForGeneralizedOperads} Suppose $\cC^\otimes \to \uFinpT$ is a generalized $\cT$-$\infty$-operad. Let $f: U \to V$ be a morphism in $\FinT$ and suppose that we have an orbit decomposition $U \simeq \sqcup_{1 \leq i \leq n} U_i$. Abbreviate $U_i \times_V U_j$ as $U_{ij}$ and let $f_i: U_i \to V$, $f_{ij}: U_{ij} \to V$ denote the induced maps. Then we have an equivalence of $\cT^{/V}$-$\infty$-categories
\[ \cC^\otimes_{\underline{\left[ U_+ \rightarrow V \right]}} \to \prod_{f_1} \cC_{\underline{U_1}} \: \underset{\underset{f_{12}}{\prod} (\cC_0)_{\underline{U_{12}}}}{\times} \: \prod_{f_2} \cC_{\underline{U_2}} \: \underset{ \underset{f_{23}}{\prod} (\cC_0)_{\underline{U_{23}}} }{\times} \: ... \: \underset{ \underset{f_{(n-1)n}}{\prod} (\cC_0)_{\underline{U_{(n-1)n}}} }{\times} \: \prod_{f_n} \cC_{\underline{U_n}} \]
\end{lem}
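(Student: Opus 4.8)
The plan is to reduce the asserted equivalence of $\cT^{/V}$-$\infty$-categories to a statement about ordinary fibers, and then to establish the latter by iterating the binary generalized Segal condition built into the definition of a generalized $\cT$-$\infty$-operad, following closely the method of \cref{prp:SegalCondition}.

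First, an equivalence of $\cT^{/V}$-$\infty$-categories may be checked on the fiber over each object $[W\xrightarrow{g}V]\in\cT^{/V}$. Both sides are compatible with base-change along $g$: on the left via the $\ev_1$-cocartesian edge $[U_+\to V]\to[(U\times_V W)_+\to W]$, and on the right because $\prod_{f_i}(-)$ and $\prod_{f_{ij}}(-)$ are right adjoint to pullback and so satisfy the pointwise Beck--Chevalley formula, giving $(\prod_{f_i}\cC_{\underline{U_i}})_{[W\to V]}\simeq\cC_{U_i\times_V W}$ and $(\prod_{f_{ij}}(\cC_0)_{\underline{U_{ij}}})_{[W\to V]}\simeq(\cC_0)_{U_{ij}\times_V W}$. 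Setting $S=U\times_V W$, with induced summand decomposition $S\simeq\coprod_i S_i$ ($S_i:=U_i\times_V W$) and $S_{ij}:=S_i\times_W S_j=U_{ij}\times_V W$, we are reduced to producing an equivalence of $\infty$-categories
\[ \cC^\otimes_{[S_+\to W]}\;\simeq\;\cC_{S_1}\times_{(\cC_0)_{S_{12}}}\cC_{S_2}\times_{(\cC_0)_{S_{23}}}\cdots\times_{(\cC_0)_{S_{(n-1)n}}}\cC_{S_n}, \]
where $\cC_{S_i}=\prod_{Z\in\Orbit(S_i)}\cC_Z$, $(\cC_0)_{S_{ij}}=\prod_{Z\in\Orbit(S_{ij})}(\cC_0)_Z$, and the transition maps are induced by the base-change functors $\cC_Z\to\cC_{Z'}$ (for $Z'\to Z$) composed with the canonical $\cT$-functor $\cC\to\cC_0$. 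The comparison functor is manufactured exactly as in \cref{cnstr:EdgeFunctoriality} and \cref{prp:SegalCondition}, using $p$-cocartesian edges over the characteristic morphisms $\chi_{[Z\subset S_i]}$ and over the evidently inert edges $[S_+\to W]\to[\emptyset_+\to Z']$ for $Z'\in\Orbit(S_{ij})$ (available since each such $Z'$ maps to $W$ through $S_i$ and the empty inclusion is a summand inclusion).

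One then proves the fiberwise equivalence by induction on $n$ using the binary generalized Segal condition for the map $S\to W$, applied to the two summands $S_n\hookrightarrow S$ and $S_{\neq n}:=\coprod_{j<n}S_j\hookrightarrow S$, each equipped with the square in which the left vertical map is the identity (so that the $V_i$ are $S_n$, resp.\ $S_{\neq n}$, and the $\beta_i$ are the induced maps to $W$). These squares are admissible precisely because $\cT$ is \emph{atomic}: the relevant diagonal maps $S_i\to S_i\times_W S_i$ are then summand inclusions, just as in the construction of the characteristic morphisms. Since $S_n$ and $S_{\neq n}$ are disjoint summands of $S$, the Segal-overlap $S_n\times_S S_{\neq n}$ is empty, but the \emph{target}-overlap is $S_n\times_W S_{\neq n}=\coprod_{j<n}S_{nj}$, so the condition yields
\[ \cC^\otimes_{[S_+\to W]}\;\simeq\;\cC^\otimes_{[(S_n)_+\to S_n]}\times_{\cC^\otimes_{[\emptyset_+\to\coprod_{j<n}S_{nj}]}}\cC^\otimes_{[(S_{\neq n})_+\to S_{\neq n}]}\;\simeq\;\cC_{S_n}\times_{\prod_{j<n}(\cC_0)_{S_{nj}}}\textstyle\prod_{j<n}\cC_{S_j}, \]
where we apply the same condition to the disjoint covers of $\coprod_{j<n}S_{nj}$ and of $S_{\neq n}$ over their own targets (all pairwise overlaps now vanishing, $\cC^\otimes_{[\emptyset_+\to\emptyset]}$ being terminal) to reduce these to plain products. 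Finally one identifies this ``peel off $S_n$'' description with the iterated fiber product over \emph{consecutive} intersections stated in the lemma, using associativity of the relevant homotopy pullbacks together with the compatibility of the base-change maps through the triple intersections $S_{ijk}$, which makes the consecutive, nested, and all-pairs fiber products agree.

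Conceptually the whole proof is just the binary generalized Segal condition plus the atomicity trick that renders the relevant squares admissible. I expect the main obstacle to be the bookkeeping: producing the comparison functor coherently as a $\cT^{/V}$-functor (transcribing \cref{prp:SegalCondition}) and carefully matching the output of the iterated binary Segal condition with the consecutive iterated fiber product in the statement, tracking the parametrized base-change coherences throughout.
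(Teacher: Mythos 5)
Your overall strategy---reduce to ordinary fibers by base-change compatibility of both sides, build the comparison functor from cocartesian lifts of inert edges as in \cref{prp:SegalCondition}, and then iterate the binary Segal condition coming from the categorical pattern $\{g_{\phi,\sigma_1,\sigma_2}\}$---is essentially the paper's approach (the paper's proof is terse: it reduces to $n=2$, produces the comparison functors from the units of the restriction--coinduction adjunction, and defers to the argument of \cref{prp:SegalCondition}). Your use of atomicity to make the squares with $V_i=U_i$ admissible is also correct.

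The gap is in your final step. Peeling off $S_n$ against $S_{\neq n}$ yields the fiber product of $\cC_{S_n}$ with $\prod_{j<n}\cC_{S_j}$ over $\prod_{j<n}(\cC_0)_{S_{nj}}$, so after the full induction you obtain the limit of the diagram indexed by \emph{all} pairwise intersections $S_{ij}$, $i<j$, whereas the statement asserts the limit of the zigzag indexed only by the \emph{consecutive} intersections $S_{12},\dots,S_{(n-1)n}$. These are limits over genuinely different diagrams, and they are not identified by ``associativity of the relevant homotopy pullbacks'': already for three objects mapping to a common target by identity maps, the consecutive limit $X\times_X X\times_X X$ is $X$, while the all-pairs limit is the free loop object $LX$, because the extra datum $p_{13}$ is not constrained to equal $p_{23}\circ p_{12}$. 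So the agreement of the two descriptions (if both are to compute $\cC^\otimes_{[S_+\to W]}$) is not a formal manipulation of pullbacks; it must be extracted from further instances of the generalized Segal condition or from a cofinality argument over the full poset of intersections that genuinely uses the higher overlaps $S_{ijk}$ and the factorization of the comparison maps through $\cC\to\cC_0$. Your proposal asserts this identification without an argument, and the stated justification is false as a general principle; either run the induction so that it produces the consecutive form directly, or supply an honest comparison of the two limit diagrams. (The discrepancy also suggests that the right-hand side of the lemma is most safely read as the limit over all pairwise intersections---which is what the natural $n=2$ reduction produces---but in any case the bridge you invoke between the two readings is the missing step.)
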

\begin{proof} Note that $U_{ij}$ need not be an orbit, so the notation $(\cC_0)_{\underline{U_{ij}}}$ means $\coprod_{\cO \subset U_{ij}} (\cC_0)_{\underline{O}} \to \underline{U_{ij}}$ for any orbit decomposition of $U_{ij}$. Without loss of generality we may suppose $n=2$. We have a pullback square in $\FinT$
\[ \begin{tikzcd}[row sep=2em, column sep=2em]
U_{12} \ar{r}{g_2} \ar{d}{g_1} & U_1 \ar{d}{f_1} \\
U_2 \ar{r}{f_2} & V.
\end{tikzcd} \]
The unit maps for the restriction-coinduction adjunction yield $\cT^{/V}$-functors
\[ \begin{tikzcd}[row sep=2em, column sep=3em]
\prod_{f_1} \cC_{\underline{U_1}} \ar{r} & \prod_{f_{12}} \cC_{\underline{U_{12}}} & \prod_{f_1} \cC_{\underline{U_1}} \ar{l}
\end{tikzcd} \]
and postcomposing with
\[ \prod_{f_{12}} \cC_{\underline{U_{12}}} \to \prod_{f_{12}} (\cC_0)_{\underline{U_{12}}} \]
we obtain the $\cT^{/V}$-functors which define the pullback. Using the `fiberwise' definition of generalized $\cT$-$\infty$-operad, we can use the same argument as in the proof of \cref{prp:SegalCondition} to accomplish the proof.
\end{proof}

\subsection{\texorpdfstring{$\cT$}{T}-operadic join}

\begin{dfn} Let $\wt{\cC}^\otimes, \wt{\cD}^\otimes, \wt{\cO}^\otimes$ be generalized $\cT$-$\infty$-operads over $\BigFinT$ and let $p,q: \wt{\cC}^\otimes, \wt{\cD}^\otimes \to \wt{\cO}^\otimes$ be categorical fibrations preserving the inert edges. Then the \emph{$\cT$-operadic join} of $p$ and $q$ is defined to be
\[ \widetilde{(\cC \star_{\cO} \cD)}^\otimes \coloneq \wt{\cC}^\otimes \star_{\wt{\cO}^\otimes} \wt{\cD}^\otimes \to \wt{\cO}^\otimes. \]

Similarly, given $\cC^\otimes, \cD^\otimes \to \cO^\otimes$ fibrations of generalized $\cT$-$\infty$-operads over $\uFinpT$, we define the $\cT$-operadic join $(\cC \star_\cO \cD)^\otimes$ to be $\cC^\otimes \star_{\cO^\otimes} \cD^\otimes \to \cO^\otimes$.
\end{dfn}

Note that the underlying $\cT$-$\infty$-category of a $\cT$-operadic join $(\cC \star_{\cO} \cD)^\otimes$ is the $\cO$-join of the underlying $\cT$-$\infty$-categories of the factors (by the base-change property \cite[Lem.~4.4]{Exp2}).

\begin{prp} $\widetilde{(\cC \star_{\cO} \cD)}^\otimes$ is a generalized $\cT$-$\infty$-operad and the structure map to $\wt{\cO}^\otimes$ is a fibration of generalized $\cT$-$\infty$-operads.
\end{prp}
\begin{proof} By the proof of \cite[Prop.~4.7(2)]{Exp2}, $\pi: \widetilde{(\cC \star_{\cO} \cD)}^\otimes \to \BigFinT$ admits cocartesian lifts over the inert edges. Moreover, an edge $\Delta^1 \to \widetilde{(\cC \star_{\cO} \cD)}^\otimes$ is $\pi$-cocartesian if and only if it factors through either $\wt{\cC}^\otimes$ or $\wt{\cD}^\otimes$ and is inert there.

The fiber of $\widetilde{(\cC \star_{\cO} \cD)}^\otimes$ over an object $[U_+ \rightarrow V]$ of $\BigFinT$ is given by $\wt{\cC}^\otimes_{[U_+ \rightarrow V]} \star_{\wt{\cO}^\otimes_{[U_+ \rightarrow V]}} \wt{\cD}^\otimes_{[U_+ \rightarrow V]}$. Moreover, the relative join is functorial in the following sense: given commutative diagrams
\[ \begin{tikzcd}[row sep=2em, column sep=2em]
X \ar{r} \ar{d} & X' \ar{d} \\
B \ar{r} & B'
\end{tikzcd}, \quad
 \begin{tikzcd}[row sep=2em, column sep=2em]
Y \ar{r} \ar{d} & Y' \ar{d} \\
B \ar{r} & B'
\end{tikzcd} \]
we have an induced map $X \star_B Y \to X' \star_{B'} Y'$ covering $B \times \Delta^1 \to B' \times \Delta^1$. From our explicit description of the $\pi$-cocartesian edges, we see that the Segal maps for $\widetilde{(\cC \star_{\cO} \cD)}^\otimes$ are obtained in this way. Consequently, it is clear that they are equivalences.

It remains to check that for all of the defining maps $g_{\phi, \sigma_1, \sigma_2}: (\Lambda^2_2)^\lhd \to \BigFinT$, we have that for every lift $G: (\Lambda^2_2)^\lhd \to \widetilde{(\cC \star_{\cO} \cD)}^\otimes$ where all edges are sent to $\pi$-cocartesian edges, $G$ is a $\pi$-limit diagram. For this, there are two cases to consider. Either $G$ factors through $\wt{\cC}^\otimes$, in which case the assertion obviously follows from $G$ being a $\pi|_{\wt{\cC}^\otimes}$-limit diagram, or $G$ factors through $\wt{\cD}^\otimes$, in which case the assertion is a consequence of \cref{lem:JoinCommutingWithSlice}.

Finally, the second assertion is obvious given the first.
\end{proof}

\begin{lem} \label{lem:JoinCommutingWithSlice} Let $K$, $X$, $Y$ and $B$ be $\infty$-categories, let $f_1, f_2: X,Y \to B$ be categorical fibrations and let $p: K \to Y$ be a functor. Also let $p$ denote the composition $K \xrightarrow{p} Y \subset X \star_B Y$. Then we have an equivalence of $\infty$-categories over $B^{/f_2 p} \times \Delta^1$
\[ (X \star_B Y)^{/p} \simeq (X \times_{B} B^{/f_2 p}) \star_{B^{/f_2 p}} Y^{/p}. \]
Consequently, if $\overline{p}: K^\lhd \to Y$ is a $f_2$-limit diagram, then $\overline{p}: K^\lhd \to Y \subset X \star_B Y$ is a $f$-limit diagram (where $f$ denotes the structure map $X \star_B Y \to B$).
\end{lem}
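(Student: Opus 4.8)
The plan is to prove the displayed structural equivalence by a direct comparison of simplicial sets, and then to deduce the statement about $f$-limit diagrams from it formally, using that relative joins commute with base change.

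For the equivalence, I would argue simplex by simplex, using the explicit simplicial descriptions of the relative join (\cite[\S 4]{Exp2}) and of the over-slice. Recall that $X \star_B Y$ carries canonical functors to $B$ and to $\Delta^1$, the latter with fibers $X$ over $0$ and $Y$ over $1$. Hence an $n$-simplex of $(X \star_B Y)^{/p}$ — that is, a map $\Delta^n \star K \to X \star_B Y$ restricting to $p$ on $K$ — sits over a simplex of $B \times \Delta^1$; since $p$ lands in $Y$ (the fiber over $1$), its $\Delta^1$-coordinate is constantly $1$ on $K$, and is therefore recorded by a single ``cut'' $-1 \le k \le n$, with vertices $0, \dots, k$ of $\Delta^n$ mapping to $0$ and the remaining vertices together with $K$ mapping to $1$. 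Unwinding the rest, the simplex is the data of: the cut $k$; a $k$-simplex $x$ of $X$ (the part over $0$); an $(n-k-1)$-simplex $\bar y$ of $Y^{/p}$ (the part over $1$); and a connecting map $c: \Delta^n \star K \to B$ with $c|_{\Delta^{\{0,\dots,k\}}} = f_1 x$ and $c|_{\Delta^{\{k+1,\dots,n\}} \star K} = f_2 \bar y$. Since $c|_K = f_2 p$, specifying $(x, c)$ relative to a fixed $\bar y$ is the same as specifying a $k$-simplex of $X \times_B B^{/f_2 p}$ whose back face in $B^{/f_2 p}$ is $f_2 \bar y$, where $f_2 \bar y$ is regarded as an $(n-k-1)$-simplex of $B^{/f_2 p}$ via the evident functor $Y^{/p} \to B^{/f_2 p}$. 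Comparing with the simplicial formula for the relative join, this is exactly an $n$-simplex of $(X \times_B B^{/f_2 p}) \star_{B^{/f_2 p}} Y^{/p}$. These bijections are natural in $[n]$ and compatible with the two projections to $B^{/f_2 p} \times \Delta^1$ (the cut recording the $\Delta^1$-coordinate and $c$ the $B^{/f_2 p}$-coordinate), so they assemble to the asserted equivalence — in fact, an isomorphism — over $B^{/f_2 p} \times \Delta^1$. (Alternatively, one may observe that $X \star_B Y \cong (X \star Y) \times_{B \star B} (B \times \Delta^1)$, use that over-slices commute with pullbacks, and invoke the elementary isomorphism $(X \star Y)^{/p} \cong X \star Y^{/p}$ valid when $p$ lands in $Y$.)

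For the consequence, apply the equivalence both to $p: K \to Y$ and to $\bar p: K^\lhd = \Delta^0 \star K \to Y$ (legitimate, as $\bar p$ also lands in $Y$). Writing $f: X \star_B Y \to B$ for the structure map, we have $f \circ p = f_2 \circ p$ and $f \circ \bar p = f_2 \circ \bar p$, so the map whose being a trivial fibration defines the $f$-limit condition,
\[ (X \star_B Y)^{/\bar p} \longrightarrow (X \star_B Y)^{/p} \times_{B^{/(f p)}} B^{/(f \bar p)}, \]
becomes, under the equivalence together with the base-change property of relative joins (\cite[Lem.~4.4]{Exp2}) applied along $B^{/f_2 \bar p} \to B^{/f_2 p}$, identified with the map
\[ (X \times_B B^{/f_2 \bar p}) \star_{B^{/f_2 \bar p}} Y^{/\bar p} \longrightarrow (X \times_B B^{/f_2 \bar p}) \star_{B^{/f_2 \bar p}} ( Y^{/p} \times_{B^{/f_2 p}} B^{/f_2 \bar p} ) \]
induced on the second join factor by $Y^{/\bar p} \to Y^{/p} \times_{B^{/f_2 p}} B^{/f_2 \bar p}$. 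By hypothesis this last map is a trivial fibration — that is exactly what it means for $\bar p$ to be an $f_2$-limit diagram in $Y$ — so it remains to note that $(X \times_B B^{/f_2 \bar p}) \star_{B^{/f_2 \bar p}} (-)$ carries trivial fibrations over $B^{/f_2 \bar p}$ to trivial fibrations; this follows because trivial fibrations are detected by right lifting against the inclusions $\partial \Delta^n \hookrightarrow \Delta^n$, and the cut decomposition reduces such a lifting problem for the relative join to the corresponding one for its second factor. Hence the comparison map is a trivial fibration and $\bar p: K^\lhd \to X \star_B Y$ is an $f$-limit diagram.

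The main obstacle I expect is the simplicial bookkeeping in the first paragraph: tracking the cut point and the $B$-valued connecting simplex, checking that the bijection on simplices is compatible with all faces and degeneracies and with the projections, and ensuring that the description of the relative join one uses matches its definition in \cite{Exp2} (including the degenerate cases $k = -1$ and $k = n$). The final-step claim that the relative join preserves trivial fibrations in its second variable, while elementary, rests on the same cut-decomposition analysis.
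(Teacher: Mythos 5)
Your proof is correct. For the displayed equivalence, your argument and the paper's both ultimately rest on the defining mapping-in property of the relative join over $B \times \Delta^1$, but you instantiate it on the simplices $\Delta^n \star K$ directly and obtain a literal isomorphism of simplicial sets (your cut analysis, including the degenerate cuts $k=-1$ and $k=n$, matches the definition of $\star_B$ in \cite{Exp2}), whereas the paper tests against an arbitrary $A \to B^{/f_2 p} \times \Delta^1$ and produces the equivalence through a chain of identifications of functor categories; these are two presentations of the same idea. The genuine divergence is in the second half. The paper recasts the $f$-limit condition as a square being a homotopy pullback and reduces it to the two squares for the join factors, which tacitly uses that the relative join carries a pair of homotopy pullback squares over a common base map to a homotopy pullback square. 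You instead stay with the trivial-fibration formulation of relative limits, use base change for $\star_B$ to identify the comparison map with the relative join over $B^{/f_2\overline{p}}$ of an identity with the comparison map $Y^{/\overline{p}} \to Y^{/p} \times_{B^{/f_2 p}} B^{/f_2\overline{p}}$, and finish by showing that $X' \star_{B'}(-)$ preserves trivial fibrations over $B'$. That last lemma is exactly the combinatorial content the paper leaves implicit, and your lifting argument for it is sound: for a simplex with cut $k \geq 0$ the lift of the second factor is already forced by the face $d_0$, and for $k=-1$ the problem reduces verbatim to the lifting problem for the second factor. What your route buys is a completely model-level, elementary argument; what it costs is the bookkeeping you flag, plus the (routine, since your part-one identification is an isomorphism natural in the diagram) verification that the comparison map really is the indicated join of maps.
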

\begin{proof} Let $q$ denote $K \to X \star_B Y \to B \times \Delta^1$ and note that $q$ factors as $K \xrightarrow{f_2 p} B \times \{1\} \subset B \times \Delta^1$. We first place $(X \star_B Y)^{/p}$ into the diagram of pullback squares
\[ \begin{tikzcd}[row sep=2em, column sep=2em]
(X \star_B Y)^{/p} \ar{r} \ar{d} & Z \ar{r} \ar{d} & \Fun(K^{\lhd}, X \star_B Y) \ar{d} \\
\{p \} \ar{r} & \Fun_{/B}(K,Y) \ar{r} \ar{d} & \Fun(K, X \star_B Y) \ar{d} \\
& \{ q \} \ar{r} & \Fun(K, B \times \Delta^1).
\end{tikzcd} \]
In addition, using that the composition $\Fun(K^\lhd, X \star_B Y) \to \Fun(K, X \star_B Y) \to \Fun(K, B \times \Delta^1)$ agrees with $\Fun(K^\lhd, X \star_B Y) \to \Fun(K^\lhd, B \times \Delta^1) \to \Fun(K, B \times \Delta^1)$, $Z$ fits into the diagram of pullback squares
\[ \begin{tikzcd}[row sep=2em, column sep=2em]
Z \ar{r} \ar{d} & \Fun(K^\lhd, X \star_B Y) \ar{d} \\
(B \times \Delta^1)^{/q} \ar{r} \ar{d} & \Fun(K^\lhd, B \times \Delta^1) \cong \Fun(K^\lhd, B) \times \Fun(K^\lhd, \Delta^1) \ar{d} \\
\{ q \} = \{ f_2 p\} \times \{ \text{const}_1 \} \ar{r} & \Fun(K, B \times \Delta^1) \cong \Fun(K,B) \times \Fun(K,\Delta^1).
\end{tikzcd} \]
Because $(\Delta^1)^{/\text{const}_1} \simeq \Delta^1$, we get that $(B \times \Delta^1)^{/q} \simeq B^{/f_2 p} \times \Delta^1$. Consequently,
\[ (X \star_B Y)^{/p} \simeq \{p \} \times_{\Fun_{/B}(K,Y)} \left( \Fun(K^\lhd, X \star_B Y) \times_{\Fun(K^\lhd, B \times \Delta^1)} B^{/f_2 p} \times \Delta^1 \right). \]

Let $A \to B^{/f_2 p} \times \Delta^1$ be any functor. We will identify $\Fun_{/(B^{/f_2 p} \times \Delta^1)} (A, (X \star_B Y)^{/p})$ with $\Fun_{/B}(A_0,X) \times \Fun_{/(B^{/f_2 p})}(A_1,Y^{/p})$ and thereby prove the claim. Let
\[ A \times K^{\lhd} \to B^{/f_2 p} \times \Delta^1 \times K^\lhd \to B \times \Delta^1 \]
be the composite of the given map and the map adjoint to $B^{/f_2 p} \times \Delta^1 \to \Fun(K^\lhd, B \times \Delta^1)$. Note that $(A \times K^\lhd)_0 \cong A_0$ and $(A \times K^\lhd)_1 \cong (A \times K) \cup_{A_1 \times K} A_1 \times K^\lhd$.
We then have the chain of equivalences
\begin{align*} \Fun&_{/(B^{/f_2 p} \times \Delta^1)} (A, (X \star_B Y)^{/p}) \simeq \{ p \circ \pr_K \} \times_{\Fun(A \times K, X \star_B Y)} \Fun_{/(B \times \Delta^1)}(A \times K^\lhd, X \star_B Y) \\
&\simeq \{ p \circ \pr_K \} \times_{\Fun(A \times K, X \star_B Y)} (\Fun_{/B}(A_0,X) \times \Fun_{/B}((A \times K) \cup_{A_1 \times K} A_1 \times K^\lhd, Y)) \\
&\simeq \Fun_{/B}(A_0,X) \times \left( \{ p \circ \pr_K \} \underset{\Fun_{/B}(A \times K, Y)}{\times} \Fun_{/B}(A \times K, Y) \underset{\Fun_{/B}(A_1 \times K,Y)}{\times} \Fun_{/B}(A_1 \times K^\lhd, Y) \right) \\
&\simeq \Fun_{/B}(A_0,X) \times \left( \{ p \circ \pr_K \} \underset{\Fun_{/B}(A_1 \times K,Y)}{\times} \Fun_{/B}(A_1 \times K^\lhd, Y) \right)
\end{align*}
and finally
\[ \{ p \circ \pr_K \} \underset{\Fun_{/B}(A_1 \times K,Y)}{\times} \Fun_{/B}(A_1 \times K^\lhd, Y) \simeq \Fun_{/(B^{/f_2 p})}(A_1,Y^{/p}) \]
because both sides compute the total fiber of the punctured cube
\[\begin{tikzcd}[row sep={40,between origins}, column sep={60,between origins}]
      &[-\perspective]  &[\perspective] &[-\perspective]  \Fun(A_1 \times K^\lhd, Y) \vphantom{\times_{U}} \ar{dd}\ar{dl} \\[-\perspective]
     \Delta^0 \ar[crossing over]{rr} \ar{dd} & & \Fun(A_1 \times K, Y) \\[\perspective]
      & \Delta^0  \ar{rr} \ar{dl} & &  \Fun(A_1 \times K^\lhd, B) \vphantom{\times_{V}} \ar{dl} \\[-\perspective]
    \Delta^0 \ar{rr} && \Fun(A_1 \times K, B) \ar[from=uu,crossing over]
\end{tikzcd}\]

For the last assertion, we need to show that
\[ \begin{tikzcd}[row sep=2em, column sep=2em]
(X \star_B Y)^{/\overline{p}} \ar{r} \ar{d} & (X \star_B Y)^{/p} \ar{d} \\
B^{/ f \overline{p}} \ar{r} & B^{/f p}
\end{tikzcd} \]
is a homotopy pullback square. But by the first part of the lemma this is equivalent to
\[ \begin{tikzcd}[row sep=2em, column sep=2em]
(X \times_{B} B^{/ f \overline{p}}) \star_{B^{/ f \overline{p}}} Y^{/\overline{p}} \ar{r} \ar{d} & (X \times_{B} B^{/ f p}) \star_{B^{/f p}} Y^{/p} \ar{d} \\
B^{/ f \overline{p}} \ar{r} & B^{/f p}.
\end{tikzcd} \]
One easily deduces that this is a homotopy pullback square as a consequence of the two squares
\[ \begin{tikzcd}[row sep=2em, column sep=2em]
X \times_{B} B^{/ f \overline{p}} \ar{r} \ar{d} & X \times_{B} B^{/ f p} \ar{d} \\
B^{/ f \overline{p}} \ar{r} & B^{/f p}
\end{tikzcd}, \quad
 \begin{tikzcd}[row sep=2em, column sep=2em]
Y^{/\overline{p}} \ar{r} \ar{d} & Y^{/p} \ar{d} \\
B^{/ f \overline{p}} \ar{r} & B^{/f p}
\end{tikzcd} \]
being homotopy pullback squares (the second by our hypothesis). 
\end{proof}

\subsection{Construction of \texorpdfstring{$\cT$}{T}-operadic left Kan extensions}

We now turn towards constructing $\cT$-operadic left Kan extensions. We will need a variant of the $\cT$-Day convolution for our proofs, where we allow the source $\cT$-$\infty$-operad to be generalized.

\begin{vrn} Suppose that $\cC^\otimes$ is a generalized $\cT$-$\infty$-operad. Then the proofs of \cref{thm:OperadicCoinduction} and \cref{cor:OperadicCoinduction2} still go through to show that
$$\widetilde{\Fun}_{\cO,\cT}(\cC,\cE)^\otimes \to \cO^\otimes$$
is a (non-generalized) $\cT$-$\infty$-operad. Moreover, if $\cC^\otimes, \cE^\otimes \to \cO^\otimes$ are cocartesian fibrations, then the proof of \cref{prp:DayConvolutionLocallyCocartesian} goes through to show that $\widetilde{\Fun}_{\cO,\cT}(\cC,\cE)^\otimes \to \cO^\otimes$ is a locally cocartesian fibration. However, the proof of \cref{thm:DayConvolutionCocartesian} doesn't directly apply because if $\cC^\otimes$ is generalized, we have a different formula for $\cC^\otimes_{\underline{x}}$ involving fiber products instead of products (cf. \cref{lem:FiberwiseDescriptionForGeneralizedOperads}).
\end{vrn}

For the following results, let $\cO^\otimes$ be a $\cT$-$\infty$-operad, $p: \cC^\otimes \to \cO^\otimes$ an $\cO$-monoidal $\cT$-$\infty$-category and $q: \cE^\otimes \to \cO^\otimes$ a distributive $\cO$-monoidal $\cT$-$\infty$-category.

\begin{lem} \label{lem:JoinDayConvolutionCocartesian} Consider the commutative diagram
\[ \begin{tikzcd}[row sep=2em, column sep=2em]
\widetilde{\Fun}_{\cO,\cT}(\cC \star_{\cO} \cO, \cE)^\otimes \ar{r}{\rho} \ar{d}{\lambda} & \cE^\otimes \ar{d}{q} \\
\widetilde{\Fun}_{\cO,\cT}(\cC, \cE)^\otimes \ar{r}{\pi} & \cO^\otimes.
\end{tikzcd} \]
where $\lambda$ is given by restriction along $\cC^\otimes \subset (\cC \star_{\cO} \cO)^\otimes$, $\rho$ is given by restriction along $\cO^\otimes \subset (\cC \star_{\cO} \cO)^\otimes$ followed by the equivalence $\widetilde{\Fun}_{\cO,\cT}(\cO, \cE)^\otimes \xto{\simeq} \cE^\otimes$ induced by precomposition with the identity section $\iota: \cO^\otimes \to \Ar^{\inert}(\cO^\otimes)$ (cf. \cref{lem:IdentifyingLeftAdjointOfCoinduction}), and $\pi$ is the structure map.
\begin{enumerate} \item An edge $e$ in $\widetilde{\Fun}_{\cO,\cT}(\cC \star_{\cO} \cO, \cE)^\otimes$ is locally $\pi \lambda$-cocartesian if and only if $\lambda(e)$ is locally $\pi$-cocartesian and $\rho(e)$ is locally $q$-cocartesian. Consequently, $\pi \lambda$ is cocartesian.
\item $\lambda$ is $\cO^\otimes$-cartesian and $\rho$ is cocartesian.
\end{enumerate}
\end{lem}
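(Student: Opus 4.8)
The plan is to reduce everything to the analysis of $\cT$-Day convolution carried out in \cref{prp:DayConvolutionLocallyCocartesian} and \cref{thm:DayConvolutionCocartesian}, exploiting the fact that the $\cT$-operadic join $(\cC \star_{\cO} \cO)^\otimes$ is, fiberwise over $\cO^\otimes$, the parametrized right cone on $\cC^\otimes$: for $b \in \cO^\otimes$ the fiber $(\cC \star_{\cO} \cO)^\otimes_b$ is $\cC^\otimes_b \star \cO^\otimes_b = (\cC^\otimes_b)^{\rhd}$, with the cone copy being precisely the factor $\cO^\otimes \subset (\cC \star_{\cO} \cO)^\otimes$ along which $\rho$ is defined and the complementary sieve being the factor $\cC^\otimes \subset (\cC \star_{\cO} \cO)^\otimes$ along which $\lambda$ is defined. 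Since $p$ and the identity $\cO^\otimes \to \cO^\otimes$ are cocartesian fibrations of generalized $\cT$-$\infty$-operads, so is $(\cC \star_{\cO} \cO)^\otimes \to \cO^\otimes$ (combining the preceding proposition on the $\cT$-operadic join with the join facts of \cite[Prop.~4.7]{Exp2}), so the generalized-source variant of \cref{prp:DayConvolutionLocallyCocartesian} applies: $\pi\lambda \colon \widetilde{\Fun}_{\cO,\cT}(\cC \star_{\cO} \cO, \cE)^\otimes \to \cO^\otimes$ is a locally cocartesian fibration, and the characterization of its locally cocartesian edges over fiberwise active edges via weak $\cT$-left Kan extensions from \cref{prp:DayConvolutionLocallyCocartesian}(2)--(3) holds verbatim. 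We also record that $\pi$ itself is a cocartesian fibration by \cref{thm:DayConvolutionCocartesian}, using the hypothesis that $\cE^\otimes$ is distributive $\cO$-monoidal with $\cT$-cocomplete fibers, and that $q$ is a cocartesian fibration since $\cE^\otimes$ is $\cO$-monoidal.

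For (1), after applying the inert-fiberwise active factorization system (\cref{exm:InertActiveFactorizationSystem}) and the orbit decompositions of \cref{prp:SegalCondition} — exactly as in the reduction steps of \cref{lem:LocallyCocartesianCriterionForOperads} — we reduce to the case of a fiberwise active edge $\alpha \colon x \to y$ over $[U_+ \to V] \to [V_+ \to V]$ with $V$ an orbit. By \cref{prp:DayConvolutionLocallyCocartesian}(3), a lift $\overline{\alpha}$ is locally $\pi\lambda$-cocartesian if and only if the associated $\cT^{/V}$-functor, composed with the cocartesian pushforward $\alpha_{\otimes}$ of $\cE^\otimes$, is a $\cT^{/V}$-left Kan extension of its restriction to the parametrized fiber $(\cC \star_{\cO} \cO)^\otimes_{\underline{x}}$. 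Using the join structure, this parametrized fiber is the $\cT^{/V}$-operadic join of $\cC^\otimes_{\underline{x}}$ with the cone copy $\cO^\otimes_{\underline{x}} \simeq \underline{U}$, and likewise at the level of $\underline{\alpha}$; invoking \cref{lem:JoinCommutingWithSlice} to compute the parametrized slices and transitivity of $\cT^{/V}$-left Kan extensions (as in \cref{lem:KanExtendToPushout}), the left Kan extension condition splits into (a) the restriction of the associated functor to the $\cC^\otimes$-factor is a $\cT^{/V}$-left Kan extension, which by \cref{prp:DayConvolutionLocallyCocartesian}(3) applied to $\widetilde{\Fun}_{\cO,\cT}(\cC,\cE)^\otimes$ is precisely the statement that $\lambda(\overline{\alpha})$ is locally $\pi$-cocartesian; and (b) the value on the cone copy is the corresponding $\cT^{/V}$-colimit, which — matching the universal property of a $q$-cocartesian lift of a fiberwise active edge, i.e. the norm functor of $\cE^\otimes$, against the pointwise colimit formula of \cref{prp:underlyingCategoryOfDayConvolution} via the identification $\widetilde{\Fun}_{\cO,\cT}(\cO,\cE)^\otimes \xto{\sim} \cE^\otimes$ of \cref{lem:IdentifyingLeftAdjointOfCoinduction} — is precisely the statement that $\rho(\overline{\alpha})$ is locally $q$-cocartesian. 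This establishes the characterization. For the ``consequently'', locally $\pi\lambda$-cocartesian edges compose: given composable ones, their $\lambda$-images compose to locally $\pi$-cocartesian edges ($\pi$ being cocartesian) and their $\rho$-images compose to $q$-cocartesian edges ($q$ being cocartesian), so by the characterization the composite is again locally $\pi\lambda$-cocartesian; hence $\pi\lambda$ is a cocartesian fibration by \cref{lem:LocallyCocartesianCriterionForOperads}.

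For (2), the inclusion $\cC^\otimes \subset (\cC \star_{\cO} \cO)^\otimes$ is a sieve, so $\lambda$ is obtained by restriction along a (parametrized) sieve inclusion and is therefore a cartesian fibration, with $\lambda$-cartesian edges being exactly those $e$ for which $\rho(e)$ is an equivalence; that it is moreover $\cO^\otimes$-cartesian amounts to cocartesian transport over $\cO^\otimes$ preserving this class of edges, which again follows from the characterization in (1), since by that characterization such transport preserves the condition that $\rho(e)$ is an equivalence. Dually, $\cO^\otimes \subset (\cC \star_{\cO} \cO)^\otimes$ is a cosieve, so over each object of $\cO^\otimes$ the functor $\rho$ is restriction along a parametrized cosieve inclusion followed by the equivalence of \cref{lem:IdentifyingLeftAdjointOfCoinduction}, hence fiberwise a cocartesian fibration; combining this with the fact from (1) that $\pi\lambda$-cocartesian edges are sent by $\rho$ to $q$-cocartesian edges, the standard criterion for recognizing a cocartesian fibration from fiberwise cocartesian fibrations together with cocartesian lifts over the base shows that $\rho$ is a cocartesian fibration, its cocartesian edges being the composites of a $\pi\lambda$-cocartesian edge followed by a fiberwise-$\rho$-cocartesian one.

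The step I expect to be the main obstacle is point (b) in the proof of (1): performing the join decomposition of the parametrized slices $\cC^\otimes_{\underline{\alpha}}$ entering the $\cT$-Day convolution and matching the ``$\cT^{/V}$-left Kan extension with a parametrized cone point'' condition with the assertion that $\rho(\overline{\alpha})$ is $q$-cocartesian — that is, identifying the $\cT^{/V}$-colimit appearing at the cone point with the value of the norm functor of $\cE^\otimes$.
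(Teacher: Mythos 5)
Your proposal is correct and follows essentially the same route as the paper: reduce to fiberwise active edges, apply the Kan-extension characterization of locally cocartesian edges in the $\cT$-Day convolution, and split the condition at the cone point, with part (2) handled via the fiberwise (bi)fibration structure and its compatibility with cocartesian pushforward. The step you flag as the main obstacle is resolved in the paper exactly as you anticipate: the inclusion of the right $\cT$-cone point into the relevant slice is fiberwise cofinal, so the Kan-extension value there collapses to $\alpha_{\otimes}$ applied to the cone-point value of $F$, which is precisely the locally $q$-cocartesian condition on $\rho(e)$.
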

\begin{proof} (1): $\rho$ and $\lambda$ are fibrations of $\cT$-$\infty$-operads by the functoriality of the Day convolution, hence preserve inert edges. By \cref{lem:LocallyCocartesianCriterionForOperads}, it suffices to consider a locally $\pi \lambda$-cocartesian edge $e$ over a fiberwise active edge $f: U \to V \in \FinT$ (identified with $[U_+ \rightarrow V] \to [V_+ \rightarrow V]$ in $\uFinpT$). Suppose $e$ covers $\alpha: x \to y$ in $\cO^\otimes_V$. Then by \cref{prp:DayConvolutionLocallyCocartesian}(3), $e$ corresponds to a $\cT^{/V}$-left Kan extension
\[ \begin{tikzcd}[row sep=2em, column sep=2em]
(\cC^\otimes_{\underline{x}})^{\underline{\rhd}} \ar{r}{F}[name=F,below,near start]{} \ar{d}[swap]{(\alpha_{\otimes})^{\underline{\rhd}}} & \cE^\otimes_{\underline{x}} \ar{r}{\alpha_{\otimes}} & \cE_{\underline{y}} \ar{d} \\
(\cC_{\underline{y}})^{\underline{\rhd}} \ar{rru}[swap]{G}[name=G,near start]{} \ar{rr} & & (\cT^{/V})^\op.
\arrow[Rightarrow,to path={(F) -- (G)}]
\end{tikzcd} \]
Examining the pointwise formula defining $\cT^{/V}$-left Kan extensions, we see that $G$ is a $\cT^{/V}$-left Kan extension of $\alpha_{\otimes} \circ F$ along $\alpha_{\otimes} \star \id$ if and only if $G|_{\cC_{\underline{y}}}$ is a $\cT^{/V}$-left Kan extension of $\alpha_{\otimes} \circ F|_{\cC^\otimes_{\underline{x}}}$ along $\alpha_{\otimes}$ and $G|_{\underline{V}} \simeq \alpha_{\otimes} \circ F|_{\underline{V}}$ (for the latter, using that the inclusion of the right $\cT$-cone point is fiberwise cofinal and \cite[Thm.~6.7]{Exp2}). This implies the first part of the claim. For the consequence, we only need to check that the composition of locally cocartesian edges is again locally cocartesian, and this is clear using the claim and \cref{thm:DayConvolutionCocartesian}.

(2): Taking the fiber over an object $y \in \cO_V$, we get a bifibration
\[  \Fun_{\cT^{/V}}((\cC_{\underline{y}})^{\underline{\rhd}}, \cE_{\underline{y}}) \to \Fun_{\cT^{/V}}(\cC_{\underline{y}}, \cE_{\underline{y}}) \times \cE_{\underline{y}}. \]
Combining this observation with the product decomposition over a general object $x \in \cO^\otimes$ obtained by the $\cT$-Segal condition, we deduce that $\lambda$ is fiberwise cartesian and $\rho$ is fiberwise cocartesian (over $\cO^\otimes$). It remains to show that for $\lambda$ the cocartesian pushforward of fiberwise cartesian edges remain fiberwise cartesian, and the dual statement for $\rho$. This is obvious over inert edges, so it suffices to consider a fiberwise active edge $\alpha: x \to y$ in $\cO^\otimes_V$. Also, without loss of generality suppose $y \in \cO_V$. Let $\theta: F_0 \to F_1$ be an edge in $\widetilde{\Fun}_{\cO,\cT}(\cC, \cE)^\otimes_x$, which corresponds to a natural transformation
\[ \theta: \Delta^1 \times \cC^\otimes_{\underline{x}} \to \cE^\otimes_{\underline{x}}. \]
 A fiberwise cartesian edge in $\widetilde{\Fun}_{\cO,\cT}(\cC \star_{\cO} \cO, \cE)^\otimes_x$ over $\theta$ is given by
 \[ \theta': \Delta^1 \times (\cC^\otimes_{\underline{x}})^{\underline{\rhd}} \to \cE^\otimes_{\underline{x}} \]
 which restricts to $\theta$ and is a constant natural transformation when restricted to the right $\cT$-cone point. The cocartesian pushforward $\alpha_! \theta'$ is given by the $\cT^{/V}$-left Kan extension of $\alpha_{\otimes} \circ \theta'$ along $\id \times \alpha_{\otimes}$. Clearly, this is still a constant natural transformation when restricted to the right $\cT$-cone point, which proves that $\alpha_! \theta'$ is a fiberwise cartesian edge lifting $\alpha_! \theta$. A similar argument handles the fiberwise cocartesian edges.
\end{proof}

The next proposition is a very general and parametrized form of the following observation: the colimit of a lax symmetric monoidal functor canonically inherits the structure of a commutative algebra.

\begin{prp} \label{prp:localOperadicKanExtn} Let $F: \cC^\otimes \to \cE^\otimes$ be a lax $\cO$-monoidal $\cT$-functor and let $\sigma_F: \cO^\otimes \to \widetilde{\Fun}_{\cO,\cT}(\cC,\cE)^\otimes$ be the associated section (which is an $\cO$-algebra map). Then there exists a $\cO$-algebra lift of $\sigma_F$ to
\[ \sigma_{\overline{F}} : \cO^\otimes \to \widetilde{\Fun}_{\cO,\cT}(\cC \star_{\cO} \cO, \cE)^\otimes \]
such that the resulting $\cO$-algebra
\[ A = \overline{F}|_{\cO^\otimes}: \cO^\otimes \to \cE^\otimes \]
has underlying section $A|_{\cO}: \cO \to \cE$ computed as the $q|_{\cE}$-$\cT$-left Kan extension of $F|_{\cC}: \cC \to \cE$ along the inclusion $\cC \to \cC \star_{\cO} \cO$.
\end{prp}
\begin{proof} Factoring $\sigma_F$ through the $\cO$-monoidal envelope $\Ar^{\act}_{\cT}(\cO^\otimes)$ of $\id: \cO^\otimes \to \cO^\otimes$, we obtain a pullback square of $\cO$-monoidal $\cT$-$\infty$-categories and (strong) $\cO$-monoidal $\cT$-functors
\[ \begin{tikzcd}[row sep=2em, column sep=2em]
X \ar{r} \ar{d}{\lambda_F} & \widetilde{\Fun}_{\cO,\cT}(\cC \star_{\cO} \cO, \cE)^\otimes \ar{d}{\lambda} \\
\Ar^{\act}_{\cT}(\cO^\otimes) \ar{r}{\tau_F} & \widetilde{\Fun}_{\cO,\cT}(\cC, \cE)^\otimes.
\end{tikzcd} \]
We proceed to identify the fibers of $\lambda_F$. By definition, for any object $x \in \cO^\otimes$, $\sigma_F(x)$ is given by the functor $F_{\underline{x}}: \cC^\otimes_{\underline{x}} \to \cE^\otimes_{\underline{x}}$, and for any fiberwise active edge $\alpha: x \to y$, $\tau_F(\alpha)$ is given by the cocartesian pushforward $\alpha_! F_{\underline{x}}: \cC^\otimes_{\underline{y}} \to \cE^\otimes_{\underline{y}}$. If $\alpha$ decomposes via the $\cT$-$\infty$-operad axioms as $(\alpha_i: x_i \to y_i)_{1 \leq i \leq n}$ for $y_i \in \cO_{V_i}$ (induced by an orbit decomposition $V \simeq V_1 \sqcup ... \sqcup V_n$ if $y$ covers $[V_+ \to W]$ in $\uFinpT$), then $\alpha_! F_{\underline{x}}$ may be explicitly identified as the collection of $\cT^{/V_i}$-left Kan extensions $(\alpha_i)_! F_{\underline{x_i}}$ of $(\alpha_i)_{\otimes} \circ F_{\underline{x_i}}: \cC^\otimes_{\underline{x_i}} \to \cE_{\underline{y_i}}$ along $(\alpha_i)_{\otimes}: \cC^\otimes_{\underline{x_i}} \to \cC_{\underline{y_i}}$. Therefore, the fiber of $\lambda_F$ over $\{\alpha\}$ is given by
\[ \prod_{1 \leq i \leq n} \cE_{\underline{y_i}}^{((\alpha_i)_! F_{\underline{x_i}}, \cT^{/V_i})/} \: .\]

Because each $\cE_{\underline{y_i}}$ is $\cT^{/V_i}$-cocomplete by assumption, these fibers all have initial objects, which are moreover preserved by the cocartesian edges over $\cT^\op$ (i.e., by restriction). We claim that restricting $\lambda_F$ to the full $\cT$-subcategory $X_0$ on these initial objects yields a trivial Kan fibration $\lambda'_F: X_0 \to \Ar^{\act}_{\cT}(\cO^\otimes)$. By (2) of \cref{lem:JoinDayConvolutionCocartesian}, $\lambda$ is $\cO^\otimes$-cartesian, hence the pulled back map $\lambda_F$ is $\cO^\otimes$-cartesian. Therefore, it suffices to check that the cocartesian edges in $X$ over $\cO^\otimes$ preserve initial objects. This is obvious for cocartesian edges over inert edges in $\cO^\otimes$, so it suffices to consider the case of a fiberwise active edge $\beta: y \to z$ in $\cO^\otimes_W$ with $z \in \cO_W$. Then for our fiberwise active edge $\alpha: x \to y \in \cO^\otimes_W$ above, $\beta_!(\alpha) = \beta \circ \alpha: x \to z$ computes the cocartesian pushforward in $\Ar^{\act}_{\cT}(\cO^\otimes)$. As we have seen, an initial object in $X$ covering $\alpha$ corresponds to a collection of $\cT^{/V_i}$-colimits of $(\alpha_i)_! F_{\underline{x_i}}$
\[ \begin{tikzcd}[row sep=2em, column sep=3em]
\cC_{\underline{y_i}} \ar{r}{(\alpha_i)_! F_{\underline{x_i}}}[name=F,below,near start]{} \ar{d} & \cE_{\underline{y_i}} \ar{d} \\
(\cT^{/V_i})^\op  \ar{r}{=} \ar{ru}[name=G,above,near start]{} & (\cT^{/V_i})^\op
\arrow[Rightarrow,to path={(F) -- (G)}]
\end{tikzcd} \]
By our assumption that $\cE^\otimes \to \cO^\otimes$ is distributive, applying $\prod_\beta$ and postcomposing with $\otimes_{\beta}: \cE^\otimes_{\underline{y}} \to \cE_{\underline{z}}$ yields a $\cT^{/W}$-colimit diagram
\[ \begin{tikzcd}[row sep=2em, column sep=3em]
\cC^\otimes_{\underline{y}} \ar{r}[name=F,below,near start]{} \ar{d} & \cE_{\underline{z}} \ar{d} \\
(\cT^{/W})^\op \ar{ru}[name=G,above,near start]{} \ar{r}{=} & (\cT^{/W})^\op
\arrow[Rightarrow,to path={(F) -- (G)}]
\end{tikzcd} \]
Factoring $\cC^\otimes_{\underline{y}} \to (\cT^{/W})^\op$ through $\cC_{\underline{z}}$ and using the transitivity of $\cT^{/W}$-left Kan extensions, this further implies that the diagram
\[ \begin{tikzcd}[row sep=2em, column sep=3em]
\cC_{\underline{z}} \ar{r}{(\beta \circ \alpha)_! F_{\underline{x}}}[name=F,below,near start]{} \ar{d} & \cE_{\underline{z}} \ar{d} \\
(\cT^{/W})^\op \ar{r}{=} \ar{ru}[name=G,above,near start]{} & (\cT^{/W})^\op
\arrow[Rightarrow,to path={(F) -- (G)}]
\end{tikzcd} \]
is a $\cT^{/W}$-colimit diagram, where inspecting the definitions reveals that the top horizontal arrow may be identified with $(\beta \circ \alpha)_! F_{\underline{x}}$. This is an initial object covering $\beta \circ \alpha$, as desired.

Choosing a section of $\lambda'_F$ and postcomposing by the map $X_0 \to \widetilde{\Fun}_{\cO,\cT}(\cC \star_{\cO} \cO, \cE)^\otimes$, we obtain the desired extension $\sigma_{\overline{F}}$. Finally, the assertion about $A|_{\cO}$ is clear from the construction if we consider only those objects $x$, $\alpha = \id_x$, and edges $\beta$ entirely in $\cO$.
\end{proof}

We can then promote \cref{prp:localOperadicKanExtn} to a global existence result.

\begin{thm} \label{thm:globalOperadicKanExtn} We have $\cO^\otimes$-adjunctions
\[\begin{tikzcd}[row sep=2em, column sep=2em]
\widetilde{\Fun}_{\cO,\cT}(\cC,\cE)^\otimes \ar[shift left]{r} & \widetilde{\Fun}_{\cO,\cT}(\cC \star_{\cO} \cO,\cE)^\otimes \ar[shift left]{r}{\ev_{\cO}} \ar[shift left]{l}{\ev_{\cC}} & \cE^\otimes \ar[shift left]{l}{\delta}.
\end{tikzcd} \]
Consequently, on passing to $\infty$-categories of $\cO$-algebras, we obtain the adjunction
\[ \adjunct{p_!}{\Alg_{\cO,\cT}(\cC,\cE)}{\Alg_{\cO,\cT}(\cE)}{p^\ast} \]
where $p_!$ is computed as in \cref{prp:localOperadicKanExtn} and $p^\ast$ is restriction along $p$.
\end{thm}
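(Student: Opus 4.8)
The plan is to assemble the two $\cO^{\otimes}$‑relative adjunctions displayed in the statement and then pass to sections. Write $\lambda = \ev_{\cC}$ and $\rho = \ev_{\cO}$ for the two legs of the square of \cref{lem:JoinDayConvolutionCocartesian}; by that lemma the structure maps of $\widetilde{\Fun}_{\cO,\cT}(\cC,\cE)^{\otimes}$ and $\widetilde{\Fun}_{\cO,\cT}(\cC \star_{\cO} \cO,\cE)^{\otimes}$ to $\cO^{\otimes}$ are cocartesian fibrations, $\lambda$ carries cocartesian edges over $\cO^{\otimes}$ to cocartesian edges, and $\lambda$ is $\cO^{\otimes}$‑cartesian while $\rho$ is a cocartesian fibration. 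Granting the adjunctions $L \dashv \lambda$ (with $L$ the unnamed leftmost functor) and $\rho \dashv \delta$ over $\cO^{\otimes}$, their composite $\rho \circ L \dashv \lambda \circ \delta$ is again an $\cO^{\otimes}$‑adjunction; since each of $L,\lambda,\rho,\delta$ preserves inert edges, postcomposition carries it to an adjunction between the $\infty$‑categories of inert‑preserving sections over $\cO^{\otimes}$, which are $\Alg_{\cO,\cT}(\cC,\cE)$ and $\Alg_{\cO,\cT}(\cE)$ by \cref{lem:IdentifyingLeftAdjointOfCoinduction} and \cref{dfn:morphism_of_operads}. Unwinding the definitions, $\lambda \circ \delta$ sends a section $A$ to $A \circ p$, so $p^{\ast} = \lambda \circ \delta$ is restriction along $p$ and $p_{!} = \rho \circ L$ is the left adjoint claimed.

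For $\rho \dashv \delta$ I would argue formally from the join structure. The functor $\rho = \ev_{\cO}$ is restriction along the cone‑point inclusion $j \colon \cO^{\otimes} \hookrightarrow (\cC \star_{\cO} \cO)^{\otimes}$ (the right‑hand factor) followed by the equivalence $\widetilde{\Fun}_{\cO,\cT}(\cO,\cE)^{\otimes} \simeq \cE^{\otimes}$, and I take $\delta$ to be the functor induced by restriction along the structure projection $\pi \colon (\cC \star_{\cO} \cO)^{\otimes} \to \cO^{\otimes}$, which retracts $j$. Since no morphism of $\cC \star_{\cO} \cO$ runs from the $\cO$‑factor into the $\cC$‑factor, the right Kan extension of any functor along $j$ exists and is computed by precomposition with $\pi$, so $\rho \dashv \delta$ with $\delta$ fully faithful; concretely the unit $\id \to \delta\rho$, evaluated on an object $H$ of a fiber $\widetilde{\Fun}_{\cO,\cT}(\cC \star_{\cO} \cO,\cE)^{\otimes}_{x}$, is the canonical transformation from $H$ to the constant diagram at its value on the $\cT$‑terminal object. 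That this adjunction is relative to $\cO^{\otimes}$ follows from \cref{lem:JoinDayConvolutionCocartesian}(2) together with the description of the cocartesian pushforwards along fiberwise active edges as $\cT^{/V}$‑left Kan extensions (\cref{prp:DayConvolutionLocallyCocartesian}), which leave the value at the cone point unchanged.

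The substance is the adjunction $L \dashv \lambda = \ev_{\cC}$, and this is where \cref{prp:localOperadicKanExtn} enters. Over an orbit $V$ the fiber of $\lambda$ at $x$ is restriction along $\cC_{\underline{x}} \hookrightarrow (\cC \star_{\cO} \cO)_{\underline{x}} \simeq (\cC_{\underline{x}})^{\underline{\rhd}}$, which admits a left adjoint given by $\cT^{/V}$‑left Kan extension because $\cE_{\underline{x}}$ is $\cT^{/V}$‑cocomplete by hypothesis. To see these fiberwise left adjoints assemble over $\cO^{\otimes}$, I would run the argument of \cref{prp:localOperadicKanExtn} in families: let $X_{0} \subset \widetilde{\Fun}_{\cO,\cT}(\cC \star_{\cO} \cO,\cE)^{\otimes}$ be the full $\cT$‑subcategory on those objects that are $\cT$‑left Kan extensions in their fibers over $\widetilde{\Fun}_{\cO,\cT}(\cC,\cE)^{\otimes}$. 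Fiberwise $\cT^{/V}$‑cocompleteness produces these objects, and \cref{lem:JoinDayConvolutionCocartesian}(2) (that $\lambda$ is $\cO^{\otimes}$‑cartesian) together with the distributivity of $\cE^{\otimes}$ (via \cref{thm:DayConvolutionCocartesian}) shows that the cocartesian pushforwards over $\cO^{\otimes}$ preserve them, exactly as in the last part of the proof of \cref{prp:localOperadicKanExtn}; hence $\lambda|_{X_{0}} \colon X_{0} \to \widetilde{\Fun}_{\cO,\cT}(\cC,\cE)^{\otimes}$ is a trivial Kan fibration. A section $s$ of $\lambda|_{X_{0}}$ — which may be chosen to preserve inert edges, since $X_{0}$ is stable under cocartesian pushforward along inert edges — is the desired $L$, and $L \dashv \lambda$ over $\cO^{\otimes}$, with unit and counit furnished by the defining Kan‑extension transformations. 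Finally, the explicit conclusion of \cref{prp:localOperadicKanExtn} identifies $p_{!} = \rho \circ L$: for a lax $\cO$‑monoidal functor $F$, the underlying section of $p_{!}(F)$ is the $q|_{\cE}$‑$\cT$‑left Kan extension of $F|_{\cC}$ along $\cC \hookrightarrow \cC \star_{\cO} \cO$. The hard part will be precisely this last bundling — upgrading the section‑level statement of \cref{prp:localOperadicKanExtn} to a genuine $\cO^{\otimes}$‑relative adjunction — which is controlled by the compatibility of fiberwise $\cT$‑left Kan extensions with cocartesian transport guaranteed by distributivity; the remaining steps (the join adjunction, composing adjunctions, descending to algebras) are bookkeeping.
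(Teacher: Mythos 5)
Your proposal is correct and takes essentially the same route as the paper: the only substantive point is the relative left adjoint to $\ev_{\cC}$, which the paper gets by invoking \cite[7.3.2.12]{HA} — fiberwise existence from $\cT^{/V}$-cocompleteness and compatibility with cocartesian pushforward from distributivity, ``using the same argument as in the proof of \cref{prp:localOperadicKanExtn}'' — which is precisely your $X_0$/trivial-fibration argument run in families, while the adjunction $\ev_{\cO} \dashv \delta$ is treated as immediate. The one cosmetic divergence is the last step: the paper extracts the adjunction on algebras by pulling back along $\ev_1\colon \Ar^{\act}_{\cT}(\cO^\otimes) \to \cO^\otimes$ and taking cocartesian sections, whereas you restrict to inert-preserving sections directly; both work, since the relative left adjoint between cocartesian fibrations over $\cO^\otimes$ automatically preserves cocartesian (hence inert) edges.
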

\begin{proof} The only subtlety involves the first $\cO^\otimes$-adjunction. We may invoke \cite[7.3.2.12]{HA} because the second condition there is ensured by distributivity in $\cE^\otimes$, using the same argument as in the proof of \cref{prp:localOperadicKanExtn}. We can then extract the adjunction involving $\infty$-categories of $\cO$-algebra maps by pulling back along the structure map $\ev_1: \Ar^{\act}_{\cT}(\cO^\otimes) \to \cO^\otimes$ of the $\cO$-monoidal envelope and taking cocartesian sections.
\end{proof}


Now suppose that $p: \cC^\otimes \to \cO^\otimes$ is only a fibration of $\cT$-$\infty$-operads and consider the factorization of $p$ through its $\cO$-monoidal envelope $\Env_{\cO,\cT}(\cC)^\otimes$. In view of \cref{prp:UniversalPropertyMonoidalEnvelope} and \cref{thm:globalOperadicKanExtn}, we have the composite adjunction
\[\begin{tikzcd}[row sep=2em, column sep=2em]
p_!: \Alg_{\cO,\cT}(\cC,\cE) \ar[shift left]{r} & \Alg_{\cO,\cT}(\Env_{\cO,\cT}(\cC), \cE) \ar[shift left]{r} \ar[shift left]{l} & \Alg_{\cO,\cT}(\cE) : p^\ast \ar[shift left]{l}.
\end{tikzcd} \]

\begin{dfn} \label{dfn:operadic_LKE}
Given a $\cO$-algebra map $F: \cC^\otimes \to \cE^\otimes$, we define the \emph{$\cT$-operadic left Kan extension} of $F$ to be $p_! F$.
\end{dfn}

\begin{rem} Given a fibration of $\cT$-$\infty$-operads $\cO^\otimes \to \cP^\otimes$ and $\cE^\otimes \to \cP^\otimes$ a distributive $\cP$-monoidal $\cT$-$\infty$-category, we will also speak of the $\cT$-operadic left Kan extension of a $\cP$-algebra map $F: \cC^\otimes \to \cE^\otimes$ along $p$ in the obvious way (note that distributivity is stable under pullback). In other words, we also have an adjunction
\[ \adjunct{p_!}{\Alg_{\cP,\cT}(\cC,\cE)}{\Alg_{\cP,\cT}(\cO,\cE)}{p^\ast}. \]
\end{rem}

Note that the underlying $\cT$-functor $p_!(F)|_{\cO}: \cO \to \cE$ is computed by first extending $F$ to
$$i_! F: \Env_{\cO,\cT}(\cC)^\otimes \to \cE^\otimes$$
and then taking the $\cT$-left Kan extension of $i_! F|_{\Env_{\cO,\cT}(\cC)}$ along the structure map to $\cO$. 

\begin{exm} Suppose that $\cC^\otimes = \Triv^\otimes = (\uFinpT)_{\inert}$ and $\cO^\otimes = \uFinpT$. Then the $\cT$-symmetric monoidal envelope of $\Triv^\otimes$ is $(\Triv^\otimes)_{\act} = (\uFinpT)_{\cocart}$, the maximal sub-left fibration of $\uFinpT \to \cT^\op$ obtained by taking the wide subcategory spanned by the cocartesian edges.

In the case that $\cT = \OO_G$ is the orbit category of a finite group, we can identify this with something familiar. Namely, let $\Sigma_n$ be the symmetric group on $n$ letters, and let $\OO_{G \times \Sigma_n, \Gamma_n}$ be the full subcategory of the orbit category of $G \times \Sigma_n$ on the $\Sigma_n$-free transitive $G \times \Sigma_n$-sets. (Recall that every object in this subcategory is isomorphic to an orbit $G \times \Sigma_n / \Gamma_\phi$, where $\Gamma_\phi$ is the graph of a homomorphism $\phi: H \to \Sigma_n$ for some subgroup $H$ of $G$.) Define a functor
\[ -/\Sigma_n: \OO_{G \times \Sigma_n, \Gamma_n} \to \OO_G, \: \goesto{U}{U/\Sigma_n}. \]
This is left adjoint to restriction along the projection $G \times \Sigma_n \to G$ and sends $(G \times \Sigma_n)/ \Gamma_{\phi}$ to $G/H$. Then $(-/\Sigma_n)^\op$ is a left fibration and exhibits $\OO_{G \times \Sigma_n, \Gamma_n}^{\op}$ as a $G$-$\infty$-category. In fact, $\OO_{G \times \Sigma_n, \Gamma_n}^{\op}$ is a model for the $G$-space $B_G \Sigma_n$ which classifies $G$-equivariant principal $\Sigma_n$-bundles (\cite[Rem.~3.17]{quigleyshah_tate}).

Now define a $G$-functor $F: \OO_{G \times \Sigma_n, \Gamma_n}^{\op} \to (\underline{\FF}_G)_{\cocart}$ which sends an object $U$ to the morphism of $G$-sets $(U \times n)/ \Sigma_n \to U/\Sigma_n$ and a morphism $U \ot V$ to
\[ \begin{tikzcd}[row sep=4ex, column sep=4ex, text height=1.5ex, text depth=0.25ex]
(U \times n)/\Sigma_n \ar{d} & (V \times n)/\Sigma_n \ar{l} \ar{d} \ar{r}{=} & (U \times n)/\Sigma_n \ar{d} \\
U/ \Sigma_n & V/ \Sigma_n \ar{l} \ar{r}{=} & V/ \Sigma_n
\end{tikzcd} \]
where we note that the left square is a pullback square of $G$-sets. (Note that this suffices to define a functor since $\underline{\FF}_G$ is equivalent to a $1$-category.) It follows from $\Sigma_n$-freeness and an elementary argument that $F$ is fully faithful. Moreover, taking the disjoint union over all $n \geq 0$ and postcomposing with $(-)_+$, we obtain an equivalence of $G$-$\infty$-categories
\[ \coprod_{n \geq 0} \OO_{G \times \Sigma_n, \Gamma_n}^{\op} \xto{\sim} (\underline{\FF}_G)_{\cocart} \xto{\sim} (\underline{\FF}_{G,\ast})_{\cocart}. \]

Therefore, for a $G$-symmetric monoidal $\infty$-category $\cC^\otimes$, the free $G$-commutative algebra on an object $x: \OO_G^\op \to \cC$ is computed by the $G$-colimit of the induced functor
\[ \coprod_{n \geq 0} \OO_{G \times \Sigma_n, \Gamma_n}^{\op} \to \cC. \]
\end{exm}

\begin{wrn} In the proofs of \cref{lem:JoinDayConvolutionCocartesian}, \cref{prp:localOperadicKanExtn} and \cref{thm:globalOperadicKanExtn}, the results would fail if we replaced $(\cC \star_{\cO} \cO)^\otimes$ by $(\cO \star_{\cO} \cC)^\otimes$. This corresponds to there generally being no $\cT$-operadic \emph{right} Kan extension.
\end{wrn}

\section{\texorpdfstring{$\cT$-$\infty$}{T-infinity}-categories of \texorpdfstring{$\cO$}{O}-algebras}

Let $\cO$ be a $\cT$-$\infty$-operad. In this section, we study $\cT$-limits and $\cT$-colimits in the $\cT$-$\infty$-category of $\cO$-algebras within an $\cO$-monoidal $\cT$-$\infty$-category. Our results are straightforward generalizations of Lurie's results in \cite[\S 3.2]{HA} and overlap with similar work of Bachmann--Hoyois undertaken in \cite[\S 7]{BACHMANN2021} (in particular, compare \cite[Prop.~7.6]{BACHMANN2021}). Before reading this section, the reader should first review \cite[Thm.~4.16 and Cor.~4.17]{Exp2b} on how to compute $\cT$-(co)limits in a $\cT$-$\infty$-category of sections.

\subsection{Parametrized (co)limits in general}

In this section, let $\cC^\otimes \to \cO^\otimes$ be an $\cO$-monoidal $\cT$-$\infty$-category, let $\cP^\otimes \to \cO^\otimes$ be a fibration of $\cT$-$\infty$-operads, and let $\CMcal{K} = \{ \CMcal{K}_V : V \in \cT \} $ be a collection of classes $\CMcal{K}_V$ of small $\cT^{/V}$-$\infty$-categories closed with respect to base-change in $\cT$. We are interested in criteria for when the $\cT$-$\infty$-category $\underline{\Alg}_{\cO,\cT}(\cP,\cC)$ of algebras strongly admits $\CMcal{K}$-indexed $\cT$-limits and colimits. To solve this problem, we will first need to understand how to compute $\cT$-limits and $\cT$-colimits in an indexed product.


\begin{lemma} \label{lem:CoinductionCreatesParamCoLimits}
Let $f: \cT_0 \to \cT_1$ be a categorical fibration of $\infty$-categories, let $\cC$ be a $\cT_0$-$\infty$-category, and let $\cK$ be a $\cT_1$-$\infty$-category. Let
\[ \adjunct{f^\ast}{\Cat_{\cT_1}}{\Cat_{\cT_0}}{f_\ast} \]
denote the restriction-coinduction adjunction.
\begin{enumerate}
\item Let $\left( \adjunct{F}{\cC}{\cD}{G} \right)$ be a $\cT_0$-adjunction. Then $\left( \adjunct{f_\ast F}{f_\ast \cC}{f_\ast \cD}{f_\ast G} \right)$ is a $\cT_1$-adjunction.
\item We have a canonical equivalence
$$\underline{\Fun}_{\cT_1}(\cK, f_\ast \cC) \simeq f_\ast \underline{\Fun}_{\cT_0}(f^\ast \cK, \cC)$$
under which $\delta_{\cK} \simeq f_{\ast}(\delta_{f^\ast \cK})$ as $\cT_1$-functors with common domain $f_\ast \cC$, where $\delta_{\cK}$, resp. $\delta_{f^\ast \cK}$ is the constant $\cK$-diagram $\cT_1$-functor, resp. constant $f^\ast \cK$-diagram $\cT_0$-functor.
\item Let $p: \cK \to f_\ast \cC$ be a $\cT_1$-functor and let $q: f^\ast \cK \to \cC$ be its adjoint $\cT_0$-functor. Then $p$ admits a $\cT_1$-limit if and only if $q$ admits a $\cT_0$-limit, and moreover an extension $\overline{p}: \cK^{\underline{\lhd}} \to f_\ast \cC$ is a $\cT_1$-limit diagram if and only if the adjoint extension $\overline{q}: (f^\ast \cK)^{\underline{\lhd}} \simeq f^\ast (\cK^{\underline{\lhd}}) \to \cC$ is a $\cT_0$-limit diagram. The analogous statements also hold for parametrized colimits.
\end{enumerate}
\end{lemma}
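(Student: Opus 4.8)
The three parts are standard facts about the restriction–coinduction adjunction $f^\ast \dashv f_\ast$, and the strategy is to reduce everything to the analogous statements over a point by testing on fibers, exploiting the base-change compatibility built into $\CMcal{K}$ and the parametrized (co)limit formalism of \cite[\S 4-6]{Exp2b}.

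First I would prove (2), since (1) and (3) both rest on it. The key input is the adjunction defining coinduction together with the exponential law for parametrized functor categories: for $\cT_1$-$\infty$-categories $\cA, \cB$ and a $\cT_0$-$\infty$-category $\cC$, there are natural equivalences
\[ \underline{\Fun}_{\cT_1}(\cA, f_\ast \cC) \simeq f_\ast f^\ast \underline{\Fun}_{\cT_1}(\cA, f_\ast \cC) \big|_{\text{on fibers}}, \]
but more directly one checks the desired equivalence by mapping in: for any $\cT_1$-$\infty$-category $\cX$,
\[ \underline{\Fun}_{\cT_1}\big(\cX, \underline{\Fun}_{\cT_1}(\cK, f_\ast \cC)\big) \simeq \underline{\Fun}_{\cT_1}(\cX \times_{\cT_1^\op} \cK, f_\ast \cC) \simeq \underline{\Fun}_{\cT_0}\big(f^\ast(\cX \times_{\cT_1^\op} \cK), \cC\big), \]
and $f^\ast(\cX \times_{\cT_1^\op}\cK) \simeq f^\ast \cX \times_{\cT_0^\op} f^\ast \cK$ since $f^\ast$ preserves products, so this is $\underline{\Fun}_{\cT_0}(f^\ast\cX, f_\ast\underline{\Fun}_{\cT_0}(f^\ast\cK,\cC)) \simeq \underline{\Fun}_{\cT_1}(\cX, f_\ast\underline{\Fun}_{\cT_0}(f^\ast\cK,\cC))$. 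By Yoneda this gives the stated equivalence, and naturality of the adjunction unit identifies $\delta_\cK$ with $f_\ast(\delta_{f^\ast\cK})$: both correspond under the chain of equivalences to the identity on the relevant functor category, as $\delta$ is adjoint to an evaluation/projection functor which commutes with the coherences.

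For (1), recall that a $\cT_0$-adjunction $F \dashv G$ is witnessed by a unit $\eta: \id \to GF$ and counit $\epsilon: FG \to \id$ satisfying the triangle identities as $\cT_0$-natural transformations (equivalently, by the criterion of \cite[7.3.2.6]{HA} applied fiberwise, or via a relative adjunction over $\cT_0^\op$). Applying the functor $f_\ast$ — which, being a right adjoint, preserves the relevant limits and hence sends $\cT_0$-natural transformations to $\cT_1$-natural transformations and preserves the triangle identities — yields $f_\ast\eta$ and $f_\ast\epsilon$ exhibiting $f_\ast F \dashv f_\ast G$. Alternatively, one tests on fibers: for $V \in \cT_1$, $(f_\ast\cC)_V \simeq \Fun_{/\cT_1}(\cT_1^{V/}, \cC)$-type formula shows the fiber of $f_\ast F \dashv f_\ast G$ over $V$ is the functor adjunction induced by the $\cT_0$-adjunction on the fiber, using that restriction-along-slices preserves adjunctions.

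For (3), the main obstacle — and the place I'd spend the most care — is matching up the parametrized (co)limit cones correctly through the adjunction, in particular verifying the cone-point identification $(f^\ast\cK)^{\underline\lhd} \simeq f^\ast(\cK^{\underline\lhd})$ compatibly with diagrams. Using the characterization of parametrized limits via the constant-diagram functor being a $\cT_1$-right adjoint on the relevant slice, and part (2) to rewrite $\underline{\Fun}_{\cT_1}(\cK^{\underline\lhd}, f_\ast\cC) \simeq f_\ast \underline{\Fun}_{\cT_0}((f^\ast\cK)^{\underline\lhd},\cC)$ (invoking that $f^\ast$ preserves the parametrized cone, which holds since $-^{\underline\lhd}$ is built from $\cT$-join and $f^\ast$ preserves $\cT$-joins as these are computed fiberwise and compatibly with base-change), a diagram chase shows $\overline p$ is a $\cT_1$-limit diagram iff its image under the evident restriction-to-a-point map is, for every object of $\cT_1$; since the fiber of $f_\ast$-things over $V\in\cT_1$ is a limit of fibers of $\cC$ over objects of $\cT_0$ mapping to $V/$, and $\cT$-limits in such a fiber are detected by \cite[Thm.~4.16]{Exp2b}, this reduces to the unparametrized statement that restriction commutes with limits of functors, which is \cite[5.1.2.3]{HTT} or its parametrized refinement. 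The colimit case is dual, replacing $f_\ast$ by its interaction with $f^\ast$ as a \emph{left} adjoint in the relevant functor categories — here one uses instead that $f^\ast$ preserves $\cT$-colimits of the adjoint diagrams, which again follows from the pointwise formula and base-change closure of $\CMcal K$.
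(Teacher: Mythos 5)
Your proposal is correct and follows essentially the same route as the paper: part (2) is proved by the identical Yoneda/representability computation using the exponential law and the $f^\ast \dashv f_\ast$ adjunction, part (1) by reducing to adjunctions between (fibers of, equivalently functor categories into) the coinduced $\cT_1$-$\infty$-categories, and part (3) by combining (1) and (2) via the constant-diagram-functor characterization of parametrized (co)limits. The paper's write-up of (3) is terser (it simply cites (1) and (2)), and your extra remarks about base-change closure of $\CMcal{K}$ are not needed for this lemma, but there is no substantive difference in method.
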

\begin{proof}
\noindent (1): It suffices to show that for all $t \in \cT_1$, $(f_\ast F)_t \dashv (f_\ast G)_t$ is an adjunction. In fact, since $(f_\ast \cC)_{t} \simeq \Fun_{\cT_1}((\cT_1^{t/})^\op, f_\ast \cC)$, we can more generally verify that $\Fun_{\cT_1}(\cK,f_\ast F) \dashv \Fun_{\cT_1}(\cK,f_\ast G)$ is an adjunction for every $\cT_1$-$\infty$-category $\cK$. But this holds since
\[ \adjunct{\Fun_{\cT_0}(f^\ast \cK,F)}{\Fun_{\cT_0}(f^\ast \cK, \cC)}{\Fun_{\cT_0}(f^\ast \cK, \cD)}{\Fun_{\cT_0}(f^\ast \cK,G)} \]
is an adjunction by \cite[Prop.~8.2]{Exp2}.

\noindent (2): We check the claimed equivalence at the level of representable functors:
\begin{align*}
\Fun_{\cT_1}(\cL, \underline{\Fun}_{\cT_1}(\cK,f_\ast \cC)) \simeq \Fun_{\cT_1}(\cL \times_{\cT_1^\op} \cK,f_\ast \cC) \simeq \Fun_{\cT_0}(f^\ast \cL \times_{\cT_0^\op} f^\ast \cK, \cC) \\
\simeq \Fun_{\cT_0}(f^\ast \cL, \underline{\Fun}_{\cT_0}(f^\ast \cK, \cC)) \simeq \Fun_{\cT_0}(\cL, f_\ast \underline{\Fun}_{\cT_0}(f^\ast \cK, \cC))
\end{align*}
The assertion about constant functors is shown in a similar manner.

\noindent (3): This follows from combining (1) and (2).
\end{proof}

\begin{corollary} \label{cor:ParametrizedFibersInOperadAdmitCoLimits}
Let $\cC^\otimes \to \cO^\otimes$ be a fibration of $\cT$-$\infty$-operads, let $f: U \simeq \coprod_{1 \leq i \leq n} U_i \to V$ be a morphism in $\FinT$ with $U_i$ and $V$ orbits, let $x \in \cO^\otimes_{f_+}$, and let $e_i: x \to x_i$ be inert edges in $\cO^\otimes$ lifting the characteristic morphisms $\chi_{[U_i \subset U]}$ in $\uFinpT$. Suppose $\cC_{\underline{x_i}}$ admits all $\CMcal{K}_{U_i}$-indexed $\cT^{U_i/}$-(co)limits for each $1 \leq i \leq n$. Then $\cC^\otimes_{\underline{x}}$ admits all $\CMcal{K}_V$-indexed $\cT^{V/}$-(co)limits.
\end{corollary}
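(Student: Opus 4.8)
The plan is to reduce the statement, via the parametrized Segal condition, to the behaviour of coinduction under parametrized (co)limits. First I would apply \cref{prp:SegalCondition} to the inert edges $e_i$, obtaining an equivalence of $\cT^{/V}$-$\infty$-categories
\[ \cC^\otimes_{\underline{x}} \simeq \prod_{1 \leq i \leq n} \left( \prod_{f_i} \cC_{\underline{x_i}} \right), \]
where, as recorded in that proposition, each $\prod_{f_i}$ is the coinduction functor (right adjoint to pullback) along the induced functor $\pi_i \colon \underline{U_i} \to \underline{V}$ of $\infty$-categories, which we may arrange to be a categorical fibration so that \cref{lem:CoinductionCreatesParamCoLimits} applies. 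Since the property of admitting all $\ccK_V$-indexed $\cT^{/V}$-(co)limits is invariant under equivalence of $\cT^{/V}$-$\infty$-categories, it then suffices to establish two things: (a) a finite product $\prod_j \cD_j$ of $\cT^{/V}$-$\infty$-categories admits all $\ccK_V$-indexed $\cT^{/V}$-(co)limits as soon as each $\cD_j$ does; and (b) $\prod_{f_i} \cC_{\underline{x_i}}$ admits all $\ccK_V$-indexed $\cT^{/V}$-(co)limits.

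For (b), I would argue as follows. Fix $i$ and write $\pi = \pi_i$, so that $\prod_{f_i} \cC_{\underline{x_i}} = \pi_\ast \cC_{\underline{x_i}}$. Let $\cK \in \ccK_V$ and let $p \colon \cK \to \pi_\ast \cC_{\underline{x_i}}$ be a $\cT^{/V}$-functor. By \cref{lem:CoinductionCreatesParamCoLimits}(2), $p$ is the adjoint of a $\cT^{/U_i}$-functor $q \colon \pi^\ast \cK \to \cC_{\underline{x_i}}$. The $\cT^{/U_i}$-$\infty$-category $\pi^\ast \cK$ is precisely the base-change of $\cK$ along the map $f_i \colon U_i \to V$ of orbits in $\cT$; since $\ccK$ is closed under base-change in $\cT$, we get $\pi^\ast \cK \in \ccK_{U_i}$. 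Hence, by the hypothesis on $\cC_{\underline{x_i}}$, the functor $q$ admits a $\cT^{/U_i}$-(co)limit, and by \cref{lem:CoinductionCreatesParamCoLimits}(3) it follows that $p$ admits a $\cT^{/V}$-(co)limit. As $\cK$ and $p$ were arbitrary, $\pi_\ast \cC_{\underline{x_i}}$ admits all $\ccK_V$-indexed $\cT^{/V}$-(co)limits. (Since part (3) of \cref{lem:CoinductionCreatesParamCoLimits} also matches up the (co)limit diagrams on the two sides, the same reasoning upgrades this to the assertion that such (co)limits are preserved under base-change in $\cT$ whenever the corresponding ones in $\cC_{\underline{x_i}}$ are, yielding the \emph{strong} form if needed.)

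Finally, claim (a) is routine: $\ccK_V$-indexed $\cT^{/V}$-(co)limits in $\prod_j \cD_j$ exist and are computed componentwise, which follows from the pointwise recognition of parametrized (co)limits (cf. \cite[Cor.~4.17]{Exp2b}) applied to the (co)cartesian fibration over $(\cT^{/V})^{\op}$ classifying the product of the classifying functors of the $\cD_j$. Combining (a) and (b) with the equivalence of the first paragraph completes the proof. I expect the one point requiring care to be the bookkeeping in (b): identifying $\prod_{f_i}$ with coinduction along a categorical fibration so that \cref{lem:CoinductionCreatesParamCoLimits} is applicable, and recognizing $\pi^\ast \cK$ as exactly the base-change to which the closure hypothesis on $\ccK$ pertains; everything else is formal.
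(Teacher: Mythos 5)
Your proposal is correct and is exactly the argument the paper has in mind: the paper's proof is the one-line instruction to combine the Segal equivalence $\cC^\otimes_{\underline{x}} \simeq \prod_{1 \leq i \leq n} \prod_{f_i} \cC_{\underline{x_i}}$ of \cref{prp:SegalCondition} with \cref{lem:CoinductionCreatesParamCoLimits}(3), and your write-up simply makes explicit the two bookkeeping points (closure of $\ccK$ under base-change identifying $\pi^\ast\cK$ with an element of $\ccK_{U_i}$, and the componentwise treatment of the finite product) that the paper leaves implicit.
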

\begin{proof}
Combine \cref{lem:CoinductionCreatesParamCoLimits}(3) and the Segal equivalence $\cC^\otimes_{\underline{x}} \simeq \prod_{1 \leq i \leq n} \prod_{f_i} \cC_{\underline{x_i}}$ of \cref{prp:SegalCondition}.
\end{proof}

We may now prove our main result on parametrized limits.

\begin{theorem} \label{thm:LimitsInAlgebras}
Let $\cC$ be an $\cO$-monoidal $\cT$-$\infty$-category and let $\cP^\otimes \to \cO^\otimes$ be a fibration of $\cT$-$\infty$-operads. Let $\CMcal{K} = \{ \CMcal{K}_V : V \in \cT \} $ be a collection of classes $\CMcal{K}_V$ of small $\cT^{/V}$-$\infty$-categories closed with respect to base-change in $\cT$ (e.g., we could take $\CMcal{K}_V$ to be all small $\cT^{/V}$-$\infty$-categories for each $V \in \cT$). Suppose for all $x \in \cO_V$ that $\cC_{\underline{x}}$ admits all $\CMcal{K}_V$-indexed $\cT^{/V}$-limits. Then:
\begin{enumerate}
\item Both $\underline{\Alg}_{\cO,\cT}(\cP,\cC)$ and $\underline{\Fun}_{/\cO^\otimes,\cT}(\cP^\otimes, \cC^\otimes)$ strongly admit all $\CMcal{K}$-indexed $\cT$-limits, and the inclusion
\[ \underline{\Alg}_{\cO,\cT}(\cP,\cC) \subset \underline{\Fun}_{/\cO^\otimes,\cT}(\cP^\otimes, \cC^\otimes) \]
strongly preserves all $\CMcal{K}$-indexed $\cT$-limits.
\item $\underline{\Fun}_{/\cO,\cT}(\cP,\cC)$ strongly admits all $\CMcal{K}$-indexed $\cT$-limits, and the forgetful $\cT$-functor
$$\mathrm{U}: \underline{\Alg}_{\cO,\cT}(\cP,\cC) \to \underline{\Fun}_{/\cO,\cT}(\cP,\cC)$$
strongly creates all $\CMcal{K}$-indexed $\cT$-limits.
\end{enumerate}
\end{theorem}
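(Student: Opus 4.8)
The plan is to deduce everything from the calculus of $\cT$-(co)limits in $\cT$-$\infty$-categories of sections (\cite[Thm.~4.16 and Cor.~4.17]{Exp2b}), applied to the cocartesian fibration $\cC^\otimes \to \cO^\otimes$ (and, for (2), to its underlying cocartesian fibration $\cC \to \cO$). For (1): since $\cC$ is $\cO$-monoidal, $\cC^\otimes \to \cO^\otimes$ is a cocartesian fibration of $\cT$-$\infty$-categories. First I would record that every $\cT^{/V}$-fiber $\cC^\otimes_{\underline{x}}$ (for $x \in \cO^\otimes$ over $[U_+ \to V]$) strongly admits $\CMcal{K}_V$-indexed $\cT^{/V}$-limits: by \cref{prp:SegalCondition} we have $\cC^\otimes_{\underline{x}} \simeq \prod_{1 \le i \le n} \prod_{f_i} \cC_{\underline{x_i}}$ with $x_i \in \cO_{U_i}$, so this follows from the hypothesis together with \cref{cor:ParametrizedFibersInOperadAdmitCoLimits}. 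Feeding this into \cite[Thm.~4.16]{Exp2b} gives that $\underline{\Fun}_{/\cO^\otimes,\cT}(\cP^\otimes,\cC^\otimes)$ strongly admits $\CMcal{K}$-indexed $\cT$-limits, computed fiberwise.

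The remaining point of (1) — which then also yields the $\underline{\Alg}$ assertions, the inclusion being fully faithful — is that the full $\cT$-subcategory $\underline{\Alg}_{\cO,\cT}(\cP,\cC)$ is closed under these $\cT$-limits. So let $\overline{A}\colon \cK^{\underline{\lhd}} \to \underline{\Fun}_{/\cO^\otimes,\cT}(\cP^\otimes,\cC^\otimes)$ be a $\cT$-limit diagram restricting on $\cK$ to $\cO$-algebra maps, with apex $A$; one must check that $A$ carries an inert edge $e\colon p \to p'$ of $\cP^\otimes$ to an inert edge. Its image $e_0\colon x \to x'$ in $\cO^\otimes$ is inert since a fibration of $\cT$-$\infty$-operads preserves inert edges, and since $\cC^\otimes \to \cO^\otimes$ is cocartesian, an edge over $e_0$ is inert in $\cC^\otimes$ exactly when it is cocartesian for $\cC^\otimes \to \cO^\otimes$, i.e.\ when the comparison $(e_0)_! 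A(p) \to A(p')$ is an equivalence. Because the $\cT$-limit is computed fiberwise, $A(p)$, resp.\ $A(p')$, is the $\cT$-limit of the $A_\bullet(p)$, resp.\ the $A_\bullet(p')$, and $(e_0)_! A_\bullet(p) \xto{\sim} A_\bullet(p')$ for each vertex since each $A_\bullet$ is an algebra; the comparison therefore factors through $\lim_\bullet (e_0)_! A_\bullet(p)$, and it remains only to see that $(e_0)_!$ strongly preserves the $\CMcal{K}$-indexed $\cT$-limit in question. This holds because, under the Segal identifications of \cref{prp:SegalCondition}, cocartesian pushforward along an inert edge of $\cO^\otimes$ is built out of projections onto subproducts and $\cT$-base-change (restriction/coinduction) functors, each of which strongly preserves $\CMcal{K}$-indexed $\cT$-limits by \cref{lem:CoinductionCreatesParamCoLimits}.

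For (2): the underlying map $\cC \to \cO$ is a cocartesian fibration of $\cT$-$\infty$-categories, being the base-change of $\cC^\otimes \to \cO^\otimes$ along $\cO \hookrightarrow \cO^\otimes$, and for $x \in \cO_V$ its fiber $\cC_{\underline{x}}$ strongly admits $\CMcal{K}_V$-indexed $\cT^{/V}$-limits by hypothesis; so \cite[Thm.~4.16]{Exp2b} again gives that $\underline{\Fun}_{/\cO,\cT}(\cP,\cC)$ strongly admits $\CMcal{K}$-indexed $\cT$-limits, computed fiberwise. To see that $\mathrm{U}$ (restriction of sections along $\cP \hookrightarrow \cP^\otimes$) strongly creates these, I would verify that $\mathrm{U}$ preserves $\CMcal{K}$-indexed $\cT$-limits — by (1) such a limit is the fiberwise limit in $\underline{\Fun}_{/\cO^\otimes,\cT}(\cP^\otimes,\cC^\otimes)$, whose restriction to the objects of $\cP$ is precisely the fiberwise limit computing the corresponding limit in $\underline{\Fun}_{/\cO,\cT}(\cP,\cC)$ — and that $\mathrm{U}$ is $\cT$-conservative — a morphism of $\cO$-algebras that is an equivalence on underlying $\cT$-$\infty$-categories is, on each fiber $\cC^\otimes_{x}$, a product of equivalences under the Segal decomposition of \cref{rem:BasepointIndependenceInOperadDefForObjects}, hence an equivalence. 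Combining these two facts with the existence of $\CMcal{K}$-indexed $\cT$-limits in $\underline{\Alg}_{\cO,\cT}(\cP,\cC)$ from (1), a standard preservation-plus-conservativity argument shows $\mathrm{U}$ strongly creates $\CMcal{K}$-indexed $\cT$-limits; the qualifier ``strongly'' is automatic throughout since $\CMcal{K}$ is closed under base-change.

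The hard part will be the closure of $\underline{\Alg}_{\cO,\cT}(\cP,\cC)$ under fiberwise $\cT$-limits in (1) — concretely, checking that cocartesian pushforward along inert edges of $\cO^\otimes$ strongly preserves $\CMcal{K}$-indexed $\cT$-limits, which requires carefully unwinding the parametrized Segal decomposition of the fibers and the behaviour of coinduction under parametrized limits. Everything else is a formal consequence of the section-category machinery of \cite{Exp2b} and of \cref{prp:SegalCondition}.
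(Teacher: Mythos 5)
Your proposal is correct and follows essentially the same route as the paper: both deduce existence of the limits in the section $\cT$-$\infty$-category from \cite[Thm.~4.16/Cor.~4.17]{Exp2b} together with \cref{cor:ParametrizedFibersInOperadAdmitCoLimits}, and both reduce closure of the algebra subcategory (and the statement in (2)) to the observation that, under the Segal equivalence of \cref{prp:SegalCondition}, pushforward along inert edges is a projection up to base-change and hence preserves the relevant $\cT$-limits. The only difference is presentational: you spell out the creation argument for $\mathrm{U}$ via preservation plus conservativity, whereas the paper simply cites the explicit limit formula of \cite[Thm.~4.16(3)]{Exp2b} a second time.
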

\begin{proof}
By \cite[Cor.~4.17]{Exp2b} and \cref{cor:ParametrizedFibersInOperadAdmitCoLimits}, $\underline{\Fun}_{/\cO^\otimes,\cT}(\cP^\otimes, \cC^\otimes)$ strongly admits all $\CMcal{K}$-indexed $\cT$-limits. Moreover, by the explicit formula for parametrized limits in $\underline{\Fun}_{/\cO^\otimes,\cT}(\cP^\otimes, \cC^\otimes)$ given in \cite[4.16(3)]{Exp2b}, we see the remaining claims follow from the observation that for every fiberwise inert edge $e: x \to x'$ in $\cO^\otimes_V$, the associated pushforward functor $e_!: \cC^\otimes_{\underline{x}} \to \cC^\otimes_{\underline{x'}}$ is identified with projection to a subset of factors in a fiber product under the Segal equivalence of \cref{prp:SegalCondition}, so in particular preserves all $\CMcal{K}_V$-indexed $\cT^{/V}$-limits. This proves (1). (2) then follows by invoking \cite[4.16(3)]{Exp2b} once more.
\end{proof}


Next, we handle the case of parametrized colimits, for which we will need an additional distributivity assumption on $\cC^\otimes \to \cO^\otimes$. Let $\CMcal{K}_V$ be the class $\CMcal{K}^{\sift}_V$ of $\cT^{/V}$-sifted $\cT^{/V}$-colimits and write $\CMcal{K}^{\sift} = \CMcal{K}$.



\begin{theorem} \label{thm:ColimitsInAlgebras}
Suppose $\cC$ is a distributive $\cO$-monoidal $\cT$-$\infty$-category (\cref{def:DistributiveMonoidalCategory}), and let $\cP^\otimes \to \cO^\otimes$ be a fibration of $\cT$-$\infty$-operads. Then:
\begin{enumerate}
\item Both $\underline{\Alg}_{\cO,\cT}(\cP,\cC)$ and $\underline{\Fun}_{/\cO^\otimes,\cT}(\cP^\otimes, \cC^\otimes)$ strongly admit all $\CMcal{K}^{\sift}$-indexed $\cT$-colimits, and the inclusion
\[ \underline{\Alg}_{\cO,\cT}(\cP,\cC) \subset \underline{\Fun}_{/\cO^\otimes,\cT}(\cP^\otimes, \cC^\otimes) \]
strongly preserves all $\CMcal{K}^{\sift}$-indexed $\cT$-colimits.
\item $\underline{\Fun}_{/\cO,\cT}(\cP,\cC)$ is $\cT$-cocomplete and the forgetful $\cT$-functor
$$\mathrm{U}: \underline{\Alg}_{\cO,\cT}(\cP,\cC) \to \underline{\Fun}_{/\cO,\cT}(\cP,\cC)$$
strongly creates all $\CMcal{K}^{\sift}$-indexed $\cT$-colimits.
\item $\underline{\Alg}_{\cO,\cT}(\cP,\cC)$ is $\cT$-cocomplete.
\item Suppose in addition that $\cC$ is fiberwise presentable. Then $\underline{\Alg}_{\cO,\cT}(\cP,\cC)$ is fiberwise presentable.
\end{enumerate}
\end{theorem}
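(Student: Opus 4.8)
The plan is to prove (1) and (2) in close parallel to \cref{thm:LimitsInAlgebras}, and then to bootstrap to the global statements (3) and (4) using the free--forgetful $\cT$-adjunction of \cref{thm:globalOperadicKanExtn}. For (1): since $\cC$ is distributive $\cO$-monoidal, for every $x \in \cO_V$ the parametrized fiber $\cC_{\underline x}$ is $\cT^{/V}$-cocomplete, hence admits $\CMcal{K}^{\sift}_V$-indexed $\cT^{/V}$-colimits, and by \cref{cor:ParametrizedFibersInOperadAdmitCoLimits} the same holds for $\cC^\otimes_{\underline x}$ for an arbitrary $x \in \cO^\otimes$; so \cite[Cor.~4.17]{Exp2b} gives that $\underline{\Fun}_{/\cO^\otimes,\cT}(\cP^\otimes,\cC^\otimes)$ strongly admits all $\CMcal{K}^{\sift}$-indexed $\cT$-colimits, computed by the pointwise formula of \cite[4.16(3)]{Exp2b}. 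To see that $\underline{\Alg}_{\cO,\cT}(\cP,\cC)$ is stable under these and that the inclusion strongly preserves them, I would argue exactly as in \cref{thm:LimitsInAlgebras}: for every fiberwise inert edge $e\colon x \to x'$ in $\cO^\otimes_V$ the pushforward $e_!$ is, under the Segal equivalence of \cref{prp:SegalCondition}, projection onto a subset of factors of a fiber product, which preserves all $\cT^{/V}$-colimits, so a pointwise $\CMcal{K}^{\sift}$-indexed $\cT$-colimit of morphisms of $\cT$-$\infty$-operads again carries inert edges to inert edges. For (2): the underlying $\cT$-$\infty$-category $\cC$ is $\cT$-cocomplete by the distributivity hypothesis, so $\underline{\Fun}_{/\cO,\cT}(\cP,\cC)$ is $\cT$-cocomplete by \cite[Cor.~4.17]{Exp2b}; and since the restriction $\cT$-functor $\underline{\Fun}_{/\cO^\otimes,\cT}(\cP^\otimes,\cC^\otimes)\to\underline{\Fun}_{/\cO,\cT}(\cP,\cC)$ preserves whatever $\cT$-colimits exist in its source (both being computed pointwise), part (1) identifies the underlying $\cT$-functor of a $\CMcal{K}^{\sift}$-indexed $\cT$-colimit of $\cO$-algebras with the corresponding $\cT$-colimit downstairs, from which the strong creation claim for $\mathrm{U}$ follows.

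For (3), the extra input is that by \cref{thm:globalOperadicKanExtn} together with the distributivity of $\cC$, the forgetful $\cT$-functor $\mathrm{U}$ admits a left $\cT$-adjoint $\mathrm{Free}$, realized as a $\cT$-operadic left Kan extension (\cref{dfn:operadic_LKE}). Since $\mathrm{U}$ is conservative and, by (2), strongly creates $\CMcal{K}^{\sift}$-indexed $\cT$-colimits, it exhibits $\underline{\Alg}_{\cO,\cT}(\cP,\cC)$ as $\cT$-monadic over the $\cT$-cocomplete $\cT$-$\infty$-category $\underline{\Fun}_{/\cO,\cT}(\cP,\cC)$, and the resulting $\cT$-monad $\mathrm{U}\circ\mathrm{Free}$ preserves $\CMcal{K}^{\sift}$-indexed $\cT$-colimits. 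To produce the remaining $\cT$-colimits I would argue as in \cite[\S 3.2]{HA}: every $\cT$-colimit is built from sifted $\cT$-colimits and finite $\cT$-coproducts, $\mathrm{Free}$ preserves $\cT$-coproducts, and every $\cO$-algebra is the geometric realization of its monadic bar resolution by free algebras; hence a finite $\cT$-coproduct of algebras is a sifted $\cT$-colimit of $\cT$-coproducts of free algebras, each of which is again free and therefore exists. This yields $\cT$-cocompleteness of $\underline{\Alg}_{\cO,\cT}(\cP,\cC)$. For (4), fiberwise presentability is checked over each $V \in \cT$, and by \cite{Exp2b} the fiber of $\underline{\Alg}_{\cO,\cT}(\cP,\cC)$ over $V$ is an $\infty$-category of $\cO^{/V}$-algebras over $\cT^{/V}$, so one reduces to showing $\Alg_{\cO,\cT}(\cP,\cC)$ is presentable when $\cC$ is fiberwise presentable; it is cocomplete by (3), and for accessibility one uses that $\underline{\Fun}_{/\cO,\cT}(\cP,\cC)$ is fiberwise presentable (a $\cT$-functor $\infty$-category out of a small $\cT$-$\infty$-category into a fiberwise presentable one), that the monad $\mathrm{U}\circ\mathrm{Free}$ is accessible because it preserves sifted colimits, and that modules over an accessible $\cT$-monad on a fiberwise presentable $\cT$-$\infty$-category are again fiberwise presentable.

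The hard part will be step (3): unlike $\cT$-limits (\cref{thm:LimitsInAlgebras}), $\cT$-colimits in $\underline{\Alg}_{\cO,\cT}(\cP,\cC)$ are not computed pointwise, and the construction of the non-sifted ones is precisely where the distributivity hypothesis is indispensable, both to guarantee that the free $\cO$-algebra $\cT$-functor exists at all and to control the interaction of $\cT$-coproducts of algebras with the norm functors. Making the parametrized monadicity and bar-resolution arguments run correctly in the $\cT$-$\infty$-categorical setting is the technical heart of the proof.
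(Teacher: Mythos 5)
Your overall architecture matches the paper's: (1) and (2) by a pointwise analysis in the section $\cT$-$\infty$-category, then (3) by reducing $\cT$-cocompleteness to finite $\cT$-coproducts and producing those via the free--forgetful adjunction of \cref{thm:globalOperadicKanExtn} and bar resolutions following \cite[Cor.~3.2.3.3]{HA}, then (4) by an accessibility argument. But there is a genuine gap at the very start, in (1): you justify that $\underline{\Fun}_{/\cO^\otimes,\cT}(\cP^\otimes,\cC^\otimes)$ strongly admits $\CMcal{K}^{\sift}$-indexed $\cT$-colimits, computed pointwise, using only the $\cT^{/V}$-cocompleteness of the fibers $\cC^\otimes_{\underline{x}}$ together with \cite[Cor.~4.17]{Exp2b} --- that is, you transport the \emph{limit} argument of \cref{thm:LimitsInAlgebras} verbatim. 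Unlike limits, colimits in a $\cT$-$\infty$-category of sections of a cocartesian fibration are computed pointwise only if the pushforward functors along \emph{all} edges of the base preserve the colimits in question; this is the criterion of \cite[Thm.~4.16(4)]{Exp2b} that the paper actually verifies. You check this only for fiberwise inert edges (where the pushforward is a projection under the Segal equivalence of \cref{prp:SegalCondition}), but never for the fiberwise active edges, whose pushforwards are the norm functors $\alpha_{\otimes}\colon \cC^\otimes_{\underline{x}} \to \cC_{\underline{y}}$. It is exactly here that the distributivity hypothesis (\cref{def:DistributiveMonoidalCategory}) and the restriction to \emph{sifted} diagrams enter: distributivity together with \cite[Prop.~8.19]{Exp2b} gives that each $\alpha_{\otimes}$ preserves $\CMcal{K}^{\sift}_V$-indexed $\cT^{/V}$-colimits. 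Without this step, the pointwise colimit of a sifted diagram of sections need not be a colimit in the section category at all, and both the stability of the algebra subcategory in (1) and the creation statement in (2) --- which you derive from the pointwise formula --- are unsupported. You correctly sense that distributivity is ``indispensable,'' but you locate its role only in (3); in fact it is already the load-bearing hypothesis for (1) and (2). Once that verification is inserted, your treatment of (3) and (4) is essentially the paper's argument (the paper checks the finite $\cT$-coproducts and the base-change condition on free objects and then extends via \cite[Prop.~4.7.3.14]{HA}, and proves accessibility via \cite[Prop.~5.4.7.11]{HTT} rather than through a general statement about accessible monads, but these are cosmetic differences).
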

\begin{proof}
(1) and (2): To show the claim for $\underline{\Fun}_{/\cO^\otimes,\cT}(\cP^\otimes, \cC^\otimes)$, we verify the criterion of \cite[Thm.~4.16(4)]{Exp2b}. Since a fiberwise morphism $\alpha:x \to y$ in $\cO^\otimes_V$ factors as the composite of a fiberwise inert edge and a fiberwise active edge, and the pushforward functor associated to a fiberwise inert edge is a projection, we may suppose $\alpha$ is fiberwise active. Moreover, using again the Segal equivalence of \cref{prp:SegalCondition}, we may suppose that $\alpha$ covers a fiberwise active edge $f_+: [U_+ \ra V] \to [V_+ \ra V]$ in $\uFinpT$ defined by a map $f: U \to V$ of finite $\cT$-sets. Then using the distributive hypothesis on $\cC^\otimes$ together with \cite[Prop.~8.19]{Exp2b}, we have that the pushforward $\cT^{/V}$-functor $\alpha_! = \otimes_{\alpha}: \cC^\otimes_{\underline{x}} \to \cC_{\underline{y}}$ preserves all $\CMcal{K}^{\sift}_V$-indexed colimits, so $\underline{\Fun}_{/\cO^\otimes,\cT}(\cP^\otimes, \cC^\otimes)$ strongly admits all $\CMcal{K}^{\sift}$-indexed $\cT$-colimits. Similarly, we see that $\underline{\Fun}_{/\cO,\cT}(\cP,\cC)$ is $\cT$-cocomplete. The remaining claims then follow as in the proof of \cref{thm:LimitsInAlgebras}, now using \cite[Thm.~4.16(4)]{Exp2b}.

(3): By part (1) and \cite[Cor.~12.15]{Exp2} (or \cite[Thm.~8.6]{Exp2b}), it suffices to check that $\underline{\Alg}_{\cO,\cT}(\cP,\cC)$ admits finite $\cT$-coproducts. For this, we employ the same strategy as in the proof of \cite[Cor.~3.2.3.3]{HA}. Pulling back $\cC^\otimes \to \cO^\otimes$ via $\cP^\otimes \to \cO^\otimes$, we may suppose $\cP^\otimes = \cO^\otimes$ without loss of generality. Now let $\cO^\otimes_{\inert} = \cO^\otimes \times_{\uFinpT} \Triv_{\cT}^\otimes$ and note that by \cref{lem:InertSubcategoryIsRightKanExtended} and \cref{cor:TrivialOperad}, we have that $\underline{\Alg}_{\cO, \cT}(\cO_{\inert},\cC) \simeq \underline{\Fun}_{/\cO, \cT}(\cO,\cC)$. By \cref{thm:globalOperadicKanExtn}, we thus obtain the free-forgetful $\cT$-adjunction
\[ \adjunct{\mathrm{F}}{\underline{\Fun}_{/\cO, \cT}(\cO,\cC)}{\underline{\Alg}_{\cO,\cT}(\cC)}{\mathrm{U}} \]
as an instance of $\cT$-operadic left Kan extension along $\cO^\otimes_{\inert} \to \cO^\otimes$. This implies that each fiber $\underline{\Alg}_{\cO,\cT}(\cC)_V$ admits finite coproducts of free objects, and for each morphism $\alpha: V \to W$ in $\cT$, the putative left adjoint $\alpha_!$ to the restriction functor $\alpha^\ast: \underline{\Alg}_{\cO,\cT}(\cC)_W \to \underline{\Alg}_{\cO,\cT}(\cC)_V$ is at least defined on the full subcategory of free objects, using the pointwise criterion for the existence of an adjoint. By part (2) and the observation that $\mathrm{U}$ is fiberwise conservative, the assumptions of \cite[Prop.~4.7.3.14]{HA} are satisfied for each of the adjunctions $\mathrm{F}_V \dashv \mathrm{U}_V$, so for each object $A \in \underline{\Alg}_{\cO,\cT}(\cC)_V$, there exists a simplicial object $A_{\bullet}$ such that each $A_n$ is free and $A \simeq |A_{\bullet}|$. It follows that the requisite finite coproducts and left adjoints exist for $\underline{\Alg}_{\cO,\cT}(\cC)$. Finally, verification of the base-change condition also reduces to free objects in the same way.

(4): Upon replacing $\cT$ by $\cT^{/V}$ this amounts to showing that $\Alg_{\cO,\cT}(\cP,\cC)$ is presentable. Given (3), it remains to show that $\Alg_{\cO,\cT}(\cP,\cC)$ is accessible. But since all the pushforward functors $\alpha_!: \cC^\otimes_x \to \cC^\otimes_{x'}$ indexed by morphisms $\alpha: x \to x' \in \cO^\otimes$ preserve sifted colimits (which reduces to the aforementioned assertion for fiberwise active $\alpha$ given the inert-fiberwise active factorization on $\cO^\otimes$), this follows from \cite[Prop.~5.4.7.11]{HTT} in exactly the same way as in the proof of \cite[Cor.~3.2.3.5]{HA}.
\end{proof}

\begin{corollary}
Suppose $\cC$ is a distributive $\cO$-monoidal $\cT$-$\infty$-category. Then the free-forgetful $\cT$-adjunction
\[ \adjunct{\mathrm{F}}{\underline{\Fun}_{/\cO, \cT}(\cO,\cC)}{\underline{\Alg}_{\cO,\cT}(\cC)}{\mathrm{U}} \]
of \cref{thm:globalOperadicKanExtn} applied to $\cO^\otimes_{\inert} \subset \cO^\otimes$ is fiberwise monadic.
\end{corollary}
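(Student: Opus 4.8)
The plan is to reduce to the $\infty$-categorical Barr--Beck--Lurie monadicity theorem \cite[Thm.~4.7.3.5]{HA}, applied fiberwise. Recall that, up to the identification $\underline{\Alg}_{\cO,\cT}(\cO_{\inert},\cC) \simeq \underline{\Fun}_{/\cO,\cT}(\cO,\cC)$ of \cref{lem:InertSubcategoryIsRightKanExtended} and \cref{cor:TrivialOperad}, the functor $\mathrm{U}$ is restriction of $\cO$-algebras along $\cO^\otimes_{\inert} \subset \cO^\otimes$, and $\mathrm{F} \dashv \mathrm{U}$ is the instance of $\cT$-operadic left Kan extension furnished by \cref{thm:globalOperadicKanExtn}. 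Fix an orbit $V \in \cT$; since a $\cT$-adjunction restricts on fibers over $V$ to an ordinary adjunction $\mathrm{F}_V \dashv \mathrm{U}_V$, it suffices to check that $\mathrm{U}_V \colon \Alg_{\cO,\cT}(\cC)_V \to \Fun_{/\cO,\cT}(\cO,\cC)_V$ is conservative and that $\Alg_{\cO,\cT}(\cC)_V$ admits, and $\mathrm{U}_V$ preserves, colimits of $\mathrm{U}_V$-split simplicial objects.

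For conservativity, I would argue as in the proof of \cref{thm:ColimitsInAlgebras}(3): a morphism $\eta \colon A \to B$ in $\Alg_{\cO,\cT}(\cC)_V$ is an equivalence if and only if it is one as a morphism of $\Fun_{/\cO^\otimes,\cT}(\cO^\otimes,\cC^\otimes)_V$, i.e.\ if and only if the component $\eta_x$ is an equivalence for every object $x \in \cO^\otimes$. Writing $x$ over $[U_+ \to W]$ and using that $A$ and $B$ carry the inert edges over the characteristic morphisms $\chi_{[W' \subset U]}$ to inert edges, condition (2) of \cref{dfn:operad} for $\cC^\otimes$ identifies $\eta_x$, naturally in $x$, with the product over $W' \in \Orbit(U)$ of the components $\eta_{x_{W'}}$ at the color objects $x_{W'} \in \cC_{W'}$. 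Hence $\eta$ is an equivalence as soon as its underlying morphism of $\cT$-functors $\mathrm{U}(\eta)$ is, which is exactly the fiberwise conservativity of $\mathrm{U}$.

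For the simplicial colimit condition, I would invoke \cref{thm:ColimitsInAlgebras}(2) directly: it asserts that $\underline{\Fun}_{/\cO,\cT}(\cO,\cC)$ is $\cT$-cocomplete and that $\mathrm{U}$ \emph{strongly creates} all $\CMcal{K}^{\sift}$-indexed $\cT$-colimits, where $\CMcal{K}^{\sift}$ is the class of $\cT$-sifted $\cT$-colimits. Passing to the fiber over $V$, the constant $\Delta^{\op}$-diagram belongs to $\CMcal{K}^{\sift}_V$, so $\mathrm{U}_V$ creates all geometric realizations of simplicial objects in $\Alg_{\cO,\cT}(\cC)_V$ — in particular those of $\mathrm{U}_V$-split simplicial objects, which moreover exist and are preserved since the target $\Fun_{/\cO,\cT}(\cO,\cC)_V$ is cocomplete. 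With the three hypotheses verified, \cite[Thm.~4.7.3.5]{HA} shows $\mathrm{U}_V$ is monadic; as $V$ ranges over $\cT$, this is precisely the assertion that $\mathrm{U}$ is fiberwise monadic.

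The argument is essentially bookkeeping on top of \cref{thm:globalOperadicKanExtn} and \cref{thm:ColimitsInAlgebras}; the only slightly delicate point is tracking, in the conservativity step, how an equivalence of $\cO$-algebras is detected on colors via the parametrized Segal decomposition of the fibers of $\cO^\otimes$, together with the minor observation that the parametrized class $\CMcal{K}^{\sift}$ restricts on each fiber to a class of colimits containing geometric realizations. I do not anticipate any substantive obstacle beyond these.
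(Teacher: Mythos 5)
Your proposal is correct and follows the paper's own argument: the paper likewise verifies the hypotheses of the Barr--Beck--Lurie theorem fiberwise, citing \cref{thm:ColimitsInAlgebras}(2) for creation of the relevant (sifted, in particular geometric-realization) colimits and noting that $\mathrm{U}$ is fiberwise conservative. Your spelled-out conservativity argument via the parametrized Segal decomposition is exactly the content the paper compresses into ``immediate from the definitions.''
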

\begin{proof}
We verify the hypotheses of the Barr--Beck--Lurie Theorem \cite[Thm.~4.7.3.5]{HA}. After \cref{thm:ColimitsInAlgebras}(2), it only remains to note that $\mathrm{U}$ is fiberwise conservative, which is immediate from the definitions.
\end{proof}

\subsection{Units and initial objects}

In this subsection, we identify $\cT$-initial objects in $\underline{\Alg}_{\cO,\cT}(\cC)$ in the case where $\cO^\otimes$ is a \emph{unital} $\cT$-$\infty$-operad and $\cC^\otimes$ is any $\cO$-monoidal $\cT$-$\infty$-category.

\begin{definition} \label{def:UnitalOperad}
Let $\cO^\otimes$ be a $\cT$-$\infty$-operad. We say that $\cO^\otimes$ is \emph{unital} if for all orbits $V \in \cT$ and objects $x \in \cO_V$, the space of multimorphisms $\Mul_{\cO}(\emptyset, x)$ is contractible.
\end{definition}

For example, $\uFinpT$ is unital and $\Triv^\otimes_{\cT}$ is not unital. We next introduce the minimal $\cT$-suboperad of $\uFinpT$ which remains unital.

\begin{definition}
Let $\EE^{\otimes}_{0, \cT} \subset \uFinpT$ be the $\cT$-suboperad given by the wide subcategory on those morphisms
\[ \begin{tikzcd}[row sep=4ex, column sep=4ex, text height=1.5ex, text depth=0.25ex]
U \ar{d} & Z \ar{l} \ar{d} \ar{r}{m} & X \ar{d} \\
V & Y \ar{l} \ar{r}{=} & Y
\end{tikzcd} \]
for which $m$ is a summand inclusion.
\end{definition}

Given any $\cT$-$\infty$-operad $\cO^\otimes$, we will then write $\cO^\otimes_0$ for the pullback $\EE^\otimes_{0,\cT} \times_{\uFinpT} \cO^\otimes$ in this subsection. Note that the inclusion $\EE^\otimes_{0,\cT} \subset \uFinpT$ is stable under equivalences and is thus a fibration of $\cT$-$\infty$-operads, and the same is true for the pullback $\cO^\otimes_0 \subset \cO^\otimes$.

\begin{remark} \label{rem:ZeroObjectUnitalOperad}
Let $\cO^\otimes$ be a $\cT$-$\infty$-operad and for each orbit $V \in \cT$, let $\ast_V$ be a choice of object in the fiber $\cO^\otimes_{[\emptyset_+ \ra V]} \simeq \ast$, which is unique up to contractible choice. We note that $\ast_V$ is a final object in $\cO^\otimes_V$. Indeed, if $\cO^\otimes = \uFinpT$, then $[\emptyset_+ \ra V]$ is a zero object in $(\uFinpT)_V \simeq \FinpT[V]$, and the general case follows by the definition of a $\cT$-$\infty$-operad. Since for each $\alpha: V \to W \in \cT$ we have that $\alpha^\ast(\ast_W) \simeq \ast_V$, the $\ast_V$ assemble to define a $\cT$-final object $\ast: \cT^\op \to \cO^\otimes$.

Now suppose $\cO^\otimes$ is unital. Then by the same reasoning, we see that $\ast_V$ is a zero object in $\cO^\otimes_V$ and hence $\ast$ is a $\cT$-zero object. In this case, we will also write $0_V$ for $\ast_V$ and $0$ for $\ast$.
\end{remark}

\begin{definition} \label{def:HomotopiesUnital}
We define the $\cT$-functor $\omega: \Delta^1 \times \cT^\op \to \uFinpT$ to be the unique homotopy from $0$ to $I(-)_+$. For a unital $\cT$-$\infty$-operad $\cO^\otimes$, we then define the $\cT$-functor $\omega_{\cO}$ lying in the 
commutative diagram
\[ \begin{tikzcd}
\cO^{\underline{\lhd}} \ar{r}{\omega_\cO} \ar{d} & \cO^\otimes_0 \ar{d} \ar{r} & \cO^\otimes \ar{d} \\
\Delta^1 \times \cT^\op \ar{r}{\omega} & \EE^\otimes_{0,\cT} \ar{r} & \uFinpT
\end{tikzcd} \]
to be the unique $\cT$-functor extending the inclusion $\cO \subset \cO^\otimes_0$ of the underlying $\infty$-category, whose restriction along the cone inclusion $\cT^\op \subset \cO^{\underline{\lhd}}$ is $0$.

For an $\cO$-monoidal $\cT$-$\infty$-category $p: \cC^\otimes \to \cO^\otimes$, we define the $\cT$-functor $\wt{\omega}_{\cC}$ lying in the commutative diagram
\[ \begin{tikzcd}
\cO^{\underline{\lhd}} \ar{r}{\wt{\omega}_{\cC}} \ar{rd}[swap]{\omega_{\cO}} & \cC^\otimes_0 \ar{r} \ar{d} & \cC^\otimes \ar{d}{p} \\
& \cO^\otimes_0 \ar{r} & \cO^\otimes 
\end{tikzcd} \]
to be the unique lift of $\omega_{\cO}$ so that $\wt{\omega}_{\cC}$ sends the edges $(0_V \to x) \in \cO_V^{\lhd}$ to $p$-cocartesian edges (and hence all edges to $p$-cocartesian edges).

We then define the \emph{unit} of $(\cC^\otimes,p)$ to be the $\cT$-functor $1 \coloneq \wt{\omega}_{\cC}|_{\cO}: \cO \to \cC$.
\end{definition}

\begin{remark}
The $\cT$-functors $\omega$, $\omega_{\cO}$, and $\wt{\omega}_{\cC}$ of \cref{def:HomotopiesUnital} may be defined rigorously as follows. For $\omega$, choose a section $\sigma$ of the trivial fibration $\cT^\op \times_{0, \EE_{0,\cT}^\otimes, \ev_0} \Ar_{\cT}(\EE_{0,\cT}^\otimes) \xto{\ev_1} \EE_{0,\cT}^\otimes$ of \cref{lem:InitialObjectTrivialFibration} and define $\omega$ to be the adjoint to the composite $\omega^\perp = \pr \circ \sigma \circ I_+: \cT^\op \to \Ar_{\cT}(\EE_{0,\cT}^\otimes)$.

Then for $\omega_\cO$, choose a section $\sigma_{\cO}$ of the trivial fibration $\psi$ of \cref{lem:InitialObjectTrivialFibration} applied to $\cO^\otimes_0 \to \EE_{0,\cT}^\otimes$, let $\rho: \cO \to \cT^\op$ and $i: \cO \subset \cO^\otimes_0$ denote the structure map and inclusion, and define the $\cT$-functor
$$\omega^{\perp}_{\cO} = \sigma \circ (\omega^\perp \rho, i): \cO \to (\cT^\op \times_{\EE_{0,\cT}^\otimes} \Ar_{\cT}(\EE_{0,\cT}^\otimes)) \times_{\EE_{0,\cT}^\otimes} \cO^\otimes_0 \xto{\simeq} \cT^\op \times_{0, \cO_0^\otimes, \ev_0} \Ar_{\cT}(\cO_0^\otimes).$$
By \cite[Cor.~4.27]{Exp2} and \cite[Prop.~4.30]{Exp2}, for any $\cT$-$\infty$-category $\cD$ and cocartesian section $\phi: \cT^\op \to \cD$ we have natural equivalences $$\cD_{(\phi,\cT)/} \xto{\simeq} \cD^{(\phi,\cT)/} \xto{\simeq} \cT^\op \times_{\phi, \cD} \Ar_{\cT}(\cD),$$ and by \cite[Prop.~4.25]{Exp2} for any $\cT$-$\infty$-category $\cA$ we have the natural $\cT$-join and slice equivalence $$\underline{\Fun}_{\cT}(\cA,\cD_{(\phi,\cT)/}) \xto{\simeq} \underline{\Fun}_{\cT/ /\cT}(\cA^{\underline{\lhd}}, \cD).$$We may thus adjoin $\omega^{\perp}_{\cO}$ to define $\omega_{\cO}$ so that it fits into the indicated commutative diagram over $\omega$.

Finally, to define $\wt{\omega}_{\cC}$, first let $\ast: \cT^\op \to \cC_0^\otimes$ be a lift of $0$ to the $\cT$-final object of \cref{rem:ZeroObjectUnitalOperad}, and let $\sigma_{\cC}: \cC_0^\otimes \times_{\cO_0^\otimes} \Ar(\cO_0^\otimes) \xto{\simeq} \Ar^{\cocart}(\cC_0^\otimes)$ be a choice of section for the trivial fibration. We then have the composite
\[ \cT^\op \times_{0, \cO_0^\otimes} \Ar(\cO_0^\otimes) \xto{\ast \times \id} \cC^\otimes_0 \times_{\cO_0^\otimes} \Ar(\cO_0^\otimes) \xto{\sigma_{\cC}} \Ar^{\cocart}(\cC_0^\otimes) \subset \Ar(\cC_0^\otimes), \]
which restricts to
\[ f: \cT^\op \times_{0, \cO_0^\otimes} \Ar_{\cT}(\cO_0^\otimes) \to \cT^\op \times_{\ast, \cC_0^\otimes} \Ar_{\cT}(\cC_0^\otimes). \]
The adjoint of $f \circ \omega^{\perp}_{\cO}$ then defines the lift $\wt{\omega}_{\cC}$ of $\omega_{\cO}$.

We also verify the uniqueness assertion for $\wt{\omega}_{\cC}$ and leave that for $\omega_{\cO}$ as an exercise for the reader. By \cref{lem:InitialObjectCocartesianEquivalence}, the functor given by restriction along the $\cT$-cone point
\[ \Fun^{\cocart}_{/\cO^\otimes_0}(\cO^{\underline{\lhd}}, \cC^\otimes_0) \to \Fun_{\cT}(\cT^\op, \cC^\otimes \times_{\cO^\otimes, 0} \cT^\op)  \]
is an equivalence, and since $\cC^\otimes \times_{\cO^\otimes, 0} \cT^\op \simeq \cT^\op$, we see that the righthand side is contractible, which shows the claim (and gives another construction of $\wt{\omega}_{\cC}$).
\end{remark}

\begin{lemma} \label{lem:InitialObjectTrivialFibration}
Suppose $q: \cD \to \cB$ is a $\cT$-fibration and $0: \cT^\op \to \cD$ is a $\cT$-functor such that $0$ and $q \circ 0$ are $\cT$-initial objects. Then the $\cT$-functor
\[ \psi: (\cT^\op \times_{0, \cD, \ev_0} \Ar_{\cT}(\cD)) \to (\cT^\op \times_{0, \cB, \ev_0} \Ar_{\cT}(\cB)) \times_{\ev_1,\cB,q} \cD \]
is a trivial fibration.
\end{lemma}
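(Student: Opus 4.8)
The plan is to reduce the assertion to the non-parametrized statement by checking that $\psi$ restricts to a trivial Kan fibration on each $\cT$-fiber. First note that $\psi$ is a categorical fibration: it is assembled from $\cD$, $\cB$, the parametrized arrow categories $\Ar_{\cT}(\cD)$, $\Ar_{\cT}(\cB)$, and the categorical fibration $q$ by iterated pullbacks, operations that preserve categorical fibrations. Moreover, since $q$ is a $\cT$-fibration it is in particular a $\cT$-functor, so $\psi$ is a $\cT$-functor between $\cT$-$\infty$-categories: the source is the $\cT$-slice $\cD^{(0,\cT)/} \simeq \cT^\op \times_{0,\cD}\Ar_{\cT}(\cD)$ (using that $0$, being $\cT$-initial, is a cocartesian section, cf.\ \cite[Cor.~4.27, Prop.~4.30]{Exp2}), and the target is $\cB^{(q\circ 0,\cT)/}\times_{\cB}\cD$, a pullback of $\cT$-$\infty$-categories over the $\cT$-$\infty$-category $\cB$. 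Since $\psi$ is a categorical fibration and a $\cT$-functor, to show it is a trivial fibration of $\cT$-$\infty$-categories it suffices to show that its restriction $\psi_V$ to the fiber over each orbit $V \in \cT$ is a trivial Kan fibration.

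I would then identify $\psi_V$. Writing $q_V \colon \cD_V \to \cB_V$ for the restriction of $q$ (an inner fibration) and $d_V := 0(V) \in \cD_V$, the fiber of the source over $V$ is the fat undercategory $\cD_V^{d_V/} = \{d_V\}\times_{\cD_V,\ev_0}\Ar(\cD_V)$, the fiber of the target over $V$ is $\cB_V^{q_V(d_V)/}\times_{\cB_V}\cD_V$, and under these identifications $\psi_V$ is the canonical comparison map. By a standard argument (cf.\ \cite[\S 2.1.2]{HTT}), since $q_V$ is an inner fibration this comparison map is a left fibration; and a left fibration is a trivial Kan fibration as soon as all of its fibers are contractible. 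A point of the target of $\psi_V$ is a pair $(d \in \cD_V,\ \beta\colon q_V(d_V)\to q_V(d))$, and the fiber of $\psi_V$ over it is the space of lifts of $\beta$ to a map $d_V\to d$, i.e.\ the fiber over $\beta$ of $q_{V,\ast}\colon \Map_{\cD_V}(d_V,d)\to \Map_{\cB_V}(q_V(d_V),q_V(d))$. Since $0$ is a $\cT$-initial object, $d_V$ is an initial object of $\cD_V$; since $q\circ 0$ is a $\cT$-initial object, $q_V(d_V)=(q\circ 0)(V)$ is an initial object of $\cB_V$. Hence both mapping spaces are contractible, so $q_{V,\ast}$ is an equivalence of contractible spaces with contractible fibers, $\psi_V$ is a left fibration with contractible fibers, and therefore a trivial Kan fibration. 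As this holds for every $V$, we conclude that $\psi$ is a trivial fibration.

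I expect the only real obstacle to be the bookkeeping in the first two steps: correctly matching the fibers of the parametrized construction $\cT^\op\times_{0,\cD,\ev_0}\Ar_{\cT}(\cD)$ with the fat undercategories $\cD_V^{d_V/}$ (and likewise on the target), and justifying that detecting a trivial fibration of $\cT$-$\infty$-categories fiberwise is legitimate here, which rests on $\psi$ being both a categorical fibration and a $\cT$-functor. The remaining inputs — that undercategory projections over an inner fibration are left fibrations, and that a $\cT$-initial object restricts to a fiberwise initial object so the relevant mapping spaces are contractible — are routine.
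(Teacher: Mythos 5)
Your proof is correct. It shares its first step with the paper's argument (observing that $\psi$ is a categorical fibration because $q$ is, so that it suffices to prove $\psi$ is a categorical equivalence), but the two then diverge in how the equivalence is established. The paper invokes the two-out-of-three property to reduce to the case $\cB = \cT^\op$: there the map $\cT^\op \times_{0,\cD,\ev_0}\Ar_{\cT}(\cD) \to \cD$ is an equivalence by fiberwise initiality of $0$, and likewise the projection $(\cT^\op \times_{0,\cB,\ev_0}\Ar_{\cT}(\cB))\times_{\cB}\cD \to \cD$ is an equivalence as the pullback along $q$ of the corresponding trivial fibration over $\cB$; hence $\psi$ is an equivalence. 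You instead attack the relative statement head-on: using that $\psi$ is a $\cT$-functor between cocartesian fibrations over $\cT^\op$, you reduce to each fiber $\psi_V$, identify it as the canonical left fibration $\cD_V^{d_V/} \to \cB_V^{q_V(d_V)/}\times_{\cB_V}\cD_V$ associated to the inner fibration $q_V$, and check its fibers are contractible using initiality of $d_V$ and $q_V(d_V)$. The paper's route is slicker in that it only ever needs the absolute statement about coslices under initial objects; yours requires the (standard but slightly heavier) input that relative coslice projections over inner fibrations are left fibrations, but in exchange it makes the role of both hypotheses --- initiality of $0$ \emph{and} of $q\circ 0$ --- completely explicit in a single mapping-space computation. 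Both arguments are sound.
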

\begin{proof}
Since $q$ is a categorical fibration, it follows that $\psi$ is as well. It thus suffices to prove that $\psi$ is a categorical equivalence, for which we may suppose that $\cB = \cT^\op$ by the two-out-of-three property of equivalences. The claim then follows by our hypothesis that $0_V$ is initial for all $V \in \cT$.
\end{proof}

\begin{lemma} \label{lem:InitialObjectCocartesianEquivalence}
Let $q: \cD \to \cB$ be a $\cT$-cocartesian fibration and let $i: \cT^\op \to \cB$ be a $\cT$-initial object. Then the $\cT$-functor
\[ i^\ast: \underline{\Fun}^{\cocart}_{/\cB, \cT}(\cB, \cD) \xto{\simeq} \cT^{\op} \times_{\cB} \cD \]
is an equivalence.
\end{lemma}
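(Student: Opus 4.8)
The plan is to identify the claimed inverse of $i^\ast$ with a $q$-relative $\cT$-left Kan extension along $i$, in exact analogy with the classical statement that, for a cocartesian fibration over an $\infty$-category with an initial object, restriction to the fiber over that object is an equivalence from the $\infty$-category of cocartesian sections. Since a $\cT$-functor is an equivalence if and only if it induces an equivalence on the fiber over every $V\in\cT$, and the formation of $i^\ast$ is compatible with base change along $\cT^{/V}\to\cT$, we may replace $\cT$ by $\cT^{/V}$ and reduce to showing that the induced functor on underlying $\infty$-categories
\[ i^\ast\colon \Fun^{\cocart}_{/\cB,\cT}(\cB,\cD)\longrightarrow \Fun_{\cT}(\cT^\op,\cT^\op\times_{i,\cB}\cD) \]
is an equivalence, where the source is the $\infty$-category of $\cT$-functors $\cB\to\cD$ over $\cB$ carrying every edge to a $q$-cocartesian edge, and the target is the $\infty$-category of $\cT$-functors $\xi\colon\cT^\op\to\cD$ with $q\circ\xi=i$.

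Next I would observe that, because $i$ is a cocartesian section whose value $i(V)$ is initial in $\cB_V$ for each $V$, for any object $b\in\cB_V$ the $\cT^{/V}$-mapping space $\underline{\Map}_{\cB}(i(V),b)$ is the terminal $\cT^{/V}$-space: fiberwise over $[W\to V]$ it is $\Map_{\cB_W}(i(W),b|_W)\simeq\ast$ since $i(W)$ is initial. Hence the comma $\cT^{/V}$-$\infty$-categories appearing in the pointwise formula for a $q$-$\cT$-left Kan extension along $i$ are $\cT^{/V}$-contractible, each equivalent to $(\cT^{/V})^\op$ with structure map selecting $b$ together with the essentially unique edge $i(V)\to b$. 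As $q$ is a $\cT$-cocartesian fibration, the corresponding $q$-$\cT$-colimit exists and is computed by cocartesian pushforward along that edge. Invoking the theory of relative $\cT$-left Kan extensions of \cite[\S 9]{Exp2}, it follows that every $\xi$ as above admits a $q$-$\cT$-left Kan extension $\sigma_\xi$ along $i$, that $\sigma_\xi$ is a $\cT$-cocartesian section — its value at $b\in\cB_V$ is the cocartesian pushforward of $\xi(V)$ along $i(V)\to b$, and $q$-cocartesian edges compose — and that $\xi\mapsto\sigma_\xi$ is left adjoint to $i^\ast$ with invertible unit, the last point because $\cT$-contractibility of the commas forces $i^\ast\sigma_\xi\simeq\xi$.

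It then remains to check that every $\cT$-cocartesian section $\sigma$ lies in the essential image of $\sigma_{(-)}$, that is, $\sigma\simeq\sigma_{i^\ast\sigma}$. This is immediate: for $b\in\cB_V$ the image $\sigma(i(V)\to b)$ is a $q$-cocartesian lift of the essentially unique edge $i(V)\to b$, so $\sigma(b)$ is the cocartesian pushforward of $\sigma(i(V))=(i^\ast\sigma)(V)$, which is precisely the value of $\sigma_{i^\ast\sigma}$ at $b$; coherence of these identifications is packaged by the universal property of the $\cT$-left Kan extension. Thus $i^\ast$ and $\sigma_{(-)}$ are mutually inverse, proving the lemma.

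The step I expect to be the main obstacle is the reduction of the pointwise $q$-$\cT$-left Kan extension along $i$ to cocartesian pushforward: one must reconcile the colimit-flavored Kan extension construction with the limit-flavored hypothesis that $i$ is $\cT$-initial, and the mechanism that makes this work is precisely the $\cT$-contractibility of the comma $\cT$-$\infty$-categories, which has to be extracted carefully from the $\cT$-mapping-space formalism. An alternative route that sidesteps relative $\cT$-left Kan extensions is, after the first reduction, to identify $\Fun^{\cocart}_{/\cB,\cT}(\cB,\cD)$ and $\Fun_{\cT}(\cT^\op,\cT^\op\times_{i,\cB}\cD)$ with the limits over $\cT^\op$ of the fiberwise data $V\mapsto\Fun^{\cocart}_{/\cB_V}(\cB_V,\cD_V)$ and $V\mapsto(\cD_V)_{i(V)}$ respectively, and to invoke the classical equivalence $\Fun^{\cocart}_{/\cB_V}(\cB_V,\cD_V)\simeq(\cD_V)_{i(V)}$ together with its naturality in $V$, which holds because the restriction functors $\cB_V\to\cB_W$ carry $i(V)$ to $i(W)$ even though they need not preserve initial objects in general.
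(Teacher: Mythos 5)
Your proof is correct, but it takes a genuinely different route from the paper's. After the same fiberwise reduction to the case where $\cT$ has a final object $\ast$, the paper finishes in two lines with the marked model structure: both sides are marked mapping objects into $\cD$ (with the cocartesian edges marked) computed in $\sSet^+_{/\cB}$, so it suffices to show that $(\cT^\op)^\sharp \to \cB^\sharp$ is a cocartesian equivalence over $\cB$, and this follows by two-out-of-three because the inclusion of $\ast \in \cT^\op$ is a cocartesian equivalence into both $(\cT^\op)^\sharp$ and $\cB^\sharp$ --- the latter using precisely your observation that $\cT$-initiality of $i$ together with cocartesianness of the section forces $i(\ast)$ to be initial in the total $\infty$-category $\cB$. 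You instead construct the inverse explicitly as the $q$-relative $\cT$-left Kan extension along $i$, using the $\cT$-contractibility of the comma $\cT$-$\infty$-categories (correctly derived from $\underline{\Map}_{\cB}(i(V),b) \simeq \ast$) to identify the extension with cocartesian pushforward from the initial object. This costs more machinery --- existence, the pointwise formula, and functoriality of relative parametrized Kan extensions from \cite{Exp2}, plus the verification that the extension of an arbitrary section is a cocartesian section --- but it buys an explicit description of the inverse functor, and your closing ``alternative route'' is essentially the paper's argument repackaged as a limit over $\cT^\op$ of the classical fiberwise statement. The only point worth flagging is that your reduction silently identifies the fiber $(\cT^\op\times_{\cB}\cD)_V \simeq \cD_{i(V)}$ with the $\infty$-category of cocartesian sections of the pullback; this is the classical unparametrized instance of the lemma (a cocartesian fibration over an $\infty$-category with an initial object) and deserves a citation, though the paper elides the same step.
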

\begin{proof}
It suffices to check the assertion fiberwise. Replacing $\cT$ by $\cT^{/V}$, we may further suppose that $\cT$ has a final object $\ast$, and we reduce to showing that $i^{\ast}: \Fun^{\cocart}_{/\cB}(\cB, \cD) \to \Fun^{\cocart}_{/\cT^\op}(\cT^\op, \cT^\op \times_{\cB} \cD)$ is an equivalence of $\infty$-categories, or equivalently, that $(\cT^\op)^\sharp \to \cB^\sharp$ is a cocartesian equivalence in $\sSet^+_{/\cB}$. But the inclusion of the initial object $\ast \in \cT^\op$ is a cocartesian equivalence to both $(\cT^\op)^\sharp$ and $\cB^\sharp$, so by the two-out-of-three property of the cocartesian equivalences we are done.
\end{proof}

We may canonically endow the unit of an $\cO$-monoidal $\cT$-$\infty$-category with the structure of an $\cO$-algebra in the following way.

\begin{proposition} \label{prop:UnitCanonicalAlgebra}
Let $\cO^\otimes$ be a unital $\cT$-$\infty$-operad and let $p: \cC^\otimes \to \cO^\otimes$ be an $\cO$-monoidal $\cT$-$\infty$-category. Then there is a unique cocartesian section $1^\otimes$ of $p$ such that $1^\otimes$ extends the unit $1: \cO \to \cC$.
\end{proposition}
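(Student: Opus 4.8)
The plan is to produce $1^\otimes$ as the essentially unique cocartesian section of $p$ and then check that this section restricts to the unit on $\cO$. For the first step, recall from \cref{rem:ZeroObjectUnitalOperad} that since $\cO^\otimes$ is unital the $\cT$-functor $0 \colon \cT^\op \to \cO^\otimes$ selecting $[\emptyset_+ \ra V]$ in each fiber is a $\cT$-zero object, and in particular $\cT$-initial. Because $\cC^\otimes$ is $\cO$-monoidal, $p$ is a $\cT$-cocartesian fibration, so \cref{lem:InitialObjectCocartesianEquivalence} applies and yields an equivalence of $\cT$-$\infty$-categories
\[ 0^\ast \colon \underline{\Fun}^{\cocart}_{/\cO^\otimes, \cT}(\cO^\otimes, \cC^\otimes) \xto{\simeq} \cT^\op \times_{0, \cO^\otimes} \cC^\otimes. \]
The fiber of the right-hand side over an orbit $V$ is $\cC^\otimes_{[\emptyset_+ \ra V]}$, which is contractible by condition (2) of \cref{dfn:operad} (the empty product over $\Orbit(\emptyset)$). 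Hence $\cT^\op \times_{0, \cO^\otimes} \cC^\otimes \simeq \cT^\op$ and the $\cT$-$\infty$-category of cocartesian sections of $p$ is contractible. Let $1^\otimes$ denote the resulting cocartesian section; this already establishes the existence and uniqueness of a cocartesian section of $p$, so it remains only to identify $1^\otimes|_{\cO}$ with the unit $1$.

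For this, I would precompose $1^\otimes$ with the $\cT$-functor $\omega_{\cO} \colon \cO^{\underline{\lhd}} \to \cO^\otimes_0 \subset \cO^\otimes$ of \cref{def:HomotopiesUnital}. Since $1^\otimes$ is a section of $p$, the composite $1^\otimes \circ \omega_{\cO}$ factors through $\cC^\otimes_0 = \cO^\otimes_0 \times_{\cO^\otimes} \cC^\otimes$ and defines a lift of $\omega_{\cO}$ along $\cC^\otimes_0 \to \cO^\otimes_0$; moreover, because $1^\otimes$ carries every edge to a $p$-cocartesian edge, this lift carries every edge to a cocartesian edge. But $\wt\omega_{\cC}$ is characterized by exactly these two properties, and such a lift is unique: applying \cref{lem:InitialObjectCocartesianEquivalence} to the pullback $\omega_{\cO}^\ast \cC^\otimes_0 \to \cO^{\underline{\lhd}}$ and the $\cT$-initial cone point $\cT^\op \subset \cO^{\underline{\lhd}}$ shows the $\cT$-$\infty$-category of such lifts is again equivalent to $\cT^\op \times_{0, \cO^\otimes_0} \cC^\otimes_0 \simeq \cT^\op$, which is the same reasoning used to establish the uniqueness of $\wt\omega_{\cC}$ in \cref{def:HomotopiesUnital}. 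Therefore $1^\otimes \circ \omega_{\cO} \simeq \wt\omega_{\cC}$. Restricting along the inclusion $\cO \subset \cO^{\underline{\lhd}}$ and using that $\omega_{\cO}|_{\cO}$ is the inclusion $\cO \hookrightarrow \cO^\otimes$ while $\wt\omega_{\cC}|_{\cO} = 1$ by definition, we conclude $1^\otimes|_{\cO} \simeq 1$, as desired.

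I expect the main obstacle to be the bookkeeping in the second step: carefully verifying that $1^\otimes \circ \omega_{\cO}$ satisfies the characterization of $\wt\omega_{\cC}$ — that it lands in $\cC^\otimes_0$, lifts $\omega_{\cO}$, and sends every edge to a cocartesian edge over $\cO^\otimes_0$ — and that the uniqueness of such a lift is controlled by the same contractibility statement. A minor preliminary point is to confirm that $p$ is a $\cT$-cocartesian fibration in the sense required by \cref{lem:InitialObjectCocartesianEquivalence}, which is immediate from the definition of an $\cO$-monoidal $\cT$-$\infty$-category.
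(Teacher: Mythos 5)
Your proof is correct and takes essentially the same route as the paper: the paper's entire argument is to invoke \cref{lem:InitialObjectCocartesianEquivalence} at the $\cT$-initial object $0$ together with the contractibility of $\cC^\otimes_{[\emptyset_+ \ra V]}$, which is exactly your first step. Your second step merely makes explicit the identification $1^\otimes \circ \omega_{\cO} \simeq \wt{\omega}_{\cC}$ (and hence $1^\otimes|_{\cO} \simeq 1$) that the paper leaves implicit, using the same uniqueness statement from the remark following \cref{def:HomotopiesUnital}.
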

\begin{proof}
Since $\cO^\otimes$ has the $\cT$-initial object $0$ by assumption, the claim follows from \cref{lem:InitialObjectCocartesianEquivalence}.
\end{proof}


We next identify $\cO^\otimes_0$-monoidal $\cT$-$\infty$-categories and $\cO_0$-algebras therein in more familiar terms.

\begin{proposition} \label{prop:E0MonoidalCats}
Let $\cO^\otimes$ be a unital $\cT$-$\infty$-operad.
\begin{enumerate}
\item Suppose $(\cC^\otimes,p)$ is a $\cO$-monoidal $\cT$-$\infty$-category and let $F^{\un}_{\cC}: \cO^\otimes_0 \to \Cat$ be the functor classifying the cocartesian fibration $p|_{\cO^\otimes_0}$. Then $F^{\un}_{\cC}$ is the right Kan extension of its restriction $f^{\un}_{\cC}$ along $\omega_{\cO}$.
\item Let $f: \cO^{\underline{\lhd}} \to \Cat$ be a functor whose restriction along the cone inclusion $\cT^\op \subset \cO^{\underline{\lhd}}$ is constant at the final object, and let $F$ be the right Kan extension of $f$ along $\omega_{\cO}$. The cocartesian fibration $q: \cD \to \cO^\otimes_0$ classifed by $F$ is then an $\cO_0$-monoidal $\cT$-$\infty$-category.
\item The assignment $(\cC^\otimes,p) \mapsto (\ast \ra f_{\cC})$ (where $f_{\cC}$ denotes the restriction of $f^{\un}_{\cC}$ to $\cC$, regarded as pointed in $\Fun(\cO,\Cat)$ via the unit $1: \cO \to \cC$) implements an equivalence of $\infty$-categories
\[ \mu: \Cat^\otimes_{\cO_0} \xto{\simeq} \Fun(\cO,\Cat)^{\ast/} \]
and an equivalence of $\cT$-$\infty$-categories
\[ \underline{\Cat}^\otimes_{\cO_0} \xto{\simeq} \underline{\Fun}_{\cT}(\cO, (\underline{\Cat}_{\cT})^{(\ast,\cT)/}). \]
\item For any two $\cO$-monoidal $\cT$-$\infty$-categories $\cC^\otimes$ and $\cD^\otimes$, $\mu$ induces an equivalence of $\infty$-categories
\[ \Fun^\otimes_{\cO_0,\cT}(\cC_0, \cD_0) \xto{\simeq} \Nat_{\ast}(f_{\cC}, f_{\cD}). \]
\end{enumerate}
\end{proposition}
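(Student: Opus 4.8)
The plan is to establish (1) and (2) by cofinality-type analyses in the spirit of \cref{lem:InertSubcategoryIsRightKanExtended}, and then to obtain (3) and (4) from them by formal manipulation with straightening and the $\cT$-join/slice equivalence.

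For (1), fix an object $x \in \cO^\otimes_0$ lying over $[U_+ \to V] \in \uFinpT$ and invoke the pointwise formula for the right Kan extension along $\omega_\cO$: writing $\cJ_x \coloneqq \cO^{\underline{\lhd}} \times_{\cO^\otimes_0} (\cO^\otimes_0)^{x/}$ and $\pi$ for its projection to $\cO^{\underline{\lhd}}$, one has $(\mathrm{RKE}_{\omega_\cO} f^{\un}_\cC)(x) \simeq \lim\bigl(\cJ_x \xrightarrow{\pi} \cO^{\underline{\lhd}} \xrightarrow{f^{\un}_\cC} \Cat\bigr)$, and the canonical comparison $\cC^\otimes_x = F^{\un}_\cC(x) \to (\mathrm{RKE}_{\omega_\cO} f^{\un}_\cC)(x)$ must be shown to be an equivalence. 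I would analyze $\cJ_x$ by hand. Its objects are the morphisms $x \to y$ of $\cO^\otimes_0$ whose target $y$ lies either in $\cO$ or is one of the objects $0_W = [\emptyset_+ \to W]$ (the images under $\omega_\cO$ of the cone points), and its morphisms come only from $\cO^{\underline{\lhd}}$. For each $W \in \Orbit(U)$ the characteristic morphism $\chi_{[W \subset U]}: x \to x_W$ receives only its identity, and together with the morphisms of $\cO$ out of $x_W$ it spans a full subcategory of $\cJ_x$ equivalent to the slice of $\cO$ under $x_W$, on whose initial object $f^{\un}_\cC \circ \pi$ takes the value $\cC^\otimes_{x_W}$; this is the direct analogue of the situation in \cref{lem:InertSubcategoryIsRightKanExtended}, and the relevant automorphism bookkeeping is handled as there by atomicity and orbitality of $\cT$. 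The new input is that since $\cO^\otimes$ is unital, each $0_W$ is a zero object of $\cO^\otimes_W$ (\cref{rem:ZeroObjectUnitalOperad}), so $f^{\un}_\cC$ sends every cone-point object of $\cJ_x$ to the terminal $\infty$-category $\ast$ and every ``collapse-to-$\emptyset$'' morphism of $\EE^\otimes_{0,\cT}$ to a functor picking out a unit object; this forces the limit-value of every object of $\cJ_x$ not lying in one of the slices under the $x_W$, so that the ``degenerate'' part of $\cJ_x$ contributes only a contractible factor. One concludes $\lim_{\cJ_x}(f^{\un}_\cC \circ \pi) \simeq \prod_{W \in \Orbit(U)} \cC^\otimes_{x_W}$, which by condition (2) of \cref{dfn:operad} is $\cC^\otimes_x$. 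Running the same analysis with $f^{\un}_\cC$ replaced by an arbitrary functor $f$ trivial on the cone inclusion additionally shows $(\mathrm{RKE}_{\omega_\cO} f) \circ \omega_\cO \simeq f$.

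Part (2) is then immediate from \cref{prop:SimplifyOMonoidalDef}: $q \colon \cD \to \cO^\otimes_0$ is a cocartesian fibration over the $\cT$-$\infty$-operad $\cO^\otimes_0$, and the limit computation above exhibits the inert edges $x \to x_W$ as inducing an equivalence $\cD_x \xrightarrow{\sim} \prod_{W \in \Orbit(U)} \cD_{x_W}$, the only condition to check. For (3), combine (1) and (2): by straightening, $\cO_0$-monoidal $\cT$-$\infty$-categories are the cocartesian fibrations over $\cO^\otimes_0$ satisfying the fibrewise Segal condition, hence (using (1), (2), and $(\mathrm{RKE}_{\omega_\cO}f)\circ\omega_\cO \simeq f$) the essential image of $\mathrm{RKE}_{\omega_\cO}$ on functors $\cO^{\underline{\lhd}} \to \Cat$ trivial on the cone inclusion, onto which $\mathrm{RKE}_{\omega_\cO}$ restricts to an equivalence; and such functors are precisely the data of a $\cT$-functor $\cO \to (\underline{\Cat}_\cT)^{(\ast,\cT)/}$ by the $\cT$-join/slice equivalence \cite[Prop.~4.25]{Exp2} together with \cite[Cor.~4.27, Prop.~4.30]{Exp2} (exactly as used to construct $\omega_\cO$). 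Chaining these equivalences and passing to cocartesian sections yields $\mu \colon \Cat^\otimes_{\cO_0} \xrightarrow{\simeq} \Fun(\cO,\Cat)^{\ast/}$ together with its $\cT$-refinement, and comparing with the construction of $\wt{\omega}_{\cC}$ and the unit $1 = \wt{\omega}_{\cC}|_\cO$ identifies $\mu$ on objects with the recipe $(\cC^\otimes, p) \mapsto (\ast \to f_\cC)$ of the statement. Part (4) is the $\Cat$-enriched (equivalently $(\infty,2)$-categorical) refinement of (3): straightening, the adjunction $\mathrm{RKE}_{\omega_\cO} \dashv \omega_\cO^\ast$, and the $\cT$-join/slice equivalence all upgrade to $\Cat$-enriched equivalences, under which strict $\cO_0$-monoidal $\cT$-functors $\cC_0 \to \cD_0$ correspond first to natural transformations $F^{\un}_\cC \to F^{\un}_\cD$, then to natural transformations $f^{\un}_\cC \to f^{\un}_\cD$, and finally to pointed natural transformations; taking mapping $\infty$-categories gives $\Fun^\otimes_{\cO_0,\cT}(\cC_0,\cD_0) \simeq \Nat_\ast(f_\cC, f_\cD)$.

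The main obstacle is the limit computation in (1). Unlike in \cref{lem:InertSubcategoryIsRightKanExtended}, where the relevant comma category is, up to $\cT$-automorphisms, just the finite set $\Orbit(U)$, here $\cJ_x$ acquires a nontrivial ``degenerate'' part coming from the extra ($\emptyset$-targeted) morphisms of $\EE^\otimes_{0,\cT}$ and the cone points of $\cO^{\underline{\lhd}}$; showing that this part is invisible to the limit is exactly where unitality of $\cO^\otimes$ — the zero-object property of the $0_W$ — must be used, and organizing the combinatorics of $\cJ_x$ is the bulk of the argument. Once (1) is in place, (2), (3) and (4) are essentially formal, the only additional care being the $(\infty,2)$-categorical bookkeeping in (4), which is routine given the enriched functoriality of straightening, right Kan extension, and the $\cT$-join/slice equivalence.
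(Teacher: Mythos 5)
Your architecture is the same as the paper's: compute the right Kan extension pointwise over the comma category $\cO^{\underline{\lhd}} \times_{\cO^\otimes_0} (\cO^\otimes_0)^{x/}$, split it into the part lying over spans with $Z \neq \emptyset$ (handled exactly as in \cref{lem:InertSubcategoryIsRightKanExtended}) and the degenerate part with $Z = \emptyset$, kill the latter using unitality, and then deduce (2) from \cref{prop:SimplifyOMonoidalDef}, (3) from the resulting restricted adjoint equivalence plus the $\cT$-join/slice machinery, and (4) from compatibility with cotensors by $\infty$-categories. The one place your argument as written does not go through is the treatment of the degenerate part. It is not true that the diagram is constant at $\ast$ there: the $Z = \emptyset$ locus contains objects $(x \to y)$ with $y \in \cO_W$ (reached by ``collapse to the basepoint, then land in $y$ through the unit''), on which $f^{\un}_{\cC} \circ \pi$ takes the value $\cC_y$, not $\ast$. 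So the observation that $f$ is terminal on the cone-point objects does not by itself make this part ``invisible to the limit,'' and there is no forcing mechanism of the kind you describe. What one actually needs --- and what the paper proves --- is that $(x \to 0_V)$ is an \emph{initial object} of the degenerate full subcategory (reduced, via the cocartesian fibration to $\cT^\op$, to checking that each $(x \to 0_W)$ is initial in the fiber over $W$), so that the limit over this part is $f(0_V) \simeq \ast$; the initiality is precisely where $\Mul_{\cO}(\emptyset, y) \simeq \ast$ enters, through a mapping-space computation in $(\cO^\otimes_0)^{x/}$ identifying $\Map(x \to 0_W, x \to y)$ with a point. With that substitution the rest of your outline, including the formal deductions of (2)--(4), coincides with the paper's proof.
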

\begin{proof}
We first analyze right Kan extension along $\omega_{\cO}$ in general. Let $x \in \cO^\otimes_{[U_+ \ra V]}$, let $U \simeq \coprod_{i=1}^n U_i$ be an orbit decomposition, and let $\rho_i: x \to x_i$ be cocartesian edges lifting the characteristic morphisms $\chi_{[U_i \subset U]}$. Let $\cA, \cB \subset \cO^{\underline{\lhd}} \times_{\cO^\otimes_0} (\cO^\otimes_0)^{x/}$ be the full subcategories on objects over
\[ \left( \begin{tikzcd}
U \ar{d} & Z \ar{l} \ar{r} \ar{d}{} & X \ar{d} \\
V & Y \ar{l}  \ar{r}{=} & Y
\end{tikzcd} \right) \in (\Delta^1 \times \cT^\op) \times_{\EE^\otimes_{0,\cT}} (\EE^\otimes_{0,\cT})^{[U_+ \ra V]/} \]
such that $Z = \emptyset$ and $Z \neq \emptyset$, respectively, and note that $\cO^{\underline{\lhd}} \times_{\cO^\otimes_0} (\cO^\otimes_0)^{x/}$ decomposes as the disjoint union of $\cA$ and $\cB$. Let $\phi = \phi_A \sqcup \phi_B: \{ a \} \sqcup \Orbit(U) \to \cA \sqcup \cB$ be the functor that sends $a$ to $(x \ra 0_V)$ and $U_i$ to $\rho_i$.

We claim that $\phi$ is right cofinal. By the same argument as in the proof of \cref{lem:InertSubcategoryIsRightKanExtended}, $\phi_B$ is right cofinal, so it only remains to show $(x \ra 0_V)$ is an initial object in $\cA$. Since $\cO^{\underline{\lhd}} \times_{\cO^\otimes_0} (\cO^\otimes_0)^{x/} \to \cO^{\underline{\lhd}} \to \cT^\op$ is a cocartesian fibration, it suffices to show that for all morphisms $\alpha: W \to V \in \cT$, $\alpha^\ast (x \ra 0_V) \simeq (x \ra 0_W)$ is an initial object in $\cB_W$. We first check that for objects $(x \ra y) \in \cB$ covering $\gamma: [U_+ \ra V] \to I(W)_+$, the mapping space $\Map_{\cB_W}(x \ra 0_W, x \ra y)$ is contractible. This mapping space fits into the commutative diagram
\[ \begin{tikzcd}
\Map_{\cB_W}(x \ra 0_W, x \ra y) \ar{d} \ar{r} & \Map_{(\cO^\otimes_0)^{x/}}(x \ra 0_W, x \ra y) \ar{r} \ar{d} & \ast \ar{d}{(x \ra y)} \\
\ast \simeq \Map_{\cO^{\lhd}_W}(0_W,y) \ar{r} & \Map_{\cO^\otimes_0}(0_W,y) \ar{r}{(x \ra 0_W)^\ast} & \Map_{\cO^\otimes_0}(x,y),
\end{tikzcd} \]
where the lower horizontal composite selects the inert-fiberwise active factorization of $x \to y$ in the connected component $\Map^{\gamma}_{\cO^\otimes_0}(x,y)$. In fact, since $\Mul_{\cO}(\emptyset, y) \simeq \ast$, this map is an equivalence onto that connected component, and the contractibility of $\Map_{\cB_W}(x \ra 0_W, x \ra y)$ follows. Finally, the argument for $(x \ra 0_W)$ itself proceeds the same way.

We conclude that given a functor $f: \cO^{\underline{\lhd}} \to \Cat$, the right Kan extension $(\omega_{\cO})_{\ast} f$ sends $x$ to $f(0_V) \times \prod_{i=1}^n f(x_i)$. This shows (1) -- more precisely, the unit map $F^{\un}_{\cC} \xto{\simeq} (\omega_{\cO})_{\ast} (\omega_{\cO})^{\ast} F^{\un}_{\cC}$ is seen to be an equivalence. By \cref{prop:SimplifyOMonoidalDef}, this also shows (2). Moreover, we see that $(\omega_{\cO})^\ast (\omega_{\cO})_{\ast} f \xto{\simeq} f$ if and only if $f|_{\cT^\op}$ is constant at $\ast \in \Cat$. We deduce that the adjunction $(\omega_{\cO})^\ast \dashv (\omega_{\cO})_{\ast}$ restricts to an adjoint equivalence
\[ \adjunct{(\omega_{\cO})^\ast}{\Cat^\otimes_{\cO_0}}{\Fun'(\cO^{\underline{\lhd}},\Cat)}{(\omega_{\cO})_{\ast}} \]
where we take the righthand side to consist of the full subcategory on functors $\cO^{\underline{\lhd}} \to \Cat$ that restrict to $\ast$ on $\cT^\op$. Note that since the inclusion of a final object is fully faithful, we have an equivalence $$\Fun'(\cO^{\underline{\lhd}},\Cat) \simeq \Delta^0 \times_{\ast, \Fun(\cT^\op,\Cat)} \Fun(\cO^{\underline{\lhd}}, \Cat),$$ and under the equivalence $\Fun_{\cT}(\cO^{\underline{\lhd}}, \Cat) \simeq \Fun_{\cT}(\cO^{\underline{\lhd}}, \underline{\Cat}_{\cT})$ of \cite[Prop.~3.9]{Exp2}, this yields an equivalence
\[ \Fun'(\cO^{\underline{\lhd}},\Cat) \simeq \Fun_{\cT/ /\cT}(\cO^{\underline{\lhd}}, \underline{\Cat}_{\cT}) \]
to the $\infty$-category of $\cT$-functors $\cO^{\underline{\lhd}} \to \underline{\Cat}_{\cT}$ that restrict on $\cT^\op$ to the $\cT$-final object of $\underline{\Cat}_{\cT}$. By the $\cT$-join and slice adjunction of \cite[Prop.~4.25]{Exp2} (together with \cite[Cor.~4.27]{Exp2}), we have an equivalence
$$\underline{\Fun}_{\cT/ /\cT}(\cO^{\underline{\lhd}}, \underline{\Cat}_{\cT}) \simeq \underline{\Fun}_{\cT}(\cO, (\underline{\Cat}_{\cT})^{(\ast, \cT)/}),$$
which we claim yields an equivalence $\Fun_{\cT/ /\cT}(\cO^{\underline{\lhd}}, \underline{\Cat}_{\cT}) \simeq \Fun(\cO,\Cat)^{\ast/}$ upon passage to cocartesian sections -- for this, just examine the pullback square of $\infty$-categories
\[ \begin{tikzcd}
\Fun_{\cT}(\cO, (\underline{\Cat}_{\cT})^{(\ast,\cT)/}) \ar{r} \ar{d} & \Fun_{\cT}(\cO, \underline{\Fun}_{\cT}(\cT^\op \times \Delta^1, \underline{\Cat}_{\cT})) \simeq \Fun(\Delta^1, \Fun(\cO,\Cat)) \ar{d} \\
\Fun_{\cT}(\cO, \cT^\op) \simeq \Delta^0 \ar{r}{\ast} & \Fun_{\cT}(\cO,\underline{\Cat}_{\cT}) \simeq \Fun(\cO, \Cat).
\end{tikzcd} \]
This shows the first part of (3), and the second follows since we have a comparison $\cT$-functor that we just proved is an equivalence fiberwise. The claim of (4) (i.e., that $\mu$ promotes to an equivalence of $(\infty,2)$-categories) then follows since $\mu$ clearly respects cotensors by $\infty$-categories (cf. \cref{constr:OperadCotensors}).
\end{proof}

\begin{theorem} \label{thm:E0Algebras}
Suppose $\cO^\otimes$ is a unital $\cT$-$\infty$-operad and $\cC^\otimes$ is a $\cO$-monoidal $\cT$-$\infty$-category. Then we have a canonical equivalence of $\cT$-$\infty$-categories
\[ \underline{\Alg}_{\cO,\cT}(\cO_0, \cC) \xto{\simeq} \underline{\Fun}_{/\cO,\cT}(\cO, \cC)^{(1,\cT)/}. \]
\end{theorem}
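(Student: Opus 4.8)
The plan is to identify a $\cO_0$-algebra in $\cC$ with its underlying $\cT$-functor $\cO\to\cC$ equipped with a $\cT$-natural map from the unit, by restricting along the unit homotopy $\omega_{\cO}$ and exploiting the right Kan extension description of $\cO^\otimes_0$ from \cref{prop:E0MonoidalCats}. First I would reduce to a statement about sections: since $\EE^\otimes_{0,\cT}\subset\uFinpT$ is a fibration of $\cT$-$\infty$-operads, so is $\cO^\otimes_0\to\cO^\otimes$, and hence the base change $\cC^\otimes_0\coloneq\cC^\otimes\times_{\cO^\otimes}\cO^\otimes_0\to\cO^\otimes_0$ is an $\cO_0$-monoidal $\cT$-$\infty$-category. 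A $\cT$-functor $\cO^\otimes_0\to\cC^\otimes$ over $\cO^\otimes$ is the same datum as a section of $\cC^\otimes_0\to\cO^\otimes_0$, and this identification preserves and reflects the morphism-of-$\cT$-$\infty$-operads condition, using that $\cC^\otimes_0\to\cC^\otimes$ and $\cC^\otimes_0\to\cO^\otimes_0$ each carry inert edges to inert edges and jointly detect them; the same holds for the parametrized enhancements, so $\underline{\Alg}_{\cO,\cT}(\cO_0,\cC)\simeq\underline{\Alg}_{\cO_0,\cT}(\cC_0)$.

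Next I would restrict along $\omega_{\cO}\colon\cO^{\underline{\lhd}}\to\cO^\otimes_0$. Set $\cG\coloneq\cO^{\underline{\lhd}}\times_{\cO^\otimes_0}\cC^\otimes_0$, a cocartesian fibration over $\cO^{\underline{\lhd}}$ classified by $f^{\un}_{\cC}=(\omega_{\cO})^\ast F^{\un}_{\cC}$; by \cref{prop:E0MonoidalCats}(1),(3) it restricts to the $\cT$-final object on the cone $\cT^\op\subset\cO^{\underline{\lhd}}$ and to the underlying $\cT$-$\infty$-category $\cC$ over $\cO$, pointed by the unit $1\colon\cO\to\cC$. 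As $F^{\un}_{\cC}$ is the right Kan extension of $f^{\un}_{\cC}$ along $\omega_{\cO}$ (\cref{prop:E0MonoidalCats}(1)), modeled by the push--pull construction of \cite[Ex.~2.26]{Exp2}, restriction along $\omega_{\cO}$ yields an equivalence between sections of $\cC^\otimes_0\to\cO^\otimes_0$ and sections of $\cG\to\cO^{\underline{\lhd}}$. The key point is then that this equivalence matches the $\cT$-$\infty$-operad sections on the left with the $\cT$-functor sections of $\cG\to\cO^{\underline{\lhd}}$ whose restriction to the cone is the essentially unique cocartesian section $0$ into the contractible fibers. One direction is formal: a $\cT$-$\infty$-operad section carries the edges inside $\cT^\op$ and inside $\cO$ --- all inert in $\cO^\otimes_0$ --- to cocartesian edges, and its cone values lie in contractible fibers. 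For the converse, given such a section $\overline{s}$ of $\cG$ with underlying section $g=\overline{s}|_{\cO}$ of $\cC\to\cO$, the pointwise formula for $(\omega_{\cO})_\ast\overline{s}$ together with the cofinality computation from the proof of \cref{prop:E0MonoidalCats}(1) (cf.\ \cref{lem:InertSubcategoryIsRightKanExtended}) shows that over an object above $[U_+\to V]$ the value of the extended section is $\prod_{W\in\Orbit(U)}g(W)$, which is exactly the parametrized Segal decomposition of \cref{prp:SegalCondition}; hence the extension carries inert edges to inert edges, i.e.\ is a $\cT$-$\infty$-operad section.

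Finally I would identify the target. The cocartesian transport along the edges $0_V\to x$ of $\cO^{\underline{\lhd}}$ carries the forced value $0$ to the unit $1(x)$ by \cref{def:HomotopiesUnital}; hence the $\cT$-slice of $\cG$ under its cone section $0$, restricted over $\cO\subset\cO^{\underline{\lhd}}$, is the $\cT$-slice $\cC_{(1,\cT)/}$ taken over $\cO$. By the $\cT$-join and slice adjunction \cite[Prop.~4.25]{Exp2} (together with \cite[Cor.~4.27]{Exp2}), applied as in the proof of \cref{prop:E0MonoidalCats}(3), the $\cT$-functor sections of $\cG\to\cO^{\underline{\lhd}}$ that restrict to $0$ on the cone are equivalent to $\underline{\Fun}_{/\cO,\cT}(\cO,\cC_{(1,\cT)/})$, and the pointwise formula for $\cT$-slices of $\cT$-$\infty$-categories of sections identifies this with $\underline{\Fun}_{/\cO,\cT}(\cO,\cC)^{(1,\cT)/}$. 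All the equivalences used --- restriction along the $\cT$-functor $\omega_{\cO}$ and the $\cT$-join--slice equivalence --- are $\cT$-functorial, so composing them gives the asserted equivalence of $\cT$-$\infty$-categories.

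The step I expect to be the main obstacle is the matching of $\cT$-$\infty$-operad sections in the second paragraph: one must know that the right Kan extension $(\omega_{\cO})_\ast$ has the ``product over orbits'' pointwise value, which is the cofinality statement already established inside the proof of \cref{prop:E0MonoidalCats}(1); granting this, extended sections automatically satisfy the Segal condition and everything else is bookkeeping with join--slice adjunctions.
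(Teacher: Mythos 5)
Your strategy---reduce to sections of $\cC^\otimes_0 \to \cO^\otimes_0$, restrict along $\omega_{\cO}$, and exploit the right Kan extension description of $\cO^\otimes_0$---is genuinely different from the paper's (which replaces $\cO^\otimes_0$ by its $\cO_0$-monoidal envelope, identifies that envelope with $\Ar_{\cT}(\cO) \times \Delta^1$ using unitality, and then invokes \cref{prop:E0MonoidalCats}(4)), but the pivotal second step contains a genuine gap. You assert that restriction along $\omega_{\cO}$ is an equivalence from \emph{all} sections of $\cC^\otimes_0 \to \cO^\otimes_0$ to all sections of $\cG = \cO^{\underline{\lhd}} \times_{\cO^\otimes_0} \cC^\otimes_0$, and then cut down to subcategories on both sides. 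This is false: \cref{prop:E0MonoidalCats}(1) says that the \emph{classifying functor} $F^{\un}_{\cC}$ is right Kan extended along $\omega_{\cO}$, which controls the fibers and the cocartesian sections (i.e.\ the limit of $F^{\un}_{\cC}$), but says nothing about arbitrary lax sections. Concretely, an arbitrary section $A$ of $\cC^\otimes_0 \to \cO^\otimes_0$ assigns to an object $x$ over $[U_+ \ra V]$, with $U$ a nontrivial finite $\cT$-set, a value $A(x) \in \cC^\otimes_x$ together with possibly non-invertible comparison maps $A(x) \to A(x_i)$ over the inert edges; none of this data is visible to $\omega_{\cO}^{\ast}A$, whose domain contains only $\cO$ and the objects $0_V$. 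Moreover your proposed characterization of the image---sections of $\cG$ whose restriction to the cone is $0$---is vacuous, since the fibers of $\cG$ over the cone points are contractible; combined with your claimed equivalence on all sections this would force every section of $\cC^\otimes_0$ to preserve inert edges, which is not the case.

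The correct statement, which your ``converse'' sketch points at but does not establish, is that restriction along $\omega_{\cO}$ and $p$-relative right Kan extension along $\omega_{\cO}$ form an adjunction restricting to an equivalence between the \emph{inert-preserving} sections of $\cC^\otimes_0 \to \cO^\otimes_0$ and \emph{all} sections of $\cG \to \cO^{\underline{\lhd}}$: the cofinality computation inside the proof of \cref{prop:E0MonoidalCats}(1) shows the relevant $p$-limits exist and are the Segal products, so the counit is an equivalence and the unit is an equivalence exactly on the sections satisfying the inert/Segal condition. This has to be set up explicitly---it is not a formal consequence of the right Kan extension statement for classifying functors---and it is precisely the work that the paper's detour through $\Env_{\cO_0,\cT}(\cO_0)^\otimes$ (via \cref{prp:UniversalPropertyMonoidalEnvelope}) is designed to avoid: the envelope converts the lax data of an $\cO_0$-algebra into strict $\cO_0$-monoidal functors, to which the equivalence $\mu$ of \cref{prop:E0MonoidalCats}(3)--(4) applies directly. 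Your final identification of the target via cocartesian transport along the edges $0_V \to x$ and the $\cT$-join--slice adjunction is sound and closely parallels the paper's computation of $\Nat_{\ast}(\Delta^1 \times \cE, f_{\cC})$.
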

\begin{proof}
By the usual reduction, it will suffice to prove the statement without the `underlining'. Our strategy is to replace $\cO^\otimes_0$ by its $\cO_0$-monoidal envelope and then invoke \cref{prop:E0MonoidalCats}(4). We first identify $\Env_{\cO_0,\cT}(\cO_0)^\otimes = \Ar^{\act}_{\cT}(\cO^\otimes_0)$ in simpler terms. Let
$$\lambda: \cO \times \Delta^1 \to \Ar^{\act}_{\cT}(\cO^\otimes_0) \times_{\cO_0^\otimes} \cO$$
be the $\cT$-functor which for $x \in \cO_V$ sends $(x,0) \to (x,1)$ to the evident morphism $[0_V \ra x] \to \id_x$ of active arrows. More precisely, we may define the adjoint of $\lambda$ projecting to $ \Ar^{\act}_{\cT}(\cO^\otimes_0)$ as the composite
\[ \cO \times \Delta^1 \times \Delta^1 \xto{h} \cO^{\underline{\lhd}} \xto{\omega_{\cO}} \cO^\otimes_0, \]
where to define $h$, we regard $\cO \times (\Delta^1 \times \Delta^1)$ as fibered over $\cT^\op \times \Delta^1$ via the structure map $\pi$ for $\cO$ and $(i,j) \mapsto \max(i,j)$, and let $h$ be given by $(\pi,\pr_{\cO})$ under the defining universal property of the $\cT$-join (cf. \cite[Prop.~4.3]{Exp2}). We then let
$$\overline{\lambda}: \Ar_{\cT}(\cO) \times \Delta^1 \to \Ar^{\act}_{\cT}(\cO^\otimes_0) \times_{\cO_0^\otimes} \cO$$
be the induced morphism of $\cT$-cocartesian fibrations over $\cO$ extending $\lambda$ under the equivalence of \cite[Ex.~3.8]{Exp2b}. We claim that $\overline{\lambda}$ is an equivalence, for which it suffices to check fiberwise. But for every $x \in \cO_V$, we have that $\lambda_x$ is essentially surjective in view of the unital assumption on $\cO^\otimes$ (since the active edges must be of the form $[f: x' \ra x]$ in $\cO_V$ or $[0_V \ra x]$ factoring through some $f$), and an easy computation of mapping spaces shows that $\lambda_x$ is also fully faithful.

By similar reasoning, we also see that the composition
\[ \cO \times\{0\} \subset \cO \times \Delta^1 \to \Ar_{\cT}(\cO) \times \Delta^1 \xto{\simeq} \Ar^{\act}_{\cT}(\cO^\otimes_0) \times_{\cO_0^\otimes} \cO \]
identifies with the unit map for $\Env_{\cO_0,\cT}(\cO_0)^\otimes$. Now let $\cE: \cO \to \Cat$ be the functor obtained by straightening $\ev_1: \Ar_{\cT}(\cO) \to \cO$. We have shown that under the correspondence of \cref{prop:E0MonoidalCats}, $\Env_{\cO_0,\cT}(\cO_0)^\otimes$ straightens to $\Delta^1 \times \cE$. In the notation of that proposition, consider the pullback square
\[ \begin{tikzcd}
\Nat_{\ast}(\Delta^1 \times \cE, f_{\cC}) \ar{r} \ar{d} & \Nat(\Delta^1 \times \cE, f_{\cC}) \ar{d} \\
\Delta^0 \ar{r} & \Nat(\cE, f_{\cC}).
\end{tikzcd} \]
Using the universal property of the free $\cT$-cocartesian fibration, we deduce an equivalence
$$\Nat_{\ast}(\Delta^1 \times \cE, f_{\cC}) \simeq \Fun_{/\cO, \cT}(\cO, \cC)^{(1,\cT)/}.$$
We may now conclude using \cref{prop:E0MonoidalCats}(4).
\end{proof}

\begin{theorem} \label{thm:InitialObjectsInAlgebrasWhenUnital}
Let $\cO^\otimes$ be a unital $\cT$-$\infty$-operad and let $p: \cC^\otimes \to \cO^\otimes$ be an $\cO$-monoidal $\cT$-$\infty$-category.
\begin{enumerate}
\item The $\cO$-algebra $1^\otimes$ of \cref{prop:UnitCanonicalAlgebra} is an initial object of $\Alg_{\cO,\cT}(\cC)$.
\item $\underline{\Alg}_{\cO,\cT}(\cC)$ admits a $\cT$-initial object given fiberwise by $(1^\otimes)_{\underline{V}}$.
\end{enumerate}
\end{theorem}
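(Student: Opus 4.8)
The plan is to reduce (2) to (1) and then to prove (1) by realizing $1^\otimes$ as the $\cT$-operadic left Kan extension of an essentially trivial algebra along the inclusion of a degenerate suboperad.

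\emph{Reduction of (2) to (1).} By \cref{prop:UnitCanonicalAlgebra} the unit $1^\otimes$ is a cocartesian section of $p$, and restricting it along $(\cT^{/V})^\op \to \cT^\op$ for each orbit $V$ produces a cocartesian section of the structure map $\underline{\Alg}_{\cO,\cT}(\cC) \to \cT^\op$ whose value at $V$ is $(1^\otimes)_{\underline V}$; compatibility with base change in $\cT$ is immediate from the construction of the unit in \cref{def:HomotopiesUnital}. Since a cocartesian section that is fiberwise initial is $\cT$-initial (exactly as in the discussion of $\cT$-final objects in \cref{rem:ZeroObjectUnitalOperad}), statement (2) follows once each $(1^\otimes)_{\underline V}$ is known to be initial, i.e.\ from (1) applied with $\cT$ replaced by $\cT^{/V}$. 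It therefore suffices to prove (1) for all atomic orbital $\cT$ at once.

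\emph{Proof of (1).} Let $\cO^\otimes_{\emptyset} := \cO^\otimes \times_{\uFinpT} \Triv_{\cT}(\emptyset)^\otimes$ be the pullback of the initial $\cT$-$\infty$-operad (\cref{exm:TrivialSuboperads}), i.e.\ the full $\cT$-subcategory of $\cO^\otimes$ on the objects over $[\emptyset_+ \to V]$. Each such fiber is contractible by condition (2) of \cref{dfn:operad}, so $\cO^\otimes_{\emptyset} \simeq \cT^\op$ — it is the image of the $\cT$-zero object $0$ of \cref{rem:ZeroObjectUnitalOperad} — and likewise $\cC^\otimes_{0_V} \simeq \ast$; hence restriction along $0$ identifies $\Alg_{\cO,\cT}(\cO_\emptyset, \cC)$ with the point, whose unique object we call $\varepsilon$. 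Granting for the moment that the restriction functor $\iota^* : \Alg_{\cO,\cT}(\cC) \to \Alg_{\cO,\cT}(\cO_\emptyset, \cC)$ along the inclusion $\iota : \cO^\otimes_\emptyset \hookrightarrow \cO^\otimes$ admits a left adjoint $\iota_!$, the object $\iota_!(\varepsilon)$ is automatically initial in $\Alg_{\cO,\cT}(\cC)$, since a left adjoint preserves initial objects and $\varepsilon$ is initial in the one-object $\infty$-category $\Alg_{\cO,\cT}(\cO_\emptyset,\cC)$. It then remains to identify $\iota_!(\varepsilon)$ with $1^\otimes$: by the construction in \cref{prp:localOperadicKanExtn} (via the $\cO$-monoidal envelope), the underlying $\cT$-functor of $\iota_!(\varepsilon)$ is the $\cT$-left Kan extension, along the structure map $\Env_{\cO,\cT}(\cO_\emptyset)^\otimes \to \cO$, of the functor picking out for each active morphism $0_V \to x$ the norm $\otimes_{[0_V \to x]}$ of the $\cT$-zero object; by unitality there is an essentially unique such active morphism into each $x$, so this structure map is an equivalence and the resulting $\cT$-functor is exactly the unit $1 : \cO \to \cC$ of \cref{def:HomotopiesUnital}. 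Again using unitality one checks that $\iota_!(\varepsilon)$ is moreover a cocartesian section of $p$, so by the uniqueness in \cref{lem:InitialObjectCocartesianEquivalence} we get $\iota_!(\varepsilon) \simeq 1^\otimes$, proving (1). (Alternatively, one may run the same argument for the factorization $\cO^\otimes_\emptyset \subset \cO^\otimes_0 \subset \cO^\otimes$ and read off the middle comparison from \cref{thm:E0Algebras}, which identifies $\Alg_{\cO,\cT}(\cO_0,\cC) \simeq \Fun_{/\cO,\cT}(\cO,\cC)^{1/}$, whose initial object $\mathrm{id}_1$ is the restriction of $1^\otimes$.)

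\emph{The main obstacle} is the existence of $\iota_!$: \cref{thm:globalOperadicKanExtn} and \cref{prp:localOperadicKanExtn} are stated for a \emph{distributive} target, whereas $\cC^\otimes$ is here only assumed $\cO$-monoidal. Inspecting the proof of \cref{prp:localOperadicKanExtn}, distributivity is invoked only to ensure that the norm functors $\otimes_\beta$ carry the relevant $\cT^{/W}$-left Kan extension (hence $\cT^{/W}$-colimit) diagrams to $\cT^{/W}$-colimit diagrams; but when the source $\cT$-$\infty$-operad is $\cO^\otimes_\emptyset$, all of its parametrized fibers $\cC^\otimes_{\underline x}$ are terminal, so those left Kan extensions are along functors out of the terminal $\cT^{/V}$-$\infty$-category and are computed by plain evaluation — no non-trivial $\cT^{/V}$-colimit is ever taken. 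Concretely, for a fiberwise active edge $\alpha : x \to y$ with $x \in \cO^\otimes_\emptyset$ and orbit decomposition $\coprod_i V_i$ of the relevant finite $\cT$-set, the fiber of the map $\lambda_F$ of \cref{prp:localOperadicKanExtn} over $\alpha$ is the product of the slice $\cT^{/V_i}$-$\infty$-categories $\cC_{\underline{y_i}}^{(u_{y_i}, \cT^{/V_i})/}$ under the unit objects $u_{y_i}$, which has an evident $\cT^{/V_i}$-initial object; these are preserved by restriction over $\cT^\op$ and by the norm functors $\otimes_\beta$, the latter because $\otimes_\beta$ carries unit objects to unit objects (the unit cocartesian section being compatible with all norms). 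Thus the argument of \cref{prp:localOperadicKanExtn} should apply verbatim with the distributivity hypothesis deleted, yielding the adjunction $\iota_! \dashv \iota^*$. This is the step I expect to require the most care, since it means re-reading the somewhat involved proof of \cref{prp:localOperadicKanExtn} to pin down exactly where distributivity enters and confirm it is dispensable in this degenerate case.
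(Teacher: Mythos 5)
Your proof is correct, and its skeleton matches the paper's — reduce (2) to (1) via base-change stability of the unit, then exhibit $1^\otimes$ as the value of a partially defined left adjoint at an initial object of an algebra $\infty$-category over a degenerate suboperad — but the details diverge in an instructive way. The paper works with $\cO^\otimes_0 = \EE^\otimes_{0,\cT} \times_{\uFinpT} \cO^\otimes$ and must invoke \cref{thm:E0Algebras} to identify $\Alg_{\cO,\cT}(\cO_0,\cC) \simeq \Fun_{/\cO,\cT}(\cO,\cC)^{(1,\cT)/}$ and locate its initial object $1^\otimes_{\un}$; your choice of $\cO^\otimes_\emptyset = \cO^\otimes \times_{\uFinpT} \Triv_{\cT}(\emptyset)^\otimes$ makes that step trivial, since $\Alg_{\cO,\cT}(\cO_\emptyset,\cC)$ is contractible by the Segal condition. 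For the existence of the partial left adjoint, the paper likewise factors through the monoidal envelope: $\iota_!$ exists unconditionally by \cref{prp:UniversalPropertyMonoidalEnvelope}(2), and $(\ev_1)_!$ is computed at the single object $1^\otimes \circ \ev_1$ as an ordinary pointwise left Kan extension, which exists because each fiber $\Env_{\cO,\cT}(\cO_0)^\otimes \times_{\cO^\otimes}\{x\}$ has an initial object $(0_V \ra x)$ — so no colimits are ever formed and distributivity never enters. This is where your write-up is less efficient than it could be: you flag the existence of $\iota_!$ as ``the main obstacle'' and propose to re-derive \cref{prp:localOperadicKanExtn} with the distributivity hypothesis deleted, but your own observation already dissolves the problem. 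By unitality the structure map $\Env_{\cO,\cT}(\cO_\emptyset)^\otimes \to \cO^\otimes$ is a cocartesian fibration with contractible fibers, hence an equivalence, so $(\ev_1)^*$ is an equivalence of algebra $\infty$-categories and $(\ev_1)_!$ is simply its inverse; combined with the unconditional $\iota_!$, the composite $i_!(\varepsilon)$ exists with no appeal to the operadic Kan extension machinery, and it is the unique cocartesian section of $p$, which is $1^\otimes$ by \cref{lem:InitialObjectCocartesianEquivalence} and \cref{prop:UnitCanonicalAlgebra}.

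One small imprecision to fix if you write this up: the operadic left Kan extension formalism (\cref{prp:localOperadicKanExtn}, \cref{thm:globalOperadicKanExtn}) takes as input a \emph{cocartesian} fibration over $\cO^\otimes$, and $\cO^\otimes_\emptyset \to \cO^\otimes$ is not one (there are no cocartesian lifts of the active edges $0_V \to x$ landing in $\cO^\otimes_\emptyset$). So the statement that ``the parametrized fibers $\cC^\otimes_{\underline{x}}$ are terminal'' only makes sense after passing to the envelope — at which point, as above, you no longer need that machinery at all.
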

\begin{proof}
Since units are stable under base-change, it suffices to prove the first assertion. Let $1^\otimes_{\un}: \cO^\otimes_0 \to \cC^\otimes$ be the unique morphism of $\cT$-$\infty$-operads over $\cO^\otimes$ that sends all edges in $\cO^\otimes_0$ to $p$-cocartesian edges in $\cC^\otimes$ (so $1^\otimes_{\un}$ extends $\ast: \cT^\op \to \cC^\otimes$ under the equivalence of \cref{lem:InitialObjectCocartesianEquivalence}). Then $1^\otimes_{\un}$ corresponds to $1 \in \Fun_{/\cO,\cT}(\cO, \cC)^{(1,\cT)/}$ under the equivalence of \cref{thm:E0Algebras} and is hence an initial object of $\Alg_{\cO,\cT}(\cO_0, \cC)$.

Let $i: \cO^\otimes_0 \to \cO^\otimes$ denote the inclusion. It will suffice to show that the left adjoint $i_!$ to the the forgetful functor $i^\ast: \Alg_{\cO,\cT}(\cC) \to \Alg_{\cO,\cT}(\cO_0, \cC)$ is defined on $1^\otimes_{\un}$ and sends $1^\otimes_{\un}$ to $1^\otimes$. Consider the factorization of $i$ through its $\cO$-monoidal envelope
$$\cO^\otimes_0 \xto{\iota} \Env_{\cO,\cT}(\cO_0)^\otimes = \cO^\otimes_0 \times_{\cO^\otimes} \Ar^{\act}_{\cT}(\cO^\otimes) \xto{\ev_1} \cO^\otimes$$
and the resulting sequence of adjunctions
\[ \begin{tikzcd}
\Alg_{\cO,\cT}(\cO_0, \cC) \simeq \Fun^\otimes_{\cO,\cT}(\Env_{\cO,\cT}(\cO_0), \cC) \ar[hookrightarrow, shift left=2]{r}{\iota_!} \ar[dotted]{rd}{i_!} & \Alg_{\cO,\cT}(\Env_{\cO,\cT}(\cO_0), \cC) \ar{l}{\iota^\ast} \ar[dotted, shift left=2]{d}{(\ev_1)_!} \\
& \Alg_{\cO,\cT}(\cC) \ar{u}{(\ev_1)^\ast} \ar[shift left=2]{ul}{i^\ast}
\end{tikzcd} \]
where the dotted left adjoints are not necessarily defined.
Observe that for every orbit $V \in \cT$, the fiber $\Env_{\cO,\cT}(\cO_0)^\otimes_V$ admits an initial object given by $\id_{0_V}$, and these assemble to define a $\cT$-initial object of $\Env_{\cO,\cT}(\cO_0)^\otimes$. By \cref{lem:InitialObjectCocartesianEquivalence}, the $\cO$-monoidal $\cT$-functor $1^\otimes \circ \ev_1$ restricts to $1^\otimes_{\un}$ as they both send \emph{all} edges to $p$-cocartesian edges and extend $\ast$, so $\iota_!(1^\otimes_{\un}) \simeq 1^\otimes \circ \ev_1$. Now observe that for every $x \in \cO^\otimes_V$, the unique map $0_V \ra x$ is active and is an initial object in the fiber $\Env_{\cO,\cT}(\cO_0)^\otimes \times_{\cO^\otimes} \{ x \}$. Consequently, the ordinary left Kan extension of $1^\otimes \circ \ev_1$ along $\ev_1$ exists and is computed by $1^\otimes$ itself. Since the ordinary left Kan extension is an $\cO$-algebra in this case, we conclude that $(\ev_1)_!(1^\otimes \circ \ev_1) \simeq 1^\otimes$ and hence $i_!(1^\otimes_{\un}) \simeq 1^\otimes$.
\end{proof}

\subsection{Indexed coproducts in the \texorpdfstring{$\cT$}{T}-symmetric monoidal case}

We identify finite $\cT$-indexed coproducts with tensor products in the case of a $\cT$-symmetric monoidal $\cT$-$\infty$-category $\cC$, following the strategy of \cite[\S 3.2.4]{HA}. To precisely articulate this identification, we first discuss how to equip $\underline{\CAlg}_{\cT}(\cC)$ with a $\cT$-symmetric monoidal structure.

\begin{construction} \label{con:BifunctorComm}
We define the smash product $\cT$-functor
$$\wedge: \uFinpT \times_{\cT^\op} \uFinpT \to \uFinpT, \quad ([U_+ \ra V], [U'_+ \ra V]) \mapsto [(U \times_V U')_+ \mapsto V]$$
as follows. Recall that given an $\infty$-category $\cD$ with finite products, we can define the smash product functor $\wedge: \cD_{\ast} \times \cD_{\ast} \to \cD_{\ast}$ as the composition of functors
\[ \begin{tikzcd}
\cD_{\ast} \times \cD_{\ast} \subset \cD^{\Delta^1} \times \cD^{\Delta^1} \ar{r}{\times} & \cD^{\Delta^1 \times \Delta^1} \ar[dotted]{r}{\min_!} & \cD^{\Delta^1} \ar{r}{(d^2)_{\ast}} & \cD^{\Lambda^2_0} \ar[dotted]{r}{\colim} & \cD^{(\Lambda^2_0)^{\rhd}} \ar{r}{\ev_{\{2\}^{\rhd}}} & \cD
\end{tikzcd} \]
provided the functors $\min_!$ and $\colim$ exist pointwise on objects $(\ast \ra x, \ast \ra y)$ and $(\ast \la x \vee y \ra x \times y)$, as they then extend to partially defined left adjoints so that the composition exists. If we then have a presheaf $\cD_{\bullet}: \cT^\op \to \Cat$ such that for every $\alpha: V \to W$ in $\cT$, $\alpha^\ast: \cD_W \to \cD_V$ preserves finite products, wedge sums, and cofibers $(x \times y) / (x \vee y)$, it follows from the existence theorem for relative adjunctions (\cite[Prop.~7.3.2.6]{HA} and \cite[Prop.~7.3.2.11]{HA}) that we obtain a $\cT$-functor
\[ \wedge: \underline{\cD_{\ast}} \times_{\cT^\op} \underline{\cD_{\ast}} \to \underline{\cD_{\ast}} \]
given fiberwise by formation of smash products, where $\underline{\cD_{\ast}}$ is the unstraightening of the pointed presheaf $\cD_{\bullet, \ast}$.
\end{construction}

\begin{definition} \label{def:BifunctorOperads}
Let $\cO^\otimes, \cP^\otimes, \cQ^\otimes$ be $\cT$-$\infty$-operads. We say that a $\cT$-functor $F: \cO^\otimes \times_{\cT^\op} \cP^\otimes \to \cQ^\otimes$ is a \emph{bifunctor of $\cT$-$\infty$-operads} if it sends pairs of inert edges to inert edges and the diagram
\[ \begin{tikzcd}
\cO^\otimes \times_{\cT^\op} \cP^\otimes \ar{r}{F} \ar{d} & \cQ^\otimes \ar{d} \\
\uFinpT \times_{\cT^\op} \uFinpT \ar{r}{\wedge} & \uFinpT
\end{tikzcd} \]
is homotopy commutative.\footnote{If $\cT = \ast$, then the smash product for $\Fin_{\ast}$ can be defined without the ambiguity of a contractible space of choices. Therefore, one can choose the square to strictly commute in $\sSet^+_{/(\Fin_{\ast}, \Inert)}$ for the non-parametrized definition of a bifunctor of $\infty$-operads as in \cite[Def.~2.2.5.3]{HA}.}
\end{definition}


\begin{variant} \label{var:BifunctorBigOperads}
By the same construction as in \cref{con:BifunctorComm}, we have a smash product $\FinT^\op$-functor
$$\wedge: \BigFinT \times_{\FinT^\op} \BigFinT \to \BigFinT$$
that can be chosen to extend the smash product on $\uFinpT$. Likewise, we have the analogous definition of a bifunctor of big $\cT$-$\infty$-operads, and the datum of one determines the other essentially uniquely under the correspondence of \cref{cor:BigVsSmallOperads}.
\end{variant}

\begin{conthm} \label{conthm:MonoidalStructureOnAlgebras}
Suppose $F: \cO^\otimes \times_{\cT^\op} \cP^\otimes \to \cQ^\otimes$ is a bifunctor of $\cT$-$\infty$-operads and let the $\cT$-functor $$P: \cO^\otimes \times_{\cT^\op} \Ar(\cT^\op) \to \cO^\otimes, \quad (x, V \xleftarrow{\beta} W) \mapsto \beta^{\ast}(x)$$ be a choice of cocartesian pushforward. Consider the spans of marked simplicial sets
\begin{equation*} \begin{tikzcd}
(\cO^\otimes, \Inert) &  (\cO^\otimes,\Inert) \times_{\cT^\op} \Ar(\cT^\op)^\sharp \times_{\cT^\op} (\cP^\otimes,\Inert) \ar{l}[swap]{\pi} \ar{r}{G} & (\cQ^\otimes, \Inert),
\end{tikzcd} \end{equation*}
\begin{equation*} \begin{tikzcd}
(\cO^\otimes)^{\sharp} & (\cO^\otimes)^\sharp \times_{\cT^\op} \Ar(\cT^\op)^\sharp \times_{\cT^\op} (\cP^\otimes,\Inert) \ar{l}[swap]{\pi} \ar{r}{G} & (\cQ^\otimes)^\sharp,
\end{tikzcd} \end{equation*}
where $G = F \circ (P \times \id_{\cP^\otimes})$ and $\pi$ is the projection to $\cO^\otimes$. Then these spans determine Quillen adjunctions
\begin{align*}
\adjunct{G_! \pi^\ast}{\sSet^+_{/(\cO^\otimes, \Inert)}}{\sSet^+_{/(\cQ^\otimes, \Inert)}}{\pi_{\ast} G^\ast}, \qquad \adjunct{G_! \pi^\ast}{\sSet^+_{/\cO^\otimes}}{\sSet^+_{/\cQ^\otimes}}{\pi_{\ast} G^\ast}
\end{align*}
with respect to the $\cT$-operadic model structures and $\cT$-monoidal model structures. Given a fibration $\cC^\otimes \to \cQ^\otimes$ of $\cT$-$\infty$-operads, we then let
\[ \underline{\Alg}_{\cQ, \cT}(\cP,\cC)^\otimes \to \cO^\otimes \]
denote the resulting fibration of $\cT$-$\infty$-operads given by $\pi_{\ast} G^\ast (\cC^\otimes, \Inert)$.\footnote{Beware that the notation $\underline{\Alg}_{\cQ, \cT}(\cP,\cC)^\otimes$ hides the dependence of the structure map $\cP^\otimes \to \cQ^\otimes$ on the choice of parameter in $\cO^\otimes$.}

If $\cC^\otimes$ is $\cQ$-monoidal, then $\underline{\Alg}_{\cQ, \cT}(\cP,\cC)^\otimes$ is $\cO$-monoidal, and has cocartesian edges marked as in $\pi_{\ast} G^\ast (\leftnat{\cC^\otimes})$.
\end{conthm}
\begin{proof}
Note that the underlying simplicial sets of $\pi_\ast G^\ast(-)$ are the same regardless of whether we work over $(\cQ^\otimes, \Inert)$ or $(\cQ^\otimes)^\sharp$. We first establish the assertion on the Quillen adjunction between $\cT$-operadic model structures. For the proof, it will be convenient to pass to big $\cT$-$\infty$-operads (cf. \cref{var:BifunctorBigOperads}). Let $$\wt{F}: \wt{\cO}^\otimes \times_{\FinT^\op} \wt{\cP}^\otimes \to \wt{\cQ}^\otimes$$ be the bifunctor of big $\cT$-$\infty$-operads extending $F$, let the $\FinT$-functor $$\wt{P}: \wt{\cO}^\otimes \times_{\FinT^\op} \Ar(\FinT^\op) \to \wt{\cO}^\otimes$$ be a choice of cocartesian pushforward over $\FinT^\op$ extending $P$, let $\wt{G} = \wt{F} \circ (\wt{P} \times \id_{\wt{\cP}^\otimes})$, and consider the span of marked simplicial sets
\[ \begin{tikzcd}
(\wt{\cO}^\otimes, \Inert) &  (\wt{\cO}^\otimes,\Inert) \times_{\FinT^\op} \Ar(\FinT^\op)^\sharp \times_{\FinT^\op} (\wt{\cP}^\otimes,\Inert) \ar{l}[swap]{\wt{\pi}} \ar{r}{\wt{G}} & (\wt{\cQ}^\otimes, \Inert).
\end{tikzcd} \]
We claim that this span satisfies the hypotheses of \cite[Thm.~B.4.2]{HA} with respect to the categorical patterns $\wt{\mathfrak{P}}_{\cO}$ and $\wt{\mathfrak{P}}_{\cQ}$ of \cref{def:BigModelStructure}. (2) is clear and (3) is vacuous. By \cite[Cor.~2.4.7.17]{HTT}, the source functor $\ev_0: \Ar(\FinT^\op) \times_{\FinT^\op} \wt{P}^\otimes \to \FinT^\op$ is a cartesian fibration, so the pullback $\wt{\pi}$ is a cartesian fibration. This proves (1) and (4). Moreover, an edge in $\wt{\cO}^\otimes \times_{\FinT^\op} \Ar(\FinT^\op) \times_{\FinT^\op} \wt{\cP}^\otimes$ is $\wt{\pi}$-cartesian if and only if its projection to $\wt{\cP}^\otimes$ is an equivalence, which implies (7).

Now let $f_{x, \phi, \Sigma}: n^{\lhd} \to \wt{\cO}^\otimes$ be as in \cref{def:BigModelStructure}, so that $\phi: U \to V$ is a morphism in $\FinT$, $\Sigma = \{ \sigma_1, ..., \sigma_n \}$ is a collection of commutative squares
\[ \begin{tikzcd}
U_i \ar{r}{\alpha_i} \ar{d}{\phi_i} & U \ar{d}{\phi} \\
V_i \ar{r}{\beta_i} & V
\end{tikzcd} \]
such that $\alpha_i$ and $U_i \to V_i \times_{V} U$ are summand inclusions and $U \simeq \bigsqcup_{1 \leq i \leq n} U_i$, $f(v) = x$, and each morphism $\overline{\chi}_i \coloneq f(v \to i): x \to x_i \coloneq f(i)$ is an inert edge covering $\chi_{\sigma_i}: [U_+ \ra V] \mapsto [(U_i)_+ \ra V_i]$ in $\BigFinT$. We first prove (5) by showing that the restriction
\[ n^{\lhd} \times_{\FinT^\op} \Ar(\FinT^\op) \times_{\FinT^\op} \wt{\cP}^\otimes \to n^{\lhd} \]
of $\pi$ along $f_{x, \phi, \Sigma}$ is a cocartesian fibration. In fact, by \cite[Lem.~6.1.1.1]{HA}, for any cocartesian fibration $\cX \to \FinT^\op$, the source functor $\ev_0: \Ar(\FinT^\op) \times_{\FinT^\op} \cX \to \FinT^\op$ is a cocartesian fibration, with an edge
\[ \left( \begin{tikzcd}
V & V' \ar{l} \\
W \ar{u} & W' \ar{l} \ar{u}
\end{tikzcd}, \: x \to y \right) \]
cocartesian if and only if the square in $\FinT$ is a pullback square and $x \to y$ is a cocartesian edge. Next, let $$s: n^{\lhd} \to n^{\lhd} \times_{\FinT^\op} \Ar(\FinT^\op) \times_{\FinT^\op} \wt{\cP}^\otimes$$ be a cocartesian section determined by $s(v) = (V \xleftarrow{\gamma} W, y \in \wt{\cP}^\otimes_{[U'_+ \ra W]})$, so that $s(i) = (V_i \xleftarrow{\gamma_i} W_i \coloneq W \times_V V_i, y_i)$ for $y \to y_i$ an inert edge lifting $W_i \to W$. Let $z = \wt{F}(\gamma^\ast x,y)$, let $\sigma'_i$ be
\[ \begin{tikzcd}
U_i \times_{V} U' \ar{r} \ar{d} & U \times_V U' \ar{d}{\phi'} \\
W_i \ar{r} & W,
\end{tikzcd} \]
and let $\Sigma' = \{ \sigma'_1, ..., \sigma'_n \}$. Then a chase of the definitions shows that the composition $\wt{F} \circ \wt{P} \circ f_{x, \phi, \Sigma}$ is of the form $f_{z, \phi', \Sigma'}$, which shows (6). Finally, (8) follows from the right cancellation property of inert edges. This completes the verification of the hypotheses of \cite[Thm.~B.4.2]{HA}. We then deduce the theorem in question by means of \cref{prp:QuillenEquivalenceOfBigAndSmall} and \cref{cor:BigVsSmallOperads}. Finally, repeating this analysis for the other span proves the assertions regarding the monoidality of the construction.
\end{proof}

\begin{proposition} \label{prop:PropertiesOfAlgebraConstrFromBifunctor}
Let $F: \cO^\otimes \times_{\cT^\op} \cP^\otimes \to \cQ^\otimes$ be a bifunctor of $\cT$-$\infty$-operads and let $\cC^\otimes \to \cQ^\otimes$ be a fibration of $\cT$-$\infty$-operads. We then have the following properties of the construction $\underline{\Alg}_{\cQ, \cT}(\cP,\cC)^\otimes \to \cO^\otimes$:
\begin{enumerate}
\item For every object $x \in \cO$ over $V \in \cT^\op$, the parametrized restriction $$F_{\underline{x}}: \underline{x} \times_{\cT^\op} \cP^\otimes \to (\cQ^\otimes)_{\underline{V}}$$ is a morphism of $\cT^{/V}$-$\infty$-operads, and we obtain a canonical equivalence of $\cT^{/V}$-$\infty$-categories
$$\underline{\Alg}_{\cQ, \cT}(\cP,\cC)^\otimes \times_{\cO^\otimes} \underline{x} \simeq \underline{\Alg}_{\cQ_{\underline{V}},\cT^{/V}}(\cP_{\underline{V}}, \cC_{\underline{V}}).$$
Similarly, for every cocartesian section $\tau: \cT^\op \to \cO$, we have a canonical equivalence of $\cT$-$\infty$-categories
\[ \underline{\Alg}_{\cQ,\cT}(\cP, \cC)^\otimes \times_{\cO^\otimes, \tau} \cT^\op \simeq \underline{\Alg}_{\cQ,\cT}(\cP,\cC). \]
\item For every object $y \in \cP$ over $V \in \cT^\op$, the parametrized restriction $$F_{\underline{y}}: \cO^\otimes \times_{\cT^\op} \underline{y} \to (\cQ^\otimes)_{\underline{V}}$$ is a morphism of $\cT^{/V}$-$\infty$-operads, and `evaluation at $y$' furnishes a commutative square of $\cT^{/V}$-$\infty$-operads
\[ \begin{tikzcd}
(\underline{\Alg}_{\cQ, \cT}(\cP,\cC)^\otimes)_{\underline{V}} \ar{r}{\ev_{y}} \ar{d} & (\cC^\otimes)_{\underline{V}} \ar{d} \\
(\cO^\otimes)_{\underline{V}} \simeq \cO^\otimes \times_{\cT^\op} \underline{y} \ar{r}{F_{\underline{y}}} & (\cQ^\otimes)_{\underline{V}}.
\end{tikzcd} \]
Similarly, for every cocartesian section $\tau: \cT^\op \to \cP$, we have a morphism of $\cT$-$\infty$-operads
\[ \ev_{\tau}: \underline{\Alg}_{\cQ, \cT}(\cP,\cC)^\otimes \to \cC^\otimes \]
covering $F_{\tau}: \cO^\otimes \to \cQ^\otimes$ and given fiberwise by evaluation at $\tau(V)$.
\item If $\cC^\otimes$ is $\cQ$-monoidal (so that $\underline{\Alg}_{\cQ, \cT}(\cP,\cC)^\otimes$ is $\cO$-monoidal), then $\ev_y$ and $\ev_{\tau}$ preserve cocartesian edges.
\end{enumerate}
\end{proposition}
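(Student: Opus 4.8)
The plan is to read everything off the construction $\underline{\Alg}_{\cQ,\cT}(\cP,\cC)^\otimes = \pi_\ast G^\ast(\cC^\otimes,\Inert)$ of \cref{conthm:MonoidalStructureOnAlgebras}, exploiting two features of that construction. First, the projection $\pi$ is a \emph{cartesian} fibration (this is exactly what was checked in the proof of \cref{conthm:MonoidalStructureOnAlgebras}, via \cite[Cor.~2.4.7.17]{HTT}), so $\pi_\ast$ is a relative right Kan extension and in particular satisfies base change. Second, the markings of $\pi_\ast G^\ast(\cC^\otimes,\Inert)$ (and of $\pi_\ast G^\ast(\leftnat{\cC^\otimes})$ when $\cC^\otimes$ is $\cQ$-monoidal) admit a pointwise description: an edge of a fiber of $\pi_\ast G^\ast(-)$ is marked exactly when its value at every object of the corresponding $\pi$-fiber is a marked edge of the coefficient marked simplicial set.

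For (1), I would first check that $F_{\underline x}$ is a bifunctor of $\cT^{/V}$-$\infty$-operads. Since $x\in\cO$ lies over the object $I(V)_+$ of the underlying $\cT$-$\infty$-category of $\uFinpT$, the homotopy-commuting square defining $F$ as a bifunctor degenerates over $\underline x$ to the statement that $F_{\underline x}$ covers $\wedge(I(-)_+,-)$, and smashing with the unit objects $I(-)_+$ is the identity on $(\uFinpT)_{\underline V}$; together with the fact that $F$ carries pairs of inert edges to inert edges, this gives the claim. I would then pull back $\pi_\ast G^\ast(\cC^\otimes,\Inert)$ along $\underline x\to\cO^\otimes$; by base change for $\pi_\ast$ this is computed as $\pi'_\ast (G')^\ast(\cC^\otimes,\Inert)$ for the span obtained by restricting $\pi$ and $G$ over $\underline x$. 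The remaining point is to recognize this restricted span as the one producing $\underline{\Alg}_{\cQ_{\underline V},\cT^{/V}}(\cP_{\underline V},\cC_{\underline V})$ from the structure map $(\cP^\otimes)_{\underline V}\to(\cQ^\otimes)_{\underline V}$ (the $\cT^{/V}$-instance of \cref{conthm:MonoidalStructureOnAlgebras}): here one identifies $\underline x\simeq(\cT^{/V})^\op$, rewrites $\underline x\times_{\cT^\op}\Ar(\cT^\op)$ via $P$, and checks that the categorical pattern $\wt{\mathfrak P}$ pulls back to the intrinsic one, replacing $\Ar(\cT^\op)$ by a cofinal (hence $\pi_\ast$-irrelevant) subcategory where needed. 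For a cocartesian section $\tau\colon\cT^\op\to\cO$, one runs the identical base-change argument along $\tau$, noting that $\tau$ factors through $\underline{\tau(V)}$ over each $V$ and invoking the $\cT$-naturality of the construction.

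For (2) and (3), let me produce $\ev_y$ by evaluating a section of $G^\ast\cC^\otimes$ over a $\pi$-fiber at the object of that fiber indexed by (the pushforwards of) $y$, which is manifestly functorial in $\cO^\otimes$ and in $\cT^\op$; this yields the $\cT^{/V}$-functor $\ev_y$ over $F_{\underline y}\colon(\cO^\otimes)_{\underline V}\simeq\cO^\otimes\times_{\cT^\op}\underline y\to(\cQ^\otimes)_{\underline V}$, hence the commutative square, with $F_{\underline y}$ a morphism of $\cT^{/V}$-$\infty$-operads by the argument of (1). That $\ev_y$ is itself a morphism of $\cT^{/V}$-$\infty$-operads is immediate from the pointwise description of the markings: an inert edge of $\underline{\Alg}_{\cQ,\cT}(\cP,\cC)^\otimes$ has inert value at every object of the relevant fiber, in particular at the one defining $\ev_y$ (one uses here that the $\cP^\otimes$-coordinate of that object travels along a cocartesian, hence inert, edge). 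Part (3) is the same observation with $(\cC^\otimes,\Inert)$ replaced by $\leftnat{\cC^\otimes}$: when $\cC^\otimes$ is $\cQ$-monoidal, the cocartesian edges of $\underline{\Alg}_{\cQ,\cT}(\cP,\cC)^\otimes$ over $\cO^\otimes$ are by \cref{conthm:MonoidalStructureOnAlgebras} exactly the edges marked in $\pi_\ast G^\ast(\leftnat{\cC^\otimes})$, i.e.\ those with cocartesian value at every fiber object, so $\ev_y$ (and likewise $\ev_\tau$ for a cocartesian section $\tau\colon\cT^\op\to\cP$) preserves them.

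I expect the main obstacle to lie in the middle of step (1): carrying out the base change for $\pi_\ast$ at the level of the $\cT$-operadic model structures and matching the pulled-back categorical pattern $\wt{\mathfrak P}$ — in particular the factor $\Ar(\cT^\op)$ and the diagrams $f_{x,\phi,\Sigma}$ — with the categorical pattern intrinsic to $\cT^{/V}$. This is a bookkeeping argument, best organized by passing to big $\cT$-$\infty$-operads as in the proof of \cref{conthm:MonoidalStructureOnAlgebras} and then descending via \cref{prp:QuillenEquivalenceOfBigAndSmall} and \cref{cor:BigVsSmallOperads}.
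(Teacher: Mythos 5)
Your proposal follows essentially the same route as the paper: everything is read off the defining span of \cref{conthm:MonoidalStructureOnAlgebras}, with (1) obtained by restricting that span over $\underline{x}$ and (2)--(3) obtained by evaluating along a marked section of the middle term whose $\cP^\otimes$-coordinate is the cocartesian (hence inert) section through $y$ (resp.\ $\tau$); your pointwise description of the markings of $\pi_\ast G^\ast(-)$ is exactly how the paper sees that these evaluations preserve inert, resp.\ cocartesian, edges. Two small corrections to part (1). First, the paper does not run a Beck--Chevalley argument for $\pi_\ast$ followed by a matching of categorical patterns, and it does not pass to big operads here; it instead writes down an explicit morphism of spans in which the key map $\rho\colon \Ar(\underline{x}) \times_{\underline{x}} (\underline{x} \times_{\cT^\op} \cP^\otimes) \to \underline{x}\times_{\cT^\op}\Ar(\cT^\op)\times_{\cT^\op}\cP^\otimes$ is a trivial fibration, and then verifies via \cite[Lem.~2.27]{Exp2} that the induced comparison $\cT^{/V}$-functor is an equivalence. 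Second, your justification ``cofinal, hence $\pi_\ast$-irrelevant'' for shrinking the $\Ar(\cT^\op)$ factor is not the right criterion: since $\pi_\ast$ computes a category of sections over the $\pi$-fiber, one needs the replacement to be an equivalence of \emph{marked} simplicial sets over the base (as in \cref{lem:IdentifyingLeftAdjointOfCoinduction}), and cofinality is neither necessary nor sufficient for that; the trivial fibration $\rho$ is what actually supplies this. With that substitution your argument is the paper's.
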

\begin{proof}
(1): We prove the assertion about $x$ -- that for $\tau$ will hold by the same reasoning. We have a commutative diagram
\[ \begin{tikzcd}
\underline{x} \times_{\cT^\op} \cP^\otimes \ar{r} \ar{d} & \cO^\otimes \times_{\cT^\op} \cP^\otimes \ar{r}{F} \ar{d} & \cQ^\otimes \ar{d} \\
(\cT^{/V})^\op \times_{\cT^\op} \uFinpT \ar{r} & \uFinpT \times_{\cT^\op} \uFinpT \ar{r}{\wedge} & \uFinpT
\end{tikzcd} \]
where the outer square induces the $\cT^{/V}$-functor $F_{\underline{x}}$. The definition of a bifunctor of $\cT$-$\infty$-operads then immediately shows that $F_{\underline{x}}$ is a morphism of $\cT^{/V}$-$\infty$-operads. Next, consider the commutative diagram
\[ \begin{tikzcd}
& \Ar(\underline{x}) \times_{\underline{x}} (\underline{x} \times_{\cT^\op} \cP^\otimes) \ar[bend right=15]{ld}[swap]{\ev_0} \ar[two heads]{d}{\simeq}[swap]{\rho} \ar[bend left=15]{rd}{\pr} & &  \\
\underline{x} \ar{d} & \underline{x} \times_{\cT^\op} \Ar(\cT^\op) \times_{\cT^\op} \cP^\otimes \ar{d} \ar{l}[swap]{\pi} \ar{r}{P \times \id} & \underline{x} \times_{\cT^\op} \cP^\otimes \ar{r}{F_{\underline{x}}} \ar{d} & \cQ^\otimes_{\underline{V}} \ar{d} \\
\cO^\otimes & \cO^\otimes \times_{\cT^\op} \Ar(\cT^\op) \times_{\cT^\op} \cP^\otimes \ar{r}{P \times \id} \ar{l}[swap]{\pi} & \cO^\otimes \times_{\cT^\op} \cP^\otimes \ar{r}{F} & \cQ^\otimes
\end{tikzcd} \]
where the functor $\rho$ is the trivial fibration used in the definition of the cocartesian pushforward. Using the variant of \cite[Lem.~4.8]{Exp2b} for algebra maps, after marking appropriately this diagram induces a comparison $\cT^{/V}$-functor
\[ \underline{\Alg}_{\cQ, \cT}(\cP,\cC)^\otimes \times_{\cO^\otimes} \underline{x} \to \underline{\Alg}_{\cQ_{\underline{V}},\cT^{/V}}(\cP_{\underline{V}}, \cC_{\underline{V}}), \]
which by \cite[Lem.~2.27]{Exp2} is an equivalence.

(2): By the same logic as in (1), $F_{\underline{y}}$ is a morphism of $\cT^{/V}$-$\infty$-operads. Using the compatibility of the construction $\underline{\Alg}_{\cQ, \cT}(\cP,\cC)^\otimes \to \cO^\otimes$ with base-change in $\cT$, without loss of generality we may replace $\cT^{/V}$ with $\cT$ and suppose $y \in \cP$ lies over a final object in $\cT$. Choosing a section $\sigma$ of the trivial Kan fibration $\underline{y} \xto{\simeq} \cT^\op$, let $j = (\id,\iota,\sigma): \cO^\otimes \to \cO^\otimes \times_{\cT^\op} \Ar(\cT^\op) \times_{\cT^\op} \cP^\otimes$ and consider the morphism of spans
\[ \begin{tikzcd}
& \cO^\otimes \ar{d}{j} \ar[bend right=15]{ld}[swap]{=} \ar[bend left=10]{rrd}{F_{\underline{y}}\sigma} & & \\
\cO^\otimes & \cO^\otimes \times_{\cT^\op} \Ar(\cT^\op) \times_{\cT^\op} \cP^\otimes \ar{r}{P \times \id} \ar{l}[swap]{\pi} & \cO^\otimes \times_{\cT^\op} \cP^\otimes \ar{r}{F} & \cQ^\otimes.
\end{tikzcd} \]
Noting that $j$ respects markings for the first span in \cref{conthm:MonoidalStructureOnAlgebras}, we then see that $j$ induces the desired morphism of $\cT$-$\infty$-operads $\underline{\Alg}_{\cQ, \cT}(\cP,\cC)^\otimes \to \cO^\otimes \times_{\cQ^\otimes} \cC^\otimes$.

(3): This follows from the proof of (2) since the functor $j$ also respects markings for the second span in \cref{conthm:MonoidalStructureOnAlgebras}.
\end{proof}

We now specialize to the bifunctor $\wedge: \uFinpT \times_{\cT^\op} \uFinpT \to \uFinpT$. Fix a choice of cocartesian pushforward $P: \uFinpT \times_{\cT^\op} \Ar(\cT^\op) \to \uFinpT$ and also write
\[ \wedge : \uFinpT \times^{\lax}_{\cT^\op} \uFinpT \coloneq \uFinpT \times_{\cT^\op} \Ar(\cT^\op) \times_{\cT^\op} \uFinpT \to \uFinpT \]
for the composition $\wedge \circ (P \times \id)$.

\begin{construction} \label{con:CanonicalAlgebraStructureOnAlgebra}
Let $\cC^\otimes$ be a $\cT$-symmetric monoidal $\cT$-$\infty$-category. We construct a $\cT$-symmetric monoidal $\cT$-functor
\[ (-)^{\can}: \underline{\CAlg}_{\cT}(\cC)^\otimes \to \underline{\CAlg}_{\cT}(\underline{\CAlg}_{\cT}(\cC))^\otimes \]
that is split by the `forgetful' evaluation $\cT$-functor $\mathrm{U}$ of \cref{prop:PropertiesOfAlgebraConstrFromBifunctor}(2). First observe that we have a commutative diagram of marked simplicial sets
\[ \begin{tikzcd}[column sep=6ex]
\uFinpT^\sharp \times_{\cT^\op}^{\lax} (\uFinpT, \Inert) \times_{\cT^\op}^{\lax} (\uFinpT, \Inert) \ar{r}{\wedge \times^{\lax}_{\cT^\op} \id} \ar{d}{\pr_{12}} & \uFinpT^\sharp \times^{\lax}_{\cT^\op} (\uFinpT, \Inert) \ar{r}{\wedge} \ar{d}{\pr_1} & \uFinpT^\sharp \\
\uFinpT^\sharp \times^{\lax}_{\cT^\op} (\uFinpT, \Inert) \ar{r}{\wedge} \ar{d}{\pr_1} & \uFinpT^\sharp \\
\uFinpT^\sharp
\end{tikzcd} \]
in which the square is a pullback (here, $\pr_{12}$ denotes projection away from the third factor). Also write $\wedge$ for the upper horizontal composite. Then we have that (cf. \cite[Lem.~2.26]{Exp2})
\[ (\pr_1)_{\ast} \wedge^{\ast} \cong (\pr_1)_{\ast} \wedge^{\ast} (\pr_1)_{\ast} \wedge^{\ast} : \sSet^+_{/\uFinpT} \to \sSet^+_{/\uFinpT}, \quad \leftnat{\cC}^\otimes \mapsto \leftnat{\underline{\CAlg}_{\cT}}(\underline{\CAlg}_{\cT}(\cC))^\otimes. \]
Now consider the morphisms of spans
\[ \begin{tikzcd}[column sep=8ex]
& \uFinpT^\sharp \times^{\lax}_{\cT^\op} (\uFinpT, \Inert) \ar[bend right=15]{ld}[swap]{\pr_1} \ar[bend left=15]{rd}{\wedge} \ar{d}{\id \times^\lax_{\cT^\op} \id \times^\lax_{\cT^\op} \iota} & \\
\uFinpT^\sharp & \uFinpT^\sharp \times^{\lax}_{\cT^\op} (\uFinpT, \Inert) \times^{\lax}_{\cT^\op} (\uFinpT, \Inert) \ar{l}[swap]{\pr_1} \ar{r}{\wedge} \ar{d}{\id \times^\lax_{\cT^\op} \wedge} & \uFinpT^\sharp. \\
& \uFinpT^\sharp \times^{\lax}_{\cT^\op} (\uFinpT, \Inert) \ar[bend left=15]{lu}{\pr_1} \ar[bend right=15]{ru}[swap]{\wedge}
\end{tikzcd} \]
Then the lower vertical arrow defines $(-)^{\can}$, and since the upper vertical arrow induces $\mathrm{U}$ and the composite is homotopic to the identity, we see that $\mathrm{U} \circ (-)^{\can} \simeq \id$.
\end{construction}

\begin{theorem} \label{thm:IndexedCoproductsAreTensorProducts}
Let $\cC^\otimes$ be a $\cT$-symmetric monoidal $\cT$-$\infty$-category. Then $\underline{\CAlg}_{\cT}(\cC)$ has all finite $\cT$-coproducts. Moreover, for any map of finite $\cT$-sets $f: U \to V$, we have a canonical equivalence
\[ \coprod_f \simeq f_{\otimes} : \underline{\CAlg}_{\cT}(\cC)_U \to \underline{\CAlg}_{\cT}(\cC)_V, \]
where $f_\otimes$ is furnished by the $\cT$-symmetric monoidal structure on $\underline{\CAlg}_{\cT}(\cC)$ of \cref{conthm:MonoidalStructureOnAlgebras}.
\end{theorem}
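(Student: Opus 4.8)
The plan is to reduce both assertions to the single claim that, for every map $f: U\to V$ of finite $\cT$-sets, the norm functor $f_\otimes: \underline{\CAlg}_\cT(\cC)_U\to\underline{\CAlg}_\cT(\cC)_V$ of the $\cT$-symmetric monoidal structure on $\underline{\CAlg}_\cT(\cC)$ built in \cref{conthm:MonoidalStructureOnAlgebras} (applied to the smash-product bifunctor of \cref{con:BifunctorComm}) is left adjoint to the restriction functor $f^\ast$, naturally enough to be compatible with base change along morphisms of $\cT$. Since the $f_\otimes$ are already in hand as part of that structure, establishing this immediately yields that $\underline{\CAlg}_\cT(\cC)$ admits all finite $\cT$-coproducts --- by the characterization of finite $\cT$-coproducts as Beck--Chevalley-compatible left adjoints to the restriction functors, cf.\ \cite{Exp2} --- and that $\coprod_f\simeq f_\otimes$. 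Using the $\cT$-Segal condition (\cref{cor:SymmetricMonoidalSegalCondition}) to decompose along the orbits of $V$, together with the factorization of a general $f$ through fold maps and maps between orbits (cf.\ \cref{rem:OperadicCompositionUnwinding} and \cref{exm:CommutativeOperads}), we reduce to the cases $f: \emptyset\to V$, $f$ a fold map $V\sqcup V\to V$, and $f: W\to V$ a map between orbits.

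The case $f: \emptyset\to V$ is exactly the statement that the unit $1^\otimes$ of the $\cT$-symmetric monoidal structure is a $\cT$-initial object of $\underline{\CAlg}_\cT(\cC)$, which is \cref{thm:InitialObjectsInAlgebrasWhenUnital}(2) --- applicable because $\uFinpT$ is unital --- together with the base-change stability of units noted in its proof. The other cases will follow the strategy of \cite[\S 3.2.4]{HA}. For the counit $\varepsilon_B: f_\otimes f^\ast B\to B$ one invokes \cref{con:CanonicalAlgebraStructureOnAlgebra}: each $B\in\underline{\CAlg}_\cT(\cC)_V$ underlies a canonical $\cT^{/V}$-commutative algebra $B^{\can}$ in $\underline{\CAlg}_\cT(\cC)_{\underline V}$, and the value of its structure section on the active edge $f_+: [W_+\to V]\to[V_+\to V]$ --- read off after the $\cT$-Segal identification of the source fiber and a cocartesian pushforward along $\cT^\op$ --- is the desired ``norm multiplication'' $\varepsilon_B$ (for a fold map this is just the multiplication $B\otimes B\to B$). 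For the unit $\eta_A: A\to f^\ast f_\otimes A$ one inserts the essentially unique map $1\to A$ coming from the initiality of $1^\otimes$ into the complementary tensor factors, using the lax structure maps of the smash product; concretely $\eta_A$ is realized by a morphism of $\underline{\CAlg}_\cT(\cC)^\otimes$ over an appropriate inert-then-pushforward composite in $\uFinpT$. One then verifies the two triangle identities.

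I expect the last point to be the main obstacle: promoting the unit, counit, and triangle identities from fiberwise data to genuine $\cT$-natural transformations of $\cT$-functors, and checking the Beck--Chevalley compatibility. My intention is to carry out the construction at the level of the span diagrams underlying \cref{conthm:MonoidalStructureOnAlgebras} and \cref{con:CanonicalAlgebraStructureOnAlgebra}; the base-change squares for the $f_\otimes$ are then automatic, since by \cref{thm:TwoPresentationsOfTSMCs} a $\cT$-symmetric monoidal structure is a product-preserving functor out of $\Span(\FinT)$, so it suffices to exhibit the fiberwise adjunction data naturally in $\cT^\op$. The conclusion may be rephrased as the identification of $\underline{\CAlg}_\cT(\cC)^\otimes$ with the $\cT$-cocartesian $\cT$-symmetric monoidal structure (\cref{exm:cartesian_monoidal}) on $\underline{\CAlg}_\cT(\cC)$. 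If a parametrized version of the coCartesian-monoidal criterion \cite[\S 2.4.3]{HA} is available, a more economical route is to apply it to the $\cT$-symmetric monoidal section $(-)^{\can}$ of the forgetful $\cT$-functor $\mathrm{U}$ of \cref{con:CanonicalAlgebraStructureOnAlgebra} and to the initiality of the unit; I would try that first.
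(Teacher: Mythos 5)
Your proposal follows essentially the same route as the paper's proof: reduce to exhibiting a fiberwise adjunction $f_\otimes \dashv f^\ast$ (Beck--Chevalley being automatic since the $f_\otimes$ already assemble into a product-preserving functor out of $\Span(\FinT)$), handle $f:\emptyset\to V$ via the initiality of the unit algebra, extract the counit from $A^{\can}$ evaluated on $f_+$, build the unit by inserting $1\to A$ into the complementary factors, and check the triangle identities. The paper makes your unit construction precise exactly as you sketch it --- via the equivalence $f^\ast f_\otimes\simeq(\pr_1)_\otimes\pr_2^\ast$ for the pullback $U\times_V U$ and the counit of the diagonal summand inclusion $\delta: U\to U\times_V U$ (a summand inclusion by atomicity), setting $\eta=(\pr_1)_\otimes(\epsilon_{\pr_2^\ast(-)})$ --- and then carries out the triangle identities using the canonical section $U_i\to U_i\times_V U\to U_i$ on each orbit of $U$, rather than first factoring $f$ through fold maps.
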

\begin{proof}
Since the base-change condition for left adjoints to the restriction functors $\{ f^\ast \}$ of $\underline{\CAlg}_{\cT}(\cC)$ to furnish finite $\cT$-coproducts is already satisfied by the maps $\{ f_{\otimes} \}$, it will suffice to construct unit and counit maps exhibiting $f_{\otimes}$ as left adjoint to $f^\ast$. Without loss of generality, we may suppose $V$ is an orbit, and after replacing $\cT$ by $\cT^{/V}$, we may suppose $V = \ast$ is the final object of $\cT$ so that $\underline{\CAlg}_{\cT}(\cC)_V \simeq \CAlg_{\cT}(\cC)$. In addition, by \cref{thm:InitialObjectsInAlgebrasWhenUnital}, $\CAlg_{\cT}(\cC)$ admits an initial object given by the unit $1$, which is $f_{\otimes} (\ast)$ for $f: \emptyset \to \ast$. We may thus suppose that $U$ is nonempty. Using \cref{con:CanonicalAlgebraStructureOnAlgebra}, given a $\cT$-commutative algebra $A$ we get
\[ A^{\can}: \uFinpT \to \underline{\CAlg}_{\cT}(\cC)^\otimes, \]
and we then get a map $f_{\otimes} f^{\ast} A \to A$ by applying $A^{\can}$ to $f_+: [U_+ \ra \ast] \to [\ast_+ \ra \ast]$ and factoring the resulting map $f^{\ast} A \to A$ through a cocartesian lift over $f_+$ in the base. Using the naturality of this procedure in $A$, we then obtain our candidate for the counit transformation $\epsilon: f_{\otimes} f^{\ast} \to \id$. To define the unit transformation $\eta: \id \to f^\ast f_\otimes$, consider the pullback square
\[ \begin{tikzcd}
U \times U \ar{r}{\pr_2} \ar{d}{\pr_1} & U \ar{d}{f} \\
U \ar{r}{f} & \ast
\end{tikzcd} \]
and the associated equivalence $f^{\ast} f_{\otimes} \simeq (\pr_1)_{\otimes} (\pr_2)^{\ast}$. The summand inclusion $\delta: U \to U \times U$ yields a natural transformation
$$\epsilon_{\pr_2^\ast(-)}:  \delta_\otimes \simeq \delta_{\otimes} (\delta^\ast \pr_2^\ast) = (\delta_{\otimes} \delta^{\ast} \pr_2^{\ast}) \to \pr_2^\ast,$$
which on objects $B \in \underline{\CAlg}_{\cT}(\cC)_U$ may be described as follows: if we write $U \times U \simeq U \coprod U'$ and $g = \pr_2|_{U'}$, then in terms of the decomposition $\underline{\CAlg}_{\cT}(\cC)_{U \times U} \simeq \underline{\CAlg}_{\cT}(\cC)_{U} \times \underline{\CAlg}_{\cT}(\cC)_{U'}$, we have that
$$\epsilon_{\pr_2^\ast B}:  \delta_\otimes(B) = (B, 1_{U'}) \to \pr_2^\ast(B) = (B, g^\ast B)$$
is given by the identity on the first factor and the unique map out of the initial object on the second factor. We then let
$$\eta = (\pr_1)_{\otimes} (\epsilon_{\pr_2^\ast (-)}):\id \simeq (\pr_1)_{\otimes} \delta_{\otimes} \to (\pr_1)_{\otimes} \pr_2^\ast \simeq f^\ast f_{\otimes}.$$

It remains to verify the triangle identities. Let $U \simeq \coprod_{i=1}^n U_i$ be an orbit decomposition of $U$, let $\iota_i: U_i \to U$ denote the inclusion, and let $f_i = f \circ \iota_i$. Observe that after pullback to $U_i$, the map $f$ acquires a canonical section, i.e., for all $1 \leq i \leq n$ we have a factorization of the identity map
\[ \id: U_i \xto{(\id,\iota_i)} U_i \times U \xto{p^i} U_i. \]
where $p^i$ denote the projection. Using $(-)^{\can}$, this furnishes a factorization
\[ \id: f_i^{\ast} A \to (p^i)_{\otimes} (p^i)^\ast (f_i^\ast A) \simeq (p^i)_{\otimes} (\pr_2)^{\ast} f^{\ast} A \to f_i^\ast A, \]
where we use the commutative square
\[ \begin{tikzcd}
U_i \times U \ar{r}{\pr_2} \ar{d}{p^i} & U \ar{d}{f} \\
U_i \ar{r}{f_i} & \ast
\end{tikzcd} \]
for the middle equivalence. To express this in more familiar terms, note that if we write $U_i \times U \simeq U_i \coprod U'_i$ and $q^i = p^i|_{U'_i}$, then we may identify this as
\[ f_i^{\ast} A \simeq f_i^{\ast} A \otimes 1_{U_i} \xto{\id \otimes 1} f_i^{\ast} A \otimes (q^i)_{\otimes} (q^i)^{\ast} A \xto{\id \otimes \epsilon} f_i^{\ast} A \otimes f_i^{\ast} A \xto{\epsilon} f_i^{\ast} A, \]
where $\otimes$ denotes the fiberwise tensor product on $\underline{\CAlg}_{\cT}(\cC)_{U_i}$ induced by the fold map $\nabla: U_i \coprod U_i \to U_i$.

Now regarding the composition $f^\ast A \xto{\eta f^\ast} f^\ast f_{\otimes} f^\ast A \xto{f^\ast \epsilon} f^\ast A$, by an elementary diagram chase we see that after pullback along $\iota_i$ this identifies with the factorization of $f_i^\ast A$ given above, which validates this half of the triangle identities.

Finally, we consider the composition $f_{\otimes} B \xto{f_{\otimes} \eta} f_{\otimes} f^{\ast} f_{\otimes} B \xto{\epsilon f^{\ast}} f_{\otimes} B$. By the $\cT$-symmetric monoidality of $(-)^{\can}$, we get a canonical equivalence $f_{\otimes} (B^{\can}) \simeq (f_{\otimes} B)^{\can}$ in $\CAlg_{\cT}(\underline{\CAlg}_{\cT}(\cC))$. If we then write $B = (A_1, ..., A_n)$ under the decomposition $\underline{\CAlg}_{\cT}(\cC)_{U} \simeq \prod_{i=1}^n \underline{\CAlg}_{\cT}(\cC)_{U_i}$, it follows that we obtain an equivalence
$$f_{\otimes} f^{\ast} f_{\otimes} B \simeq f_{\otimes}((p^1)_\otimes (p^1)^{\ast} A_1, ..., (p^n)_{\otimes} (p^n)^{\ast} A_n)$$
under which $\epsilon_{f_{\otimes} B} \simeq f_{\otimes}(\epsilon_{A_1},...,\epsilon_{A_n})$ and the composite $\epsilon_{f_{\otimes} B} \circ f_{\otimes} \eta_B$ identifies with $f_{\otimes}$ of the composite defined factorwise by the map $A_i \to (p^i)_{\otimes} (p^i)^{\ast} A_i \to A_i$ induced from $\id: U_i \xto{(\id, \iota_i)} U_i \times U \xto{p^i} U_i$. Since these all compose as identities, we deduce the other half of the triangle identities.
\end{proof}

\begin{corollary}
Let $\cC^\otimes$ be a $\cT$-symmetric monoidal $\cT$-$\infty$-category. Then the $\cT$-symmetric monoidal structure on $\underline{\CAlg}_{\cT}(\cC)$ of \cref{conthm:MonoidalStructureOnAlgebras} is cocartesian.
\end{corollary}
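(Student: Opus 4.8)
The plan is to show that the $\cT$-symmetric monoidal $\cT$-$\infty$-category $\underline{\CAlg}_{\cT}(\cC)^{\otimes}$ of \cref{conthm:MonoidalStructureOnAlgebras} is equivalent, over the identity on its underlying $\cT$-$\infty$-category, to the $\cT$-cocartesian $\cT$-symmetric monoidal structure $\underline{\CAlg}_{\cT}(\cC)^{\amalg}$ of \cref{exm:cartesian_monoidal}. Here the underlying $\cT$-$\infty$-category of $\underline{\CAlg}_{\cT}(\cC)^{\otimes}$ is $\underline{\CAlg}_{\cT}(\cC)$ by (the $\tau = I(-)_+$ case of) \cref{prp:PropertiesOfAlgebraConstrFromBifunctor}(1), and $\underline{\CAlg}_{\cT}(\cC)^{\amalg}$ is defined precisely because $\underline{\CAlg}_{\cT}(\cC)$ admits finite $\cT$-coproducts by \cref{thm:IndexedCoproductsAreTensorProducts}.

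First I would invoke \cref{thm:TwoPresentationsOfTSMCs} to present both structures as product-preserving functors $M, M^{\amalg} \colon \Span(\FinT) \to \Cat$. By construction in \cref{exm:cartesian_monoidal}, $M^{\amalg}$ is the unfurling (in the sense of Barwick \cite[\S 11]{M1}) of the Beck--Chevalley fibration $\underline{\CAlg}_{\cT}(\cC)^{\times} \to \FinT$, whose cartesian functoriality is restriction and whose cocartesian pushforwards are the indexed-coproduct functors $\coprod_f$. I would then identify $M$ on the two generating families of morphisms in $\Span(\FinT)$. On the backward maps $\FinT^{\op} \subset \Span(\FinT)$: unwinding \cref{conthm:MonoidalStructureOnAlgebras} through \cref{prp:PropertiesOfAlgebraConstrFromBifunctor}(1) and the $\cT$-Segal condition (\cref{cor:SymmetricMonoidalSegalCondition}) shows that $M|_{\FinT^{\op}}$ straightens the cartesian fibration $\underline{\CAlg}_{\cT}(\cC)^{\times} \to \FinT$ with its restriction functoriality. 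On the forward maps $\FinT \subset \Span(\FinT)$: the value of $M$ on $f$ is the norm functor $f_{\otimes}$, and \cref{thm:IndexedCoproductsAreTensorProducts} furnishes a canonical equivalence $f_{\otimes} \simeq \coprod_f$ realizing $f_{\otimes}$ as a relative left adjoint of the corresponding backward map $f^{\ast}$. Thus $M$ and $M^{\amalg}$ restrict to the same Beck--Chevalley fibration over $\FinT$.

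To conclude $M \simeq M^{\amalg}$ I would use the uniqueness aspect of the unfurling construction: a product-preserving functor $\Span(\FinT) \to \Cat$ is determined up to contractible choice by its underlying Beck--Chevalley fibration over $\FinT$, since the forward functoriality is forced to be the relative left adjoint of the backward functoriality together with its base-change coherences. Choosing the identity as the equivalence of underlying Beck--Chevalley fibrations, this produces an equivalence $M \simeq M^{\amalg}$ whose restriction along $\cT^{\op} \subset \FinT^{\op}$ is the identity on $\underline{\CAlg}_{\cT}(\cC)$; hence $\underline{\CAlg}_{\cT}(\cC)^{\otimes}$ is $\cT$-cocartesian. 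The main obstacle is the bookkeeping in the second paragraph: making precise that $M|_{\FinT^{\op}}$ is $\underline{\CAlg}_{\cT}(\cC)^{\times}$ with its restriction functoriality requires carefully tracing the bifunctor $\wedge$ and the functor $\pi_{\ast} G^{\ast}$ of \cref{conthm:MonoidalStructureOnAlgebras} through \cref{prp:PropertiesOfAlgebraConstrFromBifunctor}, and one must check that the equivalence $f_{\otimes} \simeq \coprod_f$ of \cref{thm:IndexedCoproductsAreTensorProducts} is compatible with the Beck--Chevalley (base-change) data on both sides, i.e. that it is an equivalence of relative left adjoints. Once this is in place the remaining step is purely formal. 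Alternatively, one could sidestep the unfurling uniqueness by verifying a parametrized form of the intrinsic characterization of cocartesian monoidal structures: the unit is $\cT$-initial (true here by \cref{thm:InitialObjectsInAlgebrasWhenUnital}, as $\uFinpT$ is unital), each fiberwise tensor product is the binary coproduct (the $f = \nabla$ case of \cref{thm:IndexedCoproductsAreTensorProducts}), and every norm functor is an indexed coproduct (the general case of \cref{thm:IndexedCoproductsAreTensorProducts}).
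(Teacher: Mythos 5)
The paper gives no proof of this corollary, treating it as an immediate consequence of \cref{thm:IndexedCoproductsAreTensorProducts}; your proposal correctly fills in that omitted argument, and the Beck--Chevalley compatibility you flag as the main obstacle is already handled in the paper's proof of \cref{thm:IndexedCoproductsAreTensorProducts}, which observes at the outset that the base-change condition for the putative left adjoints is satisfied by the maps $\{f_{\otimes}\}$, so that only the unit and counit remain to be produced. The one step of your first route that exceeds what the paper establishes is the uniqueness clause for the unfurling: that a product-preserving functor $\Span(\FinT) \to \Cat$ is determined up to contractible choice by its underlying Beck--Chevalley fibration over $\FinT$ is a genuine theorem (the universal property of span $\infty$-categories with respect to left-adjointable squares), not a formal consequence of uniqueness of left adjoints, and it is proved neither in this paper nor in the cited \cite[\S 11]{M1} --- though it is standard, so your argument is sound. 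Your alternative route would likewise require a parametrized analogue of the intrinsic characterization of cocartesian symmetric monoidal structures from \cite[\S 2.4.3]{HA}, which the paper does not supply either.
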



Next, we consider the more general case of an arbitrary $\cT$-indexing system $\cI$ (\cref{def:IndexingSystem}).

\begin{theorem} \label{thm:CoproductsAreTensorProductsIndexingSystem}
Let $\cC^\otimes$ be a $\cI$-symmetric monoidal $\cT$-$\infty$-category. Then for all orbits $V \in \cT$, the fiber $\underline{\CAlg}_{\cI}(\cC)_V$ admits finite coproducts, and for all morphisms $f: V \to W \in \cT$, the restriction functor $f^{\ast}: \underline{\CAlg}_{\cI}(\cC)_W \to \underline{\CAlg}_{\cI}(\cC)_V$ preserves finite coproducts. Moreover, finite coproducts are computed as tensor products in terms of the symmetric monoidal structures on the fibers $\underline{\CAlg}_{\cI}(\cC)_V$ constructed via \cref{conthm:MonoidalStructureOnAlgebras} applied to the bifunctor
\[ \wedge_{\cI}: \Com^{\otimes}_{\cT^{\simeq}} \times_{\cT^\op} \Com^{\otimes}_{\cI} \to \Com^{\otimes}_{\cI} \]
obtained by restriction of $\wedge: \uFinpT \times_{\cT^\op} \uFinpT \to \uFinpT$.
\end{theorem}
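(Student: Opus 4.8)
The plan is to deduce the statement from the argument already carried out for \cref{thm:IndexedCoproductsAreTensorProducts}, the point being that the fiberwise symmetric monoidal structure on $\underline{\CAlg}_\cI(\cC)$ and the computation of coproducts there use only fold maps and summand inclusions, all of which lie in $\overline{\cI}$ for any indexing system $\cI$.

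First I would observe that $\Com^\otimes_\cI$ is a \emph{unital} $\cT$-$\infty$-operad: since $\overline{\cI}$ contains all fold maps it contains all summand inclusions, so for every orbit $V$ the map $\emptyset \to V$ is active in $\Com^\otimes_\cI$ and hence $\Mul_{\Com_\cI}(\emptyset,x)\simeq\ast$. By \cref{thm:InitialObjectsInAlgebrasWhenUnital}, $\underline{\CAlg}_\cI(\cC)$ then admits a $\cT$-initial object, given fiberwise by the unit; in particular each fiber $\underline{\CAlg}_\cI(\cC)_V$ has an initial object, which handles the empty coproduct. Next I would check that the restriction $\wedge_\cI\colon \Com^\otimes_{\cT^\simeq}\times_{\cT^\op}\Com^\otimes_\cI \to \Com^\otimes_\cI$ of $\wedge$ is a well-defined bifunctor of $\cT$-$\infty$-operads: the homotopy commutativity with $\wedge$ on $\uFinpT$ and the preservation of pairs of inert edges are inherited, and that $\wedge$ carries $\Com^\otimes_{\cT^\simeq}\times_{\cT^\op}\Com^\otimes_\cI$ into $\Com^\otimes_\cI$ follows from the analysis of \cref{exm:CommutativeOperads}, using that $\overline{\cI}$ is closed under pullback and composition and contains the fold maps, so that the smash of a fold map with an $\cI$-map factors as a fold map followed by an $\cI$-map. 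Feeding $\wedge_\cI$ and the $\Com_\cI$-monoidal $\cT$-$\infty$-category $\cC^\otimes$ into \cref{conthm:MonoidalStructureOnAlgebras} produces the $\Com_{\cT^\simeq}$-monoidal, i.e.\ (by \cref{cor:MinimalIndexingSystemIsFiberwiseCommutative}) fiberwise symmetric monoidal, $\cT$-$\infty$-category $\underline{\CAlg}_\cI(\cC)^\otimes \to \Com^\otimes_{\cT^\simeq}$ of the statement. One also records the restriction $\wedge\colon\Com^\otimes_{\cT^\simeq}\times_{\cT^\op}\Com^\otimes_{\cT^\simeq}\to\Com^\otimes_{\cT^\simeq}$ together with the associativity constraint $\wedge_\cI\circ(\id\times_{\cT^\op}\wedge_\cI)\simeq\wedge_\cI\circ(\wedge\times_{\cT^\op}\id)$ inherited from the associativity of $\wedge$ on $\uFinpT$, which is exactly what \cref{con:CanonicalAlgebraStructureOnAlgebra} needs in order to supply the canonical $\cT^\simeq$-symmetric monoidal functor $(-)^{\can}\colon \underline{\CAlg}_\cI(\cC)^\otimes \to \underline{\CAlg}_{\cT^\simeq}(\underline{\CAlg}_\cI(\cC))^\otimes$ split by the evaluation $\mathrm{U}$.

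To identify finite coproducts with tensor products I would argue fiberwise. By \cref{prop:PropertiesOfAlgebraConstrFromBifunctor}(1), for an orbit $V$ we have $\underline{\CAlg}_\cI(\cC)_V \simeq \CAlg_{\cI'}(\cC_{\underline V})$ for the indexing system $\cI'$ induced on $\cT^{/V}$, compatibly with the fiberwise monoidal structures, so after replacing $\cT$ by $\cT^{/V}$ it suffices to treat $\CAlg_\cI(\cC)$ when $\cT$ has a final object $\ast$. Having already disposed of the empty coproduct, I would run the proof of \cref{thm:IndexedCoproductsAreTensorProducts} verbatim, but only for the fold map $f\colon U = \ast^{\sqcup n}\to\ast$ with $n\geq 1$: every map of finite $\cT$-sets appearing there --- $f$ itself; the projections $\pr_1,\pr_2\colon U\times_\ast U\cong\ast^{\sqcup n^2}\to U$, which are again fold maps; the diagonal $\delta\colon U\to U\times U$ and the summand inclusions $(\id,\iota_i)\colon U_i\to U_i\times U$; and the projections $p^i\colon U_i\times U\to U_i$, which are fold maps --- lies in $\overline{\cI}$, so the norm functors $f_\otimes$, $(\pr_1)_\otimes$, $\delta_\otimes$, $(p^i)_\otimes$ and the restrictions $\pr_2^\ast$, $\iota_i^\ast$ invoked in that proof are all available in the $\cI$-symmetric monoidal $\cC^\otimes$. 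The functor $(-)^{\can}$ is applied there only to fold maps $f_+$, which are morphisms of $\Com^\otimes_{\cT^\simeq}$, so it suffices. This yields the counit $\epsilon\colon f_\otimes f^\ast\to\id$ and, via $\delta$ and the base-change equivalence $f^\ast f_\otimes\simeq(\pr_1)_\otimes\pr_2^\ast$, the unit $\eta\colon\id\to f^\ast f_\otimes$; the triangle identities are verified by the same diagram chases, now entirely within maps of $\overline{\cI}$. Letting $n$ vary identifies all finite coproducts in $\CAlg_\cI(\cC)$ with the iterated fiberwise tensor product.

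Finally, for compatibility with restriction: since $\underline{\CAlg}_\cI(\cC)^\otimes\to\Com^\otimes_{\cT^\simeq}$ is $\cT^\simeq$-symmetric monoidal, each restriction functor $f^\ast\colon\underline{\CAlg}_\cI(\cC)_W\to\underline{\CAlg}_\cI(\cC)_V$ for $f\colon V\to W$ in $\cT$ is symmetric monoidal for the fiberwise structures (by \cref{cor:MinimalIndexingSystemIsFiberwiseCommutative}), hence preserves the unit and the binary tensor product; combined with the identification of units with initial objects and of tensor products with coproducts in each fiber (which we have proved for every orbit), this shows $f^\ast$ preserves finite coproducts. The main obstacle I expect is bookkeeping rather than anything conceptual: verifying carefully that $\wedge_\cI$ really lands in $\Com^\otimes_\cI$ and participates in the associativity constraint needed for $(-)^{\can}$, and confirming that every morphism of finite $\cT$-sets occurring in the proof of \cref{thm:IndexedCoproductsAreTensorProducts}, once specialized to fold maps and summand inclusions, indeed belongs to $\overline{\cI}$ so that the relevant norm functors exist.
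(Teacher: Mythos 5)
Your proposal is correct and follows essentially the same route as the paper: the paper's proof simply declares the argument "exactly analogous" to \cref{thm:IndexedCoproductsAreTensorProducts}, substituting the composition of the spans for $\wedge_{\cI}$ and $\wedge_{\cT^{\simeq}}$ to build the analogue of $(-)^{\can}$ and invoking \cref{cor:MinimalIndexingSystemIsFiberwiseCommutative}, which is precisely what you do. Your additional bookkeeping — checking that $\Com^\otimes_{\cI}$ is unital, that $\wedge_{\cI}$ is a genuine bifunctor landing in $\Com^\otimes_{\cI}$, and that every map of finite $\cT$-sets occurring in the earlier proof lies in $\overline{\cI}$ once specialized to fold maps and summand inclusions — is exactly the verification the paper leaves implicit.
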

\begin{proof}
The proof is exactly analogous to that of \cref{thm:IndexedCoproductsAreTensorProducts}, where in place of \cref{con:CanonicalAlgebraStructureOnAlgebra} we instead use the composition of the span involving $\wedge_{\cI}$ with that involving $\wedge_{\cT^{\simeq}}$ to define the $\cT^{\simeq}$-symmetric monoidal $\cT$-functor
\[ \underline{\CAlg}_{\cI}(\cC)^\otimes \to \underline{\CAlg}_{\cT^{\simeq}}(\underline{\CAlg}_{\cI}(\cC))^\otimes, \]
and also the identification of \cref{cor:MinimalIndexingSystemIsFiberwiseCommutative}.
\end{proof}



\begin{corollary}
Let $\cC^\otimes$ be a distributive $\cT$-symmetric monoidal $\cT$-$\infty$-category such that $\cC$ is fiberwise presentable, let $\cI$ be a $\cT$-indexing system, and write $\cC^\otimes_{\cI} = \Com^\otimes_{\cI} \times_{\uFinpT} \cC^\otimes$. Let 
\[ \mathrm{U}: \underline{\CAlg}_{\cT}(\cC) \to \underline{\CAlg}_{\cI}(\cC_{\cI}) \]
be the forgetful $\cT$-functor implemented by restriction along $\Com^\otimes_{\cI} \subset \uFinpT$. Then for all $V \in \cT$, $\mathrm{U}_V$ is a conservative functor of presentable $\infty$-categories that preserves all small limits and colimits, and is hence comonadic. In particular, $\underline{\CAlg}_{\cT}(\cC)_V$ is comonadic over $\CAlg(\cC_V)$. 
\end{corollary}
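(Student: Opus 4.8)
The plan is to check, for each orbit $V \in \cT$, that $\mathrm{U}_V$ is a conservative, limit- and colimit-preserving functor between presentable $\infty$-categories; comonadicity is then automatic from the comonadic dual of the Barr--Beck--Lurie theorem \cite[Thm.~4.7.3.5]{HA}, since a colimit-preserving functor between presentable $\infty$-categories admits a right adjoint (\cite[Cor.~5.5.2.9]{HTT}) and preservation of all small limits covers in particular totalizations of $\mathrm{U}_V$-split cosimplicial objects. After replacing $\cT$ by $\cT^{/V}$ -- under which $\cC$ becomes the distributive, fiberwise presentable $\cT^{/V}$-symmetric monoidal $\cC_{\underline V}$, $\cI$ becomes $\cI_{/V}$, and $\underline{\CAlg}$ is formed compatibly -- it suffices to treat $\mathrm{U} = \mathrm{U}_\ast \colon \CAlg_{\cT}(\cC) \to \CAlg_{\cI}(\cC_{\cI})$ when $\cT$ has a terminal object $\ast$. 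First I would note that distributivity and fiberwise presentability pass to $\cC^{\otimes}_{\cI} = \Com^{\otimes}_{\cI} \times_{\uFinpT} \cC^{\otimes}$: the fiberwise active edges of $\Com^{\otimes}_{\cI}$ form a subclass of those of $\uFinpT$ with the same norm functors, and $\cC_{\cI} \simeq \cC$ as $\cT$-$\infty$-categories since $\Com_{\cI} \simeq \cT^{\op} \simeq \Com_{\cT}$. Hence both $\CAlg_{\cT}(\cC) = \Alg_{\uFinpT,\cT}(\uFinpT,\cC)$ and $\CAlg_{\cI}(\cC_{\cI})$ are presentable by \cref{thm:ColimitsInAlgebras}(4).

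Next, consider the forgetful functors $\mathrm{U}' \colon \CAlg_{\cT}(\cC) \to \cC_{\ast}$ and $\mathrm{U}'' \colon \CAlg_{\cI}(\cC_{\cI}) \to (\cC_{\cI})_{\ast} = \cC_{\ast}$ to the underlying objects. Both are restriction along $\cT^{\op} \xrightarrow{I_+} \Com^{\otimes}_{\cI} \subset \uFinpT$, so $\mathrm{U}' \simeq \mathrm{U}'' \circ \mathrm{U}$. By the underlying-level content of \cref{thm:LimitsInAlgebras}(2) and \cref{thm:ColimitsInAlgebras}(2), $\mathrm{U}'$ and $\mathrm{U}''$ each preserve all small limits and sifted colimits and are conservative; for limits this rests only on fiberwise presentability, since limits in $\CAlg$ are computed pointwise over $\uFinpT$ and the pushforwards along inert edges are projections by the $\cT$-Segal condition. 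Conservativity of $\mathrm{U}$ is then immediate from $\mathrm{U}' \simeq \mathrm{U}'' \circ \mathrm{U}$. For preservation: given a small limit or sifted colimit diagram in $\CAlg_{\cT}(\cC)$, its image under $\mathrm{U}$ admits a limit/colimit in the presentable $\CAlg_{\cI}(\cC_{\cI})$; applying the conservative $\mathrm{U}''$ to the canonical comparison map, and using that $\mathrm{U}''$ and $\mathrm{U}' \simeq \mathrm{U}'' \circ \mathrm{U}$ both preserve the relevant limit/colimit, shows the comparison is an equivalence. Thus $\mathrm{U}$ preserves small limits and sifted colimits.

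It then remains to see $\mathrm{U}$ preserves finite coproducts, for then it preserves all small colimits (every small colimit is generated by finite coproducts and sifted colimits, as in the reduction in the proof of \cref{thm:ColimitsInAlgebras}(3) via \cite[Cor.~12.15]{Exp2}). Initial objects are the units $1$ and $1_{\cI}$ by \cref{thm:InitialObjectsInAlgebrasWhenUnital} (both $\uFinpT$ and $\Com^{\otimes}_{\cI}$ are unital), and $\mathrm{U}$ carries $1$ to $1_{\cI}$, so $\mathrm{U}$ preserves initial objects. For binary coproducts, recall from \cref{thm:IndexedCoproductsAreTensorProducts} and \cref{thm:CoproductsAreTensorProductsIndexingSystem} that these are computed by the fold-map norm $\nabla_{\otimes}$ of the $\cT^{\simeq}$-symmetric monoidal structures on $\underline{\CAlg}_{\cT}(\cC)$ and $\underline{\CAlg}_{\cI}(\cC_{\cI})$ coming, via \cref{conthm:MonoidalStructureOnAlgebras}, from the bifunctors $\wedge$ and $\wedge_{\cI}$ (restricted to the parameter operad $\Com^{\otimes}_{\cT^{\simeq}}$). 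Since $\Com^{\otimes}_{\cI} \subset \uFinpT$ is a morphism of $\cT$-$\infty$-operads compatible with $\wedge$, functoriality of \cref{conthm:MonoidalStructureOnAlgebras} in its algebra-input and target operads, together with \cref{prop:PropertiesOfAlgebraConstrFromBifunctor}, refines $\mathrm{U}$ to a strong $\cT^{\simeq}$-symmetric monoidal $\cT$-functor $\mathrm{U}^{\otimes} \colon \underline{\CAlg}_{\cT}(\cC)^{\otimes} \to \underline{\CAlg}_{\cI}(\cC_{\cI})^{\otimes}$ over $\Com^{\otimes}_{\cT^{\simeq}}$. Being strong monoidal, $\mathrm{U}^{\otimes}$, hence $\mathrm{U}$ on fibers, commutes with the norm functors of these structures, in particular with $\nabla_{\otimes}$; combined with preservation of initial objects this gives preservation of all finite coproducts, completing the verification that $\mathrm{U}_V$ is comonadic. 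The final assertion follows by taking $\cI = \cT^{\simeq}$ and using \cref{cor:MinimalIndexingSystemIsFiberwiseCommutative} to identify $\underline{\CAlg}_{\cT^{\simeq}}(\cC_{\cT^{\simeq}})_V \simeq \CAlg(\cC_V)$.

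The main obstacle is the finite-coproduct step: one must reconcile the two slightly different set-ups of the symmetric monoidal structures on algebra $\infty$-categories used in \cref{thm:IndexedCoproductsAreTensorProducts} versus \cref{thm:CoproductsAreTensorProductsIndexingSystem}, and check carefully that restriction along $\Com^{\otimes}_{\cI} \subset \uFinpT$ is compatible with them -- i.e., that the induced $\mathrm{U}^{\otimes}$ is genuinely strong monoidal over $\Com^{\otimes}_{\cT^{\simeq}}$ -- which requires chasing through the bifunctor construction of \cref{conthm:MonoidalStructureOnAlgebras} and its functoriality. Everything else is a routine assembly of results already in the text.
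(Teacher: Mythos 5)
Your proof follows essentially the same route as the paper's: reduce to the terminal orbit, obtain presentability from \cref{thm:ColimitsInAlgebras}(4), deduce conservativity and preservation of small limits and sifted colimits from \cref{thm:LimitsInAlgebras}(2) and \cref{thm:ColimitsInAlgebras}(2) via the underlying-object forgetful functors, handle finite coproducts through their identification with tensor products in \cref{thm:IndexedCoproductsAreTensorProducts} and \cref{thm:CoproductsAreTensorProductsIndexingSystem}, and conclude by Barr--Beck--Lurie. The only substantive difference is that you make explicit the compatibility of $\mathrm{U}$ with the two monoidal structures (via the bifunctor construction of \cref{conthm:MonoidalStructureOnAlgebras}), a point the paper's proof leaves implicit, and this added detail is correct.
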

\begin{proof} To reduce notational clutter, let us replace $\cT$ by $\cT^{/V}$ so that $V = \ast$ is a terminal object of $\cT$. By \cref{thm:ColimitsInAlgebras}(4), both $\CAlg_{\cT}(\cC)$ and $\CAlg_{\cI}(\cC_{\cI})$ are presentable. Since the forgetful functor to $\cC_{\ast}$ is conservative for a reduced $\cT$-$\infty$-operad, $\mathrm{U}_{\ast}$ is conservative as well. By \cref{thm:LimitsInAlgebras}(2), $\mathrm{U}_{\ast}$ preserves all small limits. By \cref{thm:ColimitsInAlgebras}(2), $\mathrm{U}_{\ast}$ preserves all sifted colimits. By \cref{thm:IndexedCoproductsAreTensorProducts} and \cref{thm:CoproductsAreTensorProductsIndexingSystem}, $\mathrm{U}_{\ast}$ preserves all finite coproducts. It follows that $\mathrm{U}_{\ast}$ preserves all small colimits and hence admits a right adjoint by the adjoint functor theorem, so we are entitled to ask about the comonadicity of $\mathrm{U}_{\ast}$. The conclusion then follows from the Barr--Beck--Lurie Theorem \cite[Thm.~4.7.3.5]{HTT}.
\end{proof}

\section{\texorpdfstring{$\cT$}{T}-symmetric monoidal structure on \texorpdfstring{$\cT$}{T}-presheaves}

Let $\cC$ be a $\cT$-symmetric monoidal $\cT$-$\infty$-category. In this section, we construct a $\cT$-symmetric monoidal structure on the $\cT$-$\infty$-category of $\cT$-presheaves $\underline{\PShv}_{\cT}(\cC)$ such that for any $\cT$-distributive $\cT$-symmetric monoidal $\cT$-$\infty$-category $\cD$, the universal mapping property  
\[ \underline{\Fun}^{L}_{\cT}(\underline{\PShv}_{\cT}(\cC), \cD) \xto{\simeq} \underline{\Fun}_{\cT}(\cC, \cD) \]
of \cite[Thm.~11.5]{Exp2} refines to an equivalence
\[ \underline{\Fun}^{L, \otimes}_{\cT}(\underline{\PShv}_{\cT}(\cC), \cD) \xto{\simeq} \underline{\Fun}^{\otimes}_{\cT}(\cC, \cD); \]
cf. \cref{cor:UMP_presheaves}. This result has also been achieved by Hilman as \cite[Thm.~2.3.6]{hilman2022}.

We first extend our discussion of universal constructions from \cite{Exp2b} by proving the parametrized analogue of \cite[Prop.~5.3.6.2]{HTT}.\footnote{For this, we need not suppose that $\cT$ is atomic.} Let $\CMcal{K} = \{ \CMcal{K}_{U} : U \in \cT \}$ be a collection of classes $\CMcal{K}_{U}$ of small $\cT^{/U}$-$\infty$-categories such that for each morphism $f: U \to V$ in $\cT$, $f^*(\CMcal{K}_{V}) \subset \CMcal{K}_{U}$; we call such a collection \emph{closed}. For each $V \in \cT$, let $\CMcal{K}|_{V} = \{ \CMcal{K}_{U} : [f: U \rightarrow V] \in \cT \}$. Recall the following definition from \cite[Def.~2.8]{Exp2b}

\begin{dfn} \label{dfn:classes_of_diagrams}
Let $\cC$ be a $\cT$-$\infty$-category. We say that $\cC$ \emph{strongly admits} $\ccK$-indexed $\cT$-colimits if for all $U \in \cT$, $\cC_{\underline{U}}$ admits $\ccK_U$-indexed $\cT^{/U}$-colimits. Likewise, for any $V \in \cT$, we may refer to $\cC_{\underline{V}}$ strongly admitting $\ccK|_V$-indexed $\cT^{/V}$-colimits.

Given a $\cT$-functor $F: \cC \to \cD$, we say that $F$ \emph{strongly preserves} $\ccK$-indexed $\cT$-colimits if for all $U \in \cT$, the $\cT^{/U}$-functor $F_{\underline{U}}: \cC_{\underline{U}} \to \cD_{\underline{U}}$ preserves all $\ccK_U$-indexed $\cT^{/U}$-colimits. Likewise, for any $V \in \cT$, we may refer to $F_{\underline{V}}$ strongly preserving $\ccK|_V$-indexed $\cT^{/V}$-colimits. We then let
\[ \underline{\Fun}^{\ccK}_{\cT}(\cC, \cD) \subset  \underline{\Fun}_{\cT}(\cC, \cD) \]
be the full $\cT$-subcategory spanned in each fiber over $V \in \cT$ by those $\cT^{/V}$-functors that strongly preserve $\ccK|_V$-indexed $\cT^{/V}$-colimits. Note that the global sections $\Fun^{\ccK}_{\cT}(\cC, \cD)$ of $\underline{\Fun}^{\ccK}_{\cT}(\cC, \cD)$ is then the full subcategory of $\Fun_{\cT}(\cC,\cD)$ spanned by those $\cT$-functors that strongly preserve $\ccK$-indexed $\cT$-colimits.

Suppose given a collection $\CMcal{R} = \{ \CMcal{R}_U: U \in \cT \}$ of classes $\CMcal{R}_U = \{ \overline{p_{\alpha}}: \cK_{\alpha}^{\underline{\rhd}} \to \cC_{\underline{U}} \}$ of $\cT^{/U}$-diagrams in $\cC_{\underline{U}}$ (which are not necessarily $\cT^{/U}$-colimit diagrams), such that for each morphism $f: U \to V$ in $\cT$, $f^*(\CMcal{R}_{V}) \subset \CMcal{R}_{U}$. Then a $\cT$-functor $F: \cC \to \cD$ \emph{strongly preserves} $\ccR$-indexed $\cT$-colimits if for all $U \in \cT$, $F_{\underline{U}}: \cC_{\underline{U}} \to \cD_{\underline{U}}$ sends each $\overline{p_{\alpha}}$ to $\cT^{/U}$-colimit diagram in $\cD_{\underline{U}}$. Likewise, for any $V \in \cT$ we have the same notion for $F_{\underline{V}}$ with respect to $\CMcal{R}|_V$. We then let
\[ \underline{\Fun}^{\ccR}_{\cT}(\cC, \cD) \subset  \underline{\Fun}_{\cT}(\cC, \cD) \]
be defined as before.
\end{dfn}

\begin{prp} \label{prp:ElaboratedYoneda}
Let $\cC$ be a $\cT$-$\infty$-category and let $\CMcal{R}$ be as in \cref{dfn:classes_of_diagrams}, such that each $\cK_{\alpha}$ lies in $\CMcal{K}_U$.

 Then there exists a $\cT$-$\infty$-category $\underline{\PShv}^{\CMcal{K}}_{\CMcal{R}}(\cC)$ and a $\cT$-functor $j: \cC \to \underline{\PShv}^{\CMcal{K}}_{\CMcal{R}}(\cC)$ such that:
\begin{enumerate}
\item $\underline{\PShv}^{\CMcal{K}}_{\CMcal{R}}(\cC)$ strongly admits $\CMcal{K}$-indexed $\cT$-colimits.

\item For all $\cT$-$\infty$-categories $\cD$ such that $\cD$ strongly admits $\CMcal{K}$-indexed $\cT$-colimits, precomposition with $j$ induces an equivalence of $\cT$-$\infty$-categories
\begin{equation*}
j^*: \underline{\Fun}^{\CMcal{K}}_{\cT}(\underline{\PShv}^{\CMcal{K}}_{\CMcal{R}}(\cC), \cD) \xto{\simeq} \underline{\Fun}^{\CMcal{R}}_{\cT}(\cC,\cD)
\end{equation*}
which upon passage to global sections yields an equivalence of $\infty$-categories
\begin{equation*}
j^*: \Fun^{\CMcal{K}}_{\cT}(\underline{\PShv}^{\CMcal{K}}_{\CMcal{R}}(\cC), \cD) \xto{\simeq} \Fun^{\CMcal{R}}_{\cT}(\cC,\cD).
\end{equation*}
\item Suppose that all $\cT^{/U}$-functors $\overline{p_{\alpha}}: \cK_{\alpha}^{\underline{\rhd}} \to \cC_{\underline{U}}$ in $\ccR_U$ are $\cT^{/U}$-colimit diagrams. Then $j$ is fully faithful.
\end{enumerate}
\end{prp}
\begin{proof}
The proof is essentially identical to that of \cite[Prop.~5.3.6.2]{HTT}; we spell out a few details for the reader's benefit. After enlarging universes, we may suppose $\cC$ is small.\footnote{Since we suppose $\cT$ is small, a $\cT$-$\infty$-category $\cC$ is small if and only if it is fiberwise small.} Let $j_0: \cC \to \underline{\PShv}_{\cT}(\cC)$ be the $\cT$-Yoneda embedding. For every $\overline{p_{\alpha}} \in \ccR_U$, let $p_{\alpha} = \overline{p_{\alpha}}|_{\cK_{\alpha}}$, let $Y_{\alpha} \in \underline{\PShv}_{\cT}(\cC)_U$ be the image of the cone point (in the fiber over $U$) under $\overline{p_{\alpha}}$, let $X_{\alpha} \in \underline{\PShv}_{\cT}(\cC)_U$ be a $\cT^{/U}$-colimit for $(j_0)_{\underline{U}} \circ p_{\alpha}: \cK_{\alpha} \to \underline{\PShv}_{\cT}(\cC)_{\underline{U}}$, let $s_{\alpha}: X_{\alpha} \to j_0(Y_{\alpha})$ be the induced map, and let $S_U = \{ s_{\alpha} \}$.

Note that for all morphisms $f: U \to V$ in $\cT$, $f^* S_V \subset S_U$ by our closure hypothesis on $\ccR$. Thus, we may form the full $\cT$-subcategory $S^{-1} \underline{\PShv}_{\cT}(\cC) \subset \underline{\PShv}_{\cT}(\cC)$ given on each fiber over $U \in \cT$ by the full subcategory of $S_U$-local objects of $\PShv_{\cT^{/U}}(\cC_{\underline{U}}) = \underline{\PShv}_{\cT}(\cC)_U$. We then have a $\cT$-left adjoint $L: \underline{\PShv}_{\cT}(\cC) \to S^{-1} \underline{\PShv}_{\cT}(\cC)$ given fiberwise by the usual localization. Finally, we define $\underline{\PShv}^{\CMcal{K}}_{\CMcal{R}}(\cC)$ to be the smallest full $\cT$-subcategory of $S^{-1} \underline{\PShv}_{\cT}(\cC)$ which contains the essential image of $L \circ j_0$ and such that for each $U \in \cT$, $\underline{\PShv}^{\CMcal{K}}_{\CMcal{R}}(\cC)$ is closed under $\ccK_U$-indexed colimits, and we let $j = L \circ j_0$ be the induced map.

Given this construction, the verification of properties (1)--(3) proceeds exactly as in the proof of \cite[Prop.~5.3.6.2]{HTT} (with parametrized analogues of non-parametrized statements involving colimits, left Kan extensions, etc. substituted as appropriate).
\end{proof}

\begin{rem}
If $\ccK = \All$ then we may also write $\underline{\PShv}^{\CMcal{K}}_{\CMcal{R}}(\cC)$ as $\underline{\PShv}_{\CMcal{R}}(\cC)$.
\end{rem}

Now let $\uCatT^{\otimes} \to \uFinpT$ be the $\cT$-symmetric monoidal $\cT$-$\infty$-category given by the $\cT$-cartesian $\cT$-symmetric monoidal structure on $\uCatT$ (\cref{exm:cartesian_monoidal}). Objects of $\uCatT^{\otimes}$ are then given by tuples
$$( [f: U \ra V] \in \uFinpT, \: \cC_1 \in \Cat_{\cT^{/U_1}}, \: ..., \: \cC_n \in \Cat_{\cT^{/U_n}} )$$
where $U \simeq U_1 \sqcup ... \sqcup U_n$ is an orbit decomposition.

To describe morphisms in $\uCatT^{\otimes}$, consider a morphism
\[ \begin{tikzcd}[row sep=4ex, column sep=4ex, text height=1.5ex, text depth=0.25ex]
U \ar{d}{f} & Z \ar{l} \ar{d} \ar{r}{m} & X \ar{d}{g} \\
V & Y \ar{l} \ar{r}{=} & Y
\end{tikzcd} \]
in $\uFinpT$, let $U \simeq \bigsqcup_{i=1}^n U_i$, $X \simeq \bigsqcup_{j=1}^m X_j$ be orbit decompositions, and let $m_j: Z_j \simeq X_j \times_Y Z \to X_j$ be the restriction over the summand $X_j$. Let $\{ \cC_i \in \Cat_{\cT^{/U_i}} \}$ and $\{ \cD_j \in \Cat_{\cT^{/X_j}} \}$. Let $\cC = \bigsqcup_{i=1}^n \cC_i$ denote the given $\cT^{/U}$-$\infty$-category and let $\pr^* \cC$ denote the pullback of $\cC$ to a $\cT^{/U \times_V Y}$-$\infty$-category along the projection $U \times_V Y \to U$. By definition, the map $Z \to U \times_V Y$ is a summand inclusion; let $\cC'$ denote the corresponding summand of $\cC$ regarded as a $\cT^{/Z}$-$\infty$-category, and also let $\cC'_j$ denote the $\cT^{/Z_j}$-$\infty$-category given by the further summand of $\cC'$. Then a morphism 
\[(f, \{ \cC_i \} ) \to (g, \{ \cD_j \}) \]
is given by a collection of $\cT^{/X_j}$-functors $F_j: (m_j)_*(\cC'_j) \to \cD_j$, or written more concisely, a $\cT^{/X}$-functor $F: m_* (\cC') \to \cD$.

\begin{dfn}
Let $\cM \subset \uCatT^{\otimes} \times \Delta^1$ be the subcategory defined as follows:

\begin{itemize}
\item $\cM_0 = \uCatT^{\otimes}$.
\item An object $([f: U \ra V], \: \cC_1 \in \Cat_{\cT^{/U_1}}, \: ..., \: \cC_n \in \Cat_{\cT^{/U_n}} )$ belongs to $\uCatT^{\otimes} \times \{ 1 \}$ if and only if each $\cC_i$ strongly admits $\cT^{/U_i}$-colimits.

\item All morphisms $(f, \{ \cC_i \}, 0) \to (g, \{ \cD_j \}, 1)$ belong to $\cM$.

\item A morphism $(f, \{ \cC_i \}, 1) \to (g, \{ \cD_j \}, 1)$ belongs to $\cM$ if and only if each $F_j$ is $\cT^{/X_i}$-distributive.
\end{itemize}

Let $p: \cM \to \uFinpT \times \Delta^1$ denote the composite of the inclusion and the structure map.

\end{dfn}

For the following, we have implicitly extended the notion of (closed) collection $\ccK = \{ \ccK_U \}$ to be indexed over all finite $\cT$-sets $U$.

\begin{ntn}
Let $f: U \to V$ be a morphism in $\FinT$ and let $\cC$ be a $\cT^{/U}$-$\infty$-category. Let $\ccR = \{ \ccR_{\alpha} : [U' \xrightarrow{\alpha} U] \in \FinT \}$ be a closed collection of diagrams in $\cC$, $\ccR_{\alpha} = \{ \overline{p_{\alpha}}: \cK_{\alpha}^{\underline{\rhd}} \to \cC_{\underline{U'}} \}$. We let $f_* \ccR$ denote the closed collection of diagrams in $f_* \cC$ specified at each morphism $\gamma: V' \to V$ as follows:

\begin{itemize}
\item[($\ast$)] Let $U' = U \times_V V'$ and let $f': U' \to V'$ be the pullback of $f$. For every $\cT^{/U'}$-diagram $\overline{p_{\alpha}}: \cK_{\alpha}^{\underline{\rhd}} \to \cC_{\underline{U'}}$ in $\ccR_{U'}$, we may form the $\cT^{/V'}$ diagram
\[ f'_*(\cK_{\alpha})^{\underline{\rhd}} \xto{\can} f'_* ( \cK_{\alpha}^{\underline{\rhd}}) \xto{f'_*(\overline{p_{\alpha}})} f'_*(\cC_{\underline{U'}}) \simeq (f_* \cC)_{\underline{V'}}. \]
Let $(f_* \cC)_{\gamma}$ be the set of these diagrams.
\end{itemize}

\end{ntn}

\begin{ntn}
Given a finite $\cT$-set $U$ with orbit decomposition $U \simeq \bigsqcup_{i=1}^n U_i$, let $\underline{\PShv}_{\cT^{/U}}(-)$ be given by the coproduct of the $\underline{\PShv}_{\cT^{/U_i}}(-)$.
\end{ntn}

\begin{lem} \label{lem:UniversallyLKEexample}
Let $f: U \to V$ be a morphism of finite $\cT$-sets and let $\cC$ be a $\cT^{/U}$-$\infty$-category. Consider the $\cT^{/V}$-functor
\[ \phi: f_* \underline{\PShv}_{\cT^{/U}}(\cC) \to \underline{\PShv}_{\cT^{/V}}(f_* \cC) \]
given by the composite of the $\cT^{/U}$-Yoneda embedding (for $f_* \underline{\PShv}_{\cT^{/U}}(\cC)$) and restriction along $f_*$ of the $\cT^{/V}$-Yoneda embedding (for $\cC$). Then $\phi$ is $\cT^{/V}$-distributive.
\end{lem}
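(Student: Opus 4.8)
The plan is to reduce the distributivity of $\phi$ to the already-established distributivity of the $\cT$-cartesian $\cT$-symmetric monoidal structure on $\underline{\Spc}$, via the pointwise description of $\cT$-colimits in $\cT$-presheaf $\infty$-categories.

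First I would record that $\phi$ is compatible with base change: the $\cT$-Yoneda embedding commutes with pullback, $\underline{\PShv}_{\cT^{/V}}(-)$ commutes with pullback, and $g^{\ast} f_{\ast} \simeq f'_{\ast} (g')^{\ast}$ by the Beck--Chevalley equivalence (using that $\cT$ is orbital, so $\FinT$ admits pullbacks). Consequently, for any pullback square of finite $\cT$-sets as in the definition of a distributive functor, $g^{\ast}\phi$ is identified, under the evident equivalences $g^{\ast} f_{\ast}\underline{\PShv}_{\cT^{/U}}(\cC) \simeq f'_{\ast}\underline{\PShv}_{\cT^{/U'}}((g')^{\ast}\cC)$ and $g^{\ast}\underline{\PShv}_{\cT^{/V}}(f_{\ast}\cC) \simeq \underline{\PShv}_{\cT^{/V'}}(f'_{\ast}(g')^{\ast}\cC)$, with the functor attached to the map $f' \colon U' \to V'$ and the $\cT^{/U'}$-$\infty$-category $(g')^{\ast}\cC$. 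Thus the distributivity condition for $\phi$ at the given pullback square is exactly the assertion that this $f'$-version of $\phi$ strongly preserves diagrams of the form $(f'_{\ast}\cK)^{\underline{\rhd}} \xto{\mathrm{can}} f'_{\ast}(\cK^{\underline{\rhd}}) \xto{f'_{\ast}\overline{p}} f'_{\ast}(g')^{\ast}\cC$ with $\overline{p}$ a $\cT^{/U'}$-colimit diagram. It therefore suffices to prove: for every map $f \colon U \to V$ of finite $\cT$-sets, every $\cT^{/U}$-$\infty$-category $\cC$, and every $\cT^{/U}$-colimit diagram $\overline{p} \colon \cK^{\underline{\rhd}} \to \underline{\PShv}_{\cT^{/U}}(\cC)$, the composite $(f_{\ast}\cK)^{\underline{\rhd}} \xto{\mathrm{can}} f_{\ast}(\cK^{\underline{\rhd}}) \xto{f_{\ast}\overline{p}} f_{\ast}\underline{\PShv}_{\cT^{/U}}(\cC) \xto{\phi} \underline{\PShv}_{\cT^{/V}}(f_{\ast}\cC)$ is a $\cT^{/V}$-colimit diagram. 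Decomposing $V$ into orbits (everything in sight decomposes accordingly) and then replacing $\cT$ by $\cT^{/V}$, we may assume $V$ is the terminal object.

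Next I would reduce to testing against representables. Recall from \cite[\S 11]{Exp2} that $\cT^{/V}$-colimits in $\underline{\PShv}_{\cT^{/V}}(f_{\ast}\cC)$ are detected pointwise: a $\cT^{/V}$-diagram into $\underline{\PShv}_{\cT^{/V}}(f_{\ast}\cC)$ is a $\cT^{/V}$-colimit diagram if and only if its image under evaluation at each object of $f_{\ast}\cC$ is a colimit diagram of parametrized spaces over the relevant base, and moreover each such evaluation $\cT$-functor strongly preserves $\cT$-colimits. Now fix an object $[W \to V]$ of $\cT^{/V}$ and an object $c$ of $(f_{\ast}\cC)_{[W\to V]}$. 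Writing $U' \coloneqq U \times_V W$ with projections $g' \colon U' \to U$ and $f' \colon U' \to W$, the $\cT$-Segal description of the norm gives $(f_{\ast}\cC)_{\underline{W}} \simeq \prod_{f'}(g')^{\ast}\cC$, so that $c$ corresponds to a tuple $(c_{W'})_{W' \in \Orbit(U')}$ with $c_{W'} \in \cC_{[W' \to U]}$. Unwinding the construction of $\phi$ one checks that the composite of $\phi$ with evaluation at $c$ is the $\cT^{/W}$-indexed product of the evaluation $\cT$-functors $\mathrm{ev}_{c_{W'}}$; equivalently, it factors as the product of the $\mathrm{ev}_{c_{W'}}$ followed by the indexed-product $\cT$-functor $f'_{\ast}(g')^{\ast}\underline{\Spc}_{\cT^{/W}} \to \underline{\Spc}_{\cT^{/W}}$.

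Finally, by the proposition establishing distributivity of the $\cT$-cartesian $\cT$-symmetric monoidal structure, applied with $\cC = \Spc$, the indexed-product $\cT$-functor $f'_{\ast}(g')^{\ast}\underline{\Spc}_{\cT^{/W}} \to \underline{\Spc}_{\cT^{/W}}$ is $\cT^{/W}$-distributive, hence strongly preserves the relevant $\mathrm{can}\circ f'_{\ast}(-)$ of colimit diagrams; combined with the fact that each $\mathrm{ev}_{c_{W'}}$ strongly preserves $\cT$-colimits, this shows that the image of our diagram under evaluation at $c$ is a $\cT^{/W}$-colimit diagram. Since $c$ and $W$ were arbitrary, the pointwise criterion gives the conclusion. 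The main obstacle is the identification in the third paragraph — unwinding the definition of $\phi$, the description of the norm $f_{\ast}$ on presheaf $\cT$-categories, and the pointwise evaluation functors, so as to see that "evaluation after $\phi$" is genuinely an indexed product of evaluations — together with keeping the base-change bookkeeping of the first paragraph honest.
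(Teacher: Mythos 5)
Your proposal is correct and follows essentially the same route as the paper's proof: reduce by base change to evaluating at objects of $f_*\cC$ in the fiber over $V$, identify evaluation-after-$\phi$ with the indexed product $m: f_*f^*\underline{\Spc}_{\cT^{/V}} \to \underline{\Spc}_{\cT^{/V}}$ precomposed with $f_*$ of the evaluation functors, and invoke the previously established distributivity of the $\cT$-cartesian $\cT$-symmetric monoidal structure on $\underline{\Spc}_{\cT}$. The paper likewise leaves the key identification to "a diagram chase," so your flagging of that step as the main obstacle matches where the real work lies.
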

\begin{proof}
Suppose $p: \cK \to \underline{\PShv}_{\cT^{/U}}(\cC)$ is a $\cT^{/U}$-diagram. We need to show that the $\cT^{/V}$-colimit of the composite
\[ f_* \cK \xto{f_*(p)} f_* \underline{\PShv}_{\cT^{/U}}(\cC) \xto{\phi} \underline{\PShv}_{\cT^{/V}}(f_* \cC) \]
evaluates to $f_* (\colim^{\cT^{/U}} p)$. It suffices to check this after evaluation at all objects of $f_* \cC$, and without loss of generality it suffices to consider $x \in (f_* \cC)_V \simeq \cC_U$ (by the usual base-change argument). A diagram chase then shows that the composite
\[  f_* \underline{\PShv}_{\cT^{/U}}(\cC) \xto{\phi} \underline{\PShv}_{\cT^{/V}}(f_* \cC) \xto{\ev_x} \underline{\Spc}_{\cT^{/V}} \]
identifies with the composite
\[ f_* \underline{\PShv}_{\cT^{/U}}(\cC) \xto{f_* \ev_x} f_* \underline{\Spc}_{\cT^{/U}} = f_* f^* \underline{\Spc}_{\cT^{/V}} \xto{m} \underline{\Spc}_{\cT^{/V}} \]
where $m$ is the multiplication given by the $\cT$-distributive $\cT$-cartesian $\cT$-symmetric monoidal structure on $\underline{\Spc}_{\cT}$. Thus, $m$ is $\cT^{/V}$-distributive, which implies that $\ev_x \phi$ is distributive. 
\end{proof}

\begin{lem} \label{lem:distributiveLKEequivalence}
Let $f: U \to V$ be a morphism of finite $\cT$-sets, let $\cC$ be a $\cT^{/U}$-$\infty$-category, and let $\cD$ be a $\cT^{/U}$-cocomplete $\cT^{/U}$-$\infty$-category. Then a $\cT^{/V}$-functor $F: f_* \underline{\PShv}_{\cT^{/U}}(\cC) \to \cD$ is $\cT^{/V}$-distributive if and only if it is the $\cT^{/V}$-left Kan extension of its restriction along $f_*(j): f_* \cC \subset f_* \underline{\PShv}_{\cT^{/U}}(\cC)$.
\end{lem}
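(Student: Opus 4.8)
The plan is to prove both directions via the universal property of parametrized presheaf categories, reducing the distributivity condition to a left Kan extension condition and vice versa. Throughout we abbreviate $\cP = \underline{\PShv}_{\cT^{/U}}(\cC)$ and write $j: \cC \to \cP$ for the $\cT^{/U}$-Yoneda embedding, so $f_*(j): f_* \cC \to f_* \cP$.

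First I would handle the ``only if'' direction. Suppose $F: f_* \cP \to \cD$ is $\cT^{/V}$-distributive. By \cref{lem:UniversallyLKEexample}, the canonical $\cT^{/V}$-functor $\phi: f_* \cP \to \underline{\PShv}_{\cT^{/V}}(f_* \cC)$ is $\cT^{/V}$-distributive, and in particular (taking the pullback square to be the identity and applying the last sentence of \cref{def:DistributiveMonoidalCategory}) it strongly preserves all small $\cT^{/V}$-colimits. Since $F|_{f_* \cC}$ factors through $\cD$ which is $\cT^{/U}$-cocomplete, by the universal property of $\underline{\PShv}_{\cT^{/V}}(f_* \cC)$ established via \cref{prp:ElaboratedYoneda} (with $\ccK = \All$), there is a $\cT^{/V}$-colimit-preserving functor $\overline{F}: \underline{\PShv}_{\cT^{/V}}(f_* \cC) \to \cD$ with $\overline{F} \circ \phi \circ f_*(j) \simeq F \circ f_*(j)$, and moreover $\overline{F}$ is the $\cT^{/V}$-left Kan extension of $F|_{f_* \cC}$ along the $\cT^{/V}$-Yoneda embedding. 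The key point is then that $F \simeq \overline{F} \circ \phi$: both are $\cT^{/V}$-distributive (the composite of distributive functors is distributive by transitivity of left Kan extensions, or one checks the pointwise colimit condition directly) and they agree on $f_* \cC$; since $\phi$ exhibits $f_* \cP$ as generated under $\cT^{/V}$-colimits of a suitable type from $f_* \cC$ — more precisely, since distributive functors out of $f_* \cP$ that agree on $f_* \cC$ agree — we conclude $F \simeq \overline{F} \circ \phi$. Then $F$ is the $\cT^{/V}$-left Kan extension of $F|_{f_* \cC}$ along $f_*(j)$ by transitivity of $\cT^{/V}$-left Kan extensions, using that $\phi$ itself arises (by \cref{lem:UniversallyLKEexample} and the pointwise formula) as a left Kan extension along $f_*(j)$ followed by the left Kan extension along the $\cT^{/V}$-Yoneda embedding.

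For the ``if'' direction, suppose $F$ is the $\cT^{/V}$-left Kan extension of $G := F \circ f_*(j)$ along $f_*(j)$. Fix a pullback square of finite $\cT$-sets with legs $g: V' \to V$, $g': U' \to U$, $f': U' \to V'$, and a $\cT^{/U'}$-colimit diagram $\overline{p}: \cK^{\underline{\rhd}} \to g'^* \cP$. I need to show the induced $\cT^{/V'}$-functor $(f'_* \cK)^{\underline{\rhd}} \to g^* \cD$ is a $\cT^{/V'}$-colimit diagram. Base change identifies $g'^* \cP$ with $\underline{\PShv}_{\cT^{/U'}}(g'^* \cC)$ and $g^* F$ with the $\cT^{/V'}$-left Kan extension of $g^* G$ along $f'_*(g'^* j)$, so after renaming we may assume $g = \id$, i.e. it suffices to show that $F$ (being a left Kan extension along $f_*(j)$) sends the image under $f_*$ of any $\cT^{/U}$-colimit diagram to a $\cT^{/V}$-colimit diagram. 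Here the main obstacle lies: one must unwind the pointwise formula for $\cT^{/V}$-left Kan extension along $f_*(j)$ and combine it with the distributivity of $\phi$ from \cref{lem:UniversallyLKEexample}. Concretely, I would factor $F \simeq \overline{F} \circ \phi$ as above (this factorization is forced: $\overline{F}$ is defined as the $\cT^{/V}$-colimit-preserving extension of $G$, and $\overline{F}\circ\phi$ is then a left Kan extension of $G$ along $f_*(j)$, hence equivalent to $F$ by uniqueness). Since $\phi$ is $\cT^{/V}$-distributive and $\overline{F}$ strongly preserves small $\cT^{/V}$-colimits, their composite sends $f_*$ of $\cT^{/U}$-colimit diagrams to $\cT^{/V}$-colimit diagrams — exactly the distributivity of $F$. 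The hard part is thus organizing the identification $F \simeq \overline{F}\circ\phi$ cleanly and invoking transitivity of parametrized left Kan extensions (from \cite{Exp2b}) and the pointwise colimit computation correctly; once that is in place both implications follow formally.
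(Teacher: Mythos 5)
Your ``if'' direction is essentially the paper's: one writes $F$ as the $\cT^{/V}$-colimit-preserving extension $\overline{G}$ of $G = F \circ f_*(j)$ composed with the restricted-Yoneda functor $\phi$ of \cref{lem:UniversallyLKEexample}, and observes that a $\cT^{/V}$-distributive functor followed by a $\cT^{/V}$-colimit-preserving one is again distributive. One side remark: your parenthetical claim that $\phi$, being distributive, ``strongly preserves all small $\cT^{/V}$-colimits'' by the last sentence of \cref{def:DistributiveMonoidalCategory} is not correct for a general $f: U \to V$ --- that sentence applies only to fiberwise morphisms $\alpha$ in $\cO_V$. For a genuine norm, $f$-distributivity concerns diagrams of the form $f'_* \cK$ with $\cK$ a $\cT^{/U'}$-$\infty$-category, and neither implies nor is implied by preservation of arbitrary $\cT^{/V}$-colimits; indeed norm functors typically fail to preserve colimits. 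Fortunately this claim is not load-bearing in your argument.

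The genuine gap is in the ``only if'' direction. You reduce everything to the assertion that two $\cT^{/V}$-distributive functors out of $f_* \underline{\PShv}_{\cT^{/U}}(\cC)$ that agree on $f_* \cC$ must agree, justified only by saying that $f_* \cC$ ``generates under $\cT^{/V}$-colimits of a suitable type.'' That uniqueness principle is essentially equivalent to the statement being proved, and the paper's proof is precisely the missing argument for it: one forms the comparison map $\theta: (f_* j)_! (f_* j)^* F \Rightarrow F$ and checks it is an equivalence at each object $x$. The two inputs that make this work, neither of which appears in your proposal, are (i) the natural equivalence $(f_* \cC)^{/\underline{x}} \simeq f_*(\cC^{/\underline{x}})$ compatible with the projections to $f_* \cC$ (using that $f_*$ preserves cotensors and limits), which identifies the comma $\cT^{/V}$-$\infty$-category computing the pointwise left Kan extension at $x$ as $f_*$ applied to a $\cT^{/U}$-$\infty$-category; and (ii) Yoneda density in $\underline{\PShv}_{\cT^{/U}}(\cC)$, so that $(\cC^{/\underline{x}})^{\underline{\rhd}} \to \underline{\PShv}_{\cT^{/U}}(\cC)$ is a $\cT^{/U}$-colimit diagram to which the distributivity of $F$ can be applied, identifying $F(x)$ with the $\cT^{/V}$-colimit of $(f_* \cC)^{/\underline{x}} \to f_* \cC \to \cD$ and hence with $F'(x)$. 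Without (i) and (ii) there is no way to see that an arbitrary object of $f_* \underline{\PShv}_{\cT^{/U}}(\cC)$ is a colimit ``of the suitable type'' (i.e., one that distributive functors are guaranteed to preserve) of objects of $f_* \cC$, so the uniqueness claim --- and with it your whole ``only if'' direction --- remains unproved.
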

\begin{proof}
For the `if' direction, it suffices to consider the universal example given by the $\cT^{/V}$-functor $\phi$ of \cref{lem:UniversallyLKEexample}, which we showed to be $\cT^{/V}$-distributive.\footnote{cf. \url{https://math.stackexchange.com/questions/4094599/when-left-kan-extension-preserve-colimits} for a description of this standard reduction, which also works in the parametrized context.} For the `only if' direction, let $F' = (f_* j)_! (f_* j)^* F$ and consider the comparison map $\theta: F' \Rightarrow F$. Without loss of generality, it suffices to show that for all $x \in (f_* \underline{\PShv}_{\cT^{/U}}(\cC))_V \simeq (\underline{\PShv}_{\cT^{/U}}(\cC))_U$, $\theta_x$ is an equivalence. On the one hand, by the pointwise formula for $\cT^{/V}$-left Kan extensions, we have that $F'(x)$ is given by the $\cT^{/V}$-colimit of $p: (f_* \cC)^{/\underline{x}} \to f_* \cC \to \cD$. On the other hand, since $f_*: \Cat_{\cT^{/U}} \to \Cat_{\cT^{/V}}$ preserves cotensors and limits, we have a natural equivalence $(f_* \cC)^{/\underline{x}} \simeq f_*(\cC^{/\underline{x}})$ such that $[(f_* \cC)^{/\underline{x}} \to f_* \cC] \simeq [f_*(\cC^{/\underline{x}} \to \cC)]$, and using that $F$ is $\cT^{/V}$-distributive, we get that $F(x)$ is also given by the $\cT^{/V}$-colimit of $p$. The naturality of all operations considered shows further that $\theta_x$ implements this equivalence.
\end{proof}

In the following corollary, we disambiguate our terminology for distributive functors by referring to the morphism of finite $\cT$-sets and not just the target. 

\begin{cor} \label{cor:distr_transitivity}
Let $U \xto{f} V \xto{g} W$ be a composite of morphisms of finite $\cT$-sets, let $\cC$ be a $\cT^{/U}$-cocomplete $\cT^{/U}$-$\infty$-category, and let $\cD$ be a $\cT^{/W}$-cocomplete $\cT^{/W}$-$\infty$-category. Consider the composite $\cT^{/W}$-functor
\[j:  g_* f_* \cC \xto{j_0} g_* \underline{\PShv}_{\cT^{/V}}(f_* \cC) \to g_* \underline{\PShv}_{f_* \All}(f_* \cC) \]
Then a $\cT^{/W}$-functor $F: g_* \underline{\PShv}_{f_* \All}(f_* \cC) \to \cD$ is $g$-distributive if and only if $j^* F$ is $g f$-distributive. Consequently, we have an equivalence
\[ \underline{\PShv}_{(g f)_* \All}((g f)_* \cC ) \simeq \underline{\PShv}_{g_* \All}(g_* \underline{\PShv}_{f_* \All}(f_* \cC)). \]
\end{cor}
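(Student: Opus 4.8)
The statement has two halves: a characterization of $g$-distributive functors out of $g_* \underline{\PShv}_{f_* \All}(f_* \cC)$, and the resulting identification of iterated presheaf $\cT$-$\infty$-categories. The plan is to deduce the first half by combining \cref{lem:distributiveLKEequivalence} (applied at the level $U \to V$ and at the level $V \to W$) with transitivity of $\cT^{/W}$-left Kan extensions, and then to feed this back into \cref{prp:ElaboratedYoneda} to obtain the claimed equivalence.

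First I would record the two instances of \cref{lem:distributiveLKEequivalence} that are available. Applied to $f: U \to V$ and the $\cT^{/U}$-cocomplete $\cT^{/U}$-$\infty$-category $\cC$ (here I use that $\cC$ is $\cT^{/U}$-cocomplete so that $\underline{\PShv}_{\cT^{/U}}(\cC)$, and hence $\underline{\PShv}_{f_* \All}(f_* \cC)$, make sense — note $\underline{\PShv}_{f_* \All}(f_* \cC)$ is, by definition of the universal construction, the $\cT^{/V}$-cocompletion of $f_* \cC$ with respect to the collection $f_* \All$), and applied again to $g: V \to W$ with the $\cT^{/V}$-$\infty$-category $\underline{\PShv}_{f_* \All}(f_* \cC)$ in place of the source, which is $\cT^{/V}$-cocomplete by construction. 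So a $\cT^{/W}$-functor $F: g_* \underline{\PShv}_{f_* \All}(f_* \cC) \to \cD$ is $g$-distributive if and only if it is the $\cT^{/W}$-left Kan extension of its restriction along $g_*(j_0): g_* \underline{\PShv}_{f_* \All}(f_* \cC)$... wait, more precisely along $g_*$ of the Yoneda embedding $f_* \cC \to \underline{\PShv}_{f_* \All}(f_* \cC)$, i.e. along the first map in the factorization of $j$. Then I would invoke transitivity of $\cT^{/W}$-left Kan extensions (used repeatedly in the excerpt, e.g. in the proof of \cref{thm:DayConvolutionCocartesian} and \cref{prp:localOperadicKanExtn}): $F$ is the left Kan extension along $g_*(\text{Yoneda})$ of a functor which is itself, by the universal property in \cref{prp:ElaboratedYoneda}(2) and \cref{lem:distributiveLKEequivalence} applied at the $f$-level, the left Kan extension of $j^* F$ along the remaining map $g_* f_* \cC \to g_* \underline{\PShv}_{\cT^{/V}}(f_* \cC)$. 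Composing these and using transitivity shows $F$ is $g$-distributive iff $F$ is the $\cT^{/W}$-left Kan extension of $j^* F$ along $j$, which by \cref{lem:distributiveLKEequivalence} applied to $gf: U \to W$ (with $\underline{\PShv}_{(gf)_* \All}((gf)_* \cC)$ as the ambient object) is equivalent to $j^* F$ being $gf$-distributive. The one subtlety to check carefully here is that the composite map $j$ in the statement really does agree, up to equivalence, with the Yoneda embedding $(gf)_* \cC \to \underline{\PShv}_{(gf)_* \All}((gf)_* \cC)$ followed by the relevant inclusion — this is where one must match the ``$g_* f_*$'' iterated structure against the ``$(gf)_*$'' structure, using that $f_* \All$ and $g_*(f_* \All)$ are set up so that $(gf)_* \All$ refines to the composite; I would also need to observe $f_* \underline{\PShv}_{\cT^{/U}}(\cC) = f_* \underline{\PShv}_{\All}(\cC)$ to line up notation.

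For the ``consequently'' clause, I would appeal to \cref{prp:ElaboratedYoneda}: both $\underline{\PShv}_{(gf)_* \All}((gf)_* \cC)$ and $\underline{\PShv}_{g_* \All}(g_* \underline{\PShv}_{f_* \All}(f_* \cC))$ are, by construction, universal among $\cT^{/W}$-cocomplete $\cT^{/W}$-$\infty$-categories receiving a functor from $(gf)_* \cC$, resp. from $g_* \underline{\PShv}_{f_* \All}(f_* \cC)$, subject to sending the designated diagrams to $\cT^{/W}$-colimits. The first-half characterization, together with \cref{lem:distributiveLKEequivalence} at the $g$-level (which says strongly-cocontinuous functors out of $g_* \underline{\PShv}_{f_* \All}(f_* \cC)$ that respect the $g$-distributivity constraint are exactly the $g$-distributive ones, hence exactly those left Kan extended from $g_* \cC$-wait, from $g_* f_* \cC$), identifies the two universal properties: for any $\cT^{/W}$-cocomplete $\cD$,
\[
\underline{\Fun}^{\mathrm{distr}}_{\cT^{/W}}\bigl(\underline{\PShv}_{g_* \All}(g_* \underline{\PShv}_{f_* \All}(f_* \cC)), \cD\bigr) \simeq \underline{\Fun}_{\cT^{/W}}(g_* f_* \cC, \cD) \simeq \underline{\Fun}^{\mathrm{distr}}_{\cT^{/W}}\bigl(\underline{\PShv}_{(gf)_* \All}((gf)_* \cC), \cD\bigr),
\]
and since both objects are $\cT^{/W}$-cocomplete and the equivalences are compatible with the canonical functors from $g_* f_* \cC$, the Yoneda lemma gives the desired equivalence. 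The main obstacle I anticipate is purely bookkeeping rather than conceptual: carefully tracking which collection of diagrams ($\All$ over which base, pushed forward along which map) is imposed at each stage, and verifying that the composite Yoneda-type map $j$ in the statement is the correct one to apply \cref{lem:distributiveLKEequivalence} to at the $gf$-level. Everything else is a direct assembly of \cref{lem:distributiveLKEequivalence}, transitivity of parametrized left Kan extensions, and the universal property in \cref{prp:ElaboratedYoneda}.
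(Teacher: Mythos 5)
Your proposal is correct and follows essentially the same route as the paper's (much terser) proof: the paper likewise deduces the first claim by restricting the equivalence of \cref{lem:distributiveLKEequivalence} through the localization $g_* \underline{\PShv}_{f_* \All}(f_* \cC)$ of $g_* \underline{\PShv}_{\cT^{/V}}(f_* \cC)$, and obtains the displayed equivalence from the universal property. Your write-up just makes explicit the two applications of the lemma, the use of transitivity of parametrized left Kan extensions, and the bookkeeping matching $g_* f_*$ with $(gf)_*$, all of which the paper leaves implicit.
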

\begin{proof}
The first claim follows immediately by restricting the equivalence of \cref{lem:distributiveLKEequivalence}, noting that by definition $g_* \underline{\PShv}_{f_* \All}(f_* \cC)$ is a localization of $g_* \underline{\PShv}_{\cT^{/V}}(f_* \cC)$ at the relevant class of morphisms. The equivalence then follows by universal property.
\end{proof}

\begin{prp}
The map $p$ is a cocartesian fibration.
\end{prp}
\begin{proof}
We adapt the proof of \cite[Prop.~4.8.1.3]{HA} to the parametrized context. We first show that $p$ is a locally cocartesian fibration. This is clear if we restrict to the fiber over $0$. For the other cases, first suppose $(f: U \ra V, \{ \cC_i \}) \in \cM_0$ and let $(f, 0) \to (g, 1)$ be a morphism in $\uFinpT \times \Delta^1$, with notation as above. Let $\cD_j = \underline{\PShv}_{\cT^{/X_j}}((m_j)_*(\cC'_j))$ and take $F_j$ to be the identity. We then have a morphism $(f, \{ \cC_i\},0) \to (g, \{ \cD_j \},1)$ which is a locally cocartesian edge by the universal property of the $\cT^{/X_j}$-presheaves.

Next, suppose $(f: U \ra V, \{ \cC_i \}) \in \cM_1$ and let $(f,1) \to (g,1)$ be a morphism in $\uFinpT \times \Delta^1$. Let $\cC'$ be the $\cT^{/Z}$-$\infty$-category as above. Let $\ccR$ be the closed collection of parametrized colimit diagrams in $\cC'$, i.e., for each morphism $\alpha: Z' \to Z$ in $\FinT$, $\ccR_{\alpha}$ is the collection of $\cT^{/Z'}$-colimit diagrams in $\cC'_{\underline{Z'}}$. We let $\cD = \underline{\PShv}^{\All}_{m_* \ccR}(m_* \cC')$ and $F = j: m_* \cC' \to \cD$ be the $\cT^{/X}$-functor as in \cref{prp:ElaboratedYoneda}. We then have that the morphism $j: (f,\cC,1) \to (g, \cD, 1)$ lies in $\cM$ by definition. Moreover, it is a locally cocartesian edge in view of the universal property supplied by \cref{prp:ElaboratedYoneda}.

To then see that $p$ is a cocartesian fibration, we need to see that the composite of locally cocartesian edges is again locally cocartesian. We already know the restriction over $0$ is a cocartesian fibration. If the first edge lies over $[0 \ra 1]$, we may apply the parametrized analogue of \cite[Prop.~5.3.6.11]{HTT}; since this step is straightforward we leave the details to the reader. If both edges lie over $1$, then without loss of generality we may suppose both edges are fiberwise active as edges over $\uFinpT$, in which case the claim follows from the transitivity property established in \cref{cor:distr_transitivity}.
\end{proof}

Let $p_0$ and $p_1$ denote the two fibers of $p$ over $0,1 \in \Delta^1$. 

\begin{cor}
The maps $p_0$ and $p_1$ exhibit $\cM_0$ and $\cM_1$ as $\cT$-symmetric monoidal $\cT$-$\infty$-categories.
\end{cor}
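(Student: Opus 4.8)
The plan is to reduce both statements to \cref{prop:SimplifyOMonoidalDef}, after disposing of the case $i=0$ trivially. For $i\in\{0,1\}$, the map $p_i\colon\cM_i\to\uFinpT$ is the pullback of the cocartesian fibration $p\colon\cM\to\uFinpT\times\Delta^1$ along $\uFinpT\times\{i\}\hookrightarrow\uFinpT\times\Delta^1$, hence is itself a cocartesian fibration. When $i=0$ we have by construction $\cM_0=\uCatT^\otimes$, which is the $\cT$-cartesian $\cT$-symmetric monoidal $\cT$-$\infty$-category of \cref{exm:cartesian_monoidal}, so there is nothing to prove; all the work is in the case $i=1$.

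First I would check that $p_1$ is a cocartesian fibration of $\cT$-$\infty$-categories, i.e.\ that the composite $\cM_1\to\uFinpT\to\cT^\op$ is a cocartesian fibration and $p_1$ is a $\cT$-functor. An edge of $\uFinpT\times\{1\}$ that is cocartesian over $\cT^\op$ is one whose middle term $Z\to X$ is an equivalence; by the explicit description of $p$-cocartesian edges obtained in the proof of the preceding proposition, a $p$-cocartesian lift of such an edge is the evident restriction ($\cT$-base-change) $\cT$-functor with identity components. This edge is already cocartesian for $\cM_0=\uCatT^\otimes$ over $\cT^\op$, and it remains cocartesian after passing to the subcategory $\cM_1$ because distributivity of functors is stable under base change, so a morphism of $\cM_0$ with endpoints in $\cM_1$ lies in $\cM_1$ if and only if its composite with this restriction edge does. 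Hence $\cM_1\to\cT^\op$ is a $\cT$-$\infty$-category, $p_1$ is a $\cT$-functor, and $p_1$ is therefore a cocartesian fibration of $\cT$-$\infty$-categories, so \cref{prop:SimplifyOMonoidalDef} applies with $\cO^\otimes=\uFinpT$ regarded as the commutative $\cT$-$\infty$-operad $\Com^\otimes_{\cT}$.

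It then remains to verify the Segal condition: for every $f_+=[U_+\to V]\in\uFinpT$ with orbit decomposition $U\simeq\coprod_{W\in\Orbit(U)}W$, the inert pushforwards along the characteristic morphisms $\chi_{[W\subset U]}$ induce an equivalence $(\cM_1)_{f_+}\xto{\sim}\prod_{W\in\Orbit(U)}(\cM_1)_{I(W)_+}$. This is essentially tautological from the definition of $\cM$: the inert pushforwards in $\cM_1$ are inherited from those in $\uCatT^\otimes$, namely the projections $\prod_{W}\Cat_{\cT^{/W}}\to\Cat_{\cT^{/W_0}}$, so the Segal map is the restriction to $\cM_1$ of the identity functor on $(\cM_0)_{f_+}=\prod_{W}\Cat_{\cT^{/W}}$. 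By definition $(\cM_1)_{f_+}$ is the subcategory of $\prod_{W}\Cat_{\cT^{/W}}$ whose objects are tuples each of whose components strongly admits $\cT^{/W}$-colimits and whose morphisms are tuples of distributive — equivalently, $\cT^{/W}$-colimit-preserving — functors, while $(\cM_1)_{I(W)_+}$ is the corresponding subcategory of $\Cat_{\cT^{/W}}$; the product of the latter over $W\in\Orbit(U)$ is visibly $(\cM_1)_{f_+}$, so the Segal map is an equivalence. Since the Segal condition for $\cM_0$ is likewise immediate (being already part of \cref{exm:cartesian_monoidal}), this completes the proof.

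I expect the only step requiring genuine care to be the one in the second paragraph, namely confirming that $p_1$ is compatible with the structure maps to $\cT^\op$: since $\cM_1\subset\cM_0$ is neither full on objects nor full on morphisms, one must invoke stability of distributivity under base change to see that the $\cT^\op$-cocartesian edges behave as expected. The Segal condition itself is engineered into the definition of $\cM$ and requires no substantive argument.
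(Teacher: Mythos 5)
Your proposal is correct and follows essentially the same route as the paper's (very brief) proof: $p_1$ is a cocartesian fibration because it is the restriction of the cocartesian fibration $p$ over $\uFinpT\times\{1\}$, and the parametrized Segal condition is inherited from $\uCatT^{\otimes}$ since all inert morphisms of $\uCatT^{\otimes}$ lie in $\cM$. Your additional second paragraph verifying that $\cM_1$ is a $\cT$-$\infty$-category is harmless but already implicit in $p$ being cocartesian over $\uFinpT\times\Delta^1$ together with composition of cocartesian fibrations down to $\cT^{\op}$.
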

\begin{proof}
We already have that the map $p_0$ is the structure map of $\uCatT^{\otimes}$. As for $p_1$, since it is a cocartesian fibration it remains to verify the parametrized Segal condition. But in the definition of $\cM$, all the inert morphisms in $\uCatT^{\otimes}$ continue to lie in $\cM$, so we see that $\cM_1$ inherits the parametrized Segal condition from $\uCatT^{\otimes}$.
\end{proof}

Now write $(\uCatT^{L})^{\otimes} = \cM_1$. The cocartesian fibration $p$ classifies the $\cT$-symmetric monoidal $\cT$-functor
\[ \underline{\PShv}^{\otimes}_{\cT^{/-}}: \uCatT^{\otimes} \to  (\uCatT^{L})^{\otimes} \]
whose underlying $\cT$-functor is given by the usual $\cT$-presheaf construction $\underline{\PShv}_{\cT^{/-}}$. Since $\underline{\PShv}_{\cT^{/-}}$ admits a right $\cT$-adjoint given by the forgetful $\cT$-functor $U$, $U$ canonically inherits a lax $\cT$-symmetric monoidal structure. Passing to $\cT$-commutative algebra objects and considering the unit of the adjunction, we obtain:

\begin{cor} \label{cor:UMP_presheaves}
Let $\cC$ be a $\cT$-symmetric monoidal $\cT$-$\infty$-category. Then $\underline{\PShv}_{\cT}(\cC)$ and the $\cT$-Yoneda embedding $j: \cC \to \underline{\PShv}_{\cT}(\cC)$ inherit a $\cT$-symmetric monoidal structure such that:
\begin{enumerate}
\item $\underline{\PShv}_{\cT}(\cC)$ is $\cT$-distributive.
\item For every $\cT$-distributive $\cT$-symmetric monoidal $\cT$-$\infty$-category $\cD$, restriction along $j$ yields an equivalence
\[ \underline{\Fun}^{L,\otimes}_{\cT}(\underline{\PShv}_{\cT}(\cC), \cD) \xto{\simeq} \underline{\Fun}^{\otimes}_{\cT}(\cC, \cD). \]
\end{enumerate}
\end{cor}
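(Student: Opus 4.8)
\textbf{Proof strategy for \cref{cor:UMP_presheaves}.}

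The plan is to exhibit the $\cT$-symmetric monoidal structure on $\underline{\PShv}_{\cT}(\cC)$ as the value of the $\cT$-symmetric monoidal $\cT$-functor $\underline{\PShv}^{\otimes}_{\cT^{/-}}\colon \uCatT^{\otimes} \to (\uCatT^L)^{\otimes}$ constructed above, evaluated on the $\cT$-symmetric monoidal $\cT$-$\infty$-category $\cC^{\otimes}$ regarded as a $\cT$-commutative algebra object of $\uCatT^{\otimes}$ (via the cartesian $\cT$-symmetric monoidal structure of \cref{exm:cartesian_monoidal}). First I would observe that since $p\colon \cM \to \uFinpT \times \Delta^1$ is a cocartesian fibration whose two ends $p_0, p_1$ are $\cT$-symmetric monoidal $\cT$-$\infty$-categories (by the preceding corollaries), $p$ classifies a $\cT$-symmetric monoidal $\cT$-functor $\uCatT^{\otimes} \to (\uCatT^L)^{\otimes}$ lifting $\underline{\PShv}_{\cT^{/-}}$; by \cite[Thm.~11.5]{Exp2} this underlying $\cT$-functor has a pointwise right $\cT$-adjoint given by the forgetful functor $\mathrm{U}$, and the relative adjoint functor theorem (using that $p$ is a cocartesian fibration and the relevant fiberwise adjoints exist and satisfy base-change, as encoded in the definition of $\cM$) promotes $\mathrm{U}$ to a $\cT$-$\infty$-operad morphism $(\uCatT^L)^{\otimes} \to \uCatT^{\otimes}$, i.e.\ a lax $\cT$-symmetric monoidal $\cT$-functor, right adjoint to $\underline{\PShv}^{\otimes}_{\cT^{/-}}$ relative to $\uFinpT$. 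Passing to $\cT$-commutative algebra objects (i.e.\ applying $\CAlg_{\cT}$, which is functorial for lax $\cT$-symmetric monoidal $\cT$-functors and preserves adjunctions), and noting that a $\cT$-commutative algebra in $\uCatT^{\otimes}$ is precisely a $\cT$-symmetric monoidal $\cT$-$\infty$-category by \cref{thm:TwoPresentationsOfTSMCs} (combined with \cref{cor:MinimalIndexingSystemIsFiberwiseCommutative} and \cref{rem:SMCsAreCommMonoids}), I get an adjunction between $\CatT^{\otimes}$ and the $\infty$-category of $\cT$-distributive $\cT$-symmetric monoidal $\cT$-$\infty$-categories, whose left adjoint sends $\cC^{\otimes}$ to a $\cT$-symmetric monoidal refinement $\underline{\PShv}_{\cT}(\cC)^{\otimes}$ and whose unit at $\cC^{\otimes}$ is a $\cT$-symmetric monoidal refinement of the $\cT$-Yoneda embedding $j$. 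This immediately gives statement (1), since $\mathrm{U}$ lands in $\cM_1$ by construction, so the image of $\underline{\PShv}^{\otimes}_{\cT^{/-}}$ is $\cT$-distributive.

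For statement (2), I would argue as follows. A $\cT$-distributive $\cT$-symmetric monoidal $\cT$-$\infty$-category $\cD^{\otimes}$ is by definition a $\cT$-commutative algebra object of $(\uCatT^L)^{\otimes} = \cM_1$: indeed, unwinding the definition of $\cM$, the morphisms in $\cM_1$ over fiberwise active edges are exactly the $\cT$-distributive functors, which is precisely the content of \cref{def:DistributiveMonoidalCategory} together with the observation there that for $\alpha$ in a fiber the distributivity condition is equivalent to strongly preserving $\cT$-colimits, so a section of $\cM_1 \to \uFinpT$ satisfying the Segal condition and carrying active edges to morphisms in $\cM_1$ is the same datum as a $\cT$-distributive $\cT$-symmetric monoidal $\cT$-$\infty$-category (and a morphism thereof is a (strict) $\cT$-symmetric monoidal $\cT$-functor that is a morphism in $\cM_1$, i.e.\ whose norm functors are distributive, which is automatic for $\cT$-symmetric monoidal $\cT$-functors between $\cT$-cocomplete targets since they strongly preserve $\cT$-colimits). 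Thus $\underline{\Fun}^{L,\otimes}_{\cT}(\underline{\PShv}_{\cT}(\cC), \cD)$ is exactly the $\cT$-$\infty$-category of $\cT$-symmetric monoidal $\cT$-functors in $\cM_1$ out of $\underline{\PShv}_{\cT}(\cC)^{\otimes}$, computed in $\cM_1$; and by the universal property of the $\cT$-symmetric monoidal localization/free-cocompletion adjunction just established, restriction along $j$ identifies this with $\underline{\Fun}^{\otimes}_{\cT}(\cC, \mathrm{U}(\cD)) = \underline{\Fun}^{\otimes}_{\cT}(\cC, \cD)$. To upgrade the equivalence of $\infty$-categories of global sections to an equivalence of $\cT$-$\infty$-categories, I would apply the argument after base-change along each $\cT^{/V}\to\cT$, using that all the constructions ($\uCatT^{\otimes}$, $\cM$, $\underline{\PShv}^{\otimes}_{\cT^{/-}}$, $\mathrm{U}$) are stable under this base-change.

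The main obstacle is the bookkeeping in the second paragraph: matching up the three a priori distinct notions of ``$\cT$-symmetric monoidal $L$-functor into a $\cT$-distributive target'' — namely (i) a $\cT$-functor landing in $\underline{\Fun}^{L,\otimes}_{\cT}$ as literally defined, (ii) a morphism of $\cT$-commutative algebras in $\cM_1$, and (iii) a strict $\cT$-symmetric monoidal $\cT$-functor that strongly preserves $\cT$-colimits. Showing (i)$\Leftrightarrow$(iii) uses the pointwise computation of $\cT$-colimits in $\cT$-presheaf categories together with \cref{exm:normParametrizedFunctor}; showing (ii)$\Leftrightarrow$(iii) requires the careful analysis of which morphisms of $\uCatT^{\otimes}$ survive into $\cM_1$ over active versus inert edges, invoking that distributivity of the norm functors is the correct ``algebra-morphism-level'' condition and that for $\cT$-symmetric monoidal $\cT$-functors with $\cT$-cocomplete source and target strong preservation of $\cT$-colimits is automatic. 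Once these identifications are in place, the rest is a formal consequence of the adjunction $\underline{\PShv}^{\otimes}_{\cT^{/-}} \dashv \mathrm{U}$ and the functoriality of $\CAlg_{\cT}(-)$, exactly parallel to the non-parametrized case.
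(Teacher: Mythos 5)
Your proposal follows essentially the same route as the paper: the corollary is extracted from the cocartesian fibration $p: \cM \to \uFinpT \times \Delta^1$ by noting that it classifies the $\cT$-symmetric monoidal $\cT$-functor $\underline{\PShv}^{\otimes}_{\cT^{/-}}: \uCatT^{\otimes} \to (\uCatT^{L})^{\otimes}$, that the right $\cT$-adjoint $\mathrm{U}$ thereby inherits a lax $\cT$-symmetric monoidal structure, and that passing to $\cT$-commutative algebra objects and taking the unit of the resulting adjunction yields the $\cT$-symmetric monoidal Yoneda embedding and its universal property; your identification of $\CAlg_{\cT}(\uCatT^{\otimes})$ with $\CatT^{\otimes}$ and of $\CAlg_{\cT}(\cM_1)$ with $\cT$-distributive $\cT$-symmetric monoidal $\cT$-$\infty$-categories is exactly the (implicit) content of the paper's one-line deduction. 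One caveat: your parenthetical assertion that distributivity of the components of an algebra map in $\cM_1$ is ``automatic for $\cT$-symmetric monoidal $\cT$-functors between $\cT$-cocomplete targets since they strongly preserve $\cT$-colimits'' is false -- a strict $\cT$-symmetric monoidal $\cT$-functor between $\cT$-cocomplete targets need not preserve any colimits (this is precisely why the left-hand side of (2) carries the superscript $L$ while the right-hand side does not); the correct statement, which you do recover in your list of conditions (i)--(iii), is that the morphisms in $\CAlg_{\cT}(\cM_1)$ are by definition those $\cT$-symmetric monoidal $\cT$-functors whose components over each edge of $\uFinpT$ are distributive, and over degenerate edges this is exactly the condition of strongly preserving $\cT$-colimits recorded by the $L$.
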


\begin{rem}
Let $\cC$ be a $\cT$-symmetric monoidal $\cT$-$\infty$-category. Consider the $\cT$-distributive $\cT$-symmetric monoidal structure on $\underline{\PShv}_{\cT}(\cC) = \underline{\Fun}_{\cT}(\cC^{\vop}, \underline{\Spc}_{\cT})$ given by $\cT$-Day convolution (\cref{thm:DayConvolutionCocartesian}), where we have the $\cT$-symmetric monoidal structure on $\cC^{\vop}$ induced by the opposite automorphism on $\Cat$ under the equivalence of \cref{thm:TwoPresentationsOfTSMCs} and the $\cT$-cartesian $\cT$-symmetric monoidal structure on $\underline{\Spc}_{\cT}$. Then one may show directly that the full $\cT$-subcategory $\cC \subset \underline{\PShv}_{\cT}(\cC)$ is closed under this $\cT$-symmetric monoidal structure. Using \cref{cor:UMP_presheaves}(2), it then follows that the $\cT$-Day convolution $\cT$-symmetric monoidal structure agrees with that defined above.
\end{rem}


\bibliographystyle{amsalpha}
\bibliography{Gcats}

\end{document}